\newcommand{\R}{{\mathbb R}}       
\newcommand{\N}{{\mathbb N}}
\newcommand{\DD}{{\mathcal D}}
\newcommand{\HH}{{\mathcal H}}
\newcommand{\EE}{{\mathcal E}}
\newcommand{\diam}{{\rm diam}}
\newcommand{\dist}{{\rm dist}}
\newcommand{\rf}[1]{{(\ref{#1})}}
\newcommand{\supp}{\operatorname{supp}}
\newcommand{\vphi}{{\varphi}}
\newcommand{\ve}{{\varepsilon}}
\newcommand{\vv}{{\vspace{2mm}}}
\newcommand{\wt}[1]{{\widetilde{#1}}}
\newcommand{\wh}[1]{{\widehat{#1}}}
\newcommand{\G}{{\mathsf G}}
\newcommand{\VG}{{\mathsf {VG}}}
\newcommand{\LD}{{\mathsf{LD}}}
\newcommand{\BA}{{\mathsf{BA}}}
\newcommand{\pStop}{\mathsf{W}_{G_A}}
\newcommand{\ppStop}{\mathsf{W}_{G_A,0}}
\newcommand{\Whit}{\mathsf{W}}
\newcommand{\Apsi}{\mathcal{A}_{\psi}}
\newcommand{\apsi}{\mathfrak{a}_{\psi}}
\newcommand{\Agapsi}{\mathcal{A}_{\Gamma,\psi}}
\newcommand{\A}{\mathcal{A}}
\newcommand{\lca}{\mathfrak{a}}
\def\Xint#1{\mathchoice
{\XXint\displaystyle\textstyle{#1}}%
{\XXint\textstyle\scriptstyle{#1}}%
{\XXint\scriptstyle\scriptscriptstyle{#1}}%
{\XXint\scriptscriptstyle\scriptscriptstyle{#1}}%
\!\int}
\def\XXint#1#2#3{{\setbox0=\hbox{$#1{#2#3}{\int}$ }
\vcenter{\hbox{$#2#3$ }}\kern-.58\wd0}}
\def\avint{\;\Xint-}
\newcommand{\chara}{\mathbbm{1}}
\newcommand{\ps}[1]{\left( #1 \right)}   \newcommand{\av}[1]{\left| #1 \right|}
\newcommand{\dr}{\frac{dr}{r}}
\newcommand{\vp}{\varphi}
\newcommand{\hi}{\mathcal{H}^1}
\newcommand{\ep}{\operatorname{ep}}
\newcommand{\Sect}{\operatorname{X}}
\newcommand{\kap}{\varkappa}
\newcommand\Hdist{\operatorname{dist}_H}
\newcommand\excess{\operatorname{excess}}
\definecolor{ffffff}{rgb}{1.0,1.0,1.0}
\definecolor{qqqqff}{rgb}{0.0,0.0,1.0}
\definecolor{ffqqqq}{rgb}{1.0,0.0,0.0}
\definecolor{zzzzqq}{rgb}{0.6,0.6,0.0}
\definecolor{marronet}{rgb}{0.6,0.2,0}
\definecolor{negre}{rgb}{0,0,0}
\definecolor{vermell}{rgb}{0.8,0.05,0.05}
\definecolor{blau}{rgb}{0.3,0.2,1.}
\definecolor{blauclar}{rgb}{0.,0.,1.}
\definecolor{grisfosc}{rgb}{0.25098039215686274,0.25098039215686274,0.25098039215686274}
\definecolor{verd}{rgb}{0.1,0.6,0.1}
\definecolor{taronja}{rgb}{0.9,0.6,0.05}
\definecolor{vermellclar}{rgb}{1.,0.,0.}
\definecolor{verdet}{rgb}{0,0.8,0.1}
\definecolor{blauverd}{rgb}{0,0.4,0.2}
\definecolor{grisclar}{rgb}{0.6274509803921569,0.6274509803921569,0.6274509803921569}
\newtheorem{theorem}{Theorem}[section]
\newtheorem{lemma}[theorem]{Lemma}
\newtheorem{mlemma}[theorem]{Main Lemma}
\newtheorem{coro}[theorem]{Corollary}
\newtheorem{claim}{Claim}
\newtheorem{conjecture}[theorem]{Conjecture}
\newtheorem*{theorem*}{Theorem}
\newtheorem*{claim*}{Claim}
\newtheorem*{KP}{Key Property}
\theoremstyle{definition}
\newtheorem{definition}[theorem]{Definition}
\theoremstyle{remark}
\newtheorem{rem}[theorem]{\bf Remark}
\numberwithin{equation}{section}
\newcommand{\brem}{\begin{rem}}
\newcommand{\erem}{\end{rem}}
\begin{document}

\title{A proof of Carleson's $\ve^2$-conjecture}

\author{Benjamin Jaye}

\author{Xavier Tolsa}

\author{Michele Villa}

\address{Benjamin Jaye\\
School of Mathematical and Statistical Sciences, Clemson University, Clemson, South Carolina, 29631, USA}

\email{bjaye@clemson.edu}

\address{Xavier Tolsa \\
ICREA, Passeig Llu\'{\i}s Companys 23 08010 Barcelona, Catalonia\\  Departament de Matem\`atiques, and BGSMath
 \\
Universitat Aut\`onoma de Barcelona
\\
08193 Bellaterra (Barcelona), Catalonia.
}

\email{xtolsa@mat.uab.cat}

\address{Michele Villa \\
School of Mathematics, the University of Edinburgh, James Clerk Maxwell Building, Peter Guthrie Tait Rd, Edinburgh EH9 3FD, Scotland}

\email{s1246023@sms.ed.ac.uk}

\thanks{B. J. was partially supported by NSF through DMS-1800015 and the CAREER Award  DMS-1847301}
\thanks{X.T. was partially supported by MTM-2016-77635-P (MICINN, Spain) and 2017-SGR-395 (AGAUR, Catalonia).
}
\thanks{M.V. was supported by The Maxwell Institute Graduate School in Analysis and its
Applications, a Centre for Doctoral Training funded by the UK Engineering and Physical
Sciences Research Council (grant EP/L016508/01), the Scottish Funding Council, Heriot-Watt
University and the University of Edinburgh. Part of this work was carried out while M.V. was visiting X.T. at UAB}

\subjclass[2010]{28A75, 28A78, 30C85}

\begin{abstract}
In this paper we provide a proof of the Carleson $\ve^2$-conjecture.  This result yields a characterization (up to exceptional sets of zero length) of the tangent points of a Jordan curve in terms of the finiteness of the associated Carleson $\ve^2$-square function.\end{abstract}

\maketitle

\tableofcontents
\section{Introduction}

Over the last thirty years,  beginning with the work of Jones \cite{Jo} and David-Semmes \cite{DS1}, there has been a great deal of activity concerning the study of geometric square functions which measure the regularity of sets through a multi-scale analysis. Usually, the motivation for the study of such geometric
square functions stems from the wish to solve different problems in complex analysis, harmonic analysis, or PDE's which depend on (variants of) quantitative rectifiability. For example, in the solution of the Vitushkin's conjecture for sets of finite length by David \cite{David-vitus}, Melnikov's curvature of measures and its connection with
 Jones' $\beta$-numbers play an essential role (see L\'eger \cite{Leger}). Analogously, the solution
of the David-Semmes problem concerning the $L^2$-boundedness of the codimension $1$ Riesz transform by Nazarov, Tolsa, and Volberg \cite{NToV} uses the so-called BAUP criterion for uniform rectifiability found
by David and Semmes (see \cite[p.\ 139]{DS2}). More recently, Naber and Valtorta \cite{Naber-Valtorta} have
extended the use of related techniques involving an $L^2$-variant of the Jones' $\beta$-numbers
to solve different questions in the area of free boundary problems,
and more precisely, on the singularities of minimizing harmonic maps.

 In this paper we solve a longstanding conjecture of Lennart Carleson concerning another geometric square function.
To formulate the problem we need to introduce some notation.  Let $\Omega^+$ be a proper open set in $\R^2$, and set $\Gamma =\partial\Omega^+$ and
$\Omega^- = \R^2\setminus \overline{\Omega^+}$.
For $x\in\R^2$ and $r>0$, denote by $I^+(x,r)$ and $I^-(x,r)$ the longest open arcs of the circumference
$\partial B(x,r)$ contained in $\Omega^+$ and $\Omega^-$, respectively (they may be empty). Then we define
$$\ve(x,r) = \frac1r\,\max\big(\big|\pi r- \HH^1(I^+(x,r))\big|,\, \big|\pi r- \HH^1(I^-(x,r))\big|\big).$$
Here $\HH^1$ denotes the one-dimensional Hausdorff measure.  The Carleson $\ve^2$-square function is given by
\begin{equation}\label{eqEE}
\EE(x)^2 :=\int_0^1 \ve(x,r)^2\,\frac{dr}r.
\end{equation}

If $\Gamma$ is a line then $\EE(x)=0$ for all $x\in \Gamma$.  Carleson conjectured that (\ref{eqEE}) encodes some regularity properties of $\Gamma$.

\begin{conjecture}[Carleson's $\ve^2$-conjecture]\label{carleps2}  Suppose $\Gamma$ is a Jordan curve.  Except for a set of zero $\mathcal{H}^1$-measure, $\Gamma$ has a tangent at $x\in \Gamma$ if and only if $\EE(x)<\infty$.
\end{conjecture}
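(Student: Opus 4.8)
The conjecture is a biconditional to be proved "almost everywhere," so I would split it into the two implications and treat each separately, as is standard for tangent-point characterizations (cf.\ the analogous results for the Jones $\beta$-numbers and the Menger curvature). Write $T$ for the set of points of $\Gamma$ where $\Gamma$ has a tangent, and $F=\{x\in\Gamma:\EE(x)<\infty\}$. The two halves are: (i) $\HH^1(T\setminus F)=0$, i.e.\ at $\HH^1$-a.e.\ tangent point the square function is finite; and (ii) $\HH^1(F\setminus T)=0$, i.e.\ finiteness of the square function forces a tangent off an exceptional null set.

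\textbf{The easy direction (i).} At an $\HH^1$-a.e.\ point $x\in T$, the curve $\Gamma$ is differentiable and, more to the point, admits a good linear approximation at most scales in a quantitative, summable sense. The natural tool here is a Dini-type / Carleson-measure estimate: one shows that for a rectifiable set, the "$\beta$-type" deviation from the best line is square-summable in $r$ at a.e.\ point (this is essentially the easy half of the Jones traveling-salesman circle of ideas, or can be extracted from the Besicovitch/density structure of rectifiable sets), and then one compares $\ve(x,r)$ with the $\beta$-number. The comparison $\ve(x,r)\lesssim \beta_{\Gamma}(x,Cr)+(\text{small error})$ should hold because, if $\Gamma\cap B(x,r)$ is $\delta r$-close to a line $\ell$ through $x$, then $\Omega^\pm\cap\partial B(x,r)$ each fill up essentially a half-circle, so $|\pi r-\HH^1(I^\pm(x,r))|\lesssim \delta r$. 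Integrating $dr/r$ gives $\EE(x)<\infty$. A subtlety to handle with care: the definition of $\ve$ uses the two open complementary regions $\Omega^\pm$, so one must know not merely that $\Gamma$ is flat near $x$ but that it genuinely separates $B(x,r)$ into two pieces each close to a half-disk; for a Jordan curve this topological separation is available, but one should verify it is stable under the flatness hypothesis (e.g.\ that $\Gamma\cap B(x,r)$ is a connected "crossing" arc at a.e.\ tangent point and most scales).

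\textbf{The hard direction (ii), and the main obstacle.} This is where the real work lies. Here one starts only from the analytic hypothesis $\EE(x)<\infty$ on a set $F$ of positive length and must produce a tangent. The strategy I would follow is a stopping-time / corona decomposition driven by the $\ve$-numbers: fix a "good" subset $F'\subseteq F$ of positive measure on which $\int_0^1\ve(x,r)^2\,\tfrac{dr}{r}\le M$ uniformly, and run a stopping-time argument that, starting from a ball where $\Gamma$ is already fairly flat, follows $\Gamma$ down through scales, stopping only when the accumulated $\ve^2$-mass exceeds a threshold or when flatness is lost. The smallness of $\ve(x,r)$ on average should be leveraged to show: (a) at each non-stopping scale the two arcs $I^\pm$ being nearly half-circles forces $\Gamma\cap\partial B(x,r)$ to consist of essentially two nearly-antipodal points, hence $\Gamma$ behaves like a Lipschitz graph over the approximating line; and (b) the direction of this line has square-summable oscillation across scales, so it converges — that limiting direction is the tangent. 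Packaging this via a Carleson-measure estimate shows the stopping set has small measure, so $F'$ is, up to a null set, covered by pieces of Lipschitz graphs, and at Lebesgue-density points of those graphs a tangent exists. \emph{The main obstacle} is precisely step (a): extracting genuine geometric control of $\Gamma$ (that it is a two-sided flat "crossing," not, say, a thin tentacle poking in and out) from the single scalar quantity $\ve(x,r)$, which only measures the lengths of the largest arcs in $\Omega^\pm$ and is a priori insensitive to complicated local topology of $\Gamma$. Overcoming this requires a careful quantitative argument showing that small $\ve(x,r)$ at a scale, combined with small $\ve$ at nearby scales, rules out such pathologies — essentially a bootstrap from "most of two specific circles is split evenly" to "the curve is flat in the whole annulus" — and then a David–Semmes-type corona/stopping machinery to globalize. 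I expect the technical heart of the paper to be this implication, likely occupying the bulk of the argument, with the flattening mechanism phrased through an auxiliary square function or $\alpha$-type coefficient that is shown to be controlled by $\ve$.
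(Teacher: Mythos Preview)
Your high-level plan matches the paper's structure well: the easy direction via the pointwise comparison $\ve(x,r)\lesssim \beta_{\infty,\Gamma}(B(x,Cr))$ combined with the Bishop--Jones result (the paper also mentions the Ahlfors distortion/Beurling route) is exactly how the paper dispatches that half; and for the hard direction, your corona/stopping-time scheme culminating in a L\'eger-type Lipschitz graph construction is indeed the skeleton of the paper's argument (their Main Lemma~\ref{l:main2}). You also correctly put your finger on the real obstacle: getting genuine $\beta$-type flatness of $\Gamma$ out of smallness of $\ve$.

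Where your proposal has a gap is precisely in the mechanism for that step. You suggest a direct bootstrap ``from `most of two specific circles is split evenly' to `the curve is flat in the whole annulus'\,''; this is what one would naively try, but the $\ve$-coefficients are too unstable for such an argument to go through (David--Semmes already flagged this instability). The paper's solution is quite different: it introduces smoother square functions $\alpha^+(x,r)$ (Gaussian) and $\mathfrak a_\psi(x,r)$ that are controlled by $\EE$, and then proves the flatness statement (Main Lemma~\ref{l:main1}) by a \emph{compactness/contradiction} argument rather than a direct estimate. In a blow-up limit, vanishing of the smooth $\alpha^+$-coefficient forces the limit support to lie in a real analytic variety, while vanishing of $\ve$ produces pairs of complementary half-circles (``admissible pairs''); these two pieces of information together force the limit set to be a line. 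For Jordan domains this limit analysis is delicate because Jordan domains are not stable under such limits (the paper first shows one can find corkscrew balls at scales where $\mu$ has mass, and then runs a rather intricate argument with admissible pairs).

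Your closing guess that ``the flattening mechanism [is] phrased through an auxiliary square function or $\alpha$-type coefficient'' is exactly right, but the way the auxiliary coefficient is used---compactness and analytic-variety rigidity in the limit, not a quantitative scale-by-scale bootstrap---is the key idea you are missing. The second place the smooth square function $\mathfrak a_\psi$ enters, and which you would also need, is a Fourier-analytic estimate showing that on a compactly supported Lipschitz graph $\Gamma=\mathrm{graph}(f)$ with small slope one has $\int_\Gamma \mathcal A_\psi^2\,d\HH^1\approx \|f'\|_2^2$; this is what drives the ``big-angle'' stopping region to have small measure in the L\'eger scheme.
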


See Section \ref{def:cones-tangents} below for the precise definition of a tangent.  This conjecture may be found in Bishop \cite[Conjecture 3]{Bishop-conjectures}, Bishop-Jones \cite{Bishop-Jones-schwartzian}, Garnett-Marshall \cite[p.220]{GM}, and David-Semmes \cite[Section 21]{DS1}, for example.

The `only if' direction of Conjecture \ref{carleps2} is a well-known result:  When $\Gamma$ is a Jordan curve, it follows from the Ahlfors distortion theorem and an argument of Beurling that
$$\EE(x)<\infty \quad\mbox{ for $\HH^1$-a.e. tangent point of $\Gamma$.}$$
See \cite[p.79]{Bishop-Jones-schwartzian}, for example. Therefore, the content of Conjecture \ref{carleps2} is that the converse statement should also hold true.


In this paper we prove Conjecture \ref{carleps2}.  We also show that an analogue of this result also holds for two-sided corkscrew open sets, which have a scale invariant topological assumption but are not necessarily connected,  see Section \ref{def:2sidedcorkscrew} below for the definition. Our precise result is the following theorem.

\begin{theorem}\label{teo-main}
Let $\Omega^+\subset \R^2$ be either a Jordan domain or a two-sided corkscrew open set, let $\Gamma=
\partial\Omega^+$, and let $\EE$ be the
associated square function defined in \rf{eqEE}. Then the set
$G=\{x\in\Gamma:\EE(x)<\infty\}$ is rectifiable and at $\HH^1$-almost every point of $G$ there exists a tangent
to $\Gamma$.
\end{theorem}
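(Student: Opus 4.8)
The plan is to analyze the set $G=\{x\in\Gamma:\EE(x)<\infty\}$ through a Corona-type decomposition driven by the smallness of the $\ve$-numbers. First I would reduce to a fixed bounded piece $G_0\subset G$ on which $\EE(x)\le M$ uniformly, after discarding an $\HH^1$-null set; by the Lebesgue density theorem I may also assume $\HH^1|_{G_0}$ behaves like a nice Ahlfors-type measure at most scales and most points. The heart of the argument is the observation that $\ve(x,r)$ controls a two-sided ``flatness'' of $\Gamma$ near $B(x,r)$: if $\ve(x,r)$ is small, then $\Gamma\cap B(x,r)$ must lie close to a diameter of the ball, because a large arc of $\partial B(x,r)$ sits in $\Omega^+$ and another large arc sits in $\Omega^-$, and the two-sided corkscrew (or Jordan) topology forces $\Gamma$ to separate these arcs, hence to pass near the ``equator'' determined by their endpoints. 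Quantitatively I would prove $\beta_{\infty}(x,r)\lesssim \ve(x, Cr)$ for an appropriate Jones $\beta$-number, or at least an $L^1$ or $L^2$ averaged version $\beta_{2}(x,r)\lesssim \big(\fint \ve^2\big)^{1/2}$ over nearby balls and scales.

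With such a pointwise-to-geometric estimate in hand, the finiteness of $\int_0^1\ve(x,r)^2\,\tfrac{dr}{r}$ on $G_0$ becomes a Carleson-measure statement for the $\beta$-numbers: summing over a dyadic lattice, $\sum_{Q}\beta_{2}(Q)^2\,\HH^1(Q\cap G_0)$ is controlled by $\int_{G_0}\EE(x)^2\,d\HH^1(x)\le M\,\HH^1(G_0)<\infty$. This is precisely the hypothesis of the Analyst's Traveling Salesman / David--Semmes rectifiability criterion via $\beta$-numbers, so I would invoke (the appropriate one-sided version of) that theorem to conclude that $G_0$ is contained in a rectifiable curve up to an $\HH^1$-null set; ranging over a countable exhaustion of $G$ gives rectifiability of $G$.

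For the tangent statement, I would argue that at $\HH^1$-a.e.\ $x\in G$, the Carleson condition forces $\ve(x,r)\to 0$ along a full-measure set of scales (indeed $\int_0^1\ve(x,r)^2\tfrac{dr}{r}<\infty$ means $\ve(x,r)$ is small for most small $r$), and combined with the density of $\HH^1|_G$ being $1$ a.e.\ (which follows from rectifiability) this pins down a unique approximating line at each such point: the flatness estimate says $\Gamma\cap B(x,r)$ is $o(r)$-close to a line $\ell_{x,r}$, and a telescoping/Cauchy argument using $\sum \ve(x,2^{-k})^2<\infty$ shows the directions $\ell_{x,2^{-k}}$ converge. One must also rule out that $\Gamma$ has other components entering $B(x,r)$ far from $\ell_{x,r}$; here the two-sided corkscrew / Jordan hypothesis is used again, exactly as in the flatness estimate, to guarantee that the part of $\Gamma$ near $x$ is genuinely a graph over $\ell_{x,r}$ and not merely close to it as a set.

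The main obstacle I expect is the flatness estimate $\beta(x,r)\lesssim \ve$ (or its averaged form), and more precisely the fact that $\ve(x,r)$ being small only constrains the \emph{circular arcs} $\partial B(x,r)\cap\Omega^{\pm}$ and not $\Gamma$ directly inside the ball. Turning control of the boundary circle into control of $\Gamma$ throughout $B(x,r)$ requires a topological separation argument: the largest arc $I^+$ and the largest arc $I^-$ being nearly semicircles forces their four endpoints to be nearly antipodal in pairs, and any curve (or the boundary of a two-sided corkscrew set) separating $I^+$ from $I^-$ must cross the ball close to the line through these endpoints. Making this robust against small exceptional circles (radii $r$ where $\ve(x,r)$ happens to be large) and propagating it to a genuine graph description — rather than merely a Hausdorff-closeness of $\Gamma\cap B(x,r)$ to a line — is the delicate, quantitative core of the whole proof; everything after it is a relatively standard application of Carleson packing and the $\beta$-number rectifiability machinery.
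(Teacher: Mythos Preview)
Your overall architecture --- reduce to a set where $\EE$ is uniformly bounded, prove a flatness estimate $\beta \lesssim \ve$, deduce a Carleson condition on the $\beta$-numbers, and invoke the Traveling Salesman / Bishop--Jones machinery --- is exactly the approach that the paper explains does \emph{not} work. The coefficients $\ve(x,r)$ are too unstable: there is no known pointwise or simple averaged estimate of the form $\beta_\infty(x,r)\lesssim \ve(x,Cr)$ or $\beta_2(x,r)\lesssim\bigl(\fint\ve^2\bigr)^{1/2}$, and the topological separation argument you sketch (antipodal endpoints forcing $\Gamma$ near an equator) only constrains $\Gamma\cap\partial B(x,r)$, not $\Gamma$ in the interior of the ball; a curve can dip far from the line at radii where $\ve$ happens to be small at the center but not at nearby points. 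The paper's Main Lemma~\ref{l:main1} does obtain flatness, but only in a \emph{qualitative} form proved by contradiction and compactness: for every $\ve>0$ there is some $\delta>0$ (with no explicit dependence) such that smallness of an averaged square function on $7B$ forces $\beta_{\infty,\Gamma}(B)\le\ve$. This gives no quantitative comparison between $\int\beta^2\,\tfrac{dr}{r}$ and $\int\ve^2\,\tfrac{dr}{r}$, so you cannot feed it into the $\beta$-number Carleson criterion.

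Because the flatness is only qualitative, the second half of the paper is not ``standard Carleson packing''. Instead it introduces \emph{smoother} square functions $\alpha^+$ and $\apsi$ (dominated by $\EE$ via Lemma~\ref{lem1}) and runs a L\'eger-type stopping-time construction (Main Lemma~\ref{l:main2}) in which the key new ingredient is a Fourier estimate showing that $\int_{G_A}\A_{\psi}^2\,d\HH^1\approx\|A'\|_2^2$ on a Lipschitz graph (Lemma~\ref{lemlips}); this is what lets one bound the measure of the ``big angle'' stopping region and close the argument. The compactness proof of Main Lemma~\ref{l:main1} itself requires further ideas you do not mention: passing to limits of the domains, showing the limit of $\supp\mu$ lies in a real analytic variety (via the smooth $\alpha^+$), and using an ``admissible pairs'' property to force that variety to be a line. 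In the Jordan-curve case one does not even have a two-sided corkscrew condition to pass to the limit, and a separate argument (Lemma~\ref{lem2cork}) is needed to manufacture corkscrew balls where the square function is small. Your proposal underestimates both halves of the difficulty.
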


Recall here that a set $E\subset\R^2$ is called rectifiable if there are  Lipschitz maps
$f_i:\R\to\R^2$, $i\in \N$, such that
\begin{equation}\label{eq001}
\HH^1\Big(E\setminus\textstyle\bigcup_{i=1}^{\infty} f_i(\R)\Big) = 0.
\end{equation}

As a corollary, we get:

\begin{coro}\label{carlcoro}
Let  $\Gamma\subset\R^2$ be a Jordan curve or a two-sided corkscrew open set, and let $\EE$ be the
associated square function defined in \rf{eqEE}. Then, the set of tangent points of $\Gamma$
coincides with the points $x\in\Gamma$ such that $\EE(x)<\infty$, up to a set of zero measure $\HH^1$.
\end{coro}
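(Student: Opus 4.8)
The plan is to deduce Corollary \ref{carlcoro} directly from Theorem \ref{teo-main}, combined with the classical ``only if'' direction of the conjecture recalled in the introduction. Write $G=\{x\in\Gamma:\EE(x)<\infty\}$ and let $T$ denote the set of tangent points of $\Gamma$. It suffices to show that $T\setminus G$ and $G\setminus T$ both have vanishing $\HH^1$-measure, i.e.\ that $G$ and $T$ coincide up to an $\HH^1$-null set.

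For $\HH^1(T\setminus G)=0$, one invokes the well-known inequality $\EE(x)<\infty$ at $\HH^1$-a.e.\ tangent point of a Jordan curve, which follows from Beurling's argument together with the Ahlfors distortion theorem; see \cite[p.\ 79]{Bishop-Jones-schwartzian}. For a two-sided corkscrew open set the same conclusion holds and is again comparatively soft: I would record it as a short lemma, using the corkscrew condition only to guarantee that, at small scales near a tangent point, one of the two components of the complement in $B(x,r)$ of a narrow cone about the tangent line lies in $\Omega^+$ and the other in $\Omega^-$, so that $\ve(x,r)$ is controlled by the aperture of that cone, which tends to $0$ as $r\to 0$. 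In both cases $\HH^1(T\setminus G)=0$.

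For $\HH^1(G\setminus T)=0$, I would simply quote the second assertion of Theorem \ref{teo-main}: $\HH^1$-almost every point of $G$ is a tangent point of $\Gamma$. Adding the two estimates gives $\HH^1\big((T\setminus G)\cup(G\setminus T)\big)=0$, which is exactly the statement of the corollary. (The rectifiability of $G$ from Theorem \ref{teo-main} is not needed here; it is a separate consequence.)

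As for the main obstacle: for this corollary there is essentially none, since it is a formal consequence of Theorem \ref{teo-main}, into which all of the analytic content has been packed. The only point that deserves a line of care is making the ``only if'' inequality available for two-sided corkscrew domains rather than only for Jordan curves, and this is elementary and local — in sharp contrast to the rectifiability-and-tangent conclusion of Theorem \ref{teo-main}, which is the substance of the paper.
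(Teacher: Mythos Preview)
Your overall structure is right, and the Jordan-curve half of the ``only if'' direction (Beurling plus the Ahlfors distortion theorem) matches one of the two routes the paper mentions. The gap is in your treatment of the two-sided corkscrew case.

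What you actually argue there is that at a tangent point $x$, for each aperture $a\in(0,1)$ and all sufficiently small $r$, the boundary near $x$ sits in a narrow cone about the tangent line with the two complementary cone-regions lying in $\Omega^+$ and $\Omega^-$; hence $\ve(x,r)$ is bounded by (a constant times) the aperture, and the aperture can be taken to $0$. That yields only $\ve(x,r)\to 0$ as $r\to 0$. It does \emph{not} give $\EE(x)^2=\int_0^1 \ve(x,r)^2\,\frac{dr}{r}<\infty$, which is what membership in $G$ requires. The definition of ``tangent point'' gives no rate on how fast the admissible aperture shrinks with $r$, so your ``short lemma'' cannot by itself produce the Dini-type square-function bound. (Also, the separation of the two cone-components into $\Omega^+$ and $\Omega^-$ is already part of the paper's definition of tangent point, so the corkscrew condition is not doing the work you attribute to it.)

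The paper closes this gap differently: it invokes Bishop--Jones (Theorem~2 of \cite{Bishop-Jones-schwartzian}) to get $\int_0^1 \beta_{\infty,\Gamma}(B(x,r))^2\,\frac{dr}{r}<\infty$ for $\HH^1$-a.e.\ tangent point, notes that the Bishop--Jones argument for this direction carries over to two-sided corkscrew open sets, and then uses the pointwise comparison \rf{betaepscomparison}, namely $\ve(x,r')\lesssim \beta_{\infty,\Gamma}(B(x,r))$ for $r/2<r'<r$ at tangent points and small $r$, to deduce $\EE(x)<\infty$. So the missing ingredient in your proposal is precisely the square-summability of $\beta$ (equivalently, of your ``aperture'') at a.e.\ tangent point, which is a genuine theorem and not an elementary local observation. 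Once you insert that reference and the $\ve$--$\beta$ comparison, your argument aligns with the paper's.
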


One should compare Theorem \ref{teo-main} to an influential theorem of Bishop-Jones \cite{Bishop-Jones-schwartzian}, who obtained an analogue of the $\ve^2$-conjecture for the $\beta$-numbers introduced by Jones \cite{Jo}.  Define
 \begin{equation}\label{betadefn}\beta_{\infty, \Gamma}(B(x,r)) = \frac{1}{r}\inf_{\substack{L\text{ a line}\\L\cap B(x,r)\neq\varnothing}}\sup_{y\in \Gamma\cap B(x,r)}\dist(y,L).
 \end{equation}
 In other words, $\beta_{\infty,\Gamma}(B(x,r))$ is the infimum over $\beta>0$ so that $\Gamma\cap B(x,r)$ is contained in a strip of width $r\beta$.  Certainly then, we have that
 \begin{equation}\label{betaepscomparison}\ve(x,r')\lesssim \beta_{\infty,\Gamma}(B(x,r))\text{ whenever } r/2<r'<r,
 \end{equation}
whenever $x$ is a tangent point of $\Gamma$ and that $r$ is small enough.
 Bishop-Jones (Theorem 2 in \cite{Bishop-Jones-schwartzian}) proved that, up to a set of $\HH^1$-measure zero,  a Jordan curve $\Gamma$ has a tangent at $x\in \Gamma$ if and only if
\begin{equation}\label{betasquarefinite}
\int_0^{1}\beta_{\infty, \Gamma}(B(x,r))^2\frac{dr}{r}<+\infty.
\end{equation}

 The arguments used in \cite{Bishop-Jones-schwartzian} to show that
 \rf{betasquarefinite} holds for the tangent points  $x\in\Gamma$ are also valid for two-sided corkscrew open sets.
 Consequently, in view of (\ref{betaepscomparison}), this result completes the proof of Corollary \ref{carlcoro}  (in the case of Jordan domains, one can alternatively appeal to the aforementioned argument based on the Ahlfors distortion theorem).
 In the opposite direction, while the Bishop-Jones theorem is weaker than Theorem \ref{teo-main}, it has a beautifully concise proof.

 In the monograph \cite{DS1}, David and Semmes develop a quantitative analogue of rectifiability for Ahlfors regular sets, and prove characterizations of it in terms of square functions involving modifications of the $\beta$-numbers.   When Conjecture \ref{carleps2} is discussed on p. 141 of \cite{DS1}, it is described how the coefficients $\ve(x,r)$ are not sufficiently stable to apply the methods developed in \cite{DS1}, even if one only wishes to show the rectiability of an Ahlfors regular subset of $\{x\in \Gamma: \mathcal{E}(x)<\infty\}$.

 With this in mind, one can point to Main Lemma \ref{l:main1} below as one of the main technical innovations of this paper, which, roughly speaking provides some control of the numbers $\beta_{\infty,\Gamma}(B(x,r))$ at points where $\mathcal{E}(x)<\infty$.  This amount of control on the $\beta$-numbers is nowhere near strong enough to directly obtain (\ref{betasquarefinite}), but it is sufficient to be able to adapt a scheme originating in the work of David and Semmes \cite{DS1} and adapted to the non-homogeneous setting by L\'eger \cite{Leger} which enables us to prove Theorem \ref{teo-main}, see Main Lemma \ref{l:main2}.

  Crucial to the proofs of both of the Main Lemmas is the introduction of several smoother square functions (see Section \ref{smoothsquares}).  While being controlled by the Carleson square function, these smoother square functions behave in a considerably more stable manner in compactness arguments, and this additional stability enables us to obtain some basic geometric information about natural limit situations (namely, that the limiting ``curve'' should contain an analytic variety, see property (1) of Lemma \ref{lem:basicsquare}).   This basic information is then considerably refined by employing the  \emph{admissible pairs} property (Definition \ref{def:admissible}), which is obtained as a consequence of the finiteness of the Carleson square function itself (see, for instance, Lemmas \ref{lemline} and \ref{l:convjordan4}).  Additionally, of central importance to the proof of Main Lemma \ref{l:main2} is the fact that the smoother square functions control the Lipschitz constant of a compactly supported Lipshitz graph (see Section \ref{s:fourier}), and it is this property that provides the main mechanism required to adapt L\'eger's scheme.

 Of course, as a consequence of Theorem \ref{teo-main} and Theorem 2 of \cite{Bishop-Jones-schwartzian}, we have that, up to a set of $\HH^1$-measure zero, $\mathcal{E}(x)<\infty$ if and only if (\ref{betasquarefinite}) holds at $x\in \Gamma$.

\subsection*{Acknowlegement}  This work began at the workshop \emph{Harmonic Analysis in Nonhomogeneous Settings and Applications} at the University of Birmingham in June 2019.  Special thanks go to Mar\'{\i}a Carmen Reguera for organizing this workshop and inviting the authors.  We also thank Jonas Azzam for stimulating conversations about the project in its early stages.

\section{Preliminaries}

\subsection{Constants}  We will denote by $C, c>0$ absolute constants that may change from line to line.  We will often use the symbol $A \lesssim B$ to mean that $A\leq CB$.  The symbol $A\gtrsim B$ is just another way of writing $B\lesssim A$.  The symbol $A\approx B$ means that both $A\lesssim B$ and $B\lesssim A$.  If a constant is allowed to depend on a given parameter, the parameter dependence will be described in parenthesis or a subscript; for example, $C_{\ve,\kap}$ and $C(\ve,\kap)$ both denote a constant that may depend on parameters $\ve$ and $\kap$.  Then $A\lesssim_{\ve,\kap}B$ means that $A\leq C(\ve,\kap) B$.

We shall write $A\ll B$ to mean that ``$A$ is much smaller than $B$'', namely that $A\leq c B $ for a sufficiently small absolute constant $c>0$.

\subsection{Balls, annuli, and neighbourhoods}
Balls $B(x,r)$ are assumed to be open. Also, when we say that a set $B\subset\R^2$ is a ball, we mean
an open ball, unless otherwise stated. We denote by $r(B)$ its radius.

The notation $A(x,r,R)$ stands for an open annulus centered at $x$
with inner radius $r$ and outer radius $R$.

For a set $E$ and $r>0$, the notation $U_r(E)$ denotes the open $r$ neighbourhood of $E$.

\subsection{Jordan domains}\label{def:Jordandomain}  A domain is a connected open set. We call a domain $\Omega^+$ a Jordan domain if its boundary $\Gamma = \partial\Omega^+$ is a Jordan curve.  In this case (by the Jordan curve theorem), $\Omega^- = \R^2\backslash \Omega^+$ is also a Jordan domain.

\subsection{Measures}  Throughout the paper, by a measure we shall mean a non-negative locally finite Borel measure.

For $C_0>0$, a measure $\mu$ has $C_0$-linear growth if $$\mu(B(x,r))\leq C_0r\text{ for all }x\in\R^2\text{ and }r>0.$$

For a ball $B\subset \R^2$, we write
$$\Theta_\mu(B) =\frac{\mu(B)}{r(B)}.$$
This should be understood as a kind of $1$-dimensional density of $\mu$ over $B$.

\subsection{2-sided corkscrew open sets}\label{def:2sidedcorkscrew}

Let $\Omega\subset\R^2$ be an open set. We say that $\Omega$ satisfies the $c$-corkscrew condition (or just, corkscrew condition) if there
exists some $c>0$ such that for
all $x\in\partial\Omega$ and all $0<r< \diam(\Omega)$ there exists some ball
$B\subset \Omega\cap \overline{B(x,r)}$ with $r(B)\geq c r$.

We say that $\Omega$ satisfies the $2$-sided ($c$-)corkscrew condition if both $\Omega$ and $\R^2\setminus \overline\Omega$ satisfy the ($c$-)corkscrew condition.

We say that $\Omega\subset\R^2$ is a $2$-sided corkscrew  open set (or domain) if it is an open set (or domain) that satisfies the $2$-sided corkscrew condition.

For example, quasicircles are $2$-sided corkscrew domains.  Indeed, quasicircles are simply connected $2$-sided corkscrew domains that satisfy a Harnack chain condition, according to Peter Jones (see Theorem 2.7 in \cite{Jerison-Kenig}), or in other words, they are the same as planar simply connected NTA domains. On the other hand, it is easy to check that there are simply connected $2$-sided corkscrew domains which are not quaiscircles.

\subsection{Cones and tangents}\label{def:cones-tangents}

For a point $x\in\R^{2}$, a unit vector $u$,
and an aperture parameter $a\in(0,1)$ we consider the two sided cone with axis in the direction of $u$ defined by
$$X_a(x,u)=\bigl\{y\in\R^{2}:|(y-x)\cdot u|> a|y-x|\bigr\}.$$

Given an open set $\Omega^+\subset\R^{2}$ and $x\in\partial\Omega^+$,
we say that $\partial\Omega^+$ has a {\em tangent} at $x$, and that $x$ is a tangent point for $\partial\Omega^+$ if there exists a unit vector $u$ such that, for all
$a\in(0,1)$, there exists some $r>0$ such that
$$\partial \Omega^+\cap X_a(x,u)  \cap B(x,r) =\varnothing,$$
and moreover, one component of $X_a(x,u)\cap B(x,r)$ is contained in $\Omega^+$ and the other in $\Omega^-=\R^2\setminus \overline{\Omega^+}$.
The line $L$ orthogonal to $u$ through $x$ is called a tangent line at $x$. Notice that this notion of tangent is associated with the domain $\Omega^+$, and it would be more appropriate to say that
$L$ is a tangent for $\Omega^+$.



\section{Smoother square functions}\label{smoothsquares}

Several smoother versions of the Carleson square function play an important role in our analysis, as one can see by the statements of Main Lemmas \ref{l:main1} and \ref{l:main2} in the next section.  In this section we show that these smoother square functions are controlled by the Carleson square function (with the addition of an absolute constant).\\

Suppose that $\Omega^+\subset \R^2$ is an open set, $\Gamma = \partial \Omega^+$ and $\Omega^- = \R^2\backslash \overline{\Omega^+}$.\\

First denote
$$\alpha^+(x,r) = \bigg|\frac\pi2 - \frac1{r^2}\int_{\Omega^+} e^{-|y-x|^2/r^2}\,dy\bigg|,$$
and set
$$\mathcal{A}(x)^2 = \int_0^1 \alpha^+(x,r)^2\,\frac{dr}r.$$

More generally, for a non-negative smooth function $\varphi:\R\to [0,\infty)$ satisfying
$$\int_0^{\infty}\varphi(t) t\sqrt{\log(e+t)}dt<\infty,
$$
set $\psi(x) = \varphi(|x|)$.  Consider
$$\apsi(x,r) = \Bigl|c_{\psi} - \frac{1}{r^2}\int_{\Omega^+}\psi\Bigl(\frac{x-y}{r}\Bigl)dy\Bigl|,\;\text{ where }c_{\psi} = \int_{\R^2_+}\psi\Big(\frac{y}{r}\Big)dy,
$$
and define
$$\Apsi^1(x)^2 = \int_0^1\apsi (x,r)^2\frac{dr}{r}.
$$

\begin{lemma}\label{lem1}
There is a constant $C_{1,\psi}$  such that for every $x\in\R^2$, every $R>0$, and $M\geq 1$, we have
$$\int_0^R \apsi(x,r) ^2\frac{dr}{r}\leq C_{1,\psi} \int_0^{MR} \ve(x,r) ^2\frac{dr}{r}  + 2\int_M^\infty \varphi(t)\, t\,\Big(\log^+\frac tM\Big)^{1/2}
\,dt.$$
\end{lemma}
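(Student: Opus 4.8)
The plan is to compare the smooth average $\frac{1}{r^2}\int_{\Omega^+}\psi\big(\frac{x-y}{r}\big)\,dy$ against its ``flat model'' value $c_\psi$ by slicing in polar coordinates around $x$. For a fixed radius $\rho>0$, write $\sigma^+(x,\rho) = \HH^1(\{y\in\partial B(x,\rho):y\in\Omega^+\})$ for the length of the portion of the circle $\partial B(x,\rho)$ lying in $\Omega^+$. Since $\psi(x-y)=\varphi(|x-y|)$ is radial, the coarea formula gives
$$\frac{1}{r^2}\int_{\Omega^+}\psi\Big(\frac{x-y}{r}\Big)\,dy = \frac{1}{r^2}\int_0^\infty \varphi(\rho/r)\,\sigma^+(x,\rho)\,d\rho = \int_0^\infty \varphi(t)\,\frac{\sigma^+(x,tr)}{r}\,dt,$$
after the substitution $\rho = tr$. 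For the half-plane model one has $\sigma^+ = \pi\rho$ on every circle, so $c_\psi = \int_0^\infty \varphi(t)\,\pi t\,dt$ (one should check this matches the stated definition of $c_\psi$, which is independent of $r$ by scaling). Therefore
$$\apsi(x,r) = \bigg|\int_0^\infty \varphi(t)\,\Big(\pi t r - \sigma^+(x,tr)\Big)\,\frac{dt}{r}\bigg| \le \int_0^\infty \varphi(t)\,t\cdot\frac{|\pi tr - \sigma^+(x,tr)|}{tr}\,dt.$$
Now the key point is that $\frac{1}{tr}\,|\pi tr - \sigma^+(x,tr)| \le \ve(x,tr)$: indeed $\sigma^+(x,\rho) \le \HH^1(I^+(x,\rho)) + (\text{something})$... more carefully, $\ve(x,\rho) = \frac{1}{\rho}\max(|\pi\rho - \HH^1(I^+(x,\rho))|,\ldots)$ controls the deficit of the \emph{longest} arc, and since $\sigma^+ \ge \HH^1(I^+)$ while also $\sigma^+ \le 2\pi\rho - \HH^1(I^-)$ (the complement in $\partial B$ of the $\Omega^+$-part contains $I^-$), one gets $|\pi\rho - \sigma^+(x,\rho)| \lesssim \rho\,\ve(x,\rho)$. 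Denote $\tilde\ve(x,\rho) = \min(\rho^{-1}|\pi\rho - \sigma^+(x,\rho)|,\pi)$, which satisfies $\tilde\ve \lesssim \ve$ and is trivially bounded by $\pi$ (since $0\le \sigma^+ \le 2\pi\rho$).

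Next I would square, integrate $\frac{dr}{r}$ over $(0,R)$, and apply Minkowski's integral inequality in the $t$-variable:
$$\Big(\int_0^R \apsi(x,r)^2\,\frac{dr}{r}\Big)^{1/2} \le \int_0^\infty \varphi(t)\,t\,\Big(\int_0^R \tilde\ve(x,tr)^2\,\frac{dr}{r}\Big)^{1/2}\,dt = \int_0^\infty \varphi(t)\,t\,\Big(\int_0^{tR}\tilde\ve(x,s)^2\,\frac{ds}{s}\Big)^{1/2}\,dt,$$
changing variables $s = tr$. I then split the $t$-integral at $t = M$. For $t\le M$ the inner integral is at most $\int_0^{MR}\tilde\ve(x,s)^2\frac{ds}{s} \lesssim \int_0^{MR}\ve(x,s)^2\frac{ds}{s}$, a quantity independent of $t$; pulling it out, this contributes $\lesssim \big(\int_0^\infty \varphi(t)t\,dt\big)\big(\int_0^{MR}\ve(x,s)^2\frac{ds}{s}\big)^{1/2}$, and squaring (using $(a+b)^2\le 2a^2+2b^2$) gives the first term $C_{1,\psi}\int_0^{MR}\ve(x,r)^2\frac{dr}{r}$. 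For $t > M$ I use the crude bound $\tilde\ve \le \pi$, so $\int_0^{tR}\tilde\ve(x,s)^2\frac{ds}{s}$ — this diverges logarithmically, which is exactly where the $\sqrt{\log}$ weight enters. One must be slightly more careful: split $\int_0^{tR}\tilde\ve^2\frac{ds}{s} = \int_0^{MR} + \int_{MR}^{tR} \le \big(\int_0^{MR}\tilde\ve^2\frac{ds}{s}\big) + \pi^2\log(t/M)$. The first piece again folds into the main term; the second gives $\int_M^\infty \varphi(t)\,t\,\big(\pi^2\log(t/M)\big)^{1/2}\,dt$, matching (up to the constant $\pi$ vs.\ $\sqrt 2$ bookkeeping, which one absorbs) the second term $2\int_M^\infty \varphi(t)t(\log^+(t/M))^{1/2}\,dt$ in the statement. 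Summing the two contributions via $(a+b)^2 \le 2a^2 + 2b^2$ yields the claimed inequality.

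The main obstacle — really the only non-routine point — is the geometric comparison $\rho^{-1}|\pi\rho - \sigma^+(x,\rho)| \lesssim \ve(x,\rho)$ relating the full one-dimensional slice measure $\sigma^+(x,\rho) = \HH^1(\Omega^+\cap\partial B(x,\rho))$ to the deficit of the \emph{longest} arc that defines $\ve$. This needs a short argument: $\Omega^+\cap\partial B(x,\rho)$ is a disjoint union of open arcs, one of which is $I^+(x,\rho)$ (the longest); its complement on the circle is $\Omega^-\cap\partial B(x,\rho)$ together with $\Gamma\cap\partial B(x,\rho)$, and the former contains $I^-(x,\rho)$. Hence $\HH^1(I^+) \le \sigma^+ \le 2\pi\rho - \HH^1(I^-)$, i.e.\ $\pi\rho - \sigma^+$ lies between $\pi\rho - \HH^1(I^+)$ (which is $\ge -\rho\,\ve$, possibly negative but bounded) and $\HH^1(I^-) - \pi\rho$ (which is $\le \rho\,\ve$), giving $|\pi\rho - \sigma^+| \le \rho\,\ve(x,\rho)$ outright — no absolute constant even needed if one is careful, though $\lesssim$ is all that's required. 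Everything else — the change of variables, Minkowski, the dyadic split at $M$, and the elementary estimate $\int_0^{tR}\frac{ds}{s}$-type bounds — is bookkeeping. I would also double-check the normalization of $c_\psi$ against the paper's definition $c_\psi = \int_{\R^2_+}\psi(y/r)\,dy$ to make sure the ``$r$'' there cancels correctly (it does, by scaling: $\int_{\R^2_+}\psi(y/r)\,dy = r^2\int_{\R^2_+}\psi(z)\,dz = r^2\cdot\frac{1}{2}\int_{\R^2}\varphi(|z|)\,dz = r^2\pi\int_0^\infty\varphi(t)t\,dt$, and then $c_\psi/r^2$ is what appears after unwinding — so one should present $\apsi$ consistently with the factor $1/r^2$ already absorbed).
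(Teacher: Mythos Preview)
Your proposal is correct and follows essentially the same route as the paper: polar coordinates to express $\apsi(x,r)$ as an average of the circle-slice deficits $\pi\rho-\sigma^+(x,\rho)$, the geometric bound $|\pi\rho-\sigma^+(x,\rho)|\le \rho\,\ve(x,\rho)$ via $\HH^1(I^+)\le\sigma^+\le 2\pi\rho-\HH^1(I^-)$, then Minkowski in $L^2(\frac{dr}{r})$ followed by the split at $t=M$ using the trivial bound $\ve\le\pi$ on the tail. The auxiliary function $\tilde\ve$ is unnecessary---once you have $|\pi\rho-\sigma^+|/\rho\le\ve(x,\rho)$ you can work with $\ve$ directly, since $\ve\le\pi$ already---but this is cosmetic.
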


\begin{proof}
Observe that, by integrating polar coordinates centered at $x$,
\begin{align}\label{eqal82}
\apsi(x,r) & = \left|c_\psi - \frac1{r^2}\int_0^\infty
\varphi\Bigl(\frac{s}{r}\Bigl) \HH^1(\partial B(x,s)\cap\Omega^+)\,ds\right|\\
& = \frac1{r^2}\left|\int_0^\infty
\varphi\Bigl(\frac{s}{r}\Bigl) \big(\pi s - \HH^1(\partial B(x,s)\cap\Omega^+)\big)\,ds\right|.\nonumber
\end{align}
Next recall that $I_{\pm}(x,s)$ are the longest arcs in $\Omega^{\pm}\cap \partial B(x,s)$, so
$$I^+(x,s)\subset \partial B(x,s)\cap\Omega^+ \subset \partial B(x,s)\setminus I^-(x,s),$$
and consequently
$$\HH^1(I^+(x,s))\leq \HH^1( \partial B(x,s)\cap\Omega^+) \leq 2\pi s - \HH^1(I^-(x,s)).$$
Subtracting $\pi s$ from this inequality easily yields that
$$\big|\pi s - \HH^1(\partial B(x,s)\cap\Omega^+)\big| \leq s\,\ve(x,s),$$
which, when plugged into \rf{eqal82} yields
\begin{equation}\label{eq:apsionescale}\apsi(x,r)\leq  \frac1{r^2}\int_0^\infty
\varphi\Bigl(\frac{s}{r}\Bigl)\ve(x,s)\,s\,ds.\end{equation}
Squaring both sides of (\ref{eq:apsionescale}) and integrating over $r\in(0,R)$ yields,
\begin{align*}
\left(\int_0^R \apsi(x,r) ^2\frac{dr}{r}\right)^{1/2}
 & \leq \left(\int_0^R  \left(\frac1{r^2}\int_0^\infty
\varphi\Bigl(\frac{s}{r}\Bigl)\ve(x,s)\,s\,ds\right)^2\,\frac{dr}r\right)^{1/2} \\
&= \left(\int_0^R  \left(\int_0^\infty
\varphi(t)\ve(x,tr)\, t\,dt\right)^2\,\frac{dr}r\right)^{1/2}\\
&\stackrel{\text{Minkowski's inequality}}{\leq} \int_0^\infty
\left(\int_0^R
\ve(x,tr)^2\,\frac{dr}r\right)^{1/2}\varphi(t)\, t\,dt \\
&= \int_0^\infty
\left(\int_0^{tR}
\ve(x,u)^2\,\frac{du}u\right)^{1/2}\,\varphi(t)\, t\,dt.
\end{align*}
For $t\geq1$ and some $M>1$, we split
$$\int_0^{tR}
\ve(x,u)^2\,\frac{du}u \leq \int_0^{MR}
\ve(x,u)^2\,\frac{du}u + \int_{MR}^{tR}\frac{du}u = \int_0^{MR}
\ve(x,u)^2\,\frac{du}u + \log^+ \frac tM.$$
This bound certainly also holds for $t\in(0,1)$, so we get
\begin{align*}
\int_0^\infty
\left(\int_0^{tR}
\ve(x,u)^2\,\frac{du}u\right)^{1/2}\,\varphi(t)\, t\,dt &\leq  \left(\int_0^{MR}
\ve(x,u)^2\,\frac{du}u\right)^{1/2}  \int_0^\infty
\varphi(t)\, t\,dt \\
&\quad + \int_M^\infty \varphi(t)\, t\,\Big(\log^+\frac tM\Big)^{1/2}
\,dt,
\end{align*}
and the lemma follows.
\end{proof}
\vv


\section{The two main lemmas and the  proof of the main theorem}

Having introduced an array of square functions, we may now state the primary two technical results of this paper.  The paper splits into two essentially disjoint parts, which use very different techniques.

\subsection*{Part I}  The first part of the paper concerns the use of compactness arguments to show, roughly speaking, that the curve $\Gamma$ must be quite flat near points where the Carleson square function is finite.

\begin{mlemma}\label{l:main1}
Let $\Omega^+\subset\R^2$ be either a Jordan domain or a two sided corkscrew open set, let $\Gamma=\partial\Omega^+$, and let $\mu$ be a measure with $1$-linear growth
supported on $\Gamma$. Let $B$ be a ball centered at $\Gamma$ such that
$$\mu(B)\geq \theta r(B),$$
for some $\theta \in (0, 1)$.
Given any $\ve>0$, there exists $\delta\in (0,1)$, depending on $\theta$ and $\ve$ (and the two sided corkscrew parameter in that case), such that if
$$\int_{7B}\int_0^{7 r(B)} [\ve(x,r)^2 +\alpha^+(x,r)^2]\,d\mu(x)\frac{dr}r \leq \delta\,\mu(7B),$$
then
$$\beta_{\infty,\Gamma}(B)\leq \ve.$$
\end{mlemma}

Observe that, roughly speaking, this lemma ensures that $\beta_{\infty,\Gamma}(B)$ is as as small as wished
if a suitable square function involving the coefficients $\ve(x,r)$ and $\alpha^+(x,r)$ is small enough on $\supp\mu\cap7B$, assuming also that $\mu$ has linear growth and that $\mu(B)$ is not too small.
It is important to remark that the lemma yields an estimate of the flatness of $\Gamma\cap B$, not only of
 $\supp\mu\cap B$. This will be crucial later for the proof of Theorem \ref{teo-main}.

  The proof of Main Lemma \ref{l:main1} is considerably easier in the case of $2$-sided corkscrew open sets, since these sets are rather stable under natural limit operations (see Lemma \ref{l:convergence}).  Jordan domains do not have similar stability properties and so the analysis is much more delicate.  However, the case of the $2$-sided corkscrew open set is nevertheless very instructive, as a key part of our analysis is that, if $\Omega^+$ is a Jordan domain, then at points and scales where $\mu$ has a lot of mass, and the Carleson square function is small, one can find corkscrew balls (Lemma \ref{lem2cork}).   This property, which is much weaker than the two sided corkscrew condition insofar as it tells us nothing about $\Gamma$ at points where $\mu$ has little mass, is still sufficient for us to prove Main Lemma \ref{l:main1} with a considerable amount of additional work.

\subsection*{Part II}  The second part of this paper is concerned with improving the local flatness which is provided by Main Lemma \ref{l:main1} into a rectifiability property.  For this we work with the general scheme introduced by David and Semmes \cite{DS1} and extended to the non-homogeneous context by L\'eger \cite{Leger}.  In fact we will not require the full strength of the Carleson square function, but rather a smoother square function.

\begin{mlemma}\label{l:main2}
Let $\Omega^+\subset\R^2$ be an open set, and let $\Gamma=\partial\Omega^+$. Fix $c_0\in(0,1)$, $\theta>0$ and $\ve>0$.  Let $B_0$ be a ball centered at $\Gamma$ and let $\mu$ be a measure with $1$-linear growth
supported on $\Gamma\cap \overline{B_0}$ satisfying the following conditions:
\begin{itemize}
\item  $\mu(B_0)\geq c_0 r(B_0)$.
\item  $\beta_{\infty,\Gamma}(B)\leq \ve$ for any ball $B$ centered at $\Gamma$ such that
 $\mu(B)\geq \theta \, r(B)$.
\item For a radial function $\psi\in C^{\infty}(\R^2)$ with $\chara_{B(0,1)}\leq \psi\leq \chara_{B(0,1.1)}$, it holds that
$$\int_0^{r(B_0)/\ve}\apsi(x,r)^2\frac{dr}{r}\,d\mu(x) \leq \ve \text{ for every }x\in \supp(\mu).$$
\end{itemize}
If $\theta$ is small enough in terms of $c_0$, and $\ve$ is small enough in terms of $\theta$ and $c_0$, then   there exists a Lipschitz graph $\Lambda$ with slope at most $1/10$ such that
   $$  \mu(\Lambda) \geq \frac12\mu(B_0).$$
\end{mlemma}

The key property of the square function generated by the coefficients $\apsi$ which enables a L\'eger type construction are the Fourier estimates carried out in Section \ref{s:fourier}, see in particular Lemma \ref{lemlips}.  Subsequently, we carry  out the construction itself, which has several subtleties due to the nature of our particular square function.

\vv

\subsection{The proof of Theorem \ref{teo-main}}

Before beginning the proof we recall some basic facts about densities:  For a set $E\subset \R^2$ we set
$$\Theta^{1,*}(x,E) = \limsup_{r\to 0}\frac{\HH^1(E\cap B(x,r))}{2r}, \;\; \Theta^1_*(x,E) = \liminf_{r\to 0}\frac{\HH^1(E\cap B(x,r))}{2r}.
$$
For the proof of the following simple lemma, see for example \cite[Theorem 6.1]{Mattila-book}.
\begin{lemma} If $E\subset \R^2$ satisfies $\HH^1(E)\in (0,\infty)$, then
$$\frac{1}{2}\leq \Theta^{1,*}(x,E)\leq 1\text{ for }\HH^1\text{-a.e.} \, x\in E.
$$
\end{lemma}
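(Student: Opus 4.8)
The plan is to argue as in the classical density theorems (cf. \cite[Ch.\ 6]{Mattila-book}). Since $\HH^1(E)\in(0,\infty)$, the restriction $\HH^1\rest E$ is a finite Borel-regular (hence Radon) measure on $\R^2$, and both inequalities will be obtained by confronting mass estimates at small scales with the definition of Hausdorff measure via the contents $\HH^1_\delta$.

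For the upper bound, fix $\ve>0$ and set $A=\{x\in E:\Theta^{1,*}(x,E)>1+\ve\}$; it suffices to show $\HH^1(A)=0$. For each $x\in A$ the balls $B(x,r)$ with $r$ arbitrarily small and $\HH^1(E\cap B(x,r))>2(1+\ve)r$ form a Vitali fine cover of $A$. Given an open set $U\supset A$ and $\delta>0$, restrict attention to such balls of radius $<\delta$ contained in $U$, and extract by the Vitali covering theorem for the Radon measure $\HH^1\rest E$ a countable disjoint subfamily $\{B(x_i,r_i)\}_i$ with $\HH^1\big(A\setminus\bigcup_i B(x_i,r_i)\big)=0$. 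Then
$$\HH^1_{2\delta}(A)\leq \sum_i 2r_i \leq \frac{1}{1+\ve}\sum_i \HH^1(E\cap B(x_i,r_i)) = \frac{1}{1+\ve}\,\HH^1\Big(E\cap\bigcup_i B(x_i,r_i)\Big)\leq \frac{1}{1+\ve}\,\HH^1(E\cap U).$$
Letting $\delta\to 0$ and then shrinking $U$ down to $A$ using the outer regularity of $\HH^1\rest E$ gives $\HH^1(A)\leq \frac{1}{1+\ve}\HH^1(A)$, so $\HH^1(A)=0$; a union over $\ve=1/n$ finishes this direction.

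For the lower bound, fix $t<\tfrac12$ and set $A=\{x\in E:\Theta^{1,*}(x,E)<t\}$; again it suffices to prove $\HH^1(A)=0$. The scale below which the density estimate holds depends on $x$, so I would first uniformize: put $A_k=\{x\in A:\HH^1(E\cap B(x,r))<2tr\text{ for all }0<r<1/k\}$, so that $A_k\uparrow A$ and, by Borel regularity of $\HH^1\rest E$, $\HH^1(A_k)\uparrow \HH^1(A)$. Fix $k$ and $\delta<1/k$, and choose a cover $\{U_i\}_i$ of $A_k$ with $\diam U_i<\delta$ and $\sum_i\diam U_i\leq \HH^1(A_k)+\ve$, discarding any $U_i$ disjoint from $A_k$. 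For each remaining $U_i$ pick $x_i\in U_i\cap A_k$; then $U_i\subset B(x_i,\diam U_i)$ with $\diam U_i<1/k$, so $\HH^1(E\cap U_i)\leq \HH^1(E\cap B(x_i,\diam U_i))\leq 2t\,\diam U_i$. Since $A_k\subset E\cap\bigcup_i U_i$, countable subadditivity yields
$$\HH^1(A_k)\leq \sum_i \HH^1(E\cap U_i)\leq 2t\sum_i\diam U_i\leq 2t\big(\HH^1(A_k)+\ve\big).$$
Letting $\ve\to 0$ and then $k\to\infty$ gives $\HH^1(A)\leq 2t\,\HH^1(A)$, and since $2t<1$ we conclude $\HH^1(A)=0$; taking a union over $t=\tfrac12-\tfrac1n$ completes the proof. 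The main obstacle is this lower bound: unlike the upper bound (a routine Vitali covering argument), it requires the uniformization to $A_k$ so that a single threshold $\delta$ is admissible, together with the use of an essentially optimal cover in the definition of $\HH^1$ rather than a disjointified Vitali family.
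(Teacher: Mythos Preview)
The paper does not give its own proof of this lemma but simply cites \cite[Theorem 6.1]{Mattila-book}; your argument is precisely the classical proof found there (Vitali covering for the upper bound, an almost-optimal $\HH^1_\delta$-cover for the lower bound), and it is correct. Two cosmetic points: in the lower bound one only has $U_i\subset \bar B(x_i,\diam U_i)$ rather than the open ball, but the non-strict estimate $\HH^1(E\cap \bar B(x_i,r))\le 2tr$ follows from the open-ball hypothesis by letting $r'\downarrow r$; and rather than invoking ``$\HH^1(A_k)\uparrow \HH^1(A)$'' it is cleaner to note that $\HH^1(A_k)\le 2t\,\HH^1(A_k)$ forces $\HH^1(A_k)=0$, whence $\HH^1(A)=0$ by countable subadditivity.
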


We will also require two simple properties of Lipschitz graphs.  Suppose that $\Lambda$ is a Lipschitz graph in $\R^2$, and $F\subset \Lambda$, then for $\HH^1$-a.e. $x\in F$
\begin{itemize}
\item $\Lambda$ has a tangent at $x$, and
\item $\Theta^1_*(x,F) = \Theta^{1,*}(x,F)=1$.
\end{itemize}
Both properties follow easily from Lebesgue's theorem on the almost everywhere differentiability of absolutely continuous functions on the real line.

\begin{proof}[\bf Proof of Theorem \ref{teo-main} using the Main Lemmas \ref{l:main1} and \ref{l:main2}]
We have to show that the set
$G=\{x\in\Gamma:\EE(x)<\infty\}$ is rectifiable and that for $\HH^1$-a.e.\ $x\in G$ there exists a tangent
to $\Gamma$.

Standard arguments, see, for instance \cite[Lemma 15.13]{Mattila-book}, yield the rectifiability of the set $G$ from the  existence of tangents to $\Gamma\supset G$ at
$\HH^1$-almost every point of $G$.  Therefore our goal is to prove the statement about the existence of tangents.

 For the sake of contradiction, suppose that the subset $F_0\subset  G$ of those  points $x\in G$ which are not tangent points for $\Gamma$ has positive $\HH^1$ measure. Consider a subset $F\subset F_0$ such that $0<\HH^1(F)<\infty$.

Since the Carleson square function $\mathcal{E}(x)^2<\infty$ for $\HH^1$-a.e. $x\in F$, we have from Lemma \ref{lem1} that
$$\int_0^1[\ve(x,r)^2+\alpha^+(x,r)^2+\apsi (x,r)^2]\frac{dr}{r}<\infty\text{ for }\HH^1\text{-a.e. } x\in F,
$$
where $\psi$ is the function from Main Lemma \ref{l:main2}.

By replacing $F$ by a subset with positive $\HH^1$ measure if necessary,
we may assume that
\begin{equation}\label{e:unifint}
\lim_{s\to0} \int_0^s\big(\ve(x,r)^2 + \alpha^+(x,r)^2+\apsi(x,r)^2\big)\,\frac{dr}r=0\quad \mbox{ uniformly in $F$,}
\end{equation}
and
$$\HH^1(B(x,r)\cap F)\leq 3r\quad \mbox{ for all $x\in F$, $r>0$.}$$
(This second inequality is a consequence of the fact that $\Theta^{1,*}(x,F)\leq 1$ for $\HH^1$-a.e.\ $x\in F$.)

For the choice $c_0=1/9$, pick $\theta>0$ and then $\ve\in (0,\theta)$ small enough positive numbers so that Main Lemma \ref{l:main2} is applicable.  Then choose $\delta>0$ small enough so that
Main Lemma \ref{l:main1} is applicable with the choice $c_0$ replaced by $\theta$.

  Let $R$ be small enough so that
\begin{equation}\label{Repseq}\int_0^{7R/\ve}\big(\ve(x,r)^2 + \alpha^+(x,r)^2+\apsi(x,r)^2\big)\,\frac{dr}r\leq \min(\delta,\ve).\end{equation}
for all $x\in F$.

Denote $\mu=\tfrac{1}{3}\HH^1|_F$.  Then $\mu$ has $1$-linear growth. Recalling that $\Theta^{1,*}(x,F)\geq 1/2$ for $\HH^1$-a.e.\ $x\in F$,
we can find a ball $B_0$ centered at $F$ with radius smaller than $R$ such that
$\mu(B_0)\geq r(B_0)/9 = c_0r(B_0)$. 

We look to apply Main Lemma \ref{l:main2} with the measure $\nu =\mu|_{B_0}$ (which satisfies $\nu(B_0)\geq  c_0r(B_0)$).  Notice that if $B$ is a ball with $\nu(B)\geq \theta r(B)$, then certainly $B\cap B_0\neq \varnothing$, and $r(B)\leq r(B_0)/\theta\leq r(B_0)/\ve$.  Consequently, from (\ref{Repseq}) we infer that $$\int_{7B}\int_0^{7 r(B)} [\ve(x,r)^2 +\alpha^+(x,r)^2]\,d\mu(x)\frac{dr}r \leq \delta\,\mu(7B).$$
But trivially have $\mu(B)\geq \theta r(B)$, and so Main Lemma \ref{l:main1} yields that $\beta_{\infty,\Gamma}(B)\leq \ve$.\\

On the other hand, it is also immediate from (\ref{Repseq}) that
$$\int_0^{r(B_0)/\ve}\apsi(x,r)^2 \frac{dr}{r}d\nu(x)\leq \ve  \text{ on }\supp(\nu).
$$
Consequently, we may apply Main Lemma \ref{l:main2} to find a Lipschitz graph $\Lambda$ such that the set $F_1 = F\cap \Lambda$ satisfies $\HH^1(F_1)>0$.

As a consequence, for $\HH^1$-a.e.\ $x\in F_1$, we have \begin{equation}\label{xgoodprops}\Theta_*^{1}(x,F_1)= 1\text{ and } \Lambda\text{ has a tangent at }x.\end{equation} We claim that, every $x\in F_1$ satisfying (\ref{xgoodprops}) the tangent line for $\Lambda$ at $x$ is also a tangent to $\Gamma$.

To verify the claim, we will appeal to Main Lemma \ref{l:main1}.  Fix $x\in F_1$ satisfying (\ref{xgoodprops}).  Observe that \rf{e:unifint}, along with the condition $\Theta_*^{1}(x,F_1)= 1$, ensure that for any $\ve>0$ we can find $r_0>0$ such that for every $r<r_0$ we can apply Main Lemma \ref{l:main1} with the measure $\mu$ and the ball $B_0=B(x,r)$ (with the constant $c_0$ equal to, say, $1/4$).  Therefore,
\begin{equation}\label{e:betalim1}
 \lim_{r\to0}\beta_{\infty,\Gamma}(B(x,r))=0.
\end{equation}

Now, let $u$ be a unit vector orthogonal to the tangent line at $x$ to $\Lambda$. Observe
that, for every $a\in(0,1)$, $\Gamma\cap X_a(x,u)\cap A(x,r/2,r)=\varnothing$ for  all sufficiently small $r>0$, since otherwise
$\beta_{\infty,\Gamma}(B(x,r))\geq c(a)>0$ for  all $r$ small enough, contradicting \rf{e:betalim1}.

But the condition $\Gamma\cap X_a(x,u)\cap A(x,r/2,r)=\varnothing$ for  all $r$ small enough clearly implies that $\Gamma\cap X_a(x,u)\cap B(x,r)=\varnothing$ for  all $r$ small enough.  Further, it is
immediate that the condition \rf{e:unifint} implies that one component of $X_a(x,u)\cap B(x,r)$ is
contained in $\Omega^+$ and the other in $\Omega^-$, and so $\Gamma$ has a tangent at $x$.  Therefore our claim follows, and this in turn clearly contradicts the fact that the points in $F_0$, and thus the ones in $F_1$, are not tangent points for $\Gamma$.
\end{proof}

\vv


\part*{Part I:  Flatness via compactness arguments}

\section{Basic compactness properties}

\subsection{Weak convergence of measures}

We say that a sequence of (Borel) measures $\mu_j$ converges weakly to a measure $\mu$ if $$\int_{\R^2}f\, d\mu_j = \int_{\R^2}f\, d\mu\text{ for every }f\in C_0(\R^2), $$
where $C_0(\R^2)$ denotes the continuous functions with compact support.

\begin{lemma}\label{mujsubsequence} If $\mu_j$ is a sequence of measures in $\R^2$ satisfying $\sup_j \mu_j(B(0,R))<\infty$ for any $R\in (0,\infty)$, then $\mu_j$ has a weakly convergent subsequence.\end{lemma}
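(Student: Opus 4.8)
The statement is the standard weak-* compactness (Banach–Alaoglu / Helly-type) result for locally bounded sequences of Radon measures. Let me sketch the proof.

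\textbf{Plan of proof.}
The plan is to use a diagonal argument over a countable dense family of test functions, combined with an exhaustion of $\R^2$ by balls, and then invoke the Riesz representation theorem to produce the limit measure. First I would exhaust $\R^2$ by the balls $B_k = B(0,k)$, $k\in\N$, and work with the spaces $C_c(B_{k})$ of continuous functions supported in $B_k$. Each of these (with the sup norm) is separable, so one can choose a countable dense subset $\{f_{k,m}\}_{m\in\N}$ of $\bigcup_k C_c(B_k)$, which is dense in $C_0(\R^2)$ in the appropriate local sense.

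\textbf{Key steps.}
First, fix $R$; by hypothesis $\sup_j \mu_j(B(0,R)) =: M_R < \infty$, so for any $f\in C_c(B(0,R))$ the sequence $\bigl(\int f\,d\mu_j\bigr)_j$ is bounded by $\|f\|_\infty M_R$. By a Cantor diagonal argument over the countable dense family $\{f_{k,m}\}$, extract a subsequence $(\mu_{j_\ell})_\ell$ such that $\lim_\ell \int f_{k,m}\,d\mu_{j_\ell}$ exists for every $k,m$. Second, by density and the uniform local bound, the limit in fact exists for \emph{every} $f\in C_0(\R^2)$: given such $f$ supported in some $B_k$ and $\eta>0$, pick $f_{k,m}$ with $\|f - f_{k,m}\|_\infty < \eta$, and use
$$
\Bigl|\int f\,d\mu_{j_\ell} - \int f\,d\mu_{j_{\ell'}}\Bigr| \le 2\eta\, M_{k} + \Bigl|\int f_{k,m}\,d\mu_{j_\ell} - \int f_{k,m}\,d\mu_{j_{\ell'}}\Bigr|,
$$
so the sequence $\bigl(\int f\,d\mu_{j_\ell}\bigr)_\ell$ is Cauchy. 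Define $L(f) = \lim_\ell \int f\,d\mu_{j_\ell}$. Third, $L$ is a positive linear functional on $C_0(\R^2)$: linearity is clear from linearity of the integral and of limits, and positivity ($f\ge 0 \Rightarrow L(f)\ge 0$) is immediate since each $\int f\,d\mu_{j_\ell}\ge 0$. By the Riesz representation theorem there is a (unique, Radon) measure $\mu$ with $L(f) = \int f\,d\mu$ for all $f\in C_0(\R^2)$, and this is precisely the assertion that $\mu_{j_\ell}\to\mu$ weakly.

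\textbf{Main obstacle.}
The only genuinely delicate point is the passage from convergence on a countable dense set to convergence on \emph{all} of $C_0(\R^2)$: one must use the local uniform mass bound to control the error $\int(f - f_{k,m})\,d\mu_{j_\ell}$ uniformly in $\ell$, which is exactly where the hypothesis $\sup_j \mu_j(B(0,R))<\infty$ for every $R$ is used (an unbounded sequence of total masses would break this estimate even though pointwise evaluation on compactly supported functions could still converge, but not to a locally finite measure). Everything else is the routine Cantor diagonalization plus an off-the-shelf application of Riesz representation; I would present those steps briefly.
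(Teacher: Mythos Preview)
Your proof is correct; this is exactly the standard Banach--Alaoglu/diagonalization argument for locally bounded sequences of Radon measures. The paper does not actually give a proof of this lemma at all --- it is stated as a standard fact and used without further comment --- so there is nothing to compare your argument against.
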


It is easy to see that weak limits are lower-semicontinuous on open sets and upper-semicontinuous on compact sets.\\

Bringing our observations together, we arrive at the following result.

\begin{lemma}\label{lineargrowthcompact}
Fix $C_0, c_0\in (0,\infty)$.  Fix a ball $B_0\subset \R^2$.  Suppose that $\mu_j$ is a sequence of measures with $C_0$-linear growth such that $\mu_j(\overline{B_0})\geq c_0 r(B_0)$ for every $j$.  Then there is a subsequence $\mu_{j_k}$ of the measures which converges weakly to a measure $\mu$ with $C_0$-linear growth satisfying $\mu(\overline{B_0})\geq c_0 r(B_0)$.
\end{lemma}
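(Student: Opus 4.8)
The plan is to combine the sequential weak compactness furnished by Lemma \ref{mujsubsequence} with the two semicontinuity properties of weak limits recorded above, being careful to pair each property we wish to preserve in the limit with the semicontinuity statement pointing in the correct direction.

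First I would check that the hypotheses of Lemma \ref{mujsubsequence} are satisfied. Applying the $C_0$-linear growth of $\mu_j$ to the ball $B(0,R)$ gives $\mu_j(B(0,R)) \leq C_0 R$ for every $j$, so $\sup_j \mu_j(B(0,R)) < \infty$ for each $R \in (0,\infty)$. Hence some subsequence $\mu_{j_k}$ converges weakly to a measure $\mu$. To see that $\mu$ inherits $C_0$-linear growth, fix $x \in \R^2$ and $r > 0$; since $B(x,r)$ is open, lower semicontinuity on open sets yields
$$\mu(B(x,r)) \leq \liminf_{k\to\infty}\mu_{j_k}(B(x,r)) \leq C_0 r.$$
To see that the lower mass bound survives, note that $\overline{B_0}$ is compact, so upper semicontinuity on compact sets yields
$$\mu(\overline{B_0}) \geq \limsup_{k\to\infty}\mu_{j_k}(\overline{B_0}) \geq c_0\, r(B_0),$$
using the hypothesis $\mu_{j_k}(\overline{B_0}) \geq c_0 r(B_0)$ for every $k$. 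Together these give the conclusion.

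There is no substantial obstacle in this argument; the only point that requires attention is matching each surviving inequality with the semicontinuity statement of the right direction. One cannot bound $\mu(\overline{B_0})$ from below using only the open-set statement, nor bound $\mu(B(x,r))$ from above using only the compact-set statement, which is precisely why the lower mass bound is stated for the closed ball $\overline{B_0}$ while linear growth is verified on open balls.
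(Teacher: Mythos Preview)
Your proof is correct and matches the paper's approach exactly: the paper does not give a detailed argument but simply indicates that the lemma follows by combining Lemma~\ref{mujsubsequence} with the two semicontinuity properties, which is precisely what you have written out. Your care in matching the open-set lower semicontinuity to the linear-growth upper bound and the compact-set upper semicontinuity to the lower mass bound on $\overline{B_0}$ is exactly the point.
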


We next establishing some basic facts about convergence of sets.

\subsection{Convergence of sets} For $B\subset \R^2$ and $x\in \R^2$, set
 $$\dist(x,B)=d(x,B) = \inf_{b\in B}|x-b|.
 $$

   For non-empty sets $A$ and $B$, we define the excess of $A$ over $B$ to be the quantity
 $$\excess(A,B) = \sup_{x\in A}d(x,B),
 $$
 and put $\excess(\varnothing,B)=0$ while $\excess(A,\varnothing)$ is left undefined.

Observe that $\excess(A,B)<\ve$ means that the open $\ve$-neighbourhood of $B$ contains $A$.

The Hausdorff distance between $A$ and $B$ is given by
$$\Hdist(A,B) = \max\{\excess(A,B), \excess(B,A)\}.$$

 For compactness arguments we will require a notion of local convergence.  To this end, we follow \cite{BL} and introduce the relative Walkup-Wets distance. For non-empty sets $A,B$, we define, for $R>1$,
 $$d_R(A,B) = \max\bigl\{\excess(A\cap \overline{B(0,R)},B), \excess(B\cap \overline{B(0,R)},A)\}.
 $$
 (The reader should not be concerned that the quantity $d_R$ need not satisfy the triangle inequality.)

 Observe that
 \begin{equation}\label{dRmonotone} d_{R_1}(A,B)\leq d_{R_2}(A,B)\leq \Hdist(A,B)\text{ if }R_2\geq R_1.
 \end{equation}


\begin{definition}[Local Convergence]\label{def:locconv}  A sequence of non-empty sets $E_j$ converge locally to a non-empty set $E$ (written $E_j\to E$ locally)  if, for any $R>0$,
$$\lim_{j\to \infty}d_R(E_j,E)=0.
$$
\end{definition}

We refer the reader to Section 2 of \cite{BL} for a more thorough introduction to this notion of convergence.  In variational analysis, this notion of convergence is called convergence in the Attouch-Wets topology.

\begin{lemma}\label{lem:baspropslocconv} If $E_j$ are non-empty closed sets that converge locally to a non-empty closed set $E$, then
\begin{enumerate}
\item a compact set $K$ satisfies $K\cap E=\varnothing$ if and only if there is a neighbourhood of $K$ that has empty intersection with $E_j$ for all sufficiently large $j$, and
\item if the sets $E_j$ are contained in a fixed compact set, then $E_j$ converge locally to $E$ if and only if $E_j$ converges to $E$ in the Hausdorff distance.
\end{enumerate}
\end{lemma}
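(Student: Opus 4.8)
The plan is to unwind the definitions of local convergence and the relative Walkup--Wets distance $d_R$ and reduce both assertions to elementary point-set topology. For part (1), suppose first that there is a neighbourhood $U_\rho(K)$ (for some $\rho>0$) with $U_\rho(K)\cap E_j=\varnothing$ for all large $j$. Pick $R$ large enough that $K\cup\{0\}\subset B(0,R-1)$, say. If $x\in E\cap K$, then since $d_R(E_j,E)\to 0$ we have $\excess(E\cap\overline{B(0,R)},E_j)\to 0$; as $x$ lies in this set, there are points $y_j\in E_j$ with $|x-y_j|\to 0$, so $y_j\in U_\rho(K)$ for large $j$, a contradiction. Hence $K\cap E=\varnothing$. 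Conversely, suppose $K\cap E=\varnothing$. Since $K$ is compact and $E$ closed, $\rho:=\dist(K,E)>0$; I claim $U_{\rho/2}(K)\cap E_j=\varnothing$ for large $j$. If not, along a subsequence there are $y_j\in E_j$ with $\dist(y_j,K)<\rho/2$; all such $y_j$ lie in a fixed compact set, so (after passing to a further subsequence) $y_j\to y$ with $\dist(y,K)\le \rho/2$. Choosing $R$ with $y\in B(0,R-1)$ and all $y_j\in\overline{B(0,R)}$, the condition $\excess(E_j\cap\overline{B(0,R)},E)\to 0$ forces $\dist(y_j,E)\to 0$, hence $\dist(y,E)=0$, i.e. $y\in E$ (as $E$ is closed). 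But then $\dist(K,E)\le\dist(K,y)\le\rho/2<\rho$, a contradiction. This proves (1).

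For part (2), assume all $E_j$ and $E$ are contained in a fixed compact set $K_0\subset B(0,R_0)$ for some $R_0>1$. The direction ``$\Hdist(E_j,E)\to 0 \Rightarrow E_j\to E$ locally'' is immediate from \eqref{dRmonotone}, which gives $d_R(E_j,E)\le \Hdist(E_j,E)$ for every $R$. For the converse, observe that for $R\ge R_0$ we have $E_j\cap\overline{B(0,R)}=E_j$ and $E\cap\overline{B(0,R)}=E$, so by definition $d_R(E_j,E)=\max\{\excess(E_j,E),\excess(E,E_j)\}=\Hdist(E_j,E)$. Thus $d_{R_0}(E_j,E)\to 0$ already gives $\Hdist(E_j,E)\to 0$, which is what we want.

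I do not anticipate a serious obstacle here; both parts are soft consequences of the definitions. The only point requiring a little care is the ``conversely'' half of (1), where one must produce a \emph{uniform} (in $j$) neighbourhood of $K$ avoiding $E_j$: this is where compactness of $K$ is used, to extract a convergent subsequence of hypothetical bad points $y_j$ and to ensure $\dist(K,E)>0$. One should also be mildly attentive to the fact that $d_R$ need not satisfy the triangle inequality, as flagged in the text, but since the argument never composes two $d_R$-estimates this causes no difficulty.
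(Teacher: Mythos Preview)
Your proof is correct and follows essentially the same approach as the paper: both unwind the definition of $d_R$ and reduce everything to elementary point-set topology. The paper only writes out the ``if'' direction of (1) and does so directly (showing $E_j\cap\overline{B(0,2R)}\subset U_\delta(E)$, which misses $K_\delta$), whereas you argue by contradiction via a convergent subsequence; both are fine, and your treatment of (2) and the ``only if'' direction of (1) is more complete than what the paper records. One tiny remark: in part (2) you assume $E\subset K_0$, which is not stated in the hypothesis but follows immediately from local convergence and the closedness of $K_0$.
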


\begin{proof} Both properties are straightforward consequences of the local convergence, so we shall only verify the `if' direction of (1). If $K\cap E=\varnothing$, there exists $r>0$ such that $K_{\delta}$, the $\delta$-neighbourhood of $K$, satisfies $$\inf_{x\in K_{\delta}, \,  y\in E} |x- y|>\delta.$$ But then there exists $R>0$ such that $K_{2\delta}\subset B(0,R)$.  But then
$$d_{2R}(E_j, E)<\delta \text{ for sufficiently large }j,$$
so the open $\delta$-neighbourhood of $E$ contains $E_j\cap B(0, 2R)$ for sufficiently large $j$.  Consequently, $K_{\delta}\cap E_j = K_{\delta}\cap E_j\cap B(0, 2R)=\varnothing$ for sufficiently large $j$.
\end{proof}

We next state a basic compactness result.

\begin{lemma}\label{hauscomp}  Suppose that $E_j$ is a sequence of closed sets in $\R^2$ that intersect $\overline{B(0,1)}$.  Then there is a subsequence $E_{j_k}$ that converges locally to a closed set $E\subset \R^2$ (satisfying $E\cap \overline{B(0,1)}\neq \varnothing$).
\end{lemma}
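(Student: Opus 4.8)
\textbf{Plan for the proof of Lemma \ref{hauscomp}.}

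The statement is a standard compactness result: from any sequence of closed sets meeting $\overline{B(0,1)}$, extract a subsequence converging locally (i.e.\ in the Walkup--Wets / Attouch--Wets sense) to a closed set still meeting $\overline{B(0,1)}$. The natural approach is a diagonal argument, reducing local convergence of sets to Hausdorff convergence on each fixed ball $\overline{B(0,n)}$, $n\in\N$. The only real subtlety is that intersecting a closed set with $\overline{B(0,n)}$ does not interact perfectly with the excess functional $d_R$ near the boundary sphere $\partial B(0,n)$, so one must either work with a slight enlargement of the balls or pass to the limit carefully; this boundary bookkeeping is the main obstacle, and it is handled by taking $n\to\infty$ through integers and exploiting the monotonicity \rf{dRmonotone}.

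First I would dispose of a trivial case: if infinitely many of the $E_j$ were eventually empty the hypothesis fails, so each $E_j$ is non-empty and meets $\overline{B(0,1)}$. For each fixed $n\in\N$ consider the compact sets $K_j^{(n)} := E_j\cap \overline{B(0,n)}$, which are non-empty for $n\ge 1$ and all contained in the fixed compact set $\overline{B(0,n)}$. By the Blaschke selection theorem (compactness of the space of non-empty compact subsets of a fixed compact set under the Hausdorff metric), every sequence of such sets has a Hausdorff-convergent subsequence. Apply this with $n=1$ to get a subsequence along which $K_j^{(1)}\to L_1$ in Hausdorff distance; then pass to a further subsequence along which $K_j^{(2)}\to L_2$; and so on, finally extracting the diagonal subsequence $E_{j_k}$. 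Along this diagonal, $E_{j_k}\cap \overline{B(0,n)}$ converges in Hausdorff distance to some compact set $L_n$ for every $n$, and by uniqueness of Hausdorff limits the $L_n$ are nested: $L_n = L_{n+1}\cap \overline{B(0,n)}$ for $n$ in a suitable range (this requires a short argument: if $x\in L_n$ then $x$ is a limit of points $x_k\in E_{j_k}\cap\overline{B(0,n)}\subset E_{j_k}\cap\overline{B(0,n+1)}$, hence $x\in L_{n+1}$, and $x\in\overline{B(0,n)}$; conversely $L_{n+1}\cap\interior{\overline{B(0,n)}}\subset L_n$, with the spherical shell $\partial B(0,n)$ being the only place this can fail, which is why I pass to integer radii and take unions).

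Set $E := \overline{\bigcup_{n\ge 1} L_n}$, a closed set. Since $L_1\neq\varnothing$ and $L_1\subset E\cap\overline{B(0,1)}$, we have $E\cap\overline{B(0,1)}\neq\varnothing$. It remains to check $d_R(E_{j_k},E)\to 0$ for every $R>0$; by \rf{dRmonotone} it suffices to treat $R=n\in\N$. For the excess of $E_{j_k}\cap\overline{B(0,n)}$ over $E$: since $L_n\subset E$ and $E_{j_k}\cap\overline{B(0,n)}\to L_n$ in Hausdorff distance, $\excess(E_{j_k}\cap\overline{B(0,n)},E)\le \excess(E_{j_k}\cap\overline{B(0,n)},L_n)\to 0$. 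For the reverse excess, fix a large integer $m>n$; then $\excess(E\cap\overline{B(0,n)},E_{j_k})$ — any $x\in E\cap\overline{B(0,n)}$ lies in $\overline{L_m}$ for $m$ large (by definition of $E$ and since $x\in\overline{B(0,n)}\subset\overline{B(0,m)}$, so $x\in L_m$ after taking closures appropriately), hence is within Hausdorff distance $o(1)$ of a point of $E_{j_k}\cap\overline{B(0,m)}\subset E_{j_k}$. Uniformity over the compact set $E\cap\overline{B(0,n)}$ then gives $\excess(E\cap\overline{B(0,n)},E_{j_k})\to 0$. Combining, $d_n(E_{j_k},E)\to 0$ for every $n$, which is exactly local convergence $E_{j_k}\to E$ by Definition \ref{def:locconv}. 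I expect the bulk of the care to go into the nesting/coherence statement $L_n = L_{n+1}\cap\overline{B(0,n)}$ and the associated boundary-sphere issue, together with making the "$x\in E\cap\overline{B(0,n)}\Rightarrow x\in L_m$ for large $m$" step precise (one should note $\bigcup_m L_m$ is already closed on bounded sets, so $E\cap\overline{B(0,n)} = \bigcup_{m} (L_m\cap\overline{B(0,n)}) = L_n$ once the nesting is established, simplifying the reverse-excess estimate considerably).
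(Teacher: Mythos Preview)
Your proposal is correct and is precisely the standard diagonal/Blaschke-selection argument the paper has in mind: the paper does not actually prove this lemma, but simply remarks that it follows by modifying the usual proof of Hausdorff compactness for closed subsets of a compact metric space and refers to Theorem~2.5 of \cite{BL}. One small imprecision worth tightening: your final parenthetical identity $E\cap\overline{B(0,n)}=L_n$ is slightly too strong (the boundary-sphere issue you correctly flagged earlier means one only gets $E\cap\overline{B(0,n)}\subset L_{n+2}$, say), but this weaker inclusion is all that is needed for the reverse-excess estimate, so the argument goes through.
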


 This statement can be proved by modifying the usual proof of the relative compactness of a sequence of closed subsets of a compact metric in the Hausdorff topology, see also Theorem 2.5 of \cite{BL} and references therein.\\

Let us now fix open sets $\Omega_j^+\subset \R^2$ with boundary $\Gamma_j = \partial \Omega_j^+$.  We write $\Omega_j^- = \R^2\backslash \overline{\Omega_j^+} $.  Throughout this paper, we will always be working in situations where also
\begin{equation}\label{gammaminusbd}
\Gamma_j = \partial\Omega_j^-.
\end{equation}

\begin{lemma} \label{l:convbasic}
Let $\{\Omega_j^+\}_j$ be a sequence of open sets in the plane. 
Set $\Omega_j^-=\R^2\setminus \overline{\Omega_j^+}$ and suppose that $\Gamma_j=
\partial\Omega_j^+$ satisfies (\ref{gammaminusbd}).
Suppose there are closed sets $G^+,G^-,G_0$ satisfying
    \begin{align*}
    \overline{\Omega_{j}^\pm}\to G^\pm \quad \mbox{and}\quad
        \Gamma_{j} \to G_0 \quad\mbox{locally.}
        \end{align*}
Then
\begin{enumerate}
    \item The limit sets $G^+,G^-,G_0$  satisfy
    $$G^+\cup G^- =\R^2,\quad \;G^+\cap G^-=G_0.$$
 In particular, $G^+\setminus G_0$ and $G^-\setminus G_0$ are open.

    \item There are functions $g^+,g^-\in L^\infty(\R^2)$ such that, for a subsequence $\Omega_{j_k}^{\pm}$
    $$\chara_{\Omega_{j_k}^\pm}\to g^\pm\quad \mbox{weakly $*$ in $L^\infty(\R^2)$},$$
    where
    $$\mbox{$g^+=1$ in $G^+\setminus G_0$\; \quad and \; \quad $g^+=0$ in $G^-\setminus G_0$.}$$
   \end{enumerate}
\end{lemma}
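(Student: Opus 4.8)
The plan is to establish the two items in sequence, using only the definition of local convergence of sets and the basic facts about weak-$*$ limits. For item (1), the inclusion $G^+ \cup G^- = \R^2$ follows from the fact that $\overline{\Omega_j^+} \cup \overline{\Omega_j^-} = \R^2$ for every $j$ (because $\Gamma_j = \partial\Omega_j^+ = \partial\Omega_j^-$, so the closures cover the plane): given any $x \in \R^2$ and any $R > |x|+1$, for each $j$ there is a point of $\overline{\Omega_j^+} \cup \overline{\Omega_j^-}$ within distance $d_R(\cdot,\cdot)$-error of $x$, and since $d_R(\overline{\Omega_j^\pm}, G^\pm) \to 0$, one obtains points of $G^+$ or $G^-$ arbitrarily close to $x$; as $G^\pm$ are closed, $x \in G^+ \cup G^-$. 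For the inclusion $G_0 \subset G^+ \cap G^-$, note $\Gamma_j \subset \overline{\Omega_j^+}$ and $\Gamma_j \subset \overline{\Omega_j^-}$, and local convergence is monotone under inclusion of the sequences in the sense that if $A_j \subset B_j$ and $A_j \to A$, $B_j \to B$ locally then $A \subset B$ (a point of $A$ is a limit of points of $A_j \subset B_j$, hence in $B$); applying this twice gives $G_0 \subset G^+$ and $G_0 \subset G^-$.

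The reverse inclusion $G^+ \cap G^- \subset G_0$ is the one requiring a genuine argument. Suppose $x \in G^+ \cap G^-$ but $x \notin G_0$. Since $G_0$ is closed, there is $\rho > 0$ with $B(x,2\rho) \cap G_0 = \varnothing$, and by Lemma \ref{lem:baspropslocconv}(1) (applied to the compact set $\overline{B(x,\rho)}$) we get $\Gamma_j \cap B(x,\rho) = \varnothing$ for all large $j$. For such $j$, the ball $B(x,\rho)$ is connected and disjoint from $\Gamma_j = \partial\Omega_j^+$, hence $B(x,\rho)$ lies entirely in $\Omega_j^+$ or entirely in $\Omega_j^- = \R^2 \setminus \overline{\Omega_j^+}$ (its intersection with each is relatively open in $B(x,\rho)$, and they partition it). Passing to a subsequence we may assume one of the two alternatives holds for all $j$; say $B(x,\rho) \subset \Omega_j^+$ for all $j$ along the subsequence. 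Then $B(x,\rho) \cap \overline{\Omega_j^-} = \varnothing$, so by Lemma \ref{lem:baspropslocconv}(1) again (now in the converse direction, which follows from the definition of local convergence since $d_R(\overline{\Omega_j^-}, G^-) \to 0$) we get $B(x,\rho/2) \cap G^- = \varnothing$, contradicting $x \in G^-$. The symmetric alternative contradicts $x \in G^+$. This proves $G^+ \cap G^- = G_0$; the openness of $G^+ \setminus G_0$ and $G^- \setminus G_0$ is then immediate since $G^\pm$ and $G_0 = G^+\cap G^-$ are closed and $G^\pm \setminus G_0 = G^\pm \setminus G^\mp$.

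For item (2), since $0 \le \chara_{\Omega_j^\pm} \le 1$ is a bounded sequence in $L^\infty(\R^2) = (L^1(\R^2))^*$, the Banach--Alaoglu theorem (sequential form, valid because $L^1$ is separable) yields a subsequence along which $\chara_{\Omega_{j_k}^\pm} \to g^\pm$ weakly-$*$, with $0 \le g^\pm \le 1$ a.e.; a diagonal argument handles both signs simultaneously along a common subsequence, which we also arrange to realize the local set convergence hypotheses. It remains to identify $g^+$ on $G^+ \setminus G_0$ and $G^- \setminus G_0$. Fix $x \in G^+ \setminus G_0$; as above there is $\rho>0$ with $B(x,2\rho) \cap G_0 = \varnothing$, so $\Gamma_{j_k} \cap B(x,\rho) = \varnothing$ for large $k$, and each such ball lies entirely in $\Omega_{j_k}^+$ or in $\Omega_{j_k}^-$. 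If infinitely many were contained in $\Omega_{j_k}^-$, then $B(x,\rho/2) \cap G^- = \varnothing$ along that sub-subsequence — but local convergence of $\overline{\Omega_{j_k}^-}$ to $G^-$ is along the full subsequence, so actually $B(x,\rho/2)\cap \overline{\Omega_{j_k}^-} = \varnothing$ for all large $k$ by Lemma \ref{lem:baspropslocconv}(1) would force $x \notin G^+$ via the pairing $G^+ \cup G^- = \R^2$ restricted to the ball... more directly: since $x \in G^+$ and $\overline{\Omega_{j_k}^+} \to G^+$, Lemma \ref{lem:baspropslocconv}(1) forbids a neighbourhood of $x$ from being eventually disjoint from $\overline{\Omega_{j_k}^+}$, so we cannot have $B(x,\rho) \subset \Omega_{j_k}^-$ for all large $k$. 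Hence $B(x,\rho) \subset \Omega_{j_k}^+$, i.e. $\chara_{\Omega_{j_k}^+} \equiv 1$ on $B(x,\rho)$, for all large $k$; testing the weak-$*$ convergence against $L^1$ functions supported in $B(x,\rho)$ gives $g^+ = 1$ a.e. on $B(x,\rho)$, in particular near $x$. Since $G^+ \setminus G_0$ is open and $x$ was arbitrary, $g^+ = 1$ a.e. on $G^+ \setminus G_0$. The identity $g^+ = 0$ a.e. on $G^- \setminus G_0$ is proved the same way, exchanging the roles of $\Omega^+$ and $\Omega^-$ (here $\chara_{\Omega_{j_k}^+} \equiv 0$ on the relevant balls since $\Omega_{j_k}^+ \cap \Omega_{j_k}^- = \varnothing$). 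The main obstacle is the dichotomy argument showing that a small ball avoiding the limit boundary is eventually on one fixed side — one must be careful that the side can be fixed along a subsequence and that this is compatible with, and in fact forced by, the prescribed membership $x \in G^+$ or $x \in G^-$; this is where Lemma \ref{lem:baspropslocconv}(1) is used in both directions. $\square$
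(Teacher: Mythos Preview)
Your proof is correct but takes a somewhat different path from the paper's in two places. For the inclusion $G^+\cap G^-\subset G_0$, you argue by contradiction via connectedness of a small ball disjoint from $\Gamma_j$, while the paper gives a short direct argument: if $x\in G^+\cap G^-$, then for any $\ve>0$ and all large $j$ there are points $y_j^\pm\in\Omega_j^\pm$ within distance $\ve$ of $x$, and the segment $[y_j^+,y_j^-]$ must cross $\Gamma_j$, so $\dist(x,\Gamma_j)\leq\ve$ and hence $x\in G_0$. For part (2), the paper exploits the identity $G^+\setminus G_0=\R^2\setminus G^-$ just established in (1): given $\varphi\in C_0(G^+\setminus G_0)$, its support is compact and disjoint from $G^-$, so Lemma~\ref{lem:baspropslocconv}(1) immediately yields that a neighbourhood of $\supp\varphi$ is disjoint from $\overline{\Omega_{j_k}^-}$, hence $\supp\varphi\subset\Omega_{j_k}^+$ for large $k$ and $\int\chara_{\Omega_{j_k}^+}\varphi=\int\varphi$. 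Your pointwise dichotomy argument reaches the same conclusion but essentially re-derives what (1) already provides; the exposition around ruling out the ``wrong'' side (the sentence beginning ``If infinitely many\ldots'' before the ``more directly'' recovery) is a bit tangled, though the underlying logic is sound. Both approaches are valid; the paper's is more economical because it uses (1) as a lemma in the proof of (2) rather than repeating its mechanism.
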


\begin{proof} For property (1), the fact that $G^+\cup G^- =\R^2$ is
obvious.  Since $\Gamma_{j} = \partial \Omega^+_{j} =\partial\Omega^-_{j}\subset \overline{\Omega_{j}^+}\cap \overline{\Omega_{j}^-}$, it is clear that $G_0\subset G^+\cap G^-$.  On the other hand, if $x\in G^+\cap G^-$, then for any $\ve>0$ there exists $j_0\in \mathbb{N}$ such that for
$j\geq j_0$,
$$\dist\big(x,\overline{\Omega_{j}^+}\big)\leq \ve\quad \text{ and }\quad \dist\big(x,\overline{\Omega_{j}^-}\big)\leq \ve.$$
That is, there exist $y_{j}^\pm\in\Omega_{j}^\pm$ such that $|x-y_{j}^\pm|\leq \ve$. There exists
some $z\in \Gamma_{j}$ in the segment $[y_{j}^+,y_{j}^-]$, and thus $|x-z|\leq\ve$ and
$\dist(x,\Gamma_{j})\leq \ve$. Since this holds for all ${j}$ big enough, we deduce that $x$ belongs to
the limit in the Attouch-Wets topology of $\{\Gamma_{j}\}_k$, that is, to $G_0$.

To see the openness of $G^+\setminus G_0$, note that $\R^2 = (G^+\setminus G_0) \cup G^-$ is a disjoint
union. Thus $G^+\setminus G_0 = \R^2\setminus G^-$ is open. Analogously, $G^-\setminus G_0
=\R^2\setminus G^+$ is open.

We now turn our attention to verifying (2).  The existence of $g^{\pm}\in L^{\infty}(\R^2)$ such that, for a subsequence $\Omega_{j_k}$, $\chara_{\Omega_{j_k}^\pm}\to g^\pm$ weakly $*$ in $L^\infty(\R^2)$ is a standard consequence of the Banach-Alouglu theorem.  Now consider a continuous  function $\vphi$ compactly supported on $G^+\setminus G_0$. Recall that $G^+\setminus G_0$ is open and $G^+\setminus G_0 = \R^2\setminus G^-$. Consequently, property (1) of Lemma \ref{lem:baspropslocconv} ensures that there exists some $\ve >0$ such that, for all $k$ big enough,
$$\dist\Big(\supp\vphi,\, \overline{\Omega_{j_k}^-}\Big)\geq \ve.$$
In particular, $\supp\vphi\subset \Omega_{j_k}^+$ for all $k$ big enough, which implies that
$$\int_{\R^2} \chara_{\Omega_{j_k}^+}\,\vphi \,dx= \int_{\R^2}\vphi\,dx\quad \mbox{ for all $k$ big enough,}$$
and proves that $g^+$, the weak $*$ limit of $\chara_{\Omega_{j_k}^+}$, equals $1$ in $G^+\setminus G_0$.
 The proof that $g^+=0$ in $G^-\setminus G_0$ is completely analogous.
\end{proof}

\section{General compactness results involving the Carleson square function}

Throughout this section, fix a sequence of sets $\Omega_j^+$ with $\Omega_j^-=\mathbb{R}^2\backslash \overline{\Omega_j^+}$, such that $\Gamma_j = \partial\Omega_j^+=\partial\Omega_j^-$ (i.e. (\ref{gammaminusbd}) holds).   Assume that $\overline{\Omega_j^\pm}\to G^{\pm}$ and $\Gamma_j \to G_0$ locally as $j\to \infty$.  Consequently, the sets $G^+$, $G^-$ and $G_0$ will satisfy the properties of Lemma \ref{l:convbasic}.

\subsection{Complementary semicircumferences and admissible pairs} It will be convenient to introduce the following definitions, which will be central to our analysis.

\begin{definition}[Complementary semicircumferences]  We say that two closed semicircumferences are \emph{complementary} if they are contained in the same circumference and their intersection consists just of their end-points.
\end{definition}

\begin{definition}[Admissible pairs]\label{def:admissible} We say that a pair of two complementary closed semicircumferences $(S_1,S_2)$
is admissible (for the sequence of sets $\{\Omega_j^+\}_j$) if there exists a subsequence of circular arcs
$I_{j_k}^\pm\subset \partial B(x_{j_k},r_{j_k})\cap \Omega_{j_k}^\pm$, with $x_{j_k}\in\Gamma_{j_k}$, such that
$I_{j_k}^+$, $I_{j_k}^-$ converge to $S_1$, $S_2$ in Hausdorff distance, respectively.
\end{definition}

It is immediate to check that, if $(S_1,S_2)$ is an admissible pair, then $S_1\subset G^+$, $S_2\subset G^-$ and that the common center of $S_1$ and $S_2$ belongs to $G_0$.  Consequently, we will also say that $S_1$ and $S_2$ are admissible for $G^+$ and $G^-$, respectively. We call the common center and radius of $S_1$, $S_2$ the center and radius of the pair, respectively.

Observe that, for a given $x\in G_0$, $r>0$, there may exist more than one admissible pair of
semicircumferences centered at $x$ with radius $r$. \\

Especially when dealing with Jordan domains, we will use the fact that \emph{the set of admissible pairs is closed in the topology of Hausdorff
distance}, that is, if $\{(S_{1,i},S_{2,i})\}$ is a sequence of admissible pairs (possibly with different centers and radii) such that $S_{1,i}$, $S_{2,i}$ converge respectively to $S_1$, $S_2$, then $(S_1,S_2)$ is an admissible pair.

To check this fact, just take for each $i$ a pair of arcs $I_{i}^+$, $I_i^-$ contained
in $\partial B(x_{j_i},r_{j_i})\cap \Omega_{j_i}^\pm$, with $x_i\in\partial\Omega_{j_i}^\pm$, for a suitable $j_i$ such that
$$\Hdist(I_i^\pm,S_{1,i}) \leq \frac 1i.$$
It is clear that the arcs  $I_{i}^+$, $I_i^-$ converge respectively to $S_1$, $S_2$ in Hasdorff distance,
and thus $(S_1,S_2)$ is an admissible pair.

\subsection{A general convergence result}  We set $\ve_j(x,r)$ and $\alpha_j^+(x,r)$ to be the coefficients $\ve(x,r)$ and $\alpha^+(x,r)$ associated with $\Omega_j^+$.

\begin{lemma}\label{lem:basicsquare}
Fix $C_0, c_0>0$.  Let $\{\mu_j\}_j$ be a sequence of measures with $C_0$-linear growth supported on $\Gamma_j$ converging weakly to a measure $\mu_0$ (so $\mu_0$ is supported in $G_0$, and has $C_0$-linear growth).  Suppose $B_0$ is a ball with $\mu_0(\overline{B_0})\geq c_0 r$.  Further assume that, for each $j$, both
\begin{align} \label{eq:G1}
    \int_{7B_0}\int_0^{7r(B_0)}  \alpha^+_{j}(x,r)^2 \, \dr \, d\mu_j(x) \leq \frac{1}{j}\,\mu_j(7B_0),
\end{align}
and
\begin{align} \label{eq:G2}
    \int_{7B_0}\int_0^{7r(B_0)}  \ve_{j}(x,r)^2 \, \dr \, d\mu_j(x) \leq \frac{1}{j}\,\mu_j(7B_0).
\end{align}

Then
\begin{enumerate}
    \item \label{existencevariety} There is an analytic variety $Z$ such that $\supp(\mu_0)\cap 7B_0\subset Z\subset G_0$.
\item  \label{existencepairs} For all $x\in 7B_0\cap \supp\mu_0$ and all $r\in (0,7r(B_0))$ there is a pair of admissible semicircumferences which are contained in $\partial B(x,r)$.
 \end{enumerate}
\end{lemma}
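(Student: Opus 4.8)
The plan is to extract, from the smallness hypotheses \rf{eq:G1} and \rf{eq:G2}, a well-chosen ``good'' point and scale for each $j$, pass to the limit along a subsequence using the compactness results of the previous sections, and then identify the limiting object. I would begin with (2), the existence of admissible pairs, since (1) will be deduced from it together with the $\alpha^+$-control. Fix $x\in 7B_0\cap\supp\mu_0$ and $r\in(0,7r(B_0))$. Since $\mu_0$ is a weak limit of the $\mu_j$ and $x\in\supp\mu_0$, for each $j$ one can find $x_j\in\Gamma_j$ with $x_j\to x$ (using that $\mu_j(B(x,\rho))$ is bounded below for large $j$ by weak convergence and lower semicontinuity on open sets). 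The double integral in \rf{eq:G1}, \rf{eq:G2} is $\le \tfrac1j\mu_j(7B_0)\le \tfrac{C_0}{j}r(7B_0)$; by Chebyshev applied in the $x$-variable against $\mu_j$ on a small ball around $x_j$ (whose mass is comparable to its radius only in the corkscrew case, so more carefully: one averages over a ball where $\mu_j$ has positive mass, which exists since $x_j\in\supp\mu_j=\Gamma_j$ — here one needs $\mu_j$ to charge neighborhoods of $x_j$; if not, replace $x_j$ by a nearby point of $\supp\mu_j$, still converging to $x$), one obtains a point $y_j\in\Gamma_j$ with $y_j\to x$ and
$$\int_0^{7r(B_0)} \big(\ve_j(y_j,s)^2+\alpha^+_j(y_j,s)^2\big)\,\frac{ds}{s}\;\xrightarrow[j\to\infty]{}\;0.$$

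Now I would use the definition of $\ve_j(y_j,s)$. At the fixed scale $s=r$, the smallness of $\int \ve_j(y_j,s)^2\,ds/s$ does not immediately give smallness of $\ve_j(y_j,r)$ pointwise, but an averaging/continuity argument over $s\in(r/2,r)$ (say) produces a scale $r_j\in(r/2,r)$ with $\ve_j(y_j,r_j)\to 0$. For such $r_j$, both $\HH^1(I^+_j(y_j,r_j))$ and $\HH^1(I^-_j(y_j,r_j))$ are within $o(r_j)$ of $\pi r_j$, i.e.\ the longest arcs in $\Omega^+_{j_k}$ and $\Omega^-_{j_k}$ on $\partial B(y_j,r_j)$ are each essentially a full semicircle. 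Passing to a further subsequence, $y_{j_k}\to x$, $r_{j_k}\to r$, and the arcs $I^\pm_{j_k}$ converge in Hausdorff distance; the length estimates force the two limiting arcs to be complementary closed semicircumferences $S_1\subset\partial B(x,r)$, $S_2\subset\partial B(x,r)$. By the definition of admissible pairs, $(S_1,S_2)$ is admissible, proving (2). (One also records that $S_1\subset G^+$, $S_2\subset G^-$, centre in $G_0$, as noted after Definition \ref{def:admissible}.)

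For (1), the analytic variety, I would exploit \rf{eq:G1}: the quantity $\alpha^+_j(x,r)$ compares $\tfrac1{r^2}\int_{\Omega^+_j}e^{-|y-x|^2/r^2}\,dy$ to $\pi/2$, i.e.\ it says the Gaussian-averaged density of $\Omega^+_j$ at $(x,r)$ is $1/2$. Using the weak-$*$ limit $\chara_{\Omega^+_{j_k}}\to g^+$ from Lemma \ref{l:convbasic}(2), the limit of this average is $\tfrac1{r^2}\int g^+(y)e^{-|y-x|^2/r^2}\,dy$. The $L^2(\mu_0)$-type convergence of the square function (Fatou / the smallness $\le 1/j$) gives that for $\mu_0$-a.e.\ $x\in 7B_0$ and a.e.\ (hence, by continuity in $r$, every) $r\in(0,7r(B_0))$,
$$\frac{1}{r^2}\int_{\R^2} g^+(y)\,e^{-|y-x|^2/r^2}\,dy=\frac{\pi}{2}.$$
Define $h := g^+ - \tfrac12$; then $h\in L^\infty$ and the Gaussian (heat) average of $h$ against $e^{-|\cdot-x|^2/r^2}$ vanishes for all small $r$ at all such $x$. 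Since $g^+=1$ on $G^+\setminus G_0$ and $g^+=0$ on $G^-\setminus G_0$, off $G_0$ the function $h$ equals $\pm\tfrac12$; the vanishing of all these Gaussian averages at points of $\supp\mu_0\cap 7B_0$ is a strong analyticity constraint. Concretely, one lets $F(x) := \int_{\R^2} g^+(y)e^{-|y-x|^2}\,dy - \tfrac{\pi}{2}$, an entire real-analytic function of $x\in\R^2$ (indeed extends holomorphically to $\C^2$), and observes that $F\equiv 0$ on $\supp\mu_0\cap 7B_0$ and also, by rescaling, that its analogues at all scales $r$ vanish there; taking $Z := \{x : F(x)=0\}$ (or a suitable component thereof) gives an analytic variety with $\supp\mu_0\cap 7B_0\subset Z$. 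Finally $Z\subset G_0$: if $x\in Z\setminus G_0$ then $x$ lies in the open set $G^+\setminus G_0$ or $G^-\setminus G_0$, on which $g^+$ is locally constant equal to $1$ or $0$; a short computation shows the Gaussian average then cannot equal $\pi/2$ for all small $r$ unless $x\in G_0$, contradiction — so $Z\subset G_0$.

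The main obstacle I anticipate is the transition from the integrated smallness $\int\ve_j(y_j,s)^2\,ds/s\to 0$ to usable pointwise flatness at a genuine scale comparable to $r$, carried out uniformly enough to survive the double limit (in $j$ and in the auxiliary scale), and — more seriously — the rigorous identification of the zero set of the limiting Gaussian-average function as an \emph{analytic variety} containing $\supp\mu_0\cap 7B_0$ while being contained in $G_0$; this requires care with the holomorphic extension of the heat-averaged indicator $g^+$, with the fact that the constraint holds at \emph{every} scale $r\in(0,7r(B_0))$ (not just one), and with ruling out that $Z$ leaks into the open sets $G^\pm\setminus G_0$. The convergence bookkeeping (choosing $y_j$, the subsequences, and ensuring $\mu_j$ charges neighborhoods of the chosen points) is routine but must be set up consistently so that the same subsequence works for both conclusions.
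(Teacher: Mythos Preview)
Your overall strategy matches the paper's: for (1), pass to the weak-$*$ limit $g^+$, exploit the real-analyticity of the Gaussian average $x\mapsto\int g^+(y)e^{-|y-x|^2/r^2}\,dy$, and build $Z$ as a zero set; for (2), use Chebyshev in $x$ and averaging in $r$ to extract points and scales where $\ve_j$ is pointwise small, then pass to the limit to obtain admissible semicircumferences. However, there are two genuine technical gaps and one minor slip.

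\textbf{The definition of $Z$.} You set $Z:=\{F=0\}$ with $F(x)=\int g^+(y)e^{-|y-x|^2}\,dy-\tfrac\pi2$, i.e.\ the zero set at the single scale $r=1$. But your argument for $Z\subset G_0$ uses that the Gaussian average cannot equal $\pi/2$ \emph{for all small $r$} at a point off $G_0$. There is no reason the single-scale zero set $\{\alpha_0^+(\cdot,1)=0\}$ should lie in $G_0$; it can easily contain points of $G^\pm\setminus G_0$. The paper fixes this by taking a \emph{countable intersection} of zero sets at dyadic scales, $Z=\bigcap_{k\ge0}\{\alpha_0^+(\cdot,2^{-k})=0\}$, and packages this as the zero set of the single real-analytic function $\sum_{k\ge0}2^{-k}\alpha_0^+(\cdot,2^{-k})^2$. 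Now the small-$r$ argument (your paragraph, or the paper's Claim~2) genuinely applies to every $x\in Z$, yielding $Z\subset G_0$.

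\textbf{Passing to the limit via ``Fatou''.} You invoke Fatou to conclude $\alpha_0^+(x,r)=0$ for $\mu_0$-a.e.\ $x$. But both the integrand ($\alpha_j^+\to\alpha_0^+$ only along a subsequence, pointwise) and the measure ($\mu_j\to\mu_0$ weakly) are changing; Fatou does not handle this double limit. The paper instead multiplies by $r^3$ (to kill the $dr/r$ singularity) and smooth cutoffs, so that the functions $f_k(x,r)=\tilde\chi_{7B_0}(x)\tilde\chi_{(0,7)}(r)\,r^3\big(r^{-2}e^{-|\cdot/r|^2}*\chi_{\Omega_{j_k}^+}-\tfrac\pi2\big)^2$ are uniformly bounded with uniformly bounded derivatives; Arzel\`a--Ascoli gives uniform convergence on compacta to the corresponding $f$ built from $g^+$, and then one can legitimately pass the weak convergence of $\mu_{j_k}$ through the integral. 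This yields $\iint f\,dr\,d\mu_0=0$, hence $\alpha_0^+(x,r)=0$ for all $x\in\supp\mu_0\cap7B_0$ and all $r$ by continuity.

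\textbf{Minor slip in (2).} You choose $r_j\in(r/2,r)$ with $\ve_j(y_j,r_j)\to0$ and then assert $r_{j_k}\to r$; but a subsequential limit only lies in $[r/2,r]$. The paper takes $r_j\in[\tau_j r,r]$ with $\tau_j=1-j^{-1/4}\to1$; since $\int_{\tau_j r}^{r}\ve_j(y_j,s)^2\,ds/s\le j^{-1/2}$ and $\log(1/\tau_j)\approx j^{-1/4}$, the infimum over this shrinking interval is $\lesssim j^{-1/8}$, which forces $r_j\to r$ as needed. Separately, the paper organizes (2) globally rather than pointwise: it defines $E_k=\{x\in\Gamma_k\cap7B_0:\int_0^7\ve_k(x,r)^2\,dr/r\le k^{-1/2}\}$, shows $\mu_k(7B_0\setminus E_k)\le k^{-1/2}\mu_k(7B_0)$, passes a compact $\wt E_k\subset E_k$ to a Hausdorff limit $F$, and checks that $\mu_k|_{\wt E_k}\to\mu_0|_{7B_0}$ so that $\supp\mu_0\cap7B_0\subset F$. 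Your pointwise route also works once the shrinking-interval detail is corrected.
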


\begin{proof}  We may assume by scaling that $B_0 = B(0,1)$.    The property (\ref{eq:G1}) is responsible for the first conclusion, while (\ref{eq:G2}) is responsible for the second conclusion.\\

\textbf{Proof of (\ref{existencevariety}).} Recall from Lemma \ref{l:convbasic} that there is a subsequence of the open sets $\Omega_{j_k}^{\pm}$ whose characteristic functions converge weak-$*$ in $L^{\infty}$ to functions $g^{\pm}$ with $g^{+}\equiv 1$ on $G^+\backslash G_0$ and $g^+\equiv 0$ on $G^-\backslash G_0$.

 \begin{claim}\label{alpha0suppoort} One has $\alpha_0^+(x,r)=0$ for all $x\in 7B_0\cap\supp\mu_0$ and all $r\in(0,7r(B_0))$, where
 \begin{equation}\label{eqalpha0}
\alpha_0^+(x,r) = \bigg|\frac\pi2 - \frac1{r^2}\int g^+(y)\, e^{-|y-x|^2/r^2}\,dy\bigg|.
\end{equation}
\end{claim}

\begin{proof}[Proof of Claim \ref{alpha0suppoort}]  For any $r>0$, the mapping $x\mapsto \alpha_0^+(x,r)$ is continuous on $\R^2$, so  (\ref{eqalpha0}) will follow once we show that
\begin{equation}\label{eq*12}
\int_{7 B_0}\int_0^{7} \alpha^+_{0}(x,r)^2 r^3 \, dr \, d\mu_0(x)=0,
\end{equation}

Note that \eqref{eq:G1} implies that
\begin{align}\label{e:Gamma-k-power-3}
     \int_{7B_0}\int_0^{7} \alpha^+_{j_k}(x,r)^2 r^3 \, dr \, d\mu_k(x) \leq \frac{\mu_{j_k}(7B_0)}{j_k}\leq\frac{C}{k}.
\end{align}
Consider arbitrary non-negative smooth functions
$\tilde\chara_{7B_0}(x)$, $\tilde \chara_{(0,7)}(r)$ compactly supported in $7B_0$ and $(0,7)$, respectively.
Define
\begin{align*}
    f_k(x,r) := \tilde \chara_{7B_0}(x) \tilde \chara_{[0,7]}(r) \,r^3 \ps{ \frac{1}{r^2} e^{-\av{\frac{\cdot}{r}}^2} * \chara_{\Omega_{j_k}^+} - \frac\pi2 }^2.
\end{align*}
 Since $\chara_{\Omega_{j_k}^+}$ converges weakly $*$ in $L^\infty(\R^2)$ to  $g^+$, then we have that
\begin{align*}
    f_k(x,r) \to f(x,r) \mbox{ pointwise, }
\end{align*}
where
\begin{align*}
    f(x,r) = \tilde \chara_{7B_0}(x) \tilde \chara_{(0,7)}(r) \,r^3 \ps{ \frac{1}{r^2} e^{-\av{\frac{\cdot}{r}}^2} *g^+ - \frac\pi2 }^2.
\end{align*}
Clearly, $f_k$ is a uniformly bounded sequence on $\overline{7B_0 \times [0,7]}$ with uniformly bounded derivative (it is to ensure this condition that we introduce the factor $r^3$ in \eqref{e:Gamma-k-power-3}).
Thus by the Arzel\`{a}-Ascoli Theorem, we deduce that $f_k$ converges uniformly on compacts subsets to  $f$, up to a subsequence which we relabel.

To prove \rf{eq*12} we write
\begin{align*}
    & \iint   f\, dr \, d\mu_0 (x)   = \iint  f\,  dr \, d(\mu_0 - \mu_{j_k}) + \iint (f-f_k) \, dr\, d\mu_{j_k} + \iint  f_k \,dr\,  d\mu_{j_k}
\end{align*}
The first integral tends to $0$ as $k \to \infty$, since clearly $dr\, d\mu_{j_k}$ converges weakly to $dr \, d\mu_0$. Similarly, the second integral converges to $0$ as $k\to \infty$, by the uniform convergence on compact subsets of $f_k$ to $f$. As for the third integral, we see that
\begin{align*}
    \av{ \iint  f_k \, dr \, d\mu_{j_k} } \leq \int_{7B_0} \int_0^7 \alpha_{j_k}^+(x,r)^2 r^3 \, dr \, d\mu_{j_k}(x)  \leq \frac{C}{k},
\end{align*}
by \eqref{e:Gamma-k-power-3}. This immediately gives that
$$\iint   f\, dr \, d\mu_0=0,$$
and since
$\tilde\chara_{7B_0}(x)$, $\tilde \chara_{(0,7)}(r)$ are arbitrary non-negative smooth functions compactly supported in $7B_0$ and $(0,7)$ respectively, \rf{eq*12} follows.\end{proof}

\begin{claim}\label{lem:zeroalphaG0}  For any $x\in\R^2$, if there exists a sequence $r_k\to0$ such that $\alpha_0^+(x,r_k) = 0$ for all $k$, then $x\in G_0$.
\end{claim}

\begin{proof}[Proof of Claim \ref{lem:zeroalphaG0}]
Recall that $g^+=1$ in $G^+\setminus G_0$ and $g^+=0$ in $G^-\setminus G_0$.
So, if $x\in G^+\setminus G_0$, then it is immediate to check that
$$\lim_{r\to0} \frac1{r^2}\int g^+(y)\,e^{-|y-x|^2/r^2}\,dy =
\int e^{-y^2}\,dy = \pi,$$
taking also into account that $G^+\setminus G_0$ is open.
Thus $\alpha^+_0(x,r)$ is bounded away from $0$ for all $r>0$ small enough.

Similarly, if
$x\in G^-\setminus G_0$, then $\lim_{r\to 0}\frac{1}{r^2}\int_{\mathbb{R}^2}g^-(y) e^{-|y-x|^2/r^2} dy=0$, so $\alpha^+_0(x,r)$ is bounded away from $0$ if $r$ is sufficiently small.
\end{proof}

We now complete the proof of property (1).   Set
$$Z= \bigcap_{k\geq 0} \{x\in\R^2: \alpha_0^+(x,2^{-k})=0\},$$
where $\alpha_0^+$ is defined in \rf{eqalpha0}.
By Claims \ref{alpha0suppoort} and \ref{lem:zeroalphaG0},
$$7B_0\cap \supp\mu_0\subset Z\subset G_0.$$
To see that $Z$ is a real analytic variety consider
\begin{align*}
    F := \sum_{k \geq 0} 2^{-k}  \alpha_0^+(\cdot, 2^{-k})^2 = \sum_{k \geq 0} 2^{-k} \ps{ \frac{1}{r_k^2}\,g^+*e^{-|\cdot|^2 2^{2k}} - \frac\pi2 }^2.
\end{align*}
Then $F$ is a real analytic function and $Z=F^{-1}(0)$.

\vv

\textbf{Proof of property (2).}  Denote
$$E_k=\Big\{x\in\Gamma_k\cap 7B_0:  \textstyle{ \int_0^{7} \ve_k(x,r)^2 \,\frac{dr}r\leq \frac1{\sqrt k}}\Big\}.$$
By Chebyshev's inequality, we have
$$\mu_k(7B_0\setminus E_k) \leq \sqrt k \int_{7B_0}\int_0^{7} \ve_k(x,r)^2 \,\frac{dr}r\,d\mu_k(x)
\leq \frac{ \sqrt k}k \,\mu(7B_0)=  \frac1{\sqrt k}\, \mu_k(7B_0).
$$
Set $\tau_k =1-k^{-1/4}.$
For each $x\in E_k$ and $0<r<7$, we have
$$\frac1{\sqrt k}\geq \int_0^{7} \ve_k(x,s)^2 \,\frac{ds}s \geq
\int_{\tau_k r}^{r} \ve_k(x,s)^2 \,\frac{ds}s \geq \inf_{s\in [\tau_k r,r]}  \ve_k(x,s)^2 \,\log\frac1{\tau_k}
\approx \inf_{s\in [\tau_k r,r]}  \ve_k(x,s)^2 \,\frac1{k^{1/4}}.$$
Hence, for all $r\in (0,7)$ there exists some $s_r\in [\tau_k r,r]$ such that
$$\ve_k(x,s_r)\lesssim \frac1{k^{1/8}}.$$
In particular, this implies that for each $x\in E_k$ and $0<r<7$, there exist disjoint arcs $I_k^+(x,s_r),I_k^-(x,s_r)\subset
\partial B(x,s_r)$ satisfying
\begin{equation}\label{eqclau89}
\HH^1(I_k^\pm(x,s_r))\geq \big(\pi - k^{-1/8}\big)\,r\quad \text{
and }\quad I_k^\pm(x,s_r)\subset \overline{\Omega_k^\pm}.
\end{equation}

Let $\wt E_k\subset E_k$ be a compact set such that $\mu_k(\wt E_k)\geq \frac{k-1}k\,\mu_k(E_k)\geq \frac{k-1}{k}\bigl(1-\frac{1}{\sqrt{k}}\bigl)\mu_k(7B_0)$.
Taking a subsequence if necessary, we can assume that $\mu_k|_{\wt E_k}$ converges weakly $*$ to some measure $\sigma$ and that $\wt E_k$ converges in the Hausdorff metric to some compact set $F\subset
\R^2$. In fact, since $\wt E_k\subset \Gamma_k$, we have $F\subset G_0$. Further, it is easy to check that
$\supp\sigma\subset F$, and by \rf{eqclau89} it follows that for all $x\in F$ and all $r\in  (0,7)$, there
exists an admissible pair with radius $r$ and center $x$. 
It just remains to notice that $\sigma|_{7 B_0} = \mu_0|_{7 B_0}$, since for $f\in C_0(7B_0)$, $\Bigl|\int_{\wt E_k}fd\mu_k- \int_{7B_0}fd\mu_k\Bigl|\leq \frac{\|f\|_{\infty}\mu(7B_0)}{\sqrt{k}}$.\end{proof}


\section{The case when $\Omega^+$ is a $2$-sided corkscrew open set}

The objective of this section is to prove Main Lemma \ref{l:main1} in the case of a $2$-sided corkscrew open set.

\begin{lemma}\label{lembeta}
Let $\Omega^+\subset\R^2$ be a $2$-sided $c$-corkscrew open set, let $\Gamma=\partial\Omega^+$, and let $\mu$ be a measure with $C_0$-linear growth
supported on $\Gamma$. Let $B$ be a ball centered at $\Gamma$ such that
$$\mu(B)\geq c_0 r(B),$$
for some $0<c_0\leq C_0$.
Given any $\ve>0$, there exists $\delta>0$ (depending on $C_0,c_0,c,\ve$) such that if
$$\int_{7B}\int_0^{7r(B)} \big(\ve(x,r)^2 + \alpha^+(x,r)^2\big)\,d\mu(x)\frac{dr}r \leq \delta\,\mu(7B),$$
then
$$\beta_{\infty,\Gamma}(B)\leq \ve.$$
\end{lemma}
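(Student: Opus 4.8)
The plan is to argue by contradiction via a compactness argument, using the general convergence result of Lemma \ref{lem:basicsquare} together with the stability of $2$-sided corkscrew open sets under local limits. Suppose the conclusion fails: then there exist $C_0, c_0, c, \ve_0>0$ and, for each $j\in\N$, a $2$-sided $c$-corkscrew open set $\Omega_j^+$ with boundary $\Gamma_j$, a measure $\mu_j$ with $C_0$-linear growth supported on $\Gamma_j$, and a ball $B_j$ centered at $\Gamma_j$ with $\mu_j(B_j)\geq c_0\,r(B_j)$, such that
$$\int_{7B_j}\int_0^{7r(B_j)}\big(\ve_j(x,r)^2+\alpha_j^+(x,r)^2\big)\,d\mu_j(x)\,\frac{dr}{r}\leq \frac1j\,\mu_j(7B_j),$$
yet $\beta_{\infty,\Gamma_j}(B_j)>\ve_0$. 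By translating and dilating (noting that all hypotheses and the conclusion are scale- and translation-invariant, and that $\ve_j(x,r)$, $\alpha_j^+(x,r)$, $\beta_{\infty,\Gamma_j}$ transform correctly under these operations) we may assume $B_j=B(0,1)$ for all $j$. By Lemma \ref{hauscomp} and Lemma \ref{lineargrowthcompact}, after passing to a subsequence we may assume $\overline{\Omega_j^\pm}\to G^\pm$ and $\Gamma_j\to G_0$ locally, and $\mu_j\to\mu_0$ weakly, where $\mu_0$ is supported on $G_0$, has $C_0$-linear growth, and satisfies $\mu_0(\overline{B(0,1)})\geq c_0$. The sets $G^\pm,G_0$ satisfy the conclusions of Lemma \ref{l:convbasic}.

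Next I would invoke the stability of the corkscrew condition under local convergence (Lemma \ref{l:convergence}, referenced in the text): the limiting configuration $G^+\setminus G_0$, $G^-\setminus G_0$ are open, nonempty, disjoint, and each satisfies a $c'$-corkscrew condition at every point of $G_0$ (with $c'$ comparable to $c$). Meanwhile, the smallness hypothesis is exactly of the form (\ref{eq:G1})--(\ref{eq:G2}) of Lemma \ref{lem:basicsquare}, so that lemma applies: there is an analytic variety $Z$ with $\supp\mu_0\cap 7B_0\subset Z\subset G_0$, and at every $x\in\supp\mu_0\cap 7B_0$ and every $r\in(0,7)$ there is a pair of admissible complementary semicircumferences contained in $\partial B(x,r)$. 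The key geometric step is now to combine these two pieces of information: since through every point $x\in \supp\mu_0\cap B(0,1)$ and every sufficiently small scale $r$ there are two complementary semicircular arcs sitting in $\overline{G^+}$ and $\overline{G^-}$ respectively, the variety $Z$ near such a point cannot be $0$-dimensional (otherwise the two semicircles, which are separated by their common diameter line, would force $G^+$ and $G^-$ to overlap on an open set, contradicting $G^+\cap G^-=G_0$ and the corkscrew/openness structure). Hence $Z$ is $1$-dimensional near $\supp\mu_0$, and being an analytic variety it is, locally, a finite union of analytic arcs. The corkscrew condition on both sides then forces this local structure to be a single smooth arc with a well-defined tangent line at each point of $\supp\mu_0\cap B(0,1)$; and the admissible-pair condition (the semicircles being genuinely semicircles, i.e. half of a full circle) pins that tangent line down to agree with the diameter line of the admissible pair, ruling out branch points and cusps. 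Consequently $G_0\cap B(0,1)$ contains a $C^1$ (indeed real-analytic) arc $\gamma$ passing within distance $o(1)$ of every point of $\supp\mu_j\cap B(0,1)$, and since $\mu_0(\overline{B(0,1)})\geq c_0>0$ this arc genuinely crosses a definite portion of $B(0,1)$. In particular $\gamma$, being analytic and $1$-dimensional, lies in an arbitrarily thin strip inside a slightly smaller ball; one checks that in fact $\beta_{\infty,\Gamma_j}(B(0,1))\to 0$ (using that $\Gamma_j\to G_0$ locally and that $\Gamma_j\cap B(0,1)$ must stay near $G_0$, and the corkscrew condition prevents $\Gamma_j$ from having pieces far from $\gamma$ within $B(0,1)$ while keeping $\mu_j(B_j)$ large — more precisely, $\beta_{\infty,\Gamma_j}(B(0,1/2))$ is controlled by the Hausdorff distance of $\Gamma_j\cap \overline{B(0,1)}$ to $G_0\cap\overline{B(0,1)}$ plus $\beta_{\infty,Z}$, both of which go to $0$). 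This contradicts $\beta_{\infty,\Gamma_j}(B(0,1))>\ve_0$ for all $j$, completing the proof. (A small bookkeeping point: the conclusion is about $\beta_{\infty,\Gamma}(B)$ on the ball $B$ itself, not $7B$; since the admissible pairs and the variety $Z$ are available on all of $7B_0$, the flatness is in fact obtained on a ball slightly larger than $B$, which is more than enough.)

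The main obstacle I anticipate is the geometric step that upgrades "$Z$ is an analytic variety through $\supp\mu_0$, and admissible pairs exist" into "$G_0$ is locally a flat $C^1$ arc near $\supp\mu_0$, with the admissible pair's diameter as tangent." One must rule out that $Z$ has branch points, isolated points, or self-intersections at points of $\supp\mu_0$, and must exploit the full strength of the corkscrew condition on both $G^+\setminus G_0$ and $G^-\setminus G_0$ (which separates the two sides locally and forbids $Z$ from locally disconnecting a neighborhood into more than two components). The admissible-pair condition does the rest of the work: a pair of genuine complementary semicircles of radius $r$ centered at $x$, sitting in $\overline{G^+}$ and $\overline{G^-}$, forces $G_0\cap \partial B(x,r)$ to consist of exactly the two endpoints of those semicircles, hence $G_0$ near $x$ is squeezed between the two open semidisks and must be a single arc tangent to their common diameter. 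This is where most of the real work lies, and where the separate treatment of the (easier) corkscrew case versus the Jordan case in the paper is presumably motivated — here the corkscrew hypothesis is doing exactly the job of guaranteeing this clean local topological picture, which is not available for general Jordan domains.
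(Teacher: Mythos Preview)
Your overall framework---contradiction, rescaling, compactness for sets and measures, invoking Lemma~\ref{l:convergence} for the corkscrew limit and Lemma~\ref{lem:basicsquare} for the variety $Z$ and admissible pairs---is exactly the paper's approach. The gap is entirely in the ``key geometric step,'' which you yourself flag as the main obstacle; your sketch of it does not work as written and misses the actual mechanism.

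Two concrete problems. First, you conclude only that $G_0$ contains a real-analytic arc $\gamma$ near $\supp\mu_0$, and then assert that such an arc ``lies in an arbitrarily thin strip inside a slightly smaller ball.'' That is false: a circular arc is analytic and one-dimensional but has $\beta_\infty\approx 1$. You must prove $\gamma$ is a \emph{segment}. The paper does this via Lemma~\ref{lemline}: from the admissible semicircumferences one extracts the ``Key Property'' (any arc of length $<\pi r$ on $\partial B(x,r)$ with both endpoints in $\Omega^+$ lies entirely in $\overline{\Omega^+}$), and then a careful configuration of three small balls near a point of strictly positive curvature produces an arc violating this property. Your claim that the admissible pair ``forces $G_0\cap\partial B(x,r)$ to consist of exactly the two endpoints'' is incorrect: the semicircles sit in the \emph{closed} sets $G^\pm$, and $G_0=G^+\cap G^-$, so $G_0$ could a priori meet either semicircle away from the endpoints. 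No tangent-line rigidity follows from that observation alone.

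Second, even once $\gamma$ is a segment on a line $L$, you need that \emph{all} of $G_0\cap B$ (equivalently $\Gamma_\infty\cap B$) lies on $L$, not just $\supp\mu_0\cap B$; otherwise $\Gamma_j\to G_0$ does not force $\beta_{\infty,\Gamma_j}(B)\to 0$. The paper handles this with a separate semicircular-extension argument (the sets $U_x^+$ in the proof of Lemma~\ref{lemline}), again driven by the Key Property, to show $B\cap\R^2_+\subset\Omega^+$ and similarly for the other three cases. Your appeal to the corkscrew condition ``preventing $\Gamma_j$ from having pieces far from $\gamma$'' does not substitute for this: corkscrew balls exist at points of $\Gamma_j$, not at arbitrary points of $B$, and say nothing about where $\Gamma_j$ is absent.
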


The next lemma shows that $2$-sided corkscrew open sets enjoy nice limiting properties under Hausdorff limits.

\begin{lemma} \label{l:convergence}
Let $\{\Omega_j^+\}_j$ be a sequence of $c$-corkscrew planar open sets such that $0\in\partial\Omega_j^+$ and $\inf_j\diam(\Omega^+_j)>0$. Let $\Omega_j^-=\R^2\setminus \overline{\Omega_j^+}$ and $\Gamma_j=
\partial\Omega_j^+$.
 Then the following holds.
\begin{enumerate}
    \item There is a subsequence $j_k$ so that
    \begin{align*}
    \Omega_{j_k}^\pm\to \Omega_\infty^\pm \quad \mbox{and}\quad
        \Gamma_{j_k} \to \Gamma_\infty \quad\mbox{locally.} 
    \end{align*}
    \item The limit sets $\Omega_\infty^\pm$ are $2$-sided corkscrew open sets such that
    $\Omega_\infty^- =\R^2\setminus \overline{\Omega_\infty^+}$ and
    $\Gamma_\infty = \partial\Omega_\infty^+$.

    \item $\Omega_\infty^\pm$  satisfy the following: for any ball $B$ such that $\overline{B} \subset \Omega_\infty^\pm$, then a neighbourhood of $\overline B$ is contained in $\Omega_{j_k}^\pm$ for $k$  sufficiently large.
   \end{enumerate}
\end{lemma}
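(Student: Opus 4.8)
The plan is to establish the three properties in order, each building on the previous one. First, item (1) is immediate: by Lemma \ref{hauscomp} applied to the three sequences of closed sets $\overline{\Omega_j^+}$, $\overline{\Omega_j^-}$, $\Gamma_j$ (each intersecting a fixed ball since $0\in\Gamma_j$), pass to a common subsequence along which all three converge locally, say to $\Omega_\infty^+$, $\Omega_\infty^-$, $\Gamma_\infty$ (relabel these as the closed limit sets; the notation $\Omega_\infty^\pm$ for the limits of the \emph{closures} will be justified a posteriori by item (2)). For later use, record that Lemma \ref{l:convbasic} applies with $G^\pm = \Omega_\infty^\pm$ and $G_0 = \Gamma_\infty$, so $\Omega_\infty^+\cup\Omega_\infty^- = \R^2$, $\Omega_\infty^+\cap\Omega_\infty^- = \Gamma_\infty$, and $\Omega_\infty^\pm\setminus\Gamma_\infty$ are open. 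Note also that since $\inf_j\diam(\Omega_j^+)>0$ and $0\in\Gamma_j$, the limit sets are nondegenerate.

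Second, I would prove item (2). The corkscrew condition passes to the limit: fix $x\in\Gamma_\infty$ and $0<r<\diam(\Omega_\infty^+)$. Choose $x_j\in\Gamma_j$ with $x_j\to x$ (possible by local convergence of $\Gamma_j$), and for $j$ large enough that $r<\diam(\Omega_j^+)$, pick corkscrew balls $B_j^\pm\subset\Omega_j^\pm\cap\overline{B(x_j,r)}$ with $r(B_j^\pm)\geq cr$; passing to a further subsequence the centers and (bounded, bounded-below) radii converge, giving limiting balls $B^\pm$ with $r(B^\pm)\geq cr$ and $\overline{B^\pm}\subset\overline{B(x,r)}$. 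A point in the open ball $B^+$ is at positive distance from $\Gamma_\infty$ and has points of $\Omega_j^+$ nearby for large $j$; using $\Gamma_\infty = \Omega_\infty^+\cap\Omega_\infty^-$ and $\Omega_\infty^+\cup\Omega_\infty^- = \R^2$, such a point lies in $\Omega_\infty^+\setminus\Gamma_\infty$, which is the open set $\R^2\setminus\Omega_\infty^-$; hence $B^+\subset\R^2\setminus\Omega_\infty^-$. So if we \emph{define} $\Omega_\infty^{+,\circ} := \R^2\setminus\Omega_\infty^-$ (open), it contains a $cr$-corkscrew ball inside $\overline{B(x,r)}$ for every $x\in\Gamma_\infty = \partial\Omega_\infty^{+,\circ}$, and symmetrically for $\Omega_\infty^{-,\circ} := \R^2\setminus\Omega_\infty^+$. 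This gives the $2$-sided corkscrew condition once we check $\partial\Omega_\infty^{+,\circ} = \Gamma_\infty$; that boundary identity, together with $\overline{\Omega_\infty^{+,\circ}} = \Omega_\infty^+$ (so that the local limit of the closures is indeed the closure of the open limit), follows from Lemma \ref{l:convbasic}(1): $\Omega_\infty^{+,\circ} = \Omega_\infty^+\setminus\Gamma_\infty$ is open and dense in $\Omega_\infty^+$ because every point of $\Gamma_\infty$ is a corkscrew-accessible limit of the open part, and its boundary is exactly $\Gamma_\infty$.

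Third, for item (3), let $B$ be a ball with $\overline{B}\subset\Omega_\infty^{+,\circ} = \R^2\setminus\Omega_\infty^-$. Then $\overline B$ is a compact set disjoint from the closed set $\Omega_\infty^-$, so by Lemma \ref{lem:baspropslocconv}(1) (applied to the locally convergent sequence $\overline{\Omega_j^-}\to\Omega_\infty^-$) there is a neighbourhood of $\overline B$ having empty intersection with $\overline{\Omega_j^-}$ for all large $k$ (along the subsequence); since $\R^2 = \Omega_j^+\sqcup\overline{\Omega_j^-}$ up to $\Gamma_j\subset\overline{\Omega_j^-}$, more precisely $\R^2\setminus\overline{\Omega_j^-}\subset\Omega_j^+$, that neighbourhood is contained in $\Omega_{j_k}^+$, as desired; the $\Omega_\infty^-$ case is symmetric.

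I expect the main obstacle to be item (2), specifically the bookkeeping needed to reconcile the three a priori different limit objects --- the local limit $\Omega_\infty^+$ of the closures $\overline{\Omega_j^+}$, the local limit $\Gamma_\infty$ of the boundaries, and the genuinely open set $\R^2\setminus\Omega_\infty^-$ that one wants to call ``$\Omega_\infty^+$'' --- and showing they fit together so that $\Omega_\infty^-=\R^2\setminus\overline{\Omega_\infty^+}$ and $\Gamma_\infty=\partial\Omega_\infty^+$ hold on the nose. The corkscrew condition is precisely what forces no ``collapse'' (the limit open set is dense in its closure and its boundary doesn't fatten), so the passage of corkscrew balls to the limit is doing double duty: it both verifies the corkscrew property and supplies the density/boundary identities. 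Everything else is a routine application of Lemmas \ref{hauscomp}, \ref{lem:baspropslocconv}, and \ref{l:convbasic}.
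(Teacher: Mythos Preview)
Your proposal is correct and follows essentially the same approach as the paper's proof: extract a convergent subsequence via Lemma \ref{hauscomp}, invoke Lemma \ref{l:convbasic} to get the basic set-theoretic relations among the limits, pass corkscrew balls to the limit to verify the $2$-sided corkscrew condition and the density/boundary identities, and finally derive item (3) from Lemma \ref{lem:baspropslocconv}(1). The paper is slightly terser---it directly defines $\Omega_\infty^\pm := G^\pm\setminus G^\mp$ and checks $\Gamma_\infty=\partial\Omega_\infty^\pm$ from the disjoint decomposition $\R^2=\Omega_\infty^+\cup\Gamma_\infty\cup\Omega_\infty^-$---whereas you introduce the auxiliary notation $\Omega_\infty^{\pm,\circ}$ and are more explicit about reconciling the closed limits with the open limit sets, but the substance is the same.
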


\begin{proof}
This result is essentially known. See, for example Theorem 4.1 in \cite{Kenig-Toro-AENS}.  However, we are not working under precisely the assumptions in \cite{Kenig-Toro-AENS}, so we provide a proof for the reader following Lemma \ref{l:convbasic}.  First, Lemma \ref{hauscomp} provides us with closed sets $G^{\pm}$ and $G_0$ and a subsequence such that $\Omega_{j_k}^{\pm}\to G^{\pm}$ and $\Gamma_{j_k}\to G_0$ locally as $k\to \infty$.  Taking the subsequence $j_k$ and the sets $G^{\pm}$ and $G_0$ provided in that lemma, we set $\Gamma_{\infty}=G_0$, $\Omega_{\infty}^{+} = G^+\backslash G^-$ and $\Omega_{\infty}^{-} = G^-\backslash G^+$.

Fix $r\in (0, \text{diam}(\Omega_{\infty})$).   Observe that $r<\liminf_{k\to \infty}\text{diam}(\Omega_{j_k})$.  If $x\in \Gamma_{\infty}$ then there is a sequence $x_{j_k}\in \Gamma_{j_k}$ with $\lim_{k\to \infty}x_{j_k} = x$.  Since $\Omega_{j_k}$ is a two-sided $c$-corkscrew domain, and $r<\text{diam}(\Omega_{j_k})$ for sufficiently large $k$, then there are $x_{j_k}^{\pm}\in \Omega_j^{\pm}$ with $|x_{j_k}-x_{j_k}^{\pm}|\leq r$ and $B(x_{j_k}^{\pm}, c_0 r)\subset \Omega_{j_k}^{\pm}$ for $k$ large enough.  Passing to a further subsequence if necessary, we may assume $\lim_{k\to \infty}x_{j_k}^{\pm} = x^{\pm}$.  But then $B(x^{\pm}, c_0 r)\subset G^{\pm}$ (for instance, any element of either of these balls can be obtain as a the limit of a sequence belonging to the respective sequences balls $B(x_{j_k}^{\pm}, c_0 r)$), and therefore $B(x^{\pm}, c_0 r)\subset \Omega_{\infty}^{\pm}$. Also notice that $|x^{\pm}-x|\leq r$.

On the other hand, Property (1) from Lemma \ref{l:convbasic}, ensures that $\R^2 = \Omega_{\infty}^+\cup \Gamma_{\infty}\cup \Omega_{\infty}^-$ and the union is disjoint, and so $$\Gamma_{\infty} = \partial \Omega_{\infty}^+ = \partial\Omega_{\infty}^-.$$

Combining our observations yields that $\Omega^+_{\infty}$ is a two-sided corkscrew open set, and additionally, $\Omega_{j_k}^\pm\to \Omega_\infty^\pm $ locally as $k\to \infty$.  Therefore property (1) of the lemma is proved.  Now property (3) follows from property (1) from Lemma \ref{lem:baspropslocconv}, since $\Gamma_{\infty} = \partial \Omega_{\infty}^+ = \partial\Omega_{\infty}^-$.

\end{proof}

We next analyze what we can say about the natural limit situation given by the conclusions of Lemma \ref{lem:basicsquare}, taking into account that the limit set $G_0 = \Gamma_{\infty}$ is the boundary of a $2$-sided corkscrew open set.

\begin{lemma}\label{lemline}
Let $\Omega^+\subset\R^2$ be a non-empty open set, and let $\Omega^- = \R^2\setminus \overline{\Omega^+}$ and $\Gamma=\partial\Omega^+$. Suppose that $\partial\Omega^- = \Gamma$ too.
Let $\mu$ be a measure with $C_0$-linear growth
supported on $\Gamma$ and let $B$ be a ball centered in $\supp\mu$. Suppose that
\begin{itemize}
\item there is an analytic variety $Z$ with $\supp(\mu_0)\cap B\subset Z\subset \Gamma$, and
\item for each $x\in B\cap \supp\mu$ and all $r\in (0,3r(B))$, there exists
two complementary half-circumferences $C^+(x,r)$, $C^-(x,r)$ with radius $r$ and center $x$ such that
$$C^+(x,r)\subset \overline{\Omega^+}\quad \text{ and }\quad C^-(x,r)\subset \overline{\Omega^-}.$$
\end{itemize}
Then $\Gamma\cap B$ is a segment.
\end{lemma}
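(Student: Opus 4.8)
The plan is to exploit the two hypotheses in tandem: the analytic variety structure forces $\Gamma\cap B$ to be either a finite union of analytic arcs and isolated points (if the defining function $F$ is not identically zero on the relevant component) or all of $B$; the half-circumference hypothesis then rules out everything except a single line segment. First I would analyze the local structure of $Z=F^{-1}(0)$ near a point $x_0\in B\cap\supp\mu$. Because $Z\supset\supp\mu\cap B$ and $\mu$ has $C_0$-linear growth with $\mu(B)>0$, the set $\supp\mu\cap B$ is infinite, so $F$ vanishes on an infinite set; by the structure theory of real-analytic varieties in the plane, through each such $x_0$ the zero set $Z$ is, locally, a finite union of analytic arcs emanating from $x_0$ (a ``spider''), unless $F\equiv 0$ on a neighbourhood, in which case $Z$ contains an open set and hence (since $Z\subset\Gamma$ and $\Gamma=\partial\Omega^+$ has empty interior) this case is impossible. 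So near each point of $\supp\mu\cap B$, the set $\Gamma\cap B$ looks like finitely many analytic arcs meeting at that point.

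Next I would use the complementary half-circumferences to pin down the geometry. Fix $x\in B\cap\supp\mu$; for every small $r$ there are complementary closed half-circumferences $C^+(x,r)\subset\overline{\Omega^+}$, $C^-(x,r)\subset\overline{\Omega^-}$ centred at $x$. Their common endpoints are two antipodal points $p(r),q(r)$ on $\partial B(x,r)$, and these are the only points of $\partial B(x,r)$ that can lie on $\Gamma$ (any other point of the circle is in the interior of $C^+$ or $C^-$, hence in $\overline{\Omega^\pm}$; but one must check it is actually in the open set — this uses that $\Gamma=\partial\Omega^+=\partial\Omega^-$ so that a point in $\Gamma$ cannot have a half-circle through it lying in $\overline{\Omega^+}$ and be ``locally one-sided'' — more precisely if $z\in\Gamma$ then every neighbourhood of $z$ meets both $\Omega^+$ and $\Omega^-$, which is compatible only with $z$ being an endpoint). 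Thus $\Gamma\cap\partial B(x,r)\subset\{p(r),q(r)\}$ for all small $r$. Combined with the spider picture: an analytic arc through $x$ must meet each small circle $\partial B(x,r)$ in at least one point, so there are at most two arcs through $x$, and each must pass through one of the two antipodal directions; moreover the tangent directions at $x$ of these arcs are the limits of the directions of $p(r),q(r)$, which are antipodal, so the (at most two) arcs are tangent to a common line $L_x$ through $x$.

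Then I would upgrade ``tangent to $L_x$'' to ``equal to $L_x$.'' Because the endpoints $p(r),q(r)$ are exactly antipodal for every $r$, the point $x$ is the midpoint of the chord $[p(r),q(r)]$ whenever both endpoints lie on $\Gamma$; iterating/using connectedness of the arc one sees the arc through $x$ cannot curve — formally, take any other point $y\in\Gamma\cap B\cap\supp\mu$ near $x$ on the same arc; applying the same analysis at $y$ shows the arc is tangent to $L_y$ at $y$, and one shows $L_x=L_y$ by noting the arc lies in the intersection of the two strips forced near $x$ and near $y$; letting the scale shrink and using the antipodality (so the ``strip'' has zero width in the limit along the arc) forces the arc to be a straight segment. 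A cleaner route: a connected analytic arc in the plane which meets every sufficiently small circle centred at each of its points in exactly two antipodal points must be a line segment — because the curvature at such a point would produce, at small scales, a chord whose endpoints are not antipodal. Thus $\Gamma\cap B$ is a union of at most two collinear segments through each of its points, and by connectedness of $\supp\mu\cap B$ within $Z$ together with the fact that these are all forced to lie on a single line, $\Gamma\cap B$ is a single segment.

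The main obstacle I anticipate is the rigorous passage from ``the two hypotheses hold at points of $\supp\mu\cap B$'' to a global statement about \emph{all} of $\Gamma\cap B$: a priori $\Gamma\cap B$ could contain pieces far from $\supp\mu$ where neither hypothesis is assumed directly. Here one must use $Z\subset\Gamma$ together with $\supp\mu\cap B\subset Z$: since $Z$ is an analytic variety containing an infinite subset of $B$, each connected component of $Z\cap B$ is a finite union of analytic arcs, and these arcs are forced to be straight by the argument above \emph{wherever they meet $\supp\mu$}; propagating straightness along an analytic arc is automatic (an analytic arc that is straight on a set with an accumulation point is straight everywhere), but one still has to rule out extra components of $\Gamma\cap B$ disjoint from $Z$ — and this is exactly where one invokes that $\Gamma=\partial\Omega^+=\partial\Omega^-$ has empty interior and the half-circumference condition to see that no such stray component can coexist with the segment without violating the local one-sidedness structure. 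Carefully organizing this topological bookkeeping, rather than any hard estimate, will be the crux.
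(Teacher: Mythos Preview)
Your proof hinges on the claim that $\Gamma\cap\partial B(x,r)\subset\{p(r),q(r)\}$, the two antipodal endpoints of the complementary half-circumferences. Your justification for this does not work: knowing that $z$ lies in the relative interior of $C^+(x,r)$ tells you only that $z\in\overline{\Omega^+}$, not that $z\in\Omega^+$. Since $\Gamma\subset\overline{\Omega^+}\cap\overline{\Omega^-}$, nothing prevents $\Gamma$ from meeting the interior of $C^+(x,r)$. Your parenthetical argument --- that every neighbourhood of $z\in\Gamma$ meets both $\Omega^+$ and $\Omega^-$ --- produces points of $\Omega^-$ near $z$, but those points need not lie on the circle $\partial B(x,r)$; they can sit just off the circle while the arc through $z$ remains in $\overline{\Omega^+}$. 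The claim may well be true under the hypotheses (indeed it follows from the lemma's conclusion), but you have not proved it, and there is no obvious direct route to it that bypasses something like the paper's argument.

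The paper addresses the closure-versus-open-set issue differently. Its ``Key Property'' is the weaker statement that an arc of length $<\pi r$ in $\partial B(x,r)$ whose endpoints lie in the \emph{open} set $\Omega^+$ must be contained in $C^+(x,r)\subset\overline{\Omega^+}$. To exploit this one cannot argue on $\Gamma$ directly; instead, assuming nonzero curvature at a point $x_0\in\supp\mu$ on an analytic arc $S\subset Z$, the paper uses $\Gamma=\partial\Omega^+=\partial\Omega^-$ to place three small \emph{open} balls $B_1,B_2,B_3$ near $S\cap\R^2_+$, alternating between $\Omega^+$ and $\Omega^-$ and arranged so that a single circle centred at $x_0$ passes through all three. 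An arc of that circle with endpoints in (say) $B_1,B_3\subset\Omega^+$ and length $<\pi r$ then meets $B_2\subset\Omega^-$, contradicting the Key Property. This geometric construction is the idea your argument is missing.

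Your handling of ``stray'' pieces of $\Gamma\cap B$ away from $\supp\mu$ is also too vague to be a proof. The paper does this concretely: once a full line $L\subset Z\subset\Gamma$ is obtained (analytic continuation of the straight arc $S$), one shows via a semicircular-extension argument --- again driven by the Key Property applied at points of $L\cap\supp\mu$ --- that each open half-plane bounded by $L$, intersected with $B$, lies entirely in one of $\Omega^\pm$, which forces $\Gamma\cap B\subset L$.
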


We remark that the last property regarding the existence complementary half-circum\-ferences $C^+(x,r)$, $C^-(x,r)$ is a consequence of the existence of admissible pairs.

\begin{proof}  Since $\mu$ is non-zero and has linear growth, we have that
$\HH^1(Z) \geq \HH^1(\supp\mu)>0$. Together with the fact that $Z\neq \R^2$ ($\Omega^+$ is non-empty), this implies that there exists an
analytic curve $S$ such that $\mu(S\cap \frac14 B)>0$ (which implies that $\HH^1(S\cap\supp\mu\cap\frac14B)>0$, because of the linear growth of $\mu$).

We claim that $S$ is a segment.
To prove this, it suffices to show that $S$ has vanishing curvature at any point of $\supp\mu\cap S$. Indeed, since this set has positive length and the curvature of
a real analytic arc is locally a real analytic function (with respect the arc-length parametrization from an interval), this implies that the curvature vanishes on the whole arc $S$, and thereby prove that $S$ is a segment.

To show that the curvature of $S$
vanishes at $\supp\mu\cap S$, we will use the following property, which we will call the

\begin{KP}  Given $x\in B\cap\supp\mu$ and $r\in (0,3r(B))$, let $I\subset \partial B(x,r)$ be an arc such that
$\HH^1(I)<\pi r$ whose end-points belong both to $\Omega^+$. Then $I\subset C^+(x,r)$, and thus $I\subset \overline{\Omega^+}$.
The analogous statement holds replacing $\Omega^+$ by $\Omega^-$ and $C^+(x,r)$ by $C^-(x,r)$.
\end{KP}

To verify that the key property holds, note that if $I$ is an arc as above, then its end-points $x_1,x_2$ do not belong to $\overline{\Omega^-}$ (because they belong to
$\Omega^+$). This implies that $x_1,x_2\in C^+(x,r)$, and thus either $I$ or $\partial B(x,r)\setminus I$  is contained in $C^+(x,r)$. The latter cannot hold since $\HH^1(\partial B(x,r)\setminus I)>\pi r = \HH^1(C^+(x,r))$, and so we have
$I\subset C^+(x,r)$.

We are ready now to show that the curvature of $S$
vanishes at every $x\in\supp\mu\cap S$.
Without loss of generality we assume that $x=0$, and that the tangent to $S$ at $0$ is the horizontal axis.

Seeking for a contradiction, suppose that $S$ is strictly convex at $0$ (i.e., if $S$ equals
the graph of the real analytic function $g:(-\delta,\delta)\to\R$ in a neighborhood of $0$, then $g''(0)>0$).

Let $z_1,z_2$ be the two end-points of $S$, and let
$$d_0=\frac12\,\min_{i=1,2}\dist(x,z_i).$$
Let $r\in (0,d_0/2)$ be small enough so that
$B(x,r)\cap S \setminus\{x\}\subset \R^2_+$
where $\R^2_+$ is the {\em open} upper half plane.
We also assume that, moreover, the distance of any point from $ S\cap B(x,r)$
to the horizontal axis is at most $r/1000$.
Let $y_1\in S\cap \partial B(x,r/2)$. Since $\Gamma= \partial\Omega^+$, there exists some ball $B_1\subset\Omega^+$ satisfying
$$\dist(y_1,B_1)+r(B_1)\leq \frac1{10}\dist(y_1,\R^2_-).$$
Let $C_1$ be the circumference centered at $x$ and passing through the center of $B_1$, and let $y_2$ the point belonging to $S\cap C_1$ which is closest to $y_1$ (if $r$ is small enough, the set $S\cap C_1$
consist of two points by strict convexity).
Using now that $\Gamma=\partial\Omega^-$, there exists some ball $B_2\subset\Omega^-$ satisfying
$$\dist(y_2,B_2)+r(B_2)\leq \frac1{10}\,\min\big(\dist(y_2,\R^2_-),r(B_1)\big).$$
Now let  $C_2$ be the circumference centered at $x$ and passing through the center of $B_2$, and let $y_3$ the point belonging to $S\cap C_2$ which is farther from $y_2$ (if $r$ is small enough, the set $S\cap C_2$
consists of two points).

We distinguish now two cases. In the first one we suppose that $B_2$ is above $B_1$ (this happens if
$B_1$ is below $S$), see Figure \ref{fig:2sidedcase1}. Then, using again that $\Gamma=\partial\Omega^+$, there exists some ball $B_3\subset\Omega^+$ satisfying
\begin{equation}\label{eqb3}
\dist(y_3,B_3)+ r(B_3)\leq \frac1{10}\,\min\big(\dist(y_3,\R^2_-),r(B_2)\big).
\end{equation}
In the case that $B_2$ is below $B_1$ (which happens if
$B_1$ is above $S$), using that $\Gamma =\partial\Omega^-$, we can choose the ball $B_3$ so that $B_3\subset\Omega^-$ satisfies also \rf{eqb3}.

 \begin{figure}
\includegraphics[height=5cm]{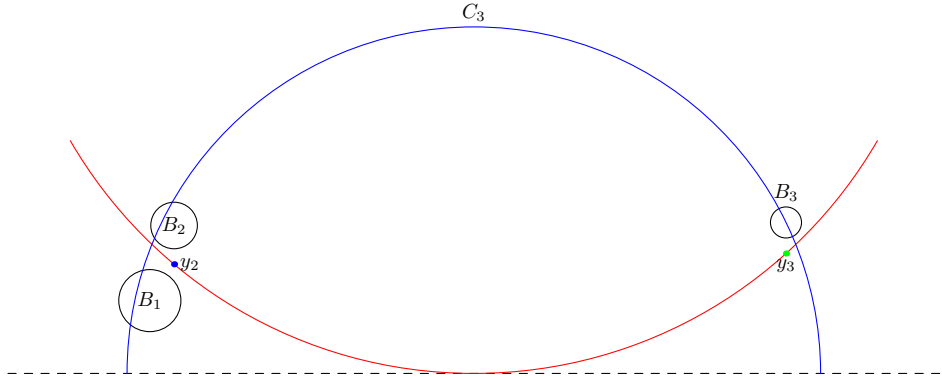}
\caption{The figure depicts the first case, where $B_1$ is below the red curve $S$.  (The ratio of the radii of the balls $B_1,B_2,B_3$ is not to scale, the reader should think of $r(B_1)\gg r(B_2)\gg r(B_3)$.)}
\label{fig:2sidedcase1}
\end{figure}

 In any case, let $C_3$ be the circumference  centered at $x$ passing through the center of $B_3$. Then it follows that
$C_3$ intersects $B_1,B_2,B_3$. Observe that, in either case $B_1,B_2,B_3\subset \R^2_+$.

In the first case, there is an arc in $C_3$ whose end-points  belong respectively to
$B_1,B_3$ (which are contained in $\Omega^+$), passes through $B_2$, and its length is smaller than $\HH^1(C_3)/2$,  due to the fact that its end-points belong to $\R^2_+$. By the Key Property, this arc is contained
in $\overline{\Omega^+}$, which is a contradiction because $B_2\subset\Omega^-$.
 In the second case we deduce that there is an arc in $C_3$ that joints $B_2$ and $B_3$ (which are contained in $\Omega^-$)
and passes through $B_1$, with length smaller than $\HH^1(C_3)/2$. By the Key Property,
the arc is contained in $\overline{\Omega^-}$.
 This is again contradiction, because $B_1\subset\Omega^+$. Hence, the curvature of $S$ at $x$ is zero.

\vv
We now appeal to the following simple fact.
\begin{lemma}\label{varietysegment} If a real analytic variety $Z\subset\R^2$ contains a segment $S$, then it also contains the line $L$ that supports the segment.
\end{lemma}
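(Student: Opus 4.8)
The plan is to reduce the statement to the classical identity theorem for real analytic functions of one real variable. Recall that in our setting (see the construction in the proof of Lemma \ref{lem:basicsquare}) a real analytic variety $Z\subset\R^2$ is the zero set $Z=F^{-1}(0)$ of a globally defined real analytic function $F:\R^2\to\R$. Let $S\subset Z$ be the given segment and $L$ the line supporting it. After composing $F$ with a suitable rotation and translation of $\R^2$ — which are polynomial maps, hence send real analytic functions to real analytic functions, so that $Z$ is transformed into an affine image of itself and it suffices to treat this new variety — we may assume that $L=\{(t,0):t\in\R\}$ is the horizontal axis and that $S=\{(t,0):t\in[a,b]\}$ for some $a<b$.

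Now set $g(t):=F(t,0)$ for $t\in\R$. Since $t\mapsto(t,0)$ is a real analytic map $\R\to\R^2$ and $F$ is real analytic on $\R^2$, the composition $g$ is real analytic on $\R$. By hypothesis $S\subset Z$, so $g\equiv 0$ on the interval $[a,b]$. A real analytic function on $\R$ that vanishes on a set possessing an accumulation point (in particular, on a nondegenerate interval) vanishes identically; hence $g\equiv 0$ on all of $\R$. Equivalently, $F(t,0)=0$ for every $t\in\R$, i.e.\ $L\subset F^{-1}(0)=Z$, which is the desired conclusion.

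There is no real obstacle in this argument; the only two points worth recording are that a rigid motion of $\R^2$ carries real analytic functions to real analytic functions (legitimizing the normalization that $L$ is a coordinate axis), and that the restriction of a real analytic function of two variables to an affine line is real analytic as a function of one variable (so that the one-variable identity theorem is applicable).
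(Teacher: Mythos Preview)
Your proof is correct and follows essentially the same approach as the paper: normalize so that $L$ is the horizontal axis, restrict the defining real analytic function to the line, and invoke the identity theorem. The only cosmetic difference is that the paper phrases the final step via the two-variable function $\phi(x_1,x_2)=\Phi(x_1,0)$, which vanishes on the open set $S\times\R$ and hence identically on $\R^2$, whereas you apply the one-variable identity theorem directly to $g(t)=F(t,0)$; the content is the same.
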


\begin{proof}[Proof of \ref{varietysegment}]  By a suitable translation and rotation we can assume that the line $L$ supporting $S$
coincides with the horizontal axis of $\R^2$. Let $\Phi:\R^2\to \R$ be a real analytic function such that $Z=\Phi^{-1}(0)$. Then,
the function defined by $\phi(x_1,x_2)=\Phi(x_1,0)$ is real analytic, and it vanishes in the interior of the
set $S\times \R$ and thus it vanishes identically in $\R^2$. That is, $\Phi(x_1,0)=0$ for all $x_1\in \R$, or, in other words, $L\subset \Phi^{-1}(0)=Z$.\end{proof}

\vv
Returning to the proof of Lemma \ref{lemline}, Lemma \ref{varietysegment} shows that $\Gamma$ contains a line $L$ such that $\mu(L\cap \frac14B)>0$.\\

Our next objective consists of showing that $\Gamma\cap B\subset L\cap B$, which will complete the proof of the lemma.  Again, without loss of generality, suppose that $L$ is the horizontal axis.\\

 Suppose that $B\cap \R^2_+\cap\Omega^+\neq\varnothing$. We intend to show that then
$B\cap \R^2_+\subset\Omega^+$.\\

For $x\in L$, consider the following semicircular extension of
 $\Omega^+\cap B(x,3r(B))\cap\R^2_+\cap$ with respect to the center $x$:
\begin{equation}\label{equ+}
U_x^+= \bigcup_{r\in(0,3r(B)): \partial B(x,r)\cap\Omega^+ \cap \R^2_+\neq\varnothing}
\big(\partial B(x,r)\cap \R^2_+\big).
\end{equation}
Observe that $U_x^+$ is also an open set.

\begin{claim}\label{extensionsemicircle} If $x\in \supp(\mu)\cap B(x, 3r(B))\cap \R^2_+$, then $$\Omega^+\cap B(x,3r(B))= U_x^+.$$\end{claim}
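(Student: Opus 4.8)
Claim \ref{extensionsemicircle} asserts that, for $x\in\supp(\mu)\cap B(x,3r(B))\cap\R^2_+$ (where the relevant point of $\supp\mu$ on $L$ should be understood as the center from which $U_x^+$ is built, and the point in $\R^2_+$ is a point of $\Omega^+$), one has $\Omega^+\cap B(x,3r(B)) = U_x^+$. The inclusion $U_x^+\subset \overline{\Omega^+}$ should come essentially from the Key Property, while the reverse inclusion is where the work lies.

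The plan is as follows. First I would observe that $U_x^+\subset\overline{\Omega^+}$: if a circle $\partial B(x,r)$ meets $\Omega^+\cap\R^2_+$, then since $x\in L\subset\Gamma$ and both $C^+(x,r),C^-(x,r)$ are at our disposal, I want to show the entire upper semicircle $\partial B(x,r)\cap\R^2_+$ lies in $\overline{\Omega^+}$. The point on $\partial B(x,r)\cap\R^2_+\cap\Omega^+$ cannot lie on $C^-(x,r)\subset\overline{\Omega^-}$ unless it is an endpoint; comparing with the fact that one of the two complementary semicircles is exactly the upper half $\partial B(x,r)\cap\R^2_+$ is not automatic, so instead I would argue via the Key Property applied to short arcs: any arc of $\partial B(x,r)$ of length $<\pi r$ with endpoints in $\Omega^+$ lies in $\overline{\Omega^+}$. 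Since $\Omega^+\cap\R^2_+$ is open, a point of $\Omega^+$ on the upper semicircle can be joined to nearby points on the same semicircle by arbitrarily short arcs with endpoints in $\Omega^+$, so by connectedness of the (relatively open) upper semicircle and the Key Property, all of $\partial B(x,r)\cap\R^2_+$ lies in $\overline{\Omega^+}$; being also open in $\partial B(x,r)$ it meets $\Omega^+$ densely, giving $U_x^+\subset\overline{\Omega^+}$, and in fact $U_x^+\cap B(x,3r(B))\subset\Omega^+$ since $U_x^+$ is open and $\Gamma=\partial\Omega^+$ has empty interior relative to it. For the reverse inclusion $\Omega^+\cap B(x,3r(B))\subset U_x^+$, note that if $y\in\Omega^+\cap B(x,3r(B))$ then $y\in\R^2_+$ (this is what we must also verify — that $\Omega^+$ cannot dip below $L$ inside $B$, which follows because $L\subset\Gamma$ so $\R^2_-\cap B$ would have to be separated; more carefully one uses that the lower semicircle argument symmetrically forces $\Omega^-$ to occupy $\R^2_-\cap B(x,3r(B))$), hence with $r=|y-x|$ the circle $\partial B(x,r)$ meets $\Omega^+\cap\R^2_+$ at $y$, so $y\in\partial B(x,r)\cap\R^2_+\subset U_x^+$.

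The main obstacle I anticipate is pinning down the geometry of the complementary semicircles $C^\pm(x,r)$ relative to the line $L$. Nothing \emph{a priori} forces $C^+(x,r)$ to be the upper semicircle; the content of the Key Property is only that once an arc of length $<\pi r$ has its endpoints in $\Omega^+$, it cannot be the "long" piece and hence must sit inside $C^+(x,r)\subset\overline{\Omega^+}$. So the careful step is the connectedness/open-closed argument on $\partial B(x,r)\cap\R^2_+$: I must show that \emph{some} point of this open semicircle lies in $\Omega^+$, then propagate along the semicircle using short arcs, and separately rule out that the semicircle exits through a point of $\Gamma$ back into $\Omega^-$ (which the Key Property forbids, since a point of $\Omega^-$ on the semicircle, being interior, would let us build a short arc with endpoints in $\Omega^-$ passing through a point we have already shown to be in $\Omega^+$, a contradiction). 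I would also use at the end that $\partial B(x,r)\cap L$ consists of two points (the endpoints of both $C^+(x,r)$ and the upper semicircle), so the decomposition $\partial B(x,r) = (\partial B(x,r)\cap\R^2_+)\cup(\partial B(x,r)\cap L)\cup(\partial B(x,r)\cap\R^2_-)$ is clean.

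Finally I would assemble the two inclusions. Since $U_x^+$ is open and contained in $\overline{\Omega^+}$ while $\Gamma$ has no interior (it equals $\partial\Omega^+=\partial\Omega^-$, so $\R^2 = \Omega^+\sqcup\Gamma\sqcup\Omega^-$), actually $U_x^+\subset\Omega^+$; intersecting with $B(x,3r(B))$ gives $U_x^+\subset\Omega^+\cap B(x,3r(B))$, and the reverse inclusion was shown above, yielding the claimed equality $\Omega^+\cap B(x,3r(B)) = U_x^+$. This claim then feeds into the proof that $B\cap\R^2_+\subset\Omega^+$, completing the argument that $\Gamma\cap B\subset L\cap B$ in Lemma \ref{lemline}.
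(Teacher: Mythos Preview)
Your identification of the two inclusions and of the Key Property as the main tool is correct, and the easy inclusion $\Omega^+\cap B(x,3r(B))\cap\R^2_+\subset U_x^+$ is indeed immediate from the definition. The problem lies in your argument for $U_x^+\subset\Omega^+$.

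The connectedness/propagation argument on a single upper semicircle $S=\partial B(x,r)\cap\R^2_+$ does not close. The Key Property only applies to arcs whose \emph{endpoints} lie in $\Omega^+$ (respectively $\Omega^-$). Starting from a point $p\in S\cap\Omega^+$, you can conclude that the arc between any two points of $S\cap\Omega^+$ lies in $\overline{\Omega^+}$, but this only gives you the ``convex hull'' of $S\cap\Omega^+$ inside $S$, not all of $S$. To extend further you would need endpoints in $\Omega^+$ beyond that hull, which you do not have. Your proposed remedy --- ``a point of $\Omega^-$ on the semicircle \dots\ would let us build a short arc with endpoints in $\Omega^-$ passing through a point we have already shown to be in $\Omega^+$'' --- requires a point of $\Omega^-$ on \emph{each side} of $p$ along $\partial B(x,r)$, at arc-distance less than $\pi r$. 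You only produce one such point $q$ (and its small neighbourhood); the endpoints of $S$ lie on $L\subset\Gamma$, not in $\Omega^-$, so they cannot serve as the second endpoint. Nothing in the hypotheses for this single radius $r$ forces the complementary pair $(C^+(x,r),C^-(x,r))$ to have its endpoints on $L$, so the configuration where part of $S$ sits in the interior of $C^-(x,r)$ is not excluded by your argument.

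The paper overcomes exactly this difficulty by leaving the circle $\partial B(x,r)$. After locating a small ball $B_1'\subset U_x^+\cap\Omega^-$ (via $\Gamma=\partial\Omega^-$), it picks a point $z$ where the circle through the centre of $B_1'$ meets $L$, chosen so that the short arc from $z$ to $B_1'$ crosses $\Omega^+$. Since $z\in L\subset\Gamma=\partial\Omega^-$, there is a further small ball $B_2'\subset\Omega^-$ near $z$. One then passes to the \emph{new} concentric circle $C_2'$ through the centre of $B_2'$: this circle meets both $B_1'$ and $B_2'$ (both in $\Omega^-$), and the short arc joining them still crosses $\Omega^+$. Now the Key Property for $\Omega^-$ applies to that arc and gives the contradiction. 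The manoeuvre of perturbing to a nearby concentric circle --- using the hypothesis at a different radius --- is precisely the step your outline is missing.
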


\begin{proof}[Proof of Claim \ref{extensionsemicircle}]  The arguments we use are similar to those required to show that the curve $S$ had vanishing curvature.  We need to show that $U_x^+\subset \Omega^+$ (recall $L\subset \partial\Omega^+$).  Assuming otherwise, there exists some point $y\in U_x^+\cap \overline{\Omega^-}$. By connectivity, then we deduce that there
exists some $r\in(0,3r(B))$ such that
$$\partial B(x,r)\cap\Omega^+\cap \R^2_+\neq \varnothing \quad \text{ and }\quad
\partial B(x,r)\cap\Gamma\cap \R^2_+\neq \varnothing.$$
Because of the existence of some point $y'\in \partial B(x,r)\cap\Gamma\cap \R^2_+$, the fact that
$\Gamma=\partial\Omega^-$,
and the openness of $U_x^+$, we deduce that there exists some ball $B_1'\subset U_x^+\cap\Omega^-$.
Let $C_1'$ be the circumference centered at $x$ passing through the center of $B_1'$.
Choose one of the two points $z\in C_1'\cap L$ so that the {\em shortest} arc in $C_1'$
that joints $z$ to $B_1'$ intersects $\Omega^+$. See Figure \ref{fig:extensionbad} below.

 \begin{figure}
\includegraphics[height=8cm]{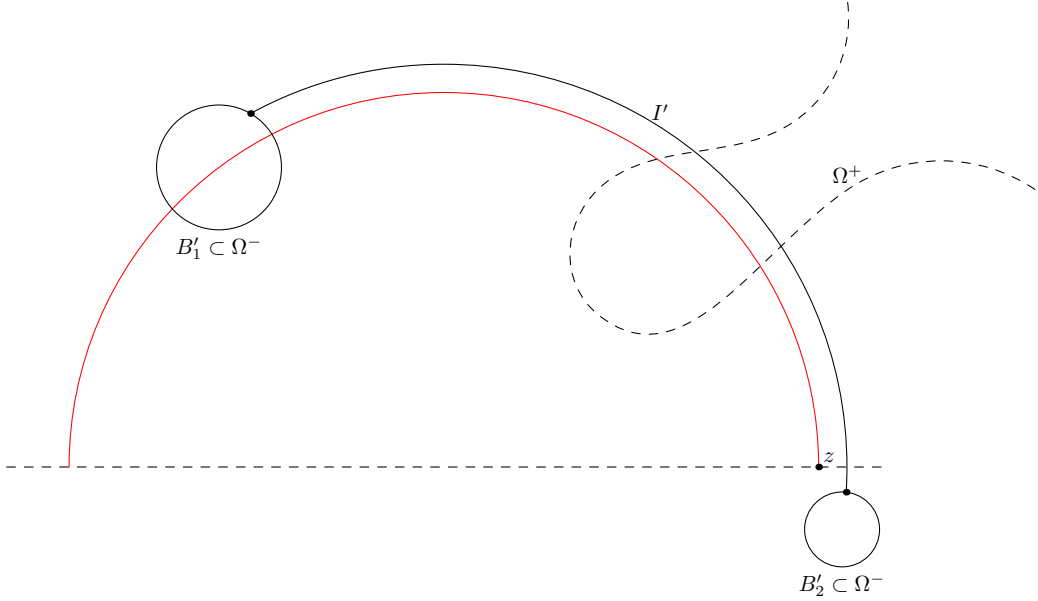}
\caption{The figure depicts geometric set up in the proof of Claim \ref{extensionsemicircle}.  In particular, observe the arc $I'$ with end-points belonging to $\Omega^-$ that intersects $\Omega^+$.}
\label{fig:extensionbad}
\end{figure}

Again by the fact that $\Gamma=\partial\Omega^-$, there exists some ball $B_2'\subset \Omega^-$ such that
$$r(B_2') + \dist(z,B_2') \leq \frac1{100}\,\min\big(r(B_1'),\dist(B_1',L)\big).$$
Let $C_2'$ be the circumference centered at $x$ passing through the center of $B_2'$. It is easy to check
that there is an arc $I'\subset C_2'$ whose end-points belong respectively to $B_1'$ and $B_2'$, such that
it intersects $\Omega^+$, and moreover has length smaller that $\frac12\HH^1(C_2')$. Since $B_1'$ and $B_2'$
are contained in $\Omega^-$, the whole $I'$ is contained in $\overline{\Omega^-}$ by the Key Property, which contradicts
the fact that $I'\cap \Omega^+\neq\varnothing$.\end{proof}

Recall that we are assuming that $B\cap \R^2_+\cap\Omega^+\neq\varnothing$ and we want to show that then
$B\cap \R^2_+\subset\Omega^+$. Suppose that this not the case. Of course, this implies that if $x_B\in \supp(\mu)\cap L$ is the centre of $B$, then $B(x_B,2r(B))
\cap \R^2_+\not\subset \Omega^+$. Let $V$ be a connected component of
$\Omega^+\cap B(x_B,2r(B))
\cap \R^2_+$. Since, by Claim \ref{extensionsemicircle}, $V$ coincides with its semicircular extension centered at $x_B$, it is of the form
$$V= A(x_B,s_1,s_2)\cap \R^2_+\quad \text{ or } \quad V=B(x_B,s_1)\cap \R^2_+,$$
with $s_1< 2r(B)$ in any case (because $V\neq B(x_B,2r(B))\cap\R^2_+$ by assumption). Let $x'\in \frac{1}{10}B\cap L\cap \supp\mu$, $x'\neq x_B$ (the existence of $x'$ is an immediate consequence of the linear growth of $\mu$).
By Claim \ref{extensionsemicircle}, the semicircular extension $U_{x'}$ centred at $x'$ is also
contained in $\Omega^+$, but then $$\Omega_+\supset U_{x'}\supset  \bigcup_{r\in(0,3r(B)): \partial B(x',r) \cap \R^2_+\cap V\neq\varnothing}
\big(\partial B(x',r)\cap \R^2_+\big)\supset \partial B(x_B,s_1) \cap \R^2_+,$$ which contradicts the definition of $V$ as a connected component of $\Omega^+$.

We have now verified that, if  $B\cap \R^2_+\cap\Omega^+\neq\varnothing$ then
$B\cap \R^2_+\subset\Omega^+$.  But by completely analogous arguments, we see that if $B\cap \R^2_+\cap\Omega^-\neq\varnothing$ then
$B\cap \R^2_+\subset\Omega^-$, and one can interchange the upper half plane with the
lower half plane. We therefore conclude that $B\cap \partial\Omega^+\subset L$, and the proof of the Lemma \ref{lemline} is complete.
\end{proof}

\vv

\begin{proof}[\bf Proof of Lemma \ref{lembeta}]
By renormalizing it suffices to prove the lemma for the ball $B_0:=B(0,1)$. We argue by contradiction: then there exists an $\ve>0$ such that for all $k \in \N$, there exists a $2$-sided $c$-corkscrew open set $\Omega_k^+$ with
$\Gamma_k:=\partial \Omega_k^+$ containing $0$ supporting a measure $\mu_k$ with $C_0$-linear growth with $\mu_k(B_0)> c_0$, so that we have
\begin{align} \label{e:Gamma-k-small}
    \int_{7B_0}\int_0^{7} \big(\ve_k(x,r)^2 + \alpha^+_{k}(x,r)^2\big) \, \dr \, d\mu_k(x) \leq \frac{1}{k}\,\mu_k(7_0),
\end{align}
and
\begin{align*}
    \beta_{\infty, \Gamma_k}(B_0) > \ve.
\end{align*}
Here we denote by $\ve_k(x,r)$ and $\alpha^+_{k}(x,r)$ the coefficients $\ve(x,r)$ and $\alpha^+(x,r)$
associated with $\Omega_k^+$.

Observe that the condition $\mu_k(B_0)\approx1$ and the linear growth of
$\mu_k$ imply that $\diam(\Omega_k^+)\geq \diam(\Gamma_k)\geq \diam(B_0\cap\supp \mu_k)\gtrsim1$.  Therefore, passing to a subsequence (which we relabel) if necessary, we may apply Lemma \ref{l:convergence} to find a $2$-sided $c$-corkscrew open sets $\Omega^{\pm}$ such that $\Gamma = \partial \Omega^{\pm}$ and
$$\lim_{k]\to \infty}\Omega_k = \Omega \text{ and }\lim_{k\to \infty}\Gamma_k=\Gamma \text{ locally as }k\to\infty.
$$
This implies that $\beta_{\infty, \Gamma}(\overline{B_0}) \geq \ve$.

Next, Lemma \ref{lineargrowthcompact} ensures that, by passing to a further subsequence if necessary, we may assume that the  measures $\mu_k$ converge weakly to a measure $\mu$, supported on $\Gamma$, with $C_0$-linear growth and $\mu(\overline{B_0})\geq c_0$.

We now apply Lemma \ref{lem:basicsquare}.  Therefore, there is an analytic variety $Z$ such that $7B_0\cap \supp(\mu)\subset Z\subset \Gamma$, and for every $x\in 7B_0\cap \supp(\mu)$ and $r\in (0,7)$
\begin{equation}\begin{split}\label{2sidedproofemicirc}
&\text{there are complementary semicircumferences }(C^+, C^-)\text{ centred at }x\\
&\text{with radius }r\text{ satisfying }C^{\pm}\subset \Omega^{\pm}
\end{split}\end{equation}
Since $\mu(\overline{B_0})\geq c_0$, we can now find a ball $B'$ centred on $\supp(\mu)\cap\overline{B_0}$ such that $7B_0\supset B'\supset \overline{B_0}$ such that $\supp(\mu)\cap B'\subset Z$ and (\ref{2sidedproofemicirc}) holds for every $x\in \supp(\mu)\cap B'$ and $r\in (0, 3r(B'))$. We now apply Lemma \ref{lemline} with the ball $B'$ to conclude that
  $\Gamma_\infty\cap B'$ (and so $\Gamma\cap \overline{B_0}$) is a segment.  This, however, contradicts the fact that $\beta_{\infty, \Gamma}(\overline{B_0}) \geq \ve$.
\end{proof}

\vv


\section{The case of Jordan domains}

In this section we shall prove Main Lemma \ref{l:main1} in the case of a Jordan domain, which we restate for the benefit of the reader.


\begin{lemma}\label{lembetajordan}
Let $\Omega^+\subset\R^2$ be a Jordan domain, let $\Gamma=\partial\Omega^+$, and let $\mu$ be a measure with $C_0$-linear growth
supported on $\Gamma$. Let $B$ be a ball centered at $\Gamma$ such that
$$\mu(B)\geq c_0 r(B),$$
for some $c_0 \in (0, C_0)$.
Given any $\ve>0$, there exists $\delta>0$ (depending on $C_0,c_0,\ve$) such that if
$$\int_{7B}\int_0^{7r(B)} \big(\ve(x,r)^2 + \alpha^+(x,r)^2\big)\,d\mu(x)\frac{dr}r \leq \delta\,\mu(7B),$$
then
$$\beta_{\infty,\Gamma}(B)\leq \ve.$$
\end{lemma}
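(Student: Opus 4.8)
The plan is to mimic the structure of the proof of Lemma \ref{lembeta}, replacing the two-sided corkscrew hypothesis by the fact that a Jordan domain enjoys a \emph{one-sided} corkscrew property at points and scales where $\mu$ has substantial mass and the Carleson square function is small. First I would argue by contradiction and renormalization: assuming the lemma fails for $B_0 = B(0,1)$, one obtains a sequence of Jordan domains $\Omega_k^+$ with boundaries $\Gamma_k \ni 0$, measures $\mu_k$ with $C_0$-linear growth and $\mu_k(B_0) > c_0$, such that the square-function integral over $7B_0$ is at most $\tfrac1k\mu_k(7B_0)$, yet $\beta_{\infty,\Gamma_k}(B_0) > \ve$. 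Using $\mu_k(B_0)\gtrsim 1$ and linear growth, $\diam(\Gamma_k)\gtrsim 1$, so by the compactness results (Lemma \ref{hauscomp} and Lemma \ref{lineargrowthcompact}) we may pass to a subsequence along which $\overline{\Omega_k^\pm}\to G^\pm$, $\Gamma_k\to G_0$ locally, and $\mu_k \to \mu_0$ weakly, with $\mu_0$ supported in $G_0$, having $C_0$-linear growth and $\mu_0(\overline{B_0})\geq c_0$; moreover $\beta_{\infty,G_0}(\overline{B_0})\geq \ve$ (lower semicontinuity of the $\beta$-number under local Hausdorff convergence). Lemma \ref{lem:basicsquare} then produces an analytic variety $Z$ with $7B_0\cap\supp\mu_0 \subset Z \subset G_0$, and a pair of admissible semicircumferences (hence complementary half-circumferences $C^\pm(x,r)\subset \overline{G^\pm}$) centered at every $x\in 7B_0\cap\supp\mu_0$ for every $r\in(0,7r(B_0))$.

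The obstacle — and the reason this case is genuinely harder than Lemma \ref{lembeta} — is that for a general Jordan domain the limit set $G_0$ need not be the boundary of an open set in the required sense: one does not automatically have $\partial G^+ = \partial G^- = G_0$, nor that $G^+\setminus G_0$, $G^-\setminus G_0$ are nicely separated the way $\Omega_\infty^\pm$ were in Lemma \ref{l:convergence}. So I cannot directly invoke Lemma \ref{lemline}. The repair, as the introduction signals (Lemma \ref{lem2cork}), is to establish that at points of $\supp\mu_k$ and scales up to $\approx r(B_0)$ one \emph{does} have corkscrew balls on both sides of $\Gamma_k$: this is where the hypothesis $\mu_k(B_0)\geq c_0 r(B_0)$ together with smallness of the square function is used, exploiting that a Jordan curve separates the plane and that $\alpha^+_k(x,r)$ small forces $\Omega_k^+$ to occupy roughly half of $B(x,r)$, hence (by an isoperimetric/connectivity argument) to contain a ball of proportionate radius, and similarly for $\Omega_k^-$. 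Passing to the limit, these corkscrew balls survive and yield the hypotheses needed to run the Key Property argument of Lemma \ref{lemline}: for every $x\in\supp\mu_0\cap B'$ (for a suitable ball $B_0\subset B'\subset 7B_0$ centered on $\supp\mu_0$) and every $r\in(0,3r(B'))$ there are complementary half-circumferences $C^\pm(x,r)$ with $C^+(x,r)\subset\overline{G^+}$, $C^-(x,r)\subset\overline{G^-}$, together with the separation property $\partial G^+ = \partial G^- = G_0$ on this ball.

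With those ingredients in hand, the remainder parallels Lemma \ref{lemline} verbatim: the analytic curve $S$ inside $Z$ through a density point of $\mu_0$ must have vanishing curvature (the three-ball/Key-Property contradiction), hence $S$ is a segment, hence by Lemma \ref{varietysegment} the supporting line $L$ lies in $Z\subset G_0$ with $\mu_0(L\cap\tfrac14 B')>0$; then the semicircular-extension argument (the analogue of Claim \ref{extensionsemicircle}) forces $G_0\cap B' = L\cap B'$, i.e.\ $\Gamma_k\cap B'$ converges to a segment. This contradicts $\beta_{\infty,G_0}(\overline{B_0})\geq\ve$, completing the proof. The one technical point requiring care throughout is that all the arguments of Lemma \ref{lemline} were phrased for an actual open set $\Omega^+$ with $\partial\Omega^+=\partial\Omega^-=\Gamma$; here one applies them with $\Omega^+$ replaced by $\interior{(G^+)}$ (equivalently $G^+\setminus G_0$) and must check, using the corkscrew property at scales $\lesssim r(B_0)$, that these sets satisfy $\partial = G_0$ on $B'$, which is exactly the content of the corkscrew lemma for Jordan domains (Lemma \ref{lem2cork}) passed to the limit.
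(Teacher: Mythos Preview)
Your setup through the compactness step and the invocation of Lemma \ref{lem:basicsquare} is correct, and you correctly identify the obstacle: for Jordan domains the limit set $G_0$ need not equal $\partial(G^+\setminus G_0)=\partial(G^-\setminus G_0)$, so Lemma \ref{lemline} cannot be applied directly. But your proposed repair does not work. Lemma \ref{lem2cork} (and its limit form, property \eqref{holesbymeasure} of Lemma \ref{l:convjordan2}) only produces corkscrew balls $B^\pm\subset G^\pm\setminus G_0$ at points $x\in\supp\mu_0$ and scales $r$ where $\mu_0(B(x,r))\geq r/M$. It says nothing about points of $G_0$ away from $\supp\mu_0$, and hence does \emph{not} yield $\partial(G^\pm\setminus G_0)=G_0$ on any ball. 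The paper explicitly warns that $G_0$ may have nonempty interior and $G^\pm\setminus G_0$ may even be empty.

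This breaks both steps you want to import from Lemma \ref{lemline}. The Key Property argument for vanishing curvature, and especially the semicircular-extension argument (Claim \ref{extensionsemicircle}), repeatedly use $\Gamma=\partial\Omega^-$ to find small balls in $\Omega^\pm$ near an \emph{arbitrary} boundary point; here you can only do that near points of large $\mu_0$-density. The paper therefore proceeds in two genuinely new stages. First, Lemma \ref{l:convjordan4} adapts the curvature argument using density points of $\mu_0$ to conclude only the weaker statement that some line $L\subset G_0$ satisfies $\mu_0(L\cap 2B_0)>0$ --- not that $G_0\cap B'$ is a segment. Second, to upgrade this to $G_0\cap 2B_0\subset L$, the paper exploits the full \emph{admissible pairs} property (which remembers that the approximating curves are Jordan) through Lemmas \ref{lemlongarcstech}--\ref{lemclau80}: one shows admissible pairs at points of $L\cap\supp\mu_0$ can be taken with endpoints on $L$, and then that any putative point of $G_0\cap\R^2_+$ would force long limit arcs of $\Gamma_{j_k}$ that obstruct the existence of such admissible pairs at a second point of $L\cap\supp\mu_0$. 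This machinery is the essential missing idea in your plan.
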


The first auxiliary result we need is the following, which states that, at points where the Carleson square function is sufficiently small, we may find corkscrew balls.

\begin{lemma}\label{lem2cork}
Let $\Omega^+\subset\R^2$ be a Jordan domain. Let $x\in\Gamma=\partial\Omega^+$, $r>0$, and $x'\in \Gamma\cap \partial B(x,r)$. Suppose that
$$\int_0^{2r}\big(\ve(x,t)^2 + \ve(x',t)^2\big)\,\frac{dt}t \leq \delta,$$
for some $\delta>0$. If $\delta$ is small enough, then there are two balls $B^\pm\subset B(x,r)\cap \Omega^\pm$ such that $r(B^+)\approx r(B^-)\approx r$, where the implicit constants are absolute.
\end{lemma}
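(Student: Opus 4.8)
The plan is to argue by contradiction and compactness, exactly as in the proof of Lemma \ref{lembeta}. Suppose the statement fails: then for each $k\in\N$ there is a Jordan domain $\Omega_k^+$ with boundary $\Gamma_k$, points $x_k\in\Gamma_k$ and $x_k'\in\Gamma_k\cap\partial B(x_k,r_k)$ with
$$\int_0^{2r_k}\bigl(\ve_k(x_k,t)^2+\ve_k(x_k',t)^2\bigr)\,\frac{dt}t\le \tfrac1k,$$
yet one cannot find balls $B^\pm\subset B(x_k,r_k)\cap\Omega_k^\pm$ of radius comparable to $r_k$ with an absolute comparability constant; concretely, the largest such balls have radius $\le \eta_k r_k$ with $\eta_k\to0$. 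After translating so that $x_k=0$ and dilating so that $r_k=1$ (which preserves the hypothesis, since the $\ve_k$-coefficient is scale invariant), we have $0,x_k'\in\Gamma_k$ with $|x_k'|=1$. Passing to a subsequence, $x_k'\to x_\infty$ with $|x_\infty|=1$, and $\Gamma_k\to G_0$ locally (Lemma \ref{hauscomp}), with $\overline{\Omega_k^\pm}\to G^\pm$ locally, so Lemma \ref{l:convbasic} applies. Note that since $\mu$ plays no role here, I would simply take $\mu_k=\mathcal H^1|_{\Gamma_k\cap \overline{B(0,2)}}$ truncated to have linear growth, or more cleanly use point masses: the role of $\mu_0$ in Lemma \ref{lem:basicsquare} here is played just by the two points $0$ and $x_\infty$. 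In fact, the cleanest route is to bypass Lemma \ref{lem:basicsquare} and directly extract, as in the proof of its property (2), admissible pairs: from the smallness of $\int_0^2\ve_k(0,t)^2\frac{dt}t$ and $\int_0^2\ve_k(x_k',t)^2\frac{dt}t$ one finds, for every $r\in(0,2)$, radii $s\in[\tau_k r,r]$ (with $\tau_k\to1$) at which $\ve_k(0,s)$ and $\ve_k(x_k',s)$ are $\lesssim k^{-1/8}$, hence arcs $I_k^\pm$ of length $\ge(\pi-k^{-1/8})s$ in $\overline{\Omega_k^\pm}\cap\partial B(0,s)$ and similarly centered at $x_k'$. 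Taking Hausdorff limits yields: for each $x\in\{0,x_\infty\}$ and each $r\in(0,2)$, a pair of complementary closed semicircumferences $C^\pm(x,r)$ centered at $x$ of radius $r$ with $C^+(x,r)\subset G^+$ and $C^-(x,r)\subset G^-$.

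Next I would show this forces $G^+$ and $G^-$ to have nonempty interiors near $0$. Since $C^+(0,r)$ is a full semicircle in $G^+$ for \emph{every} $r\in(0,2)$, and $G^+$ is closed, the union $\bigcup_{r\in(0,2)}C^+(0,r)$ is a set that fills out (up to its boundary) a half-disk; more precisely, if $x_\infty=e^{i\theta_0}$ and we knew all these semicircles were on a consistent side we would be done immediately, but they may rotate with $r$. Here is where the Key Property mechanism of Lemma \ref{lemline} is the right tool, and I expect this to be the main obstacle: I need to upgrade ``a semicircle in $G^\pm$ at every scale at two points at distance $1$'' into ``$G^+$ and $G^-$ each contain an open ball of radius $\gtrsim1$ inside $B(0,1)$''. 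The argument: pick the semicircle $C^+(0,1/2)\subset G^+$; it has a midpoint $p$ with $|p|=1/2$, lying on one side of the line through $0$ in the direction of $C^+(0,1/2)$'s diameter. Using that $G^+\setminus G_0$ is open (Lemma \ref{l:convbasic}(1)) and $\Gamma_k=\partial\Omega_k^-$, together with the two-point admissible-pair information and a Key-Property-type argument run at the limit (arcs whose endpoints lie in one phase, which are shorter than a semicircle, must lie entirely in the closure of that phase), one shows that a fixed neighborhood $B(p,c)$ of $p$ with $c$ absolute cannot meet $G^-$: if it did, connectivity of $\Gamma_\infty$ plus the complementary-semicircle data would produce a short arc with endpoints in $G^+$ passing through $G^-$, contradicting the limiting Key Property. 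Hence $B(p,c)\subset G^+\setminus G_0$, i.e. $B(p,c)\subset \Omega_\infty^+:=G^+\setminus G^-$. The symmetric argument, using $C^-(0,1/2)$, gives $B(q,c)\subset \Omega_\infty^-:=G^-\setminus G^+$ for some $q$ with $|q|=1/2$.

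Finally I would transfer this back to finite $k$. By property (3)-type reasoning — specifically, Lemma \ref{lem:baspropslocconv}(1) applied to the compact set $\overline{B(p,c/2)}$, which is disjoint from the closed set $G^-$ — a neighborhood of $\overline{B(p,c/2)}$ has empty intersection with $\overline{\Omega_k^-}$ for all large $k$, hence $B(p,c/2)\subset\Omega_k^+$; likewise $B(q,c/2)\subset\Omega_k^-$ for all large $k$. Moreover $B(p,c/2)\cup B(q,c/2)\subset B(0,1)=B(x_k,r_k)$ once $c<1$. Thus for all large $k$ we have found balls of radius $c/2$ — an absolute constant — inside $B(x_k,r_k)\cap\Omega_k^\pm$, contradicting $\eta_k\to0$. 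This contradiction proves the lemma, with the implicit comparability constant being the absolute constant $c/2$ produced by the limiting geometric argument. The delicate point to get right in the full write-up is the limiting Key Property and the precise connectivity argument ensuring the absolute separation constant $c$; everything else is a direct adaptation of the proofs of Lemmas \ref{lem:basicsquare} and \ref{lembeta}.
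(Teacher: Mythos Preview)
Your compactness approach has a genuine circularity gap at the step you yourself flag as ``the main obstacle.'' In the paper's treatment of Jordan domains, the way one gets points in $G^+\setminus G_0$ and $G^-\setminus G_0$ (which is what you need to run any Key-Property-type argument at the limit) is precisely via Lemma~\ref{lem2cork} --- see property~(\ref{holesbymeasure}) of Lemma~\ref{l:convjordan2}, which is proved by passing Lemma~\ref{lem2cork} to the limit. So invoking a ``Key Property'' at the limit to produce the balls $B(p,c)\subset G^+\setminus G_0$ and $B(q,c)\subset G^-\setminus G_0$ presupposes the very conclusion you are trying to establish. Concretely: the admissible-pair information only gives $C^+(0,r)\subset G^+$ and $C^-(0,r)\subset G^-$, and since $G_0=G^+\cap G^-$, nothing prevents the entire circle $\partial B(0,r)$ from lying in $G_0$ for every $r$; the paper explicitly warns (just after Lemma~\ref{l:convjordan}) that for limits of Jordan domains ``$G_0$ may have non-empty interior, and $G^\pm\setminus G_0$ may be empty.'' The Key Property in Lemma~\ref{l:convjordan4} requires the arc's endpoints to lie in $G^+\setminus G_0$, so it cannot bootstrap itself from nothing.

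The paper's proof avoids this entirely by working directly at the finite level with a combinatorial argument: it partitions an annular sector into polar rectangles, uses the smallness of $\int_0^2\ve(x,t)^2\frac{dt}t$ to control how many of these rectangles are hit by $\Gamma$ on most circumferences $\partial B(x,s)$, and then exploits the connectedness of the Jordan curve $\Gamma$ (the ``goes left/goes right'' dichotomy) together with the second center $x'$ to force a contradiction unless one sub-rectangle is entirely in $\Omega^+$ or $\Omega^-$. This topological input --- that $\Gamma$ is a single connected curve --- is exactly what is lost under Hausdorff limits and is why the lemma must be proved before any compactness argument, not after.
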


\begin{proof}

Without loss of generality, we may assume that $x=0$, $r=1$, and $x'$ lies on the horizontal axis.  It will be convenient to work with rectangles in polar coordinates.  For intervals $I\subset (0,\infty)$ and $P\subset [-\pi,\pi]$, define
$$\Sect(I,P) = \{se^{i\theta}: s\in I, \theta\in P\}.
$$
We call such a set a \emph{polar rectangle}.  Note that, if $I\subset [0,1]$, then we can inscribe a ball inside $\Sect(I,P)$ with radius a constant multiple of $\ell(I)\ell(P)\approx \sqrt{m_2(\Sect(I,P))}$.

We begin with a claim:

\begin{claim}\label{emptyrectangles}  There is an absolute constant $c>0$ such that the following holds:  For intervals $I\subset [1/2,1]$ and $P\subset [\pi/4,3\pi/4]\cup [-3\pi/4, -\pi/4]$, if $\delta$ is sufficiently small (depending on $\ell(I)$ and $\ell(P)$), then there exists a polar rectangle $\Sect'\subset \Sect(I,P)$ such that $m_2(\Sect')\geq cm_2(\Sect)$ and either $\Sect'\subset \Omega^+$ or $\Sect'\subset \Omega^-$.\end{claim}

Let us first show how to prove the lemma using Claim \ref{emptyrectangles}.  First, take $I=[1/2,1]$ and $P=[\pi/4, 3\pi/4]$.  Then we get a polar rectangle $\Sect' = \Sect(I',P')\subset \Sect(I,P)$ with $m_2(X')\gtrsim 1$, and such that $\Sect'\subset \Omega^{\pm}$, provided $\delta$ is small enough. For definiteness let us assume that $\Sect'\subset \Omega^+$.  We then apply the claim again with $I$ replaced by $I'$ and $P$ replaced by $-(\tfrac{1}{3}P') = \{-\theta:\theta\in \frac{1}{3}P'\}$ (here for an interval $P$, $aP$ is the concentric interval of sidelength $a\ell(P)$).  As long as $\delta$ is small enough, there is a polar rectangle $\Sect''=\Sect(I'', P'')\subset \Sect(I',-(\tfrac{1}{3}P'))$ with $\Sect''\subset \Omega^{\pm}$, and $m_2(\Sect'')\gtrsim 1$.  We need to verify that $\Sect''\subset \Omega^-$.

However, if $\Sect''\subset \Omega^+$, then we would have that every circumference $C(0,s)$, with $s\in I''$, has its intersection with $\Sect''$ or $\Sect'$ contained in $\Omega^+$.
But $C(0,s)
\backslash(\Sect'\cup\Sect'')$ is comprised of two arcs with length  at most equal to $(\pi-\ell(P''))s$. Thus $\ve(0, s)\gtrsim 1$ for all $s\in I''$, whence $\int_0^1 \ve(0, s)\frac{ds}{s}\gtrsim 1$.  We have therefore arrived at a contradiction if $\delta$ is sufficiently small.\end{proof}

We now return to verify the claim.

\begin{proof}[Proof of Claim \ref{emptyrectangles}]We may assume that $X=X(I,P)\subset \R_+^2$, the upper half-plane (i.e., $P\subset[\pi/4,3\pi/4]$).  First split $X(I,P)$ into $1000$ polar rectangles $\Sect_j=\Sect(I, P_j)$ with $\ell(P_j) = \tfrac{1}{1000}\ell(P)$. Write $I=[r_1,r_2]$.  Fix $\kap>0$, and consider the circumferences
$$C_s = \partial B(0, s)\text{ for } s\in ((1-\kap) r_2 +\kap r_1, r_2).
$$
If $\delta$ is sufficiently small, 99\% of these circumferences intersect $\Gamma$ in at most $4$ of the polar rectangles $\Sect_j$.  In this case, we call $C_s$ \emph{good}.

Next, for each polar rectangle $\Sect_j$, consider
$$m_j = \mathcal{H}^1\bigl(\{s\in [(1-\kap)r_2+\kap r_1, r_2]: C_s\text{ is good and }C_s\cap \Sect_j\cap \Gamma\neq \varnothing\}\bigl).
$$
Fubini's theorem yields
$$\sum_j m_j \leq 4\kap(r_2-r_1),
$$
and so there exists $j_0$ with $m_{j_0}\leq \frac{4\kap}{1000}(r_2-r_1).$

Consequently,
\begin{equation}\begin{split}\nonumber\mathcal{H}^1&\bigl(\{s\in [(1-\kap)r_2+\kap r_1, r_2]: C_s\cap \Sect_{j_0}\cap \Gamma\neq \varnothing\}\bigl)\\
&\leq m_{j_0}+\mathcal{H}^1(\{ s\in ((1-\kap) r_2 +\kap r_1, r_2): C_s\text{ is not good}\})\\
&\leq \frac{4\kap}{1000}(r_2-r_1)+ \frac{1}{100}\kap(r_2-r_1)\leq \frac{1}{50}\kap(r_2-r_1),
\end{split}\end{equation}
and we conclude that at most only 2\% of the circumferences $C_s$, $s\in ((1-\kap) r_2 +\kap r_1, r_2)$, intersect $\Gamma$ in $\Sect_{j_0}=\Sect(I, P_{j_0})$.

Using the pigeonhole principle, we infer that we can find three pairwise disjoint intervals $I_1, I_2$ and $I_3$ in $[(1-\kap) r_2 +\kap r_1, r_2]$, such that
\begin{itemize}
\item $\ell(I_k)\gtrsim \kap (r_2-r_1)$,
\item $\dist(I_j, I_k)\gtrsim \kap(r_2-r_1)$ if $j\neq k$, and
\item $C(0,s)\cap \Sect_{j_0,k}\cap\Gamma=\varnothing$ whenever $s\in \partial I_k$ for $k=1,2,3$.
\end{itemize}

Consider the three polar rectangles $\Sect_{j_0, k} = \Sect(I_k, P_{j_0})$, which certainly contain $\wt\Sect_{j_0, k} = \Sect(I_k, \wt P_{j_0})$ with $\wt P_{j_0} = \tfrac{1}{10}P_{j_0}$.  We will show that one of the rectangles $\wt \Sect_{j_0,k}$, for some $k=1,2,3$, does not intersect $\Gamma$.

Let us write $\Gamma = \gamma([0,1])$ with $\gamma(0)=0=\gamma(1)$.  First suppose $\Gamma\cap \wt \Sect_{j_0,k}\neq \varnothing$ for some $k\in \{1,2,3\}$.   If we consider $u_0$ such that
$\gamma(u_0)\in \wt \Sect_{j_0,k}$ and
$u_1
 = \max\{u:\gamma([u_0,u])\subset \Sect_{j_0,k}\}$, then since $C(0,s)\cap \Sect_{j_0,k}\cap\Gamma=\varnothing$ for $s\in \partial I_k$, and $0=\gamma(0)=\gamma(1)\notin \Sect_{j_0,k}$, we must have that $\gamma(u_1)\in \{se^{i\theta}: s\in I_k, \theta\in \partial P_{j_0}\}$.  We say that $\Gamma$ goes to the right (left resp.) if $\gamma(u_1)$ lies on the right (left) side boundary of $\Sect_{j_0,k}$.  Assuming that $\wt \Sect_{j_0,k}\cap \Gamma\neq \varnothing$  for every $k=1,2,3$, we therefore see that $\Gamma$ must go to one direction (either left or right) in two of the rectangles, say $\Sect_{j_0,k_1}$ and  $\Sect_{j_0, k_2}$ -- for definiteness let us say the direction is right (analogues arguments handle the other case).

\begin{figure}
\includegraphics[height=5cm, trim={1cm 4cm 1.5cm 3cm},clip]{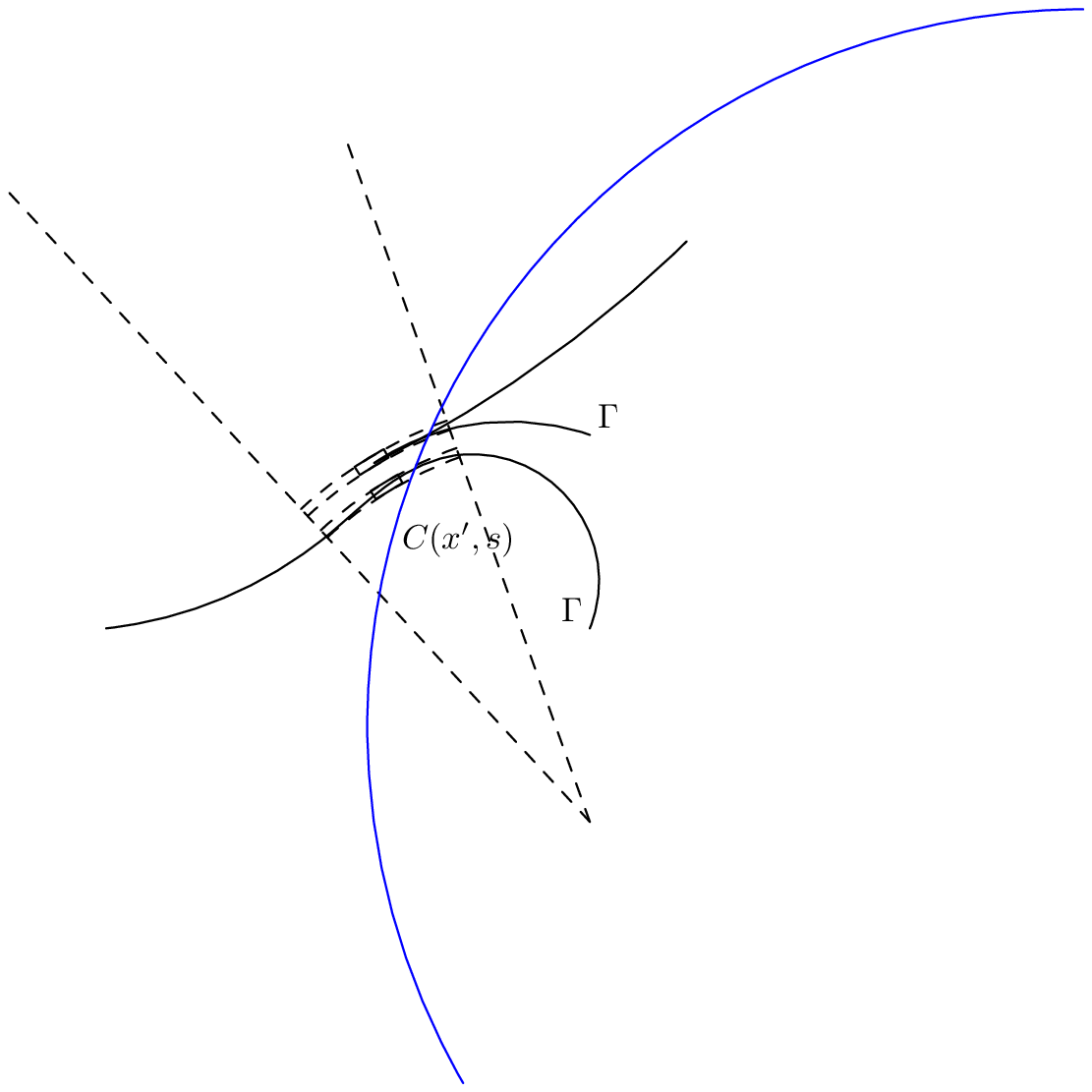}
\caption{The figure depicts a circumference $C(x',s)$ crossing $\Sect_{j_0, k_1}$ to the right of $\wt\Sect_{j_0,k_1}$ and crossing $\Sect_{j_0, k_2}$ to the right of $\wt\Sect_{j_0,k_2}$.}
\label{fig:polarrectangles1}
\end{figure}

If we fix $\kap = 10^{-6}\ell(P_j)$, say, then there is an interval $J$ with $\ell(J)\gtrsim \kap\ell(I)$ so that for every $s\in J$, the circumference $C(x',s)$ crosses $\Sect_{j_0,k_1}$ to the right of $\wt\Sect_{j_0,k_1}$ and also crosses $\Sect_{j_0, k_2}$ to the right of $\wt\Sect_{j_0,k_2}$.  Therefore, insofar as $\Gamma$ goes to the right in both $\Sect_{j_0, k_1}$ and $\Sect_{j_0, k_2}$, a circumference $C(x',s)$ with $s\in J$ intersects $\Gamma$ in the well-separated polar rectangles $\Sect_{j_0, k_1}$ and $\Sect_{j_0, k_2}$ (see Figure \ref{fig:polarrectangles1}), and so $\ve(x',s)\gtrsim_{\kap, \ell(I)} 1$.  But then $\int_0^2 \ve(x',s)^2\frac{ds}{s} \gtrsim_{\kap, \ell(I)} 1$.  If $\delta$ is small enough then we have reached a contradiction.
\end{proof}

We begin by reviewing Lemma \ref{l:convbasic} in the context of a sequences of Jordan domains.

\begin{lemma} \label{l:convjordan}
Let $\{\Omega_j^+\}_j$ be a sequence of Jordan domains in the plane  such that $0\in\partial\Omega_j^+$. Let $\Omega_j^-=\R^2\setminus \overline{\Omega_j^+}$ and $\Gamma_j=
\partial\Omega_j^+$.
 Then the following holds:
\begin{enumerate}
    \item There is a subsequence of domains $\Omega_{j_k}^\pm$ and there are closed sets $G^+,G^-,G_0$ such that
    \begin{align*}
    \overline{\Omega_{j_k}^\pm}\to G^\pm \quad \mbox{and}\quad
        \Gamma_{j_k} \to G_0 \quad\mbox{locally.}
    \end{align*}
    \item The limit sets $G^+,G^-,G_0$  satisfy
    $$G^+\cup G^- =\R^2,\quad \;G^+\cap G^-=G_0.$$
 In particular, $G^+\setminus G_0$ and $G^-\setminus G_0$ are open.

   \end{enumerate}
\end{lemma}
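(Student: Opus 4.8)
The plan is to obtain this lemma as an immediate consequence of the abstract compactness result Lemma \ref{hauscomp} together with the general convergence statement Lemma \ref{l:convbasic}, after checking that the hypotheses of the latter are satisfied in the Jordan setting. The only point that requires any verification is the boundary condition \rf{gammaminusbd}: since each $\Omega_j^+$ is a Jordan domain, the Jordan curve theorem (as recalled in Section \ref{def:Jordandomain}) gives that $\Omega_j^- = \R^2 \setminus \overline{\Omega_j^+}$ is also a Jordan domain whose boundary is the same Jordan curve $\Gamma_j$, so that $\partial\Omega_j^- = \Gamma_j = \partial\Omega_j^+$ for every $j$, which is precisely \rf{gammaminusbd}. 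Moreover, since $0 \in \Gamma_j \subset \overline{\Omega_j^+}\cap\overline{\Omega_j^-}$, each of the three sequences of closed sets $\{\overline{\Omega_j^+}\}_j$, $\{\overline{\Omega_j^-}\}_j$ and $\{\Gamma_j\}_j$ meets $\overline{B(0,1)}$, so Lemma \ref{hauscomp} is applicable to each of them.

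Next I would apply Lemma \ref{hauscomp} three successive times, extracting nested subsequences: first along a subsequence $\overline{\Omega_j^+}$ converges locally to a closed set $G^+$; along a further subsequence $\overline{\Omega_j^-}$ converges locally to a closed set $G^-$; and along a still further subsequence $\Gamma_j$ converges locally to a closed set $G_0$. Relabelling this last subsequence as $\{j_k\}_k$, we have simultaneously $\overline{\Omega_{j_k}^\pm}\to G^\pm$ and $\Gamma_{j_k}\to G_0$ locally, which is exactly property (1) of the lemma.

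Finally, with \rf{gammaminusbd} verified and the local limits $G^\pm$, $G_0$ produced, Lemma \ref{l:convbasic} applies to the subsequence $\{\Omega_{j_k}^+\}_k$; its part (1) states precisely that $G^+\cup G^- = \R^2$ and $G^+\cap G^- = G_0$, and hence that $G^+\setminus G_0 = \R^2\setminus G^-$ and $G^-\setminus G_0 = \R^2\setminus G^+$ are open. This is property (2), and the lemma follows. There is essentially no obstacle here, as the substantive content has already been established in the two cited lemmas; the only mild care needed is the verification of \rf{gammaminusbd} for Jordan domains (immediate from the Jordan curve theorem) and the observation that the three sequences meet a fixed ball so that Lemma \ref{hauscomp} can be invoked.
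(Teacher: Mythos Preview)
Your proposal is correct and follows essentially the same approach as the paper's own proof, which simply cites Lemma \ref{hauscomp} for the existence of the locally convergent subsequences and Lemma \ref{l:convbasic} for property (2). You have in fact supplied more detail than the paper does, explicitly verifying \rf{gammaminusbd} via the Jordan curve theorem and noting that $0\in\Gamma_j$ guarantees the applicability of Lemma \ref{hauscomp} to all three sequences.
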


\begin{proof}
The existence of the locally convergent subsequences follows from Lemma \ref{hauscomp}, the property (2) is then a consequence of Lemma \ref{l:convbasic}.
\end{proof}

\vv

We remark that, in the above situation, $G_0$ need not coincide with $\partial G^+$ or $\partial G^-$.
Further, $G_0$ may have non-empty interior, and $G^\pm\setminus G_0$ may be empty.

Our next lemma reviews the basic convergence result Lemma \ref{lem:basicsquare}, also taking into account Lemma \ref{lem2cork}.

\begin{lemma}\label{l:convjordan2}
Let $\{\Omega_j^+\}_j$ be a sequence of Jordan domains in the plane which intersect some ball $B_0$. Let $\Omega_j^-=\R^2\setminus \overline{\Omega_j^+}$ and $\Gamma_j=
\partial\Omega_j^+$. Suppose $G^{\pm}$ and $G_0$ are closed sets with
$$\overline{\Omega_j^{\pm}}\to G^{\pm}\text{ and }\Gamma_j \to G_0\text{ locally as }j\to\infty.
$$
Suppose $\mu_j$ are measures supported on $\Gamma_j$ with $C_0$-linear growth that converge weakly to a measure $\mu_0$ satisfying $\mu_0(\overline{B_0})\geq c_0r(B_0)$.
Suppose that (\ref{eq:G1}) holds, i.e.
\begin{align} \nonumber 
    \int_{7B_0}\int_0^{7r(B_0)}  \alpha^+_{j}(x,r)^2 \, \dr \, d\mu_j(x) \leq \frac{1}{j}\,\mu_j(7B_0),
\end{align}
where $\alpha^+_{j}$ are the coefficients $\alpha^+$ associated with $\Omega_j^+$, and (\ref{eq:G2}) holds, i.e.,
\begin{align} \nonumber
    \int_{7B_0}\int_0^{7r(B_0)}  \ve_{j}(x,r)^2 \, \dr \, d\mu_j(x) \leq \frac{1}{j}\,\mu_j(7B_0),
\end{align}
where $\ve_{j}(\cdot,\cdot)$ are the coefficients $\ve(\cdot,\cdot)$
associated with $\Omega_j^+$.
Then
\begin{enumerate}
\item there is an analytic variety $Z$ with $7B_0\cap \supp(\mu_0)\subset Z\subset G_0$.
\item\label{jordanadmissible} for all $x\in 7 B_0\cap \supp\mu_0$ and all $r\in (0,7r(B_0))$ there is a pair of admissible semicircumferences which are contained in $\partial B(x,r)$.
\item \label{holesbymeasure} for every $M>0$, there exists a constant $c(M)>0$ such that whenever $x\in B_0\cap \supp(\mu_0)$ and $r\in (0,r(B_0))$ are such that $\mu_0(B(x,r))\geq r/M$, then there are two balls
$B^\pm\subset B(x,2r)\cap G^\pm\setminus G_0$ with $r(B^\pm)\geq c(M)r$.
 \end{enumerate}
\end{lemma}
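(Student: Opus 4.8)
The plan is that conclusions (1) and (2) require no new argument: they are precisely Lemma \ref{lem:basicsquare} in the present setting. Since $\supp\mu_j\subset\Gamma_j$ and $\Gamma_j\to G_0$ locally, Lemma \ref{lem:baspropslocconv}(1) gives $\supp\mu_0\subset G_0$, and the lower semicontinuity of weak limits on open balls shows $\mu_0$ has $C_0$-linear growth; hence all the hypotheses of Lemma \ref{lem:basicsquare} hold (with the same ball $B_0$ and constants), and (1) and (2) follow. The real content is (3).

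For (3) I would fix $x\in B_0\cap\supp\mu_0$, $r\in(0,r(B_0))$ and $M>0$ with $\mu_0(B(x,r))\geq r/M$. Since $G^\pm$ and $G_0$ are fixed once and for all, it suffices to produce, for all large $j$, balls $B_j^\pm\subset B(x,\tfrac54 r)\cap\Omega_j^\pm$ with $r(B_j^\pm)\geq c(M)\,r$, and then pass to the limit. The pre-limit balls will come from Lemma \ref{lem2cork}. Let $\delta_0>0$ be a sufficiently small absolute constant, to be fixed so that Lemma \ref{lem2cork} applies below, and set
\[
E_j=\Bigl\{y\in B(x,r)\cap\Gamma_j:\ \int_0^{7r(B_0)}\ve_j(y,t)^2\,\frac{dt}{t}\leq \delta_0\Bigr\}.
\]
By Chebyshev's inequality, hypothesis (\ref{eq:G2}), and the linear growth estimate $\mu_j(7B_0)\lesssim r(B_0)$, the complement $(B(x,r)\cap\Gamma_j)\setminus E_j$ has $\mu_j$-measure at most $r/(2M)$ for $j$ large; combined with $\liminf_j\mu_j(B(x,r))\geq\mu_0(B(x,r))\geq r/M$, this gives $\mu_j(E_j)\geq r/(4M)$ for $j$ large. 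Covering $\overline{B(x,r)}$ by an absolute number $N$ of balls of radius $r/8$ and pigeonholing, I find one such ball $D$ with $\mu_j(E_j\cap D)\gtrsim_M r$; by $C_0$-linear growth, $E_j\cap D$ cannot be contained in a single ball of radius $\ll_M r$, so it has diameter $\gtrsim_M r$, hence contains points $x_j,x_j'$ with $\rho_j:=|x_j-x_j'|\in[c_1(M)\,r,\, r/4]$. Now apply Lemma \ref{lem2cork} with centre $x_j$, the point $x_j'\in\Gamma_j\cap\partial B(x_j,\rho_j)$, and radius $\rho_j$: this is legitimate because $x_j,x_j'\in E_j$ and $2\rho_j\leq r/2<7r(B_0)$, so $\int_0^{2\rho_j}\bigl(\ve_j(x_j,t)^2+\ve_j(x_j',t)^2\bigr)\,\frac{dt}{t}\leq 2\delta_0$, which is admissible once $\delta_0$ was chosen small enough. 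We obtain balls $B_j^\pm\subset B(x_j,\rho_j)\cap\Omega_j^\pm$ with $r(B_j^\pm)\approx\rho_j\gtrsim_M r$, and since $x_j\in B(x,r)$ and $\rho_j\leq r/4$ these lie in $B(x,\tfrac54 r)$.

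To finish, pass to a further subsequence so that the centres and radii of $B_j^\pm$ converge, say to $z^\pm$ and to $\sigma^\pm$ with $\sigma^\pm\gtrsim_M r$ and $\sigma^\pm\leq r/4$; put $B^\pm:=B(z^\pm,\sigma^\pm/4)$. Then $B^\pm\subset B(x,2r)$, $r(B^\pm)\geq c(M)\,r$, and for $j$ large $B^\pm\subset\tfrac12 B_j^\pm$, so $\dist(B^\pm,\Gamma_j)\geq\tfrac12 r(B_j^\pm)\gtrsim_M r$ because $B_j^\pm\subset\Omega_j^\pm$ is disjoint from $\Gamma_j=\partial\Omega_j^\pm$. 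On one hand, $B^\pm\subset\overline{\Omega_j^\pm}$ for $j$ large, so the local convergence $\overline{\Omega_j^\pm}\to G^\pm$ forces $B^\pm\subset G^\pm$. On the other hand, no $w\in B^+$ can belong to $G_0=G^+\cap G^-$: if $w\in G^-$, there are $w_j\in\overline{\Omega_j^-}=\Omega_j^-\cup\Gamma_j$ with $w_j\to w$, and the segment $[w,w_j]$, which runs from $w\in\Omega_j^+$ to $w_j$, must meet $\Gamma_j=\partial\Omega_j^+$ at a point within distance $|w-w_j|$ of $w$, contradicting $\dist(w,\Gamma_j)\gtrsim_M r$ for $j$ large. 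Hence $B^\pm\subset B(x,2r)\cap(G^\pm\setminus G_0)$ with $r(B^\pm)\geq c(M)\,r$, which is (3).

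The step I expect to be the main obstacle is the middle one: extracting from the good set $E_j$ (of mass $\gtrsim_M r$) two good points whose separation $\rho_j$ is simultaneously bounded below by a multiple of $r/M$ --- so that Lemma \ref{lem2cork} yields corkscrew balls of radius $\gtrsim_M r$ --- and bounded above by $r/4$ --- so that these balls and their limits remain inside $B(x,2r)$. The pigeonhole into balls of radius $r/8$ is exactly what reconciles the two requirements; everything else is routine manipulation of the weak convergence of the measures and of the Attouch--Wets convergence of the sets $\overline{\Omega_j^\pm}$ and $\Gamma_j$.
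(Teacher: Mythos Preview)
Your proof is correct and follows essentially the same approach as the paper: (1) and (2) are reduced to Lemma \ref{lem:basicsquare}, and for (3) both arguments use Chebyshev to locate points of $\Gamma_j$ where the $\ve_j$-integral is small, apply Lemma \ref{lem2cork} to obtain corkscrew balls in $\Omega_j^\pm$ of radius $\gtrsim_M r$, and then pass to the limit to land in $G^\pm\setminus G_0$. The only variation is in how the two good points at separation $\approx_M r$ are found --- the paper picks one near $x$ (in a tiny ball $B(x,s)$) and one in the annulus $B(x,r)\setminus B(x,r/(3MC_0))$, while you pigeonhole into balls of radius $r/8$ and use linear growth to force spread within one such ball --- but this is a minor tactical difference, not a different route.
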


\begin{proof}
The proof of the first two statements are precisely those of Lemma \ref{lem:basicsquare}.  The third assertion is proved by passing to the limit in the result in Lemma \ref{lem2cork}.  Indeed, fix $r' = \frac{r}{3MC_0}$.  Then for sufficiently large $j$, $\mu_j(B(x,r)\backslash B(x,r'))\geq \frac{r}{2M}$.  Since $x\in \supp(\mu)$, for any $s\in (0, \frac{r'}{2})$ we have $\liminf_{j\to\infty}\mu_j(B(x,s))>0$, whence
$$\frac{1}{\mu_j(B(0,s))}\int_{B(x,s)}\int_0^{7r} \ve^+_{j}(y,r)^2 \, \dr \, d\mu_j(y) \leq \frac{1}{j\mu_j(B(0,s))}\,\mu_j(7B_0) \to 0 \text{ as }j\to \infty.
$$
Consequently, for $\delta>0$ as in Lemma \ref{lem2cork}, and for sufficiently large $j$ we can find $z_j\in B(x,s)\cap \supp(\mu_j)$ with $\int_0^{7r} \ve^+_{j}(z_j,r)^2 \, \dr< \delta.$   But now, as $j\to\infty$,
$$\frac{1}{\mu_j(B(x,r)\backslash B(x,r'))}\int_{B(x,r)\backslash B(x,r')}\int_0^{7r} \ve^+_{j}(y,r)^2 \, \dr \, d\mu_j(y) \leq \frac{2M}{jr}\mu_j(7B_0)\to 0,$$
so for large $j$ we can find $z_j'\in \supp(\mu_j)\cap B(x,r)\backslash B(x,r')$ with $\int_0^{7r} \ve^+_{j}(z_j',r)^2 \, \dr< \delta.$  Notice that $z_j'\in \partial B(z_j,t_j)$ with $t_j\in (r'/2, \tfrac{3}{2}r)$.  We apply Lemma \ref{lem2cork} with the points $z_j$ and $z_j'$ and radius $t_j$ (note that $2t_j\leq 7r$) to find balls $B_j^{\pm}\in \Omega_{j}^{\pm}\cap B(z_j, \tfrac{3}{2}r)$ such that $r(B^{\pm}_j) \gtrsim \frac{r}{M}.$  If $s$ is small enough, $B_j^{\pm}\subset \overline{B(x, 3r/2)}$ and we may pass to a subsequence  $B_{j_k}^{\pm}$ which converge in Hausdorff distance to balls $B^{\pm}\subset G^{\pm}$ with $B^{\pm}\subset B(x,2r)$.  But then if $y\in B^{\pm}$ (say $y\in B^+$ for definiteness), then a neighbourhood of $y$ is contained in $B_{j_k}^{+}$ for sufficiently large $k$, and so $\liminf_k\dist(x, \Gamma_{j_k})>0$, which ensures that $y\in G^+\backslash G_0$.  Thus $B^{\pm}\subset G^{\pm}\backslash G_0$.
\end{proof}


\vv

Our next result is an analogue of Lemma \ref{lemline}.  The reader should notice that the conclusion is weaker.  This is due to the fact that we only can infer anything about the structure of the boundary set $G_0$ at points where $\mu$ has lots of mass (via property (\ref{holesbymeasure}) of Lemma \ref{l:convjordan2}).

\begin{lemma}\label{l:convjordan4}Suppose $G^+$, $G^-$ and $G_0$ are three closed sets satisfying $G^+\cup G^-=\R^2$ and $G^+\cap G^-= G_0$.  Suppose that $\mu_0$ is a measure with $C_0$-linear growth and that there is a real analytic variety $Z$ with $\supp(\mu_0)\subset Z\subset G_0$.  Let $B_0\subset\R^2$ be some ball such that $\mu(B_0)>0$ and
\begin{enumerate}
\item \label{holesbymeasure2} for every $M>0$, there exists a constant $c(M)>0$ such that whenever $x\in B_0\cap \supp(\mu_0)$ and $r\in (0, r(B_0))$ are such that $\mu_0(B(x,r))\geq r/M$, then there are two balls
$B^\pm\subset B(x,2r)\cap G^\pm\setminus G_0$ with $r(B^\pm)\geq c(M)r$.
\item \label{jordansemicirc}for every $x\in 2B_0$ and $r\in (0, 2r(B_0))$ there exists two complementary semicircumferences $C^+(x,r)$, $C^-(x,r)$ with radius $r$ and center $x$ such that
$$C^+(x,r)\subset G^+\quad \text{ and }\quad C^-(x,r)\subset G^-.$$
\end{enumerate}
Then there is some line $L$ such that
$$L\subset G_0\quad \text{ and }\quad \mu_0(B_0\cap L)>0.$$
\end{lemma}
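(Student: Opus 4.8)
The plan is to adapt the proof of Lemma \ref{lemline} to the present, weaker hypotheses: the role of the condition $\Gamma=\partial\Omega^+=\partial\Omega^-$ will be played by the corkscrew hypothesis (\ref{holesbymeasure2}), and that of the complementary half-circumferences contained in $\overline{\Omega^\pm}$ by hypothesis (\ref{jordansemicirc}).

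First I would reduce the statement to producing a \emph{segment} inside $Z$. Since $\mu_0$ has $C_0$-linear growth one has $\mu_0(E)\le C_0\,\HH^1(E)$ for every Borel set $E$, so $\HH^1(\supp\mu_0\cap B_0)>0$. Moreover $Z\neq\R^2$: if $Z=\R^2$ then $G_0=\R^2$, hence $G^+=G^-=\R^2$ and $G^\pm\setminus G_0=\varnothing$, contradicting the corkscrew hypothesis applied to any $x\in\supp\mu_0\cap B_0$ and a small $r$ with $M=r/\mu_0(B(x,r))$. A proper real-analytic variety in $\R^2$ is, locally, a finite union of real-analytic arcs together with a discrete set of points; since points carry no $\mu_0$-mass, there is a real-analytic arc $S\subset Z$ with $\mu_0(S\cap B_0)>0$. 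If $S$ is a segment, then Lemma \ref{varietysegment} provides a line $L\subset Z\subset G_0$ containing $S$, and then $\mu_0(B_0\cap L)\ge\mu_0(B_0\cap S)>0$, as desired. So it remains to prove that $S$ is a segment.

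Since the curvature of $S$ is a real-analytic function of arc length, $S$ is a segment if and only if its curvature vanishes on a set of positive $\HH^1$-measure, equivalently at $\mu_0|_S$-a.e.\ point of $S$. Exactly as in Lemma \ref{lemline}, the semicircumference hypothesis yields the \emph{Key Property}: if $x\in 2B_0$, $r\in(0,2r(B_0))$ and $I\subset\partial B(x,r)$ is an arc with $\HH^1(I)<\pi r$ whose two endpoints lie in $\R^2\setminus G^-=G^+\setminus G_0$, then $I\subset C^+(x,r)\subset G^+$, and symmetrically with the signs interchanged. Suppose, for contradiction, that $S$ is not a segment; then its curvature is nonzero off a countable, hence $\mu_0$-null, set, so we may fix a point of $S\cap B_0$ — which after a translation and rotation we take to be $0$ with horizontal tangent, and which enjoys suitable density properties specified below — at which the curvature is nonzero and near which $S$ is the graph of a strictly convex real-analytic function. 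The contradiction is then produced as in the proof of Lemma \ref{lemline}: for a suitable small $r$ one builds three balls $B_1,B_2,B_3$ with $r(B_3)\ll r(B_2)\ll r(B_1)$, contained respectively in $G^+\setminus G_0$, $G^-\setminus G_0$ and one of the two, located near points $y_1$, $y_2\approx y_1$ and $y_3$ of $S$ at distance $\approx r/2$ from $0$; the circumference $C_3$ centred at $0$ through the centre of $B_3$ meets all three balls, and it carries an arc of length $<\HH^1(C_3)/2$ joining two balls on the same side while passing through the third, which contradicts the Key Property.

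The essential new point — and the main obstacle — is that, whereas in Lemma \ref{lemline} the inclusion $S\subset\Gamma=\partial\Omega^+=\partial\Omega^-$ furnishes balls in $\Omega^\pm$ arbitrarily close to \emph{every} point of $S$, here the corkscrew hypothesis only furnishes balls in $G^\pm\setminus G_0$ near points of $\supp\mu_0$ that carry $\mu_0$-mass comparable to the scale. Thus the points $y_1,y_2,y_3$ in the construction must be chosen inside $S\cap\supp\mu_0$, a set of positive but possibly ``Cantor-like'' $\HH^1$-measure on $S$. To arrange this one pigeonholes an $M$ and restricts to the set $\mathcal G\subset S\cap\supp\mu_0\cap B_0$ of points $x$ with $\Theta^{1,*}(x,\mu_0)>1/M$ which are in addition Lebesgue density points of $\mathcal G$ within $S$; this set has full $\mu_0|_S$-measure, hence positive $\HH^1$-measure, so one may take $0\in\mathcal G$ above. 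Then the density of $\mathcal G$ in $S$ lets one choose, for a set of scales $r$ of full density at $0$, the point $y_1$ (and the reflected point near which $y_3$ sits) inside $\mathcal G$; and, invoking the corkscrew hypothesis at $y_1$ and at $y_2$ at auxiliary scales small enough that the resulting corkscrew balls have radius exceeding the local ``gap size'' of $\mathcal G$, one may relocate the otherwise determined points $y_2$ and $y_3$ into $\mathcal G$ within a small fraction of $r(B_1)$, respectively $r(B_2)$ — which is precisely the accuracy needed for $C_3$ to still meet $B_1$, $B_2$ and $B_3$ and for the case analysis of Lemma \ref{lemline} to carry over. Granting this bookkeeping, $S$ is a segment, and the proof is complete.
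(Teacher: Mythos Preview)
Your high-level plan coincides with the paper's: reduce to showing the analytic arc $S$ is a segment, formulate the Key Property from hypothesis (\ref{jordansemicirc}), and contradict it at a point of nonzero curvature by producing a short arc with endpoints in one of $G^\pm\setminus G_0$ that meets the other. However, the step you label ``bookkeeping'' is not bookkeeping at all---it is the heart of the argument, and the cascading three-ball scheme you import from Lemma \ref{lemline} runs into a genuine obstacle here.

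The difficulty is a scale mismatch. In Lemma \ref{lemline} one may take $r(B_1)\gg r(B_2)\gg r(B_3)$ freely because $\Gamma=\partial\Omega^\pm$ supplies balls in $\Omega^\pm$ at \emph{every} point of $S$ and \emph{every} scale. Here the corkscrew hypothesis (\ref{holesbymeasure2}) only fires at points where $\mu_0$ has mass comparable to the scale, so each application forces both the location and the scale. Concretely: once $B_1$ is placed near $y_1$ at some scale $s_1$, the point $y_2$ is determined on $S\cap C_1$, and to place $B_2$ you need a point $y_2'\in\mathcal G$ within roughly $c(M)s_1$ of $y_2$ \emph{and} a scale $s_2\ll s_1$ at which $\mu_0(B(y_2',s_2))\ge s_2/M$. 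Your set $\mathcal G$ uses the \emph{upper} density $\Theta^{1,*}>1/M$, which only supplies a sequence of good scales, not the specific $s_2$ you need; and even if one uniformizes via Egorov so that all small scales are good, the relocation of $y_3$ on the far side requires the gaps of $\mathcal G$ near the reflected point $y_1^*$ to be smaller than $r(B_2)\approx c(M)s_2$, while the density-point information at $y_1^*$ only controls gaps relative to $s_1\gg s_2$. The sentence ``auxiliary scales small enough that the resulting corkscrew balls have radius exceeding the local gap size'' is backwards---smaller scales give smaller corkscrew balls---and this is precisely where the cascade breaks.

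The paper circumvents this by abandoning the cascading construction entirely. It fixes a single scale $t$ at which $F$ (a subset of $S$ on which $\mu_0\approx\HH^1$) has density $\ge 1-\tau$ in $S\cap B(0,t)$, and works with intervals of length $\approx t^2$ (dictated by the curvature). It partitions a left and a right third of $(-t/2,t/2)$ into many such intervals, uses a pigeonhole lemma (Lemma \ref{lem:denstau}) to find large subfamilies where $F$ has comparable mass, applies the corkscrew hypothesis at that \emph{uniform} scale to obtain matched families $\{B_k^+\}$, $\{B_k^-\}$ on each side, and---this is the key new idea---lets the \emph{center} of the circumference vary over $g(J_c\cap\Pi_H(F))$. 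A measure-theoretic argument then finds a center $g(x)$ and a radius so that the circle simultaneously meets a $B^+$ and a $B^-$ on each side, all lying in the half-plane $H_z$ above the tangent at $z=g(x)$; this yields the short arc violating the Key Property without any cascade of scales.
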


There is some natural repetition in the proof of Lemma \ref{l:convjordan4} and Lemma \ref{lemline}, but since the proofs are also quite substantially different, and some readers may want to only consider the case of Jordan domains, we repeat all the relevant details here.

\begin{proof}
Since $\mu_0$ is non-zero and has linear growth, it is clear that
$\HH^1(Z) \geq \HH^1(\supp\mu_0)>0$. Together with the fact that $Z\neq \R^2$ (which follows from property (\ref{holesbymeasure2}), since $\mu(B_0)>0$), this implies that there exists some
analytic arc $S$ such that $\mu_0(S\cap B_0)>0$ (which implies that $\HH^1(S\cap\supp\mu\cap B_0)>0$, because of the linear growth of $\mu_0$). \\

We claim that $S$ is a segment.
To prove this it suffices to show that $S$ has vanishing curvature in a set positive measure $\mu_0$. Indeed, since this set has positive length and the curvature of
a real analytic arc is locally a real analytic function (with respect the arc-length parametrization from an interval), this implies that the curvature vanishes on the whole arc $S$ . Thus $S$ is a segment.\\

To show that the curvature of $S$
vanishes in some set of positive measure $\mu_0$, we will again use the

\begin{KP} Given $x\in B\cap\supp\mu_0$ and $r\in (0,3r(B_0))$, let $I\subset \partial B(x,r)$ be an arc such that
$\HH^1(I)<\pi r$ whose end-points belong both to $G^+\setminus G_0$. Then  $I\subset G^+$.
The analogous statement holds replacing $G^+$ by $G^-$.
\end{KP}

We verify the Key Property as follows: if $I$ is an arc as above, then its end-points $x_1,x_2$ do not belong to $G^-$ (because they belong to
$G^+\setminus G_0$). Hence, if $(C^+,C^-)$ is a pair of complementary semicircumferences at $x$ with radius $r$ satisfying $C^{\pm}\subset G^{\pm}$, we have
 that $x_1,x_2\in C^+$, and thus either $I$ or $\partial B(x,r)\setminus I$  is contained in $C^+$. The latter cannot happen since $\HH^1(\partial B(x,r)\setminus I)>\pi r = \HH^1(C^+)$, and so we have
$I\subset C^+\subset G^+$.\\

We are ready now to show that the curvature of $S$
vanishes at some set of positive measure $\mu_0$.
We consider some set $F\subset S$ such that
$\HH^1(F)>0$ and
 $\mu_0|_F = h\,\HH^1|_F$ for some function $h\approx1$ (with the implicit constant possibly depending on $S$, $F$, and other parameters).
Without loss of generality we assume that $x_0=0$ is a density point of $F$ and that the tangent to $S$  at $x_0$ is the horizontal axis. Aiming for a contradiction, suppose that $S$ is strictly convex at $x_0$ (i.e., if $S$ equals
the graph of the real analytic function $g:(-\delta,\delta)\to\R$ in a neighborhood of $x_0$, then $g''(0)>0$).

Let $z_1,z_2$ be the two end-points of $S$, and let
$$d_0=\frac12\,\min_{i=1,2}\dist(x_0,z_i).$$
Let $r\in (0,d_0/2)$ be small enough so that $g''(\Pi_H(x))$ is comparable to $g''(\Pi_H(x_0))$ in
$B(x_0,r)\cap S$, where $\Pi_H$ is the orthogonal projection on the horizontal axis.
We will prove the following:

\begin{claim}\label{claim1}
There exist some $z\in B(x_0,r/10)\cap S\cap \supp\mu_0$ and some $r'\in (0,r/10)$ and an arc $I\subset \partial B(z,r')$ with $\HH^1(I)< \pi  r'$ such that either
its end-points belong to $G^+\setminus G_0$ and intersects $G^-\setminus G_0$, or its end-points belong to $G^-\setminus G_0$ and intersects $G^+\setminus G_0$.
\end{claim}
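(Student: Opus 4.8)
The plan is to adapt the three-ball argument from the proof of Lemma \ref{lemline}, using the holes-by-measure property (\ref{holesbymeasure2}) in place of the two uses there of $\Gamma=\partial\Omega^{\pm}$: whereas there one finds small balls inside $\Omega^{\pm}$ arbitrarily close to a curve point, here (\ref{holesbymeasure2}) produces, near any point of $\supp\mu_0$ carrying enough mass, a ball of comparable size inside $G^+\setminus G_0$ and one inside $G^-\setminus G_0$. I would first record the basic facts at $x_0=0$: since $\mu_0|_F=h\,\HH^1|_F$ with $h\approx 1$ and a.e.\ point of $F$ has $\HH^1$-density $1$ in $F$ (as $F$ lies on a rectifiable curve), there is a fixed $M$ (depending only on $\inf_F h$) with $\mu_0(B(x_0,\rho))\geq \rho/M$ for all small $\rho$, so (\ref{holesbymeasure2}) applies at $x_0$ at every small scale with a single constant $c_1:=c(M)$; moreover, since $\HH^1(S\cap B(x_0,\rho)\setminus F)=o(\rho)$, the set $\mathcal{D}$ of density points of $F$ is \emph{quantitatively dense} in $S$ near $x_0$ (every sub-arc of $S\cap B(x_0,\rho)$ of length $\gtrsim\rho$ meets $\mathcal{D}$, for $\rho$ small). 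Fix a small scale $\rho_0<r/10$ with $g''(0)\rho_0\ll c_1$, so that $S$ is extremely flat at scales $\leq\rho_0$ and any circle centred at $x_0$ of radius $R\leq\rho_0$ meets $S$ in exactly two points, bounding two thin ``slivers'' of the below-$S$ region.

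Next I would produce the three balls. By a bi-Lipschitz change of variables between $d\in(\rho_0/2,\rho_0)$ and the right-hand point of $S\cap\partial B(x_0,d)$, and since $\mathcal{D}$ has full $\HH^1|_S$-measure in $F$, one can fix $d$ so that both points $z_1:=q_d^{+}$, $z_3:=q_d^{-}$ of $S\cap\partial B(x_0,d)$ lie in $\mathcal{D}$. Apply (\ref{holesbymeasure2}) at $z_1$ at a scale $\rho_1\leq\min\{r(z_1),\,\tfrac{1}{100}g''(0)\rho_0^2\}$ to get balls $B_1\subset B(z_1,2\rho_1)\cap(G^+\setminus G_0)$ and $B_1'\subset B(z_1,2\rho_1)\cap(G^-\setminus G_0)$ of radius $\geq c_1\rho_1$; since both avoid $G_0\supset S$ and $G^{\pm}\setminus G_0$ are open, each lies strictly on one side of the (locally graphical, convex) arc $S$, and — provided $G_0$ coincides with $S$ near $x_0$, which we arrange by choosing $x_0$ generic — the two local components of (a neighbourhood of $S$)$\setminus S$ are exactly $G^+\setminus G_0$ and $G^-\setminus G_0$. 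After interchanging $G^+\leftrightarrow G^-$ if necessary (this only swaps the two alternatives in the Claim) we may call $G^+\setminus G_0$ the below-$S$ side, so $B_1$ lies just below $S$ near $z_1$ and $B_1'$ just above. Let $q_1\in S$ be the foot of the perpendicular from the centre of $B_1$; choose $z_2\in\mathcal{D}$ within $\tfrac{1}{100}r(B_1)$ of $q_1$, apply (\ref{holesbymeasure2}) at $z_2$ at scale $\rho_2\leq\min\{r(z_2),\,\tfrac{c_1}{100}\rho_1\}$, and let $B_2\subset B(z_2,2\rho_2)\cap(G^-\setminus G_0)$ (just above $S$, near $q_1$); similarly, apply (\ref{holesbymeasure2}) at $z_3$ at scale $\rho_3\leq\min\{r(z_3),\,\tfrac{c_1}{100}\rho_2\}$ and let $B_3\subset B(z_3,2\rho_3)\cap(G^+\setminus G_0)$ (just below $S$, near $z_3$). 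Because $\rho_i\ll g''(0)\rho_0^2$, strict convexity places all of $B_1,B_2,B_3$ inside the two slivers, in particular on the convex side of the tangent line to $S$ at $x_0$.

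Finally I would thread the balls. Since $|z_1|=|z_3|=d$, the centres of $B_1$ and $B_3$ have radial distance from $x_0$ within $2\rho_1$ resp.\ $2\rho_3$ of $d$; and the centre of $B_2$ has radial distance within $\ll c_1\rho_1$ of that of $B_1$, because the displacement $\mathrm{c}B_1\to q_1$ is along the normal to $S$, which is nearly vertical and hence contributes a negligible radial component once $g''(0)\rho_0\ll c_1$. One then produces a radius $r'\approx d$ for which $\partial B(x_0,r')$ meets all three balls: the circle is pinned last, by $B_3$ — whose set of threading radii is a non-degenerate interval lying within $2\rho_3$ of $d$ — and the resulting perturbation of $r'$ away from $d$, being $O(\rho_3)\ll c_1\rho_2,\,c_1\rho_1$, is absorbed by the much larger tolerances of $B_2$ and $B_1$. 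On $\partial B(x_0,r')$ the ball $B_1$ occupies the right sliver just below $S$, $B_3$ the left sliver just below $S$, and $B_2$ a point just above $S$ near the right crossing; running along $\partial B(x_0,r')$ over the convex side from a point of $B_1$ to a point of $B_3$ therefore passes through $B_2$, stays inside the union of the two slivers and the (slightly sub-semicircular) above-$S$ arc, and so has length $<\pi r'$. Taking $z:=x_0\in S\cap\supp\mu_0$ and this arc as $I$ proves the claim, since the end-points of $I$ lie in $B_1\cup B_3\subset G^+\setminus G_0$ and $I$ meets $B_2\subset G^-\setminus G_0$. (In the simpler situation where $G_0$ already coincides with $S$ near $x_0$ one may dispense with the balls altogether and take $I$ to be the over-the-convex-side arc of a single small circle $\partial B(x_0,r')$, running from just inside one end of its — slightly more than semicircular — $G^+\setminus G_0$ part to just inside the other, sweeping through its — slightly less than semicircular — $G^-\setminus G_0$ part.)

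I expect the threading step just described to be the main obstacle. In Lemma \ref{lemline} each auxiliary ball can be squeezed to within $\tfrac{1}{10}\dist(y_i,\R^2_-)$ of a curve point, so its radial position is controlled far better than its radius; here (\ref{holesbymeasure2}) only confines each ball to a $2\rho_i$-neighbourhood of a density point — a distance comparable to, indeed exceeding, its radius $\geq c_1\rho_i$ when $c_1<1$ — so one cannot simply prescribe a single circle through all three balls in advance, and is forced into the bookkeeping indicated above (equidistant base points $z_1,z_3$, rapidly decreasing scales $\rho_1\gg\rho_2\gg\rho_3$, and fixing the circle's radius only after $B_3$ is in hand). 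A secondary difficulty, logically prior to all of this, is pinning down on which side of $S$ each holes-by-measure ball lands and excluding — or separately handling, via the semicircumference property (\ref{jordansemicirc}) at a point of $G_0\setminus S$ — the possibility that $G_0$ is strictly larger than $S$ near $x_0$.
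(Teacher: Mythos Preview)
Your proposal has a genuine gap in the threading step, which you correctly flagged as the main obstacle but did not actually resolve. Once you apply property (\ref{holesbymeasure2}) at $z_1$, the ball $B_1$ may sit anywhere in $B(z_1,2\rho_1)$, so its center's radial distance from $x_0$ is $d+\delta_1$ with $|\delta_1|$ as large as $2\rho_1-r(B_1)$. The threading interval for $B_1$ has half-width $r(B_1)\geq c_1\rho_1$, and it contains $d$ only if $|\delta_1|\leq r(B_1)$; this fails whenever $c_1<1$ and $B_1$ happens to lie near the edge of $B(z_1,2\rho_1)$. Your decreasing-scales bookkeeping pins $B_3$'s threading interval tightly near $d$, but provides no mechanism to force $B_1$'s interval to contain it. (Adjusting $z_3$ afterwards to the radius $|c_{B_1}|$ does not help either, since that specific left intersection point of $S\cap\partial B(x_0,|c_{B_1}|)$ need not lie in your set $\mathcal{D}$.) Your second difficulty---assuming $G_0$ coincides with $S$ near $x_0$ so as to assign a fixed $\pm$-side to each component---is also a real gap: nothing in the hypotheses rules out $G_0$ being thicker than $S$, and no genericity argument will.

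The paper's proof circumvents both issues by abandoning the few-balls, single-center approach. It partitions the left and right thirds of a small interval $J$ of length $t$ into $N\approx t^{-1}$ subintervals of length $\approx t^2$, uses the density of $F$ (via a pigeonhole lemma) to find many subintervals on each side carrying enough $\mu_0$-mass, and applies (\ref{holesbymeasure2}) at each to produce a ball $B_k^+\subset G^+\setminus G_0$ and then, nested under its projection, a ball $B_k^-\subset G^-\setminus G_0$ with $\Pi_H(B_k^-)\subset\Pi_H(\tfrac12 B_k^+)$. The unions $W_l,W_r$ of the projections $\Pi_H(\tfrac12 B_k^-)$ have length $\approx t$, so a reflection argument locates $x\in J_c\cap\Pi_H(F)$ with $y_l\in W_l$, $y_r\in W_r$ and $x=\tfrac12(y_l+y_r)$; the circle centered at $z=g(x)$ with radius $|x-y_l|=|x-y_r|$ then threads all four balls $B_k^\pm,B_h^\pm$, because the midpoint condition kills the horizontal error and the residual vertical error is $O(t^3)\ll r(B_k^\pm)\approx t^2$. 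Having \emph{both} a $+$ and a $-$ ball at each end removes any need to know which side of $S$ is which: whatever the angular order of the four balls along the arc in the convex half-plane $H_z$, some sub-arc of length $<\pi r'$ has endpoints of one sign and meets a ball of the other. This is the missing idea in your approach.
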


The preceding claim asserts that the strict convexity of $g$ at $x_0$ implies that the Key Property is violated. Hence $S$ is a segment.  Lemma \ref{varietysegment} then ensures that $Z$ also contains the line $L$ that supports the segment, thereby completing the proof of the lemma (up to verification of the claim).
\end{proof}

\vv
\begin{proof}[Proof of Claim \ref{claim1}]
Since $x_0$ is a density point of $F$ in $S$, we can take some $t\in (0,r/10)$
such that $\HH^1(F\cap B(x_0,t))\geq (1-\tau)\,\HH^1(S\cap B(x_0,t))$, where $\tau\in(0,10^{-3})$ is some small parameter to be fixed below.

Denote by $L_H$ the horizontal axis and let $J=(-t/2,t/2)\subset L_H$, so that $S\cap B(x_0,t)\supset
g(J)$.

We will appeal to the following simple lemma.

\begin{lemma}\label{lem:denstau}
Fix $\kap > \tau$, and suppose that $\mathcal{F}$ is a finite family of pairwise disjoint intervals contained in $(-t/2,t/2)$ satisfying
$$\sum_{T\in \mathcal{F}}\mathcal{H}^1(g(T))\geq 3\kap \mathcal{H}^1(S\cap B(x_0,t)).
$$
If $\mathcal{F}'$ denotes the subfamily of intervals $T\in \mathcal{F}$ satisfying
$\mathcal{H}^1(g(T)\cap F)\geq \kap \mathcal{H}^1(g(T))$, then
$$\sum_{T\in \mathcal{F}'}\mathcal{H}^1(g(T))\cap F)\geq \kap \mathcal{H}^1(S\cap B(x_0,t)).
$$
\end{lemma}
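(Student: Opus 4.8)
The plan is to argue by contradiction, exploiting a pigeonhole/counting split of the family $\mathcal{F}$ into the ``good'' subfamily $\mathcal{F}'$ and its complement $\mathcal{F}\setminus\mathcal{F}'$, and then to estimate the total length picked up by the bad intervals against $\HH^1(S\cap B(x_0,t))$. First I would record the elementary fact that, since the intervals $T\in\mathcal{F}$ are pairwise disjoint subintervals of $(-t/2,t/2)$ and $g$ is a fixed bi-Lipschitz parametrization of $S$ near $x_0$ (so $\HH^1(g(\cdot))$ behaves like Lebesgue measure up to absolute constants, and additivity over disjoint pieces holds), we have $\sum_{T\in\mathcal{F}}\HH^1(g(T))\le \HH^1(S\cap B(x_0,t))$ trivially, and more importantly $\sum_{T\in\mathcal{F}\setminus\mathcal{F}'}\HH^1(g(T)\setminus F)$ is at most $\HH^1(g(J)\setminus F)\le \HH^1(S\cap B(x_0,t)\setminus F)\le \tau\,\HH^1(S\cap B(x_0,t))$, by the density hypothesis $\HH^1(F\cap B(x_0,t))\ge(1-\tau)\HH^1(S\cap B(x_0,t))$ fixed at the start of the proof of Claim \ref{claim1}.

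The key step is the following chain. For each bad interval $T\in\mathcal{F}\setminus\mathcal{F}'$ we have by definition $\HH^1(g(T)\cap F)<\kap\,\HH^1(g(T))$, hence $\HH^1(g(T)\setminus F)>(1-\kap)\HH^1(g(T))\ge\tfrac12\HH^1(g(T))$ (using $\kap<\tfrac12$, which is harmless since $\kap$ is a small parameter). Summing over $T\in\mathcal{F}\setminus\mathcal{F}'$ and using the disjointness together with the displayed bound on $\sum\HH^1(g(T)\setminus F)$ gives
\[
\sum_{T\in\mathcal{F}\setminus\mathcal{F}'}\HH^1(g(T))\ \le\ 2\sum_{T\in\mathcal{F}\setminus\mathcal{F}'}\HH^1(g(T)\setminus F)\ \le\ 2\tau\,\HH^1(S\cap B(x_0,t))\ \le\ \kap\,\HH^1(S\cap B(x_0,t)),
\]
where in the last inequality I use the hypothesis $\kap>\tau$ (in fact I would note $\kap\ge 2\tau$ suffices, and if only $\kap>\tau$ is assumed one absorbs the factor into the constant, or simply strengthens the bookkeeping; since $\tau$ is at our disposal this is not an issue). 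Subtracting this from the hypothesis $\sum_{T\in\mathcal{F}}\HH^1(g(T))\ge 3\kap\,\HH^1(S\cap B(x_0,t))$ yields $\sum_{T\in\mathcal{F}'}\HH^1(g(T))\ge 2\kap\,\HH^1(S\cap B(x_0,t))$. Finally, for each $T\in\mathcal{F}'$ we have $\HH^1(g(T)\cap F)\ge\kap\,\HH^1(g(T))$ by definition of $\mathcal{F}'$, so summing gives $\sum_{T\in\mathcal{F}'}\HH^1(g(T)\cap F)\ge\kap\sum_{T\in\mathcal{F}'}\HH^1(g(T))\ge 2\kap^2\,\HH^1(S\cap B(x_0,t))$; since $\kap$ can be taken $\le\tfrac12$ this is certainly $\ge\kap\,\HH^1(S\cap B(x_0,t))$, which is the claimed inequality.

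There is no real obstacle here: the lemma is a soft counting argument and the only things to be careful about are (i) the additivity and comparability of the ``length along $g$'' functional over disjoint intervals, which follows from $g$ being a fixed bi-Lipschitz (analytic, regular) parametrization of the arc $S$, and (ii) keeping track of which small parameter dominates which — the relation $\kap>\tau$ is exactly what is needed to make the bad intervals contribute a negligible fraction. I would present the three estimates above as displayed inequalities and conclude in one line. The mild point worth a sentence is that the density hypothesis on $F$ inside $B(x_0,t)$ is applied to $g(J)\subset S\cap B(x_0,t)$, so the deficit $\HH^1(g(J)\setminus F)$ is controlled by $\tau\,\HH^1(S\cap B(x_0,t))$; everything else is bookkeeping.
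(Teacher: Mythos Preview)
Your overall strategy is sound and close in spirit to the paper's, but the final step contains an arithmetic slip that breaks the argument as written. You conclude with
\[
\sum_{T\in\mathcal{F}'}\HH^1(g(T)\cap F)\ \ge\ \kap\sum_{T\in\mathcal{F}'}\HH^1(g(T))\ \ge\ 2\kap^2\,\HH^1(S\cap B(x_0,t)),
\]
and then assert that ``since $\kap\le\tfrac12$ this is certainly $\ge\kap\,\HH^1(S\cap B(x_0,t))$.'' But $2\kap^2\ge\kap$ is equivalent to $\kap\ge\tfrac12$, so for small $\kap$ (which is the regime here) your bound $2\kap^2$ is strictly \emph{weaker} than the claimed $\kap$. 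The error is that you re-apply the defining inequality of $\mathcal{F}'$ at the end, which costs an unnecessary factor of $\kap$. The easy fix is to instead write
\[
\sum_{T\in\mathcal{F}'}\HH^1(g(T)\cap F)\ \ge\ \sum_{T\in\mathcal{F}'}\HH^1(g(T))\ -\ \sum_{T\in\mathcal{F}'}\HH^1(g(T)\setminus F)\ \ge\ 2\kap\,\HH^1(S\cap B(x_0,t)) - \tau\,\HH^1(S\cap B(x_0,t)),
\]
which gives the desired lower bound once $\tau$ is sufficiently small relative to $\kap$. (Note, though, that even with this fix your chain of inequalities uses $\kap\ge 2\tau$ rather than merely $\kap>\tau$, so you are not proving the lemma under exactly the stated hypothesis.)

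For comparison, the paper's proof runs by contradiction in a slightly cleaner way that uses only $\kap>\tau$: assuming the conclusion fails, one bounds $\sum_{T\in\mathcal{F}}\HH^1(g(T)\cap F)\le 2\kap\,\HH^1(S\cap B(x_0,t))$ (the $\mathcal{F}'$ part by the failed conclusion, the $\mathcal{F}\setminus\mathcal{F}'$ part by the defining inequality and disjointness), and then estimates $\HH^1(F\cap B(x_0,t))$ by this plus $\HH^1\bigl(S\cap B(x_0,t)\setminus\bigcup_T g(T)\bigr)\le(1-3\kap)\HH^1(S\cap B(x_0,t))$, obtaining $\HH^1(F\cap B(x_0,t))\le(1-\kap)\HH^1(S\cap B(x_0,t))$, which contradicts the density hypothesis since $\kap>\tau$. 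Your direct approach and the paper's contradiction approach use the same two ingredients (the density hypothesis and the good/bad split) but arranged differently; the paper's arrangement avoids the extra factor and the extraneous parameter assumptions.
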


\begin{proof}  Suppose the conclusion fails.  Then insofar as $\mathcal{F}$ are pairwise disjoint intervals, and $\mathcal{H}^1(g(T)\cap F)< \kap \mathcal{H}^1(g(T))$ for $T\in \mathcal{F}\backslash \mathcal{F}'$, we have
$$\sum_{T\in \mathcal{F}}\mathcal{H}^1(g(T)\cap F)\leq 2\kap \mathcal{H}^1(S\cap B(x_0,t)).
$$
But then
$$\mathcal{H}^1(B(x_0,t)\cap F)\leq 2\kap \mathcal{H}^1(S\cap B(x_0,t))+\mathcal{H}^1\Bigl(S\backslash \bigcup_{T\in \mathcal{F}}g(T)\Bigl)\leq (1-\kap)\mathcal{H}^1(S\cap B(x_0,t)).
$$
The right hand side is strictly smaller than $(1-\tau)\mathcal{H}^1(S\cap B(x_0,t))$, which is our desired contradiction.
\end{proof}

We split $J$ into three intervals $J_l, J_c, J_r$ (where $l, c, r$, stand for left, center, right) with
disjoint interiors such that $\HH^1(J_l)=\HH^1(J_c)=\HH^1(J_r)=\HH^1(J)/3$.
Next we split $J_l$ into $N$ intervals with disjoint interiors of the same length, and we take
$N = c_1\HH^1(J)^{-1}$, with $c_1\in (0,1)$ to be chosen below (depending on $g''(0)$). We denote by $J_l^1,\ldots J_l^N$
this family of intervals. By standard arguments, we find a subfamily $\{J_l^k\}_{k\in K_l}$ of $\{J_l^1,\ldots, J_l^N\}$ such that the intervals $\{10J_l^k\}_{k\in K_l}$ are pairwise disjoint and moreover
$$\sum_{k\in K_l} \HH^1(g(J_{l}^k))\gtrsim \HH^1(g(J_l)) \gtrsim \HH^1(S\cap B(x_0, t)).$$
We may therefore apply Lemma \ref{lem:denstau}, with $\kap$ some absolute constant (provided $\tau$ is small enough), to find a subfamily $\{J_l^k\}_{k\in H_l} \subset \{J_l^k\}_{k\in K_l}$ of the intervals
$J_l^k$
such that \begin{equation}\label{j1kbigf}\HH^1(F\cap g(J_l^k))\approx \HH^1(g(J_l^k)),\end{equation}
and
\begin{equation}\label{eqsjh3}
\sum_{k\in H_l} \HH^1(F\cap g(J_{l}^k))\gtrsim \HH^1(S \cap B(x_0, t)).
\end{equation}
Next, note that the condition (\ref{j1kbigf}) ensures that we can apply property (\ref{holesbymeasure2}) for each $k\in H_l$ to find a ball $B^+_k$ satisfying
$$B^+_k\subset U_{\ell(J_{l}^k)}(g(J_l^k))\cap\Omega^+,\qquad
\text{ with }r(B_k^+)\approx \HH^1(J_{l}^k),$$
where $U_\ell(A)$ stands for the $\ell$-neighborhood of $A$.
Observe that, by the strict convexity of $S$, we have $\dist(g(J_l^k),L_H)\approx\ell(J)^2$. On the other hand,
$$\dist(B_k^+,g(J_l^k))\leq \ell(J_l^k)\leq \frac{\ell(J)}N =c_1\ell(J)^2.$$
So if we choose $c_1$ small enough, then the balls $B_k^+$ are contained in $\R^2_+$ and far from $L_H$.

Next, let $I_k$, $k\in H_l$, be the projection of the balls $B_k^+$, $k\in H_l$, on the axis $L_H$. The intervals $I_k$, $k\in H_{l}$, are disjoint, and moreover, $$\sum_{k\in H_{l}}\mathcal{H}^1(g(\tfrac{1}{10}I_j))\gtrsim \sum_{k\in H_{l}}\mathcal{H}^1(g(I_j))\stackrel{(\ref{eqsjh3})}{\gtrsim} \mathcal{H}^1(S\cap B(x_0,t)).$$
Therefore, appealing to Lemma \ref{lem:denstau} once again, with $\kap$ some absolute constant, we find a family of indices $M_l\subset H_l$ such that
\begin{equation}\label{eq:onetenthbigF}\HH^1(F\cap g(\tfrac1{10} I_k))\approx \HH^1(g(\tfrac{1}{10}I_k))\approx \ell(I_k)\text{ for every }k\in M_l\end{equation} and
$$\sum_{k\in M_l} \HH^1(F\cap g(\tfrac1{10} I_k))\approx \HH^1(S \cap B(x_0,t)).$$

Since (\ref{eq:onetenthbigF}) holds, we may apply property (\ref{holesbymeasure2})
for each $k\in M_l$ to find
 some ball $B^-_k$ satisfying
$$B^-_k\subset U_{\tfrac1{10} \ell(I_k)}(g(\tfrac1{10} I_k))\cap\Omega^+,\qquad
r(B_k^-)\approx \HH^1(I_k).$$
Again, by the strict convexity of $S$, the balls $B^-_k$ are contained in $\R^2_+$ and far away from
$L_H$. Further, by construction the projection $\Pi_H(B^-_k)$ is contained deep inside $\Pi_H(B^+_k)$ for
each $k\in M_l$. In fact, by shrinking the balls $B^-_k$ if necessary, we can assume that
$$\Pi_H(B^-_k)\subset \Pi_H(\tfrac12 B^+_k)\quad \mbox{ for each $k\in M_l$.}$$

Now we denote
$$W_l = \bigcup_{k\in M_l} \Pi_H(\tfrac12B^-_k).$$
By the disjointness of the intervals $10J^k_l$, $k\in M_l$, the intervals $\Pi_H(\tfrac12B^-_k)$
are disjoint and we deduce that $\HH^1(W_l)\approx \HH^1(J)\approx t.$

Next we define an analogous family of balls $\{B^\pm_k\}_{k\in M_r}$ and a set $W_r$, replacing the left
interval $J_l$ by the right one $J_r$.

We claim that there is some $x\in J_c\cap \Pi_H(F)$ such that
$$W_l \cap (2x-W_r)\neq \varnothing.$$
In fact, for an arbitrary point $y_r\in W_r$,
the set $\{2x - y_r:x\in J_c\cap \Pi_H(F)\}$ is of the form $I\setminus X$, where $I$ is an interval
 of length $2\ell(J_c)$ which contains $J_l$ and $X$ is an exceptional set with length at most
 $2\HH^1(J\setminus \Pi_H(F))\leq c\HH^1(g(J)\setminus F)\leq
 c\tau\ell(J)$. So for $\tau$ small enough, $\{2x - y_r:x\in J_c\cap \Pi_H(F)\}$
 intersects $W_l$, since $\HH^1(W_l)\approx \ell(J)\gg c\tau\ell(J)$.

The preceding argument shows that
 there exist $y_l\in W_l$, $y_r\in W_r$, and $x\in J_c\cap \Pi_H(F)$ such that $y_l = 2x-y_r$, or equivalently,
$$x= \frac{y_l+y_r}2.$$
Observe that, in particular, this implies that $|x-y_l|= |x-y_r|\approx\ell(J)$.

Let $k\in M_l$ be such that $y_l\in\Pi_H(\tfrac12B^-_k)$ and $h\in M_r$  such that
$y_r\in\Pi_H(\tfrac12B^-_{h})$. By construction, there are points $y_l^\pm\in \frac12B_k^\pm$ and
$y_r^\pm\in \frac12B_h^\pm$ such that
$$\Pi_H(y_l^-)= \Pi_H(y_l^+) = y_l\quad \text{and}\quad \Pi_H(y_r^-)= \Pi_H(y_r^+) = y_r.$$
We claim that the circumference centered at $g(x)$ (observe that $g(x)\in F\subset S\cap\supp\mu_0$) with radius $|x-y_l|$ intersects the four balls $B_k^\pm$ and $B_h^\pm$. To see this, notice that
$$\big||g(x)- y_l^\pm| - |x-y_l|\big| \leq (1-\cos\alpha^\pm)\,\ell(J)
\lesssim (\alpha^\pm)^2\,\ell(J),$$
where $\alpha^\pm$ is the slope of the line passing through $g(x)$ and $y_l^\pm$, which satisfies $\alpha^\pm\lesssim \ell(J)$ (taking into account that $|x-y_l|= |x-y_r|\approx\ell(J)$ and the quadratic behavior of $g$ close to $x_0=0$). Thus,
$$\big||g(x)- y_l^\pm| - |x-y_l|\big| \lesssim \ell(J)^3\ll \ell(J)^2\approx r(B_k^\pm)
.$$
Analogously,
$$\big||g(x)- y_r^\pm| - |x-y_l|\big| =
\big||g(x)- y_r^\pm| - |x-y_r|\big| \lesssim \ell(J)^3\ll \ell(J)^2\approx r(B_h^\pm).$$
Hence the aforementioned circumference passes through the balls $B_k^\pm$, $B_h^\pm$.

Let $z=g(x)$ and $r'=|x-y_l|$.
It is easy to check that there is an arc contained in the circumference $\partial B(z,r')$ satisfying the
required properties in the claim. To see this, let $H_z$ the open half-plane whose boundary equals the tangent to $S$ at $z$ and containing $S\setminus \{z\}$. It is easy to check that the four balls
$B_k^\pm$, $B_h^\pm$ are contained in $H_z$, taking into account that $g''(\xi)\approx g''(0)$ in the
whole interval $J$ and choosing the constant $c_1$ above small enough if necessary.
\end{proof}

\vv

It would appear that Lemma \ref{l:convjordan4} is the most we can extract out of the assumptions stated there, and in particular using only the existence of complementary pairs.  To say more, we need to use the full strength of the admissible pairs property, which has a memory of the limiting sequence $\Omega_j^+$ of Jordan domains.

Our goal will be to prove the following result.

\begin{lemma}\label{lemclau81}
Suppose that $\Omega_j^+$ is a sequence of Jordan domains such that there are closed sets $G^+, G^-$ and $G_0$ such that, with $\Omega_j^-=\R^2\backslash \overline{\Omega_j^+}$ and $\Gamma_j = \partial\Omega_j$,
$$\lim_{j\to\infty}\overline{\Omega_j^{\pm}} =G^{\pm}, \,\text{ and } \lim_{j\to\infty}\overline{\Gamma_j}=G_0\text{ locally as }j\to \infty.
$$
Suppose $\mu_0$ is a measure supported in $G_0$ with $C_0$-linear growth, and $B_0$ is a ball satisfying
\begin{enumerate}
\item there is a line $L \subset G_0$ with $ \mu_0(B_0\cap L)>0$, and
\item \label{admissiblesubseqence}given any subsequence $\{\Omega^+_{j_k}\}_k$, and for every $x\in \supp(\mu_0)$ and $r\in (0, 3r_0)$, there exists a pair $(S_1,S_2)$ that is admissible for the sequence of domains\footnote{To be clear, $(S_1,S_2)$ is an admissible pair for the given sequence $\{\Omega^+_{j_k}\}_k$ of domains means that we can find a further subsequence $\{j_{\ell}\}_{\ell}$ of $\{j_k\}_k$ such that there exists circular arcs
$I_{j_{\ell}}^\pm\subset \partial B(x_{j_{\ell}},r_{j_{\ell}})\cap \Omega^{\pm}_{j_{\ell}}$, with $x_{j_{\ell}}\in\Gamma_{j_{\ell}}$, such that
$I_{j_{\ell}}^+$, $I_{j_{\ell}}^-$ converge to $S_1$, $S_2$ in Hausdorff distance, respectively.} $\{\Omega_{j_k}^+\}_k$ that is centered at $x$ with radius $r>0$.
\end{enumerate}
Then $G_0\cap B_0\subset L$ and if $H_1$, $H_2$ are the two open half planes whose boundary is $L$, we have
that either
$$H_1\cap B_0\subset G^+\setminus G_0 \;\text{ and }\; H_2\cap B_0\subset G^-\setminus G_0,$$
or
$$H_1\cap B_0\subset G^-\setminus G_0 \;\text{ and }\; H_2\cap B_0\subset G^+\setminus G_0.$$
\end{lemma}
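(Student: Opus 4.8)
The plan is to reduce the statement to a purely local identity near the line $L$, and then to prove that identity by a compactness/topology argument that for the first time exploits the full ``memory'' of the admissible pairs together with the fact that each $\Omega_j^+$ is a Jordan domain (so that $\Gamma_j$ is a Jordan curve and $\Omega_j^\pm$ are its two complementary \emph{connected} components).

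After a rotation assume $L=\{x_2=0\}$ and write $H_1=\{x_2>0\}$, $H_2=\{x_2<0\}$. Since $\mu_0$ has $C_0$-linear growth, $\mu_0$ restricted to $L$ is absolutely continuous with respect to $\HH^1|_L$ with density at most $C_0$; as $\mu_0(B_0\cap L)>0$ this gives $\supp\mu_0\cap L\cap B_0\ne\varnothing$, and for any $w\in B_0$ there is $x_0\in\supp\mu_0\cap L\cap B_0$ with $|x_0-w|<3r_0$. I claim it suffices to establish the local identity
\begin{equation}\label{eq:locclau81}
G_0\cap B(x_0,r)=L\cap B(x_0,r)\qquad\text{for all }x_0\in\supp\mu_0\cap L\cap B_0,\ \ r\in(0,3r_0).
\end{equation}
Indeed, granting \eqref{eq:locclau81}: if some $w\in G_0\cap B_0$ were off $L$, applying \eqref{eq:locclau81} at a suitable $x_0$ with $r=|x_0-w|+\eta<3r_0$ would force $w\in L$, a contradiction, so $G_0\cap B_0\subset L$; then $H_1\cap B_0$ and $H_2\cap B_0$ are connected and disjoint from $G_0$, hence each lies in exactly one of the disjoint open sets $G^+\setminus G_0$, $G^-\setminus G_0$; and taking an admissible pair $(S_1,S_2)$ centred at a point $x_0\in\supp\mu_0\cap L\cap\tfrac12 B_0$ of small radius $r$, its two antipodal endpoints lie in $G^+\cap G^-=G_0$, hence in $L$ by \eqref{eq:locclau81} (with radius $2r$, say), so they equal $x_0\pm re_1$, $S_1$ and $S_2$ are the closed semicircumferences of $\partial B(x_0,r)$ in $\overline{H_1}$ and $\overline{H_2}$, and their relative interiors meet $G_0$ only at the endpoints; since one of them lies in $G^+$ and the other in $G^-$, we conclude $H_1\cap B_0\subset G^+\setminus G_0$ and $H_2\cap B_0\subset G^-\setminus G_0$ (or with the roles interchanged), which is the assertion of the lemma.

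To prove \eqref{eq:locclau81} fix $x_0\in\supp\mu_0\cap L\cap B_0$, translate so $x_0=0$, and fix $r\in(0,3r_0)$ (the inclusion ``$\supset$'' being the hypothesis $L\subset G_0$). By the admissible pairs hypothesis, passing to a subsequence (relabelled) we obtain arcs $I_j^\pm\subset\partial B(y_j,r_j)\cap\Omega_j^\pm$ with $y_j\in\Gamma_j\to 0$, $r_j\to r$, and $I_j^\pm\to S_1,S_2$ in Hausdorff distance, where $(S_1,S_2)$ are complementary semicircumferences centred at $0$ of radius $r$. The two components $\gamma_j^1,\gamma_j^2$ of $\partial B(y_j,r_j)\setminus(\overline{I_j^+}\cup\overline{I_j^-})$ satisfy $\HH^1(\gamma_j^1)+\HH^1(\gamma_j^2)\to 0$ and shrink to the antipodal endpoints $p_1,p_2$ of $S_1$ (and $p_1,p_2\in G^+\cap G^-=G_0$). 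The crucial topological point is that, because $\Gamma_j$ is a Jordan curve with complementary components $\Omega_j^\pm$ and meets $\partial B(y_j,r_j)$ only inside the shrinking arcs $\overline{\gamma_j^1}\cup\overline{\gamma_j^2}$, the trace $\Gamma_j\cap\overline{B(y_j,r_j)}$ must contain an arc running from $\gamma_j^1$ to $\gamma_j^2$ which separates $\Omega_j^+\cap B(y_j,r_j)$ from $\Omega_j^-\cap B(y_j,r_j)$, all remaining pieces of $\Gamma_j$ inside the disc being confined to shrinking neighbourhoods of $p_1$ or $p_2$; here one uses the connectedness of the $\Omega_j^\pm$, the feature that fails for general open sets (cf.\ the remark preceding the lemma), and the possibility of selecting a single subsequence realizing admissible pairs at countably many centres and radii at once. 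Passing this structure to a further subsequential limit yields a continuum $\sigma\subset G_0\cap\overline{B(0,r)}$ with $\sigma\cap\partial B(0,r)=\{p_1,p_2\}$ such that $G_0\cap B(0,r)=\sigma\cap B(0,r)$ and $G^+\cap B(0,r)$, $G^-\cap B(0,r)$ are the two sides into which $\sigma$ cuts $B(0,r)$.

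It remains to identify $\sigma$. The hypothesis $L\subset G_0$ gives $[-re_1,re_1]=L\cap\overline{B(0,r)}\subset\overline\sigma=\sigma$, hence $\{p_1,p_2\}\supset\{-re_1,re_1\}$, so $\{p_1,p_2\}=\{-re_1,re_1\}$ and $S_1,S_2$ are the upper and lower semicircumferences of $\partial B(0,r)$. Finally, were $\sigma$ strictly larger than $[-re_1,re_1]$, the side-identification would place a small ball contained in, say, $G^-\setminus G_0$ inside $H_1\cap B(0,r)$; combining a point of this ball with the open lower semicircumference of a suitable circle $\partial B(0,\rho)$ centred at $0$ (which, by the previous step, lies in $G^-\setminus G_0$), one produces an arc of length $<\pi\rho$ with both endpoints in $G^-\setminus G_0$ but passing through a point of the open upper semicircumference, which lies in $G^+\setminus G_0$; this contradicts the Key Property (proved exactly as in Lemma \ref{l:convjordan4} from the complementary semicircumferences furnished by admissible pairs) that such an arc must lie in $G^-$. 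Hence $\sigma=[-re_1,re_1]$, proving \eqref{eq:locclau81}, and the lemma follows. I expect the main obstacle, and the bulk of the work, to be precisely the topological step above: rigorously controlling the traces of the Jordan curves $\Gamma_j$ near $\partial B(y_j,r_j)$ — in particular ruling out ``fat fingers'' of one side intruding into the other — and transferring this control stably to the limit.
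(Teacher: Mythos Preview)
Your reduction to the local identity $G_0\cap B(x_0,r)=L\cap B(x_0,r)$ and the deduction of the half-plane dichotomy from it are fine and match the paper's overall shape. The gap is in the proof of the local identity itself.

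From a single admissible pair centred at $0$ of radius $r$ you only learn that $\Gamma_j$ meets $\partial B(y_j,r_j)$ inside the two shrinking arcs $\overline{\gamma_j^1}\cup\overline{\gamma_j^2}$. It does \emph{not} follow that ``all remaining pieces of $\Gamma_j$ inside the disc are confined to shrinking neighbourhoods of $p_1$ or $p_2$'': a component of $\Gamma_j\cap B(y_j,r_j)$ with both endpoints on, say, $\gamma_j^1$ can be a long finger reaching deep into the disc, with a thin channel of one of the $\Omega_j^\pm$ exiting through $\gamma_j^1$ to preserve connectivity of that side. Such fingers survive in the Hausdorff limit and land in $G_0\cap B(0,r)$ away from $[-re_1,re_1]$, so the assertion that $G_0\cap B(0,r)$ equals $\sigma\cap B(0,r)$ for a single continuum $\sigma$ is unjustified. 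You invoke ``admissible pairs at countably many centres and radii at once'' to exclude this, but you supply no mechanism: which centres, which radii, and exactly how do the resulting arcs obstruct the finger? This is not a technicality you can sweep under the rug --- it is the whole content of the lemma.

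The paper supplies precisely this mechanism, and it is not a direct topological control of $\Gamma_j$ inside one disc. It uses a \emph{second} centre $x'\in\supp\mu_0\cap L$, $x'\ne x$. First (Lemma~\ref{lemclau78}) it shows that at every $x\in\supp\mu_0\cap L\cap B_0$ and every radius there is an admissible pair with endpoints on $L$: otherwise a tubular-neighbourhood construction using admissible pairs at the two nearby radii $r\pm 1/k$ manufactures arcs of $\Gamma_{j_k}$ Hausdorff-converging to a nontrivial arc of $\partial B(x,r)$ (Lemmas~\ref{lemlongarcstech}--\ref{lemclau77}); iterating at slightly smaller and larger radii produces three such limiting arcs on three concentric circles about $x$, and then circles centred at $x'$ of suitable radius must cut at least two of them, blocking any admissible pair at $x'$ of that radius --- contradicting hypothesis~(2). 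Once endpoints are forced onto $L$, any $y\in G_0\cap\R^2_+\cap\partial B(x,r)$ has $\dist(y,\text{endpoints})\ge\dist(y,L)>0$, so the same long-arc lemma yields a limiting arc of $\partial B(x,r)$ with $y$ as an endpoint (Lemma~\ref{lemclau79}); circles about $x'$ through the midpoint of that arc then admit no admissible pair with endpoints on $L$, contradicting what was just proved (Lemma~\ref{lemclau80}). The two-centre interplay, together with the tubular-neighbourhood argument that converts admissible pairs at nearby radii into genuine long arcs of the $\Gamma_j$'s, is exactly the missing ingredient in your outline.
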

Observe that if a sequence of Jordan domains $\Omega_j^+$ satisfies the assumptions of Lemma \ref{lemclau81}, then so does any subsequence of the domains.

Now we need to introduce some additional notation. Given a pair complementary semicircumferences $(S_1,S_2)$,
we say that the two common
end-points of $S_1,S_2$ are the end-points of the pair $(S_1,S_2)$ and we denote the set of these
end-points by $(S_1,S_2)_{\ep}$.

\begin{lemma}\label{lemlongarcstech}  Under the notation and assumptions of Lemma \ref{lemclau81},  fix $x\in \supp\mu_0\cap B_0$, and $y\in \partial B(x,r)\cap G_0$ for some $r\in (0,2r(B_0))$.  Fix $\ell_0>0$. Suppose that, for sufficiently large $k$, and given any subsequence of the domains, we can find sequences
of pairs $(S_{1, r+1/k}, S_{2, r+1/k})$ and $(S_{1,r-1/k},S_{2, r-1/k})$ which are
admissible for the subsequence of domains, which are centred at $x$ and have radii $r+1/k$ and $r-1/k$ respectively, and such that
$$\liminf_{k\to \infty}\dist(y,(S_{1, r+1/k}, S_{2, r+1/k})_{\ep})\geq \ell_0\text{ and }\liminf_{k\to \infty}\dist(y,(S_{1, r-1/k}, S_{2, r-1/k})_{\ep})\geq \ell_0.
$$
Then, there exists a subsequence of arcs $\gamma_{j_k}\subset \Gamma_{j_k}$ which converge in Hausdorff distance
to an arc $I\subset \partial B(x,r)$ such that $y$ is one of its end-points and $\HH^1(I)\geq \ell_0/5$.
\end{lemma}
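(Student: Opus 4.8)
The plan is to produce, for each large $m$, a thin curvilinear ``collar'' $R_m$ hugging $\partial B(x,r)$ near $y$, whose two curved walls lie in $\Omega_{j_m}^+$ or in $\Omega_{j_m}^-$ (hence miss $\Gamma_{j_m}$), so that the component of $\Gamma_{j_m}\cap R_m$ through the approximating point $y_{j_m}$ is forced to run along the collar all the way to one of the two radial ends of $R_m$, which sit at angular distance $\gtrsim\ell_0/r$ from $y$. Passing to the limit then yields the desired arc. Throughout, write $u=(y-x)/|y-x|$, and pick $y_{j}\in\Gamma_{j}$ with $y_{j}\to y$ (possible since $y\in G_0=\lim\Gamma_j$).

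\textbf{Step 1: extracting a good subsequence.} Using the admissible-pairs hypothesis first on the full sequence to obtain the pair at a radius slightly larger than $r$, then applying it again to the subsequence realizing that pair to obtain the pair at a radius slightly smaller than $r$, and iterating, one extracts a single subsequence of the domains (not relabeled) and, for each large $m$, circular arcs $I_m^{\mathrm{out},\pm}\subset \partial B(x_m^{\mathrm{out}},r_m^{\mathrm{out}})\cap\Omega_{j_m}^{\pm}$ and $I_m^{\mathrm{in},\pm}\subset \partial B(x_m^{\mathrm{in}},r_m^{\mathrm{in}})\cap\Omega_{j_m}^{\pm}$, with $x_m^{\mathrm{out}},x_m^{\mathrm{in}}\in\Gamma_{j_m}$, such that $x_m^{\mathrm{out}},x_m^{\mathrm{in}}\to x$, $r_m^{\mathrm{in}}<r<r_m^{\mathrm{out}}$, $r_m^{\mathrm{in}},r_m^{\mathrm{out}}\to r$, and $I_m^{\mathrm{out},\pm}$, $I_m^{\mathrm{in},\pm}$ converge in Hausdorff distance to complementary semicircumferences whose common endpoints lie at distance $\ge \ell_0-o(1)$ from $y$ (this last fact uses the two $\liminf$ hypotheses, and $r_m^{\mathrm{out}},r_m^{\mathrm{in}}\to r$). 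It is crucial here that the hypothesis is quantified over \emph{every} subsequence, so that after each extraction the previously chosen arcs survive; we also take $j_m$ deep enough that $|y_{j_m}-y|$ is much smaller than $r_m^{\mathrm{out}}-r$. Finally, passing to a further subsequence in $m$, we may assume the semicircumference near $y$ on the outer (resp.\ inner) circle is the one contained in $\overline{\Omega_{j_m}^{\sigma}}$ (resp.\ $\overline{\Omega_{j_m}^{\tau}}$) for fixed signs $\sigma,\tau$; the rest of the argument does not care whether $\sigma=\tau$.

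\textbf{Step 2: the collar and the topological argument.} For $m$ large the eccentric open annulus $B(x_m^{\mathrm{out}},r_m^{\mathrm{out}})\setminus\overline{B(x_m^{\mathrm{in}},r_m^{\mathrm{in}})}$ is a genuine topological annulus; cutting it along two disjoint radial segments issuing from the inner circle in the directions obtained by rotating $u$ by $\pm\beta_m$ — where $\beta_m$ is slightly less than the minimum of the four angular half-widths of the two relevant semicircumferences as seen from $x$ — produces a topological disk $R_m$ whose boundary consists of a sub-arc of $I_m^{\mathrm{out},\sigma}$, a sub-arc of $I_m^{\mathrm{in},\tau}$, and the two cut-segments, and (after absorbing the $o(1)$ discrepancy between directions measured from $x$, $x_m^{\mathrm{out}}$, $x_m^{\mathrm{in}}$ into the choice of $\beta_m$) these two sub-arcs lie in $\Omega_{j_m}^{\sigma}$, $\Omega_{j_m}^{\tau}$ respectively, hence are disjoint from $\Gamma_{j_m}$. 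Since $r_m^{\mathrm{in}}<r<r_m^{\mathrm{out}}$ and $x_m^{\mathrm{out}},x_m^{\mathrm{in}}\to x$ one checks $y\in R_m$ with $\dist(y,\partial R_m)\gtrsim r_m^{\mathrm{out}}-r$, hence $y_{j_m}\in R_m$. Let $\gamma_m^{*}$ be the connected component of $\Gamma_{j_m}\cap R_m$ containing $y_{j_m}$. As $x_m^{\mathrm{out}}\in\Gamma_{j_m}$ lies outside $R_m$ for large $m$, $\gamma_m^{*}$ is a proper sub-arc of the Jordan curve $\Gamma_{j_m}$, and its two endpoints lie on $\partial R_m$; since $\Gamma_{j_m}$ meets the curved parts of $\partial R_m$ in the empty set, both endpoints lie on the cut-segments. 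Thus $\gamma_m^{*}$ contains a sub-arc $\hat\gamma_m\subset\Gamma_{j_m}$ joining $y_{j_m}$ to a point $q_m$ on one cut-segment, i.e.\ at angular distance $\approx\beta_m$ from $u$.

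\textbf{Step 3: truncation at a small angle and passage to the limit.} To make $y$ a genuine endpoint of the limiting arc, let $p_m$ be the \emph{last} point of $\hat\gamma_m$ (travelling from $y_{j_m}$ towards $q_m$) at angular distance $\delta_m$ from $u$, for a sequence $\delta_m\downarrow 0$ chosen much larger than the angular error of $y_{j_m}$ but much smaller than $\liminf\beta_m$; by the intermediate value theorem the sub-arc $\tilde\gamma_m$ of $\hat\gamma_m$ from $p_m$ to $q_m$ stays at angular distance in $[\delta_m,\beta_m]$ from $u$ and inside $R_m$, hence within $o(1)$ of $\partial B(x,r)$. The sets $\tilde\gamma_m$ are continua in a fixed compact set, so by Lemma~\ref{hauscomp} a subsequence converges in Hausdorff distance to a continuum $I\subset\partial B(x,r)$; since each $\tilde\gamma_m$ is connected and joins a point at angle $\delta_m$ to a point at angle $\beta_m$, and the thickness of $R_m$ tends to $0$, the set $I$ is exactly the arc of $\partial B(x,r)$ of angular span $[0,\beta_\infty]$ with $\beta_\infty=\lim\beta_m$, so $y$ is an endpoint of $I$; and, comparing chord to arc and using $\dist(y,(\cdot)_{\ep})\ge\ell_0-o(1)$ together with $r_m^{\mathrm{out}},r_m^{\mathrm{in}}\to r$, one gets $r\beta_\infty\ge\ell_0$, whence $\HH^1(I)\ge\ell_0\ge\ell_0/5$. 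Taking $\gamma_{j_m}:=\tilde\gamma_m$ finishes the proof.

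\textbf{Main obstacles.} I expect the two delicate points to be: (i) the bookkeeping in Step~1 — producing the inner and outer admissible arcs along one \emph{common} subsequence of domains with the distance control — where the quantification over all subsequences in the hypothesis is exactly what is needed and must be exploited via the iteration described; and (ii) the topological step in Step~2 — verifying that $R_m$ is honestly a topological disk whose curved walls lie inside $\Omega_{j_m}^{\pm}$, so that $\Gamma_{j_m}$ truly cannot escape through them and $\gamma_m^{*}$ is driven out through the radial ends. The remaining estimates (relating distances to angular widths and tracking the $o(1)$ errors from $x_m^{\mathrm{out}},x_m^{\mathrm{in}}\to x$) are routine, and the constant $\ell_0/5$ in the statement leaves ample room for them.
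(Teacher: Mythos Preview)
Your argument is correct and follows the same strategy as the paper: sandwich $y$ in a thin annular collar whose two curved walls are sub-arcs of the inner and outer admissible arcs (hence lie in $\Omega_{j_m}^{\pm}$ and avoid $\Gamma_{j_m}$), so that the piece of $\Gamma_{j_m}$ through $y_{j_m}$ is forced to escape through the radial ends. The paper organizes the topological step slightly differently: it distinguishes two cases according to whether the two walls lie in opposite components ($\sigma\neq\tau$, where a separation argument produces a curve crossing the entire collar from $\ell_k^1$ to $\ell_k^2$) or in the same component ($\sigma=\tau$, where the point $\omega_{j_k}\approx y$ on $\Gamma_{j_k}$ is used to produce a curve from near $y$ to one end). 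You observe that the second mechanism works regardless of the signs, since $y_{j_m}\in\Gamma_{j_m}\cap R_m$ is always available; this eliminates the case split. Your Step~3 truncation also makes explicit how $y$ becomes an endpoint of the limiting arc, a point the paper leaves to the reader. Finally, by pushing $\beta_m$ all the way out to the nearest endpoint of the admissible semicircumferences you obtain $\HH^1(I)\ge\ell_0$ rather than the paper's $\ell_0/5$, though this plays no role in the applications.
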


\begin{proof}
Consider the sequence of radii $s_k = r(1-\frac1k)$ and $t_k = r(1+\frac1k)$.
By assumption, we can find admissible pairs $(S_{1,s_k}, S_{2,s_k})$ centered at $x$ with radii
$s_k$ satisfying,
\begin{equation}\label{eqcond1*}
\liminf_{k\to \infty} \dist(y,(S_{1,s_k}, S_{2,s_k})_{\ep}) \geq \ell_0.
\end{equation}
Consequently, with $\ve_k$ a decreasing sequence chosen much smaller than $1/k$, there is a subsequence $j_k$, and arcs $I_{s_k}^\pm\subset \Omega_{j_k}^\pm \cap\partial B(y_{j_k},\wt s_k)$  such that
\begin{equation}\label{skcloseprops1}|x-y_{j_k}|\leq \ve_k, \quad |s_k - \wt s_k|\leq \ve_k,\text{ and }|\HH^1(I_{s_k}^\pm )-\pi s_k|\leq \ve_k.\end{equation}
Also, insofar as $y\in G_0$, we may choose the subsequence $j_k$ to ensure that there exists \begin{equation}\label{skcloseprops2}\omega_{j_k}\in \Gamma_{j_k}\text{ with }|y-\omega_{j_k}|<\ve_k/k.\end{equation}

Now, by assumption, we can find admissible pairs $(S_{1,t_k}, S_{2,t_k})$  (for the sequence $\{\Omega_{j_k}\}_k$) centered at $x$ with radius $t_k$.  satisfying \begin{equation}\label{eqcond2*}
\liminf_{k\to \infty}\dist(y,(S_{1,t_k}, S_{2,t_k})_{\ep}) \geq \ell_0.
\end{equation}
Thus, by taking a further subsequence, relabelled again by $\{j_k\}_k$ (which preserves all the properties in (\ref{skcloseprops1})) and (\ref{skcloseprops2}), 
we find $I_{t_k}^\pm\subset \Omega_{j_k}^\pm \cap \partial B(z_{j_k},\wt t_k)$ satisfying
$$\quad |x-z_{j_k}|\leq \ve_k,\quad |t_k - \wt t_k|\leq \ve_k, \text{ and }|\HH^1(I_{t_k}^\pm) -\pi t_k|\leq \ve_k.$$

For $k$ big enough and $\ve_k$ small enough, the end-points of $I_{s_k}^\pm$ and $I_{t_k}^\pm$ are far from $y$.
Say, any end-point $z_k$ of these intervals will satisfy $|y-z_k|\geq 0.9\ell_0.$

Assuming $\ve_k\ll1/k$, the arcs $I_{s_k}^\pm$  are essentially some perturbation of some arcs contained in $\partial B(x,s_k)$, while the arcs $I_{t_k}^\pm$ are also another small perturbation of other arcs from $\partial B(x,t_k)$. In fact, there is a thin tubular neighborhood $U_k$ containing $y$ that satisfies the following:
\begin{itemize}
\item
$U_k =A(x,s_k+2\ve_k,t_k-2\ve_k)\cap V$, where $V$ is the sector of $B(x,2r)$ with axis equal to line passing through $x$ and $y$ and such that its angle of aperture is $\ell_0/4r$, say.

\item Associated with the arc $J_{s_k}:=\partial B(x,s_k+2\ve_k)\cap V\subset \partial U_k$ there is a close arc $I'_{s_k}$ contained
either in $I^+_{s_k}$ or $I^-_{s_k}$ such that $\dist_H(J_{s_k}, I'_{s_k})\leq c\ve_k r$.

\item Associated with the arc $J_{t_k}:=\partial B(x,t_k-2\ve_k)\cap V\subset \partial U_k$ there is a close arc $I'_{t_k}$ contained
either in $I^+_{t_k}$ or $I^-_{t_k}$ such that $\dist_H(J_{t_k}, I'_{t_k}) \leq c\ve_k r$.
\end{itemize}

Now we consider the tubular neighborhood $\wt U_k$ whose boundary is formed by the arcs $I'_{s_k}$, $I'_{t_k}$
and two small segments $\ell_k^1$, $\ell_k^2$ that join the closest respective end-points of $I'_{s_k}$ and $I'_{t_k}$,
so that $\dist_H(U_k,\wt U_k)\lesssim \ve_k r$. See Fig. \ref{fig:twoneighbourhoods}.

\begin{figure}
\includegraphics[height=5cm, trim={0cm 0cm 0cm  0cm},clip]{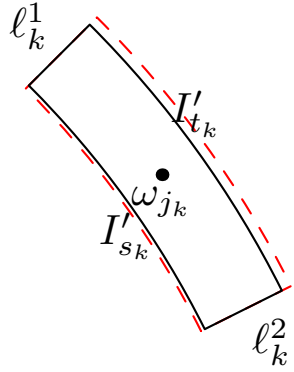}
\caption{The figure depicts the solid black lined tubular neighbourhood $\wt U_k$, and the red dashed neighbourhood $U_k$.}
\label{fig:twoneighbourhoods}
\end{figure}

We distinguish two cases:
\begin{enumerate}
\item
In the first case,  $I'_{s_k}\subset  I^+_{s_k}$ and  $I'_{t_k}\subset  I^-_{t_k}$, or alternatively  $I'_{s_k}\subset  I^-_{s_k}$ and  $I'_{t_k}\subset  I^+_{t_k}$. In both situations, by connectivity
there is a curve $\gamma_{j_k}\subset \partial\Omega_{j_k}^+\cap\wt U_k$ that joints $\ell_k^1$ with $\ell_k^2$.

\item In the second case,  $I'_{s_k}\subset  I^+_{s_k}$ and  $I'_{t_k}\subset  I^+_{t_k}$, or alternatively  $I'_{s_k}\subset  I^-_{s_k}$ and  $I'_{t_k}\subset  I^-_{t_k}$. By construction, the point $\omega_{j_k}\in \Gamma_{j_k}$ satisfies $|\omega_{j_k}-y|<\ve_k/k$, so we have $\omega_{j_k}\in \wt U_k$ and
the  distance of $\omega_{j_k}$ to any of the small segments $\ell_k^1$, $\ell_k^2$ from $\partial\wt U_k$ is at least
$\ell_0/2$, say.
Then, by the connectivity of $\partial \Omega_{j_k}^+$, there is a curve $\gamma_{j_k}\subset \partial\Omega_{j_k}^+\cap\wt U_k$ that joints $\omega_{j_k}$ either to  $\ell_k^1$ or $\ell_k^2$. So its diameter
is at least
$\ell_0/5$.
\end{enumerate}
It is easy to check that the sequence of curves $\gamma_{j_k}$ satisfy the properties asserted in
the lemma.
\end{proof}

\vv

\begin{lemma}\label{lemclau77}
Under the assumptions and notation of Lemma \ref{lemclau81},
let $x\in L\cap\supp\mu_0\cap B_0$ and let $y\in \partial B(x,r)\cap G_0$ for some $r\in (0,2r(B_0))$.
Set
$$\ell_0 = \inf_{(S_1,S_2)} \dist(y,(S_1,S_2)_{\text{ep}}),
$$
where the infimum is taken over all admissible pairs centered at $x$ with radius $r$.

The infimum is attained by an admissible pair centered at $x$ with radius $r$, and if $\ell_0>0$ then
there exists a subsequence of arcs $\gamma_{j_k}\subset \Gamma_{j_k}$ which converge in Hausdorff distance to an arc $I\subset \partial B(x,r)$ such that $y$ is one of its end-points and $\HH^1(I)\geq \ell_0/5$.

\end{lemma}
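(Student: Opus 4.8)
The plan is to reduce the statement to Lemma \ref{lemlongarcstech}, after first establishing that the infimum defining $\ell_0$ is attained. For the attainment, I would take a minimizing sequence of admissible pairs $(S_{1,i},S_{2,i})$, each centered at $x$ with radius $r$, so that $\dist\bigl(y,(S_{1,i},S_{2,i})_{\ep}\bigr)\to\ell_0$. The pairs of complementary semicircumferences with the fixed center $x$ and fixed radius $r$ form a compact family in the Hausdorff distance (such an ordered pair is determined by the direction of its common diameter, which ranges over a compact set, and the two end-points vary continuously with this direction), so after passing to a subsequence $S_{1,i}\to S_1$, $S_{2,i}\to S_2$ in Hausdorff distance, with $(S_1,S_2)$ again a pair of complementary semicircumferences centered at $x$ of radius $r$. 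Since the set of admissible pairs is closed under Hausdorff convergence (as recorded after Definition \ref{def:admissible}), $(S_1,S_2)$ is admissible, and by the continuity of the end-points $\dist\bigl(y,(S_1,S_2)_{\ep}\bigr)=\ell_0$; thus the infimum is attained.

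Now assume $\ell_0>0$; it remains to verify the quantitative hypothesis of Lemma \ref{lemlongarcstech} with this value of $\ell_0$. So fix an arbitrary subsequence $\{\Omega^+_{j_k}\}_k$ of the domains (which still satisfies the assumptions of Lemma \ref{lemclau81}), and take $k$ large enough that $r-1/k>0$ and $r+1/k<3r_0$. By assumption (2) of Lemma \ref{lemclau81} applied to this subsequence, there exist pairs admissible for $\{\Omega^+_{j_k}\}_k$ centered at $x$ of radii $r+1/k$ and $r-1/k$. Let $\ell_k^{+}$ be the supremum of $\dist\bigl(y,(S_1,S_2)_{\ep}\bigr)$ over all pairs $(S_1,S_2)$ admissible for this subsequence, centered at $x$, of radius $r+1/k$ (the supremum being over a nonempty set), and define $\ell_k^-$ analogously at radius $r-1/k$. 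I claim $\liminf_{k\to\infty}\ell_k^{\pm}\geq\ell_0$. Granting this, I would pick for each large $k$ an admissible pair $(S_{1,r\pm1/k},S_{2,r\pm1/k})$ centered at $x$ of radius $r\pm1/k$ with $\dist\bigl(y,(S_{1,r\pm1/k},S_{2,r\pm1/k})_{\ep}\bigr)\geq\ell_k^{\pm}-1/k$; then $\liminf_k\dist\bigl(y,(S_{1,r\pm1/k},S_{2,r\pm1/k})_{\ep}\bigr)\geq\ell_0$. Since the domain-subsequence was arbitrary, Lemma \ref{lemlongarcstech} applies and yields the arcs $\gamma_{j_k}\subset\Gamma_{j_k}$ converging in Hausdorff distance to an arc $I\subset\partial B(x,r)$ with $y$ as an end-point and $\HH^1(I)\geq\ell_0/5$.

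It remains to prove the claim (I treat the $+$ case; the $-$ case is identical). If it failed, there would be $\eta>0$ and a further subsequence of indices $k$ along which every pair admissible for $\{\Omega^+_{j_k}\}_k$, centered at $x$, of radius $r+1/k$ has $\dist\bigl(y,(\cdot)_{\ep}\bigr)\leq\ell_0-\eta$. Choosing one such pair per index and passing to a Hausdorff-convergent subsequence of pairs (possible since the radii $r+1/k\to r$ and the common center is $x$, arguing as in the attainment step), the limit is a pair of complementary semicircumferences centered at $x$ of radius $r$; it is admissible for $\{\Omega^+_{j_k}\}_k$, hence for the full sequence $\{\Omega^+_j\}_j$ (admissibility for a subsequence of domains implies admissibility for the whole sequence), and it satisfies $\dist\bigl(y,(\cdot)_{\ep}\bigr)\leq\ell_0-\eta<\ell_0$, contradicting the definition of $\ell_0$ as an infimum over admissible pairs of radius $r$ centered at $x$.

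I expect this last compactness-plus-closedness step — together with the bookkeeping of the nested subsequences and the careful distinction between admissibility for a subsequence of domains and admissibility for the full sequence — to be the only genuine subtlety; the attainment of the infimum and the verification of the hypotheses of Lemma \ref{lemlongarcstech} are then routine.
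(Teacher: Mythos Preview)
Your proof is correct and follows essentially the same approach as the paper: attain the infimum via compactness of semicircumferences of fixed center and radius together with the closedness of admissible pairs, then verify the hypothesis of Lemma~\ref{lemlongarcstech} by showing that any admissible pair at radius $r\pm 1/k$ must, after passing to a Hausdorff limit, yield an admissible pair at radius $r$ whose end-point distance to $y$ is at least $\ell_0$. The paper argues this last step by taking an arbitrary sequence of admissible pairs at the approximating radii and observing that every subsequential Hausdorff limit has end-point distance $\geq\ell_0$; your supremum-plus-contradiction formulation is a cosmetic repackaging of the same idea, and your explicit treatment of the ``for any subsequence of domains'' quantifier (which the paper leaves implicit) is a welcome clarification.
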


\begin{proof}  The fact that the infimum is attained by an admissible pair $(S_1,S_2)$ centered at $x$ with radius $r>0$ is an immediate consequence of the closedness of admissible pairs (and Lemma \ref{hauscomp}).  Now suppose $\ell_0 = \dist(y,(S_1,S_2)_{\text{ep}})>0$.

Consider the sequence of radii $s_k = r(1-\frac1k)$ and $t_k = r(1+\frac1k)$.
Let $(S_{1,s_k}, S_{2,s_k})$ and $(S_{1,t_k}, S_{2,t_k})$ be sequences of admissible pairs centered at $x$ with radii
$s_k$ and $t_k$ respectively.  By taking a subsequence, we may assume that these admissible pairs converge in Hausdorff metric to admissible pairs with centre $x$ and radius $r$ say $(S_1^-,S_2^-)$ and $(S_1^+, S_2^+)$.  By the minimal property of $(S_1,S_2)$ we must have that $\dist(y,(S_{1}^{\pm}, S_{2}^{\pm})_{\ep})\geq \dist(y,(S_{1}, S_{2})_{\ep})$. By the closeness property of the set of admissible pairs, we infer that
$$
\liminf_{k\to\infty} \dist(y,(S_{1,s_k}, S_{2,s_k})_{\ep}) \geq \dist(y,(S_1,S_2)_{\ep})
$$
and
$$
\liminf_{k\to\infty} \dist(y,(S_{1,t_k}, S_{2,t_k})_{\ep}) \geq \dist(y,(S_1,S_2)_{\ep}).
$$
Consequently, we may apply Lemma \ref{lemlongarcstech} with $\ell_0=\dist(y,(S_1,S_2)_{\ep})$.
\end{proof}


\begin{lemma}\label{lemclau78}
Under the assumptions and notation of Lemma \ref{lemclau81},
let $x\in L\cap\supp\mu_0\cap B_0$ and let $r\in (0,2r(B_0))$.
Then there exists an admissible pair of semicircumferences centered at $x$ with radius $r$ whose end-points
belong to $L$.
\end{lemma}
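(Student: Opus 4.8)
The plan is to single out the admissible pair of semicircumferences centered at $x$ with radius $r$ lying ``closest to $L$'', and to show that its two endpoints actually lie on $L$. Write $y_1,y_2$ for the two points of $L\cap\partial B(x,r)$; since $x\in L$ these are antipodal with respect to $x$ (i.e.\ $y_2=2x-y_1$), and since $L\subset G_0$ they lie in $G_0$. For any admissible pair $(S_1,S_2)$ centered at $x$ with radius $r$ the endpoint set $(S_1,S_2)_{\ep}$ has the form $\{a,2x-a\}$ for some $a$, hence is invariant under $p\mapsto 2x-p$, so $\dist(y_1,(S_1,S_2)_{\ep})=\dist(y_2,(S_1,S_2)_{\ep})$. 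Put $\ell_0:=\inf\dist(y_1,(S_1,S_2)_{\ep})$, the infimum over all admissible pairs at $(x,r)$ — a nonempty family by the hypotheses of Lemma~\ref{lemclau81}, since $x\in\supp\mu_0$ — so that also $\ell_0=\inf\dist(y_2,(S_1,S_2)_{\ep})$. By Lemma~\ref{lemclau77} this infimum is attained. If $\ell_0=0$, the attaining pair satisfies $y_1\in(S_1,S_2)_{\ep}=\{a,2x-a\}$, so $a=y_1$ or $2x-a=y_1$, and in either case $(S_1,S_2)_{\ep}=\{y_1,y_2\}\subset L$, which proves the lemma. So assume henceforth, for contradiction, that $\ell_0>0$.

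Apply Lemma~\ref{lemclau77} at $y_1$, and then (passing to a further subsequence) at $y_2$: we obtain a subsequence $\{j_k\}$ and arcs $\gamma^{(1)}_{j_k},\gamma^{(2)}_{j_k}\subset\Gamma_{j_k}$ converging in Hausdorff distance to arcs $I_1,I_2\subset\partial B(x,r)$ with $y_i$ an endpoint of $I_i$ and $\HH^1(I_i)\geq\ell_0/5$, $i=1,2$. By the hypotheses of Lemma~\ref{lemclau81} there is an admissible pair $(\hat S_1,\hat S_2)$ at $(x,r)$ for the subsequence $\{\Omega^+_{j_k}\}_k$; being admissible for this subsequence it is in particular admissible for the full sequence, so $\dist(y_i,(\hat S_1,\hat S_2)_{\ep})\geq\ell_0$ for $i=1,2$. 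Say $\hat S_1\subset G^+$ and $\hat S_2\subset G^-$, and fix circular arcs $J^{\pm}_{j_k}\subset\partial B(x_{j_k},r_{j_k})\cap\Omega^{\pm}_{j_k}$, with $x_{j_k}\to x$, $r_{j_k}\to r$, such that $J^+_{j_k}\to\hat S_1$ and $J^-_{j_k}\to\hat S_2$ in Hausdorff distance; since $\hat S_1,\hat S_2$ are full closed semicircumferences, for large $k$ the arc $J^{\pm}_{j_k}$ runs essentially all the way along $\hat S_1$, resp.\ $\hat S_2$.

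For the contradiction, note first that a closed semicircumference cannot contain two antipodal points in its relative interior, so (as $\dist(y_i,(\hat S_1,\hat S_2)_{\ep})\geq\ell_0>0$) the points $y_1,y_2$ lie in the relative interiors of different members of the pair; say $y_1\in\operatorname{relint}(\hat S_1)\subset G^+$ and $y_2\in\operatorname{relint}(\hat S_2)\subset G^-$, each at distance $\geq\ell_0$ from the pair's endpoints. Hence, for large $k$, the arc $J^+_{j_k}\subset\Omega^+_{j_k}$ passes within $o(1)$ of $y_1$ while running essentially along $\partial B(x,r)$ through a whole neighbourhood of $y_1$, whereas $\gamma^{(1)}_{j_k}\subset\Gamma_{j_k}$ runs essentially tangentially out of $y_1$ along $\partial B(x,r)$ for arc-length $\geq\ell_0/5<\ell_0$, so its limit $I_1$ stays inside $\operatorname{relint}(\hat S_1)$ near $y_1$; since $\Omega^+_{j_k}$ lies locally on one side of $\Gamma_{j_k}$, the arcs $J^+_{j_k}$ and $\gamma^{(1)}_{j_k}$ are nested near $y_1$ (one just inside and one just outside, or on, $\partial B(x_{j_k},r_{j_k})$), and symmetrically near $y_2$ for $J^-_{j_k}$, $\gamma^{(2)}_{j_k}$ and $\Omega^-_{j_k}$. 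Exploiting this nesting at both $y_1$ and $y_2$, together with the connectedness of $\Omega^{\pm}_{j_k}$ and the fact that $\Gamma_{j_k}$ is a simple closed curve, one produces, for $k$ large and suitable $s_k\to r$, an arc in $\partial B(x_{j_k},s_k)\cap\Omega^+_{j_k}$ together with a complementary arc in $\partial B(x_{j_k},s_k)\cap\Omega^-_{j_k}$ whose Hausdorff limits are complementary semicircumferences with common endpoints strictly within $\ell_0$ of $y_1$ — i.e.\ an admissible pair violating the minimality of $\ell_0$. This final topological step — extracting the new, closer admissible pair from the forced crossing pattern of $\Gamma_{j_k}$ relative to the arcs $J^{\pm}_{j_k}$, keeping careful track of which side of $\Gamma_{j_k}$ each of $\Omega^{\pm}_{j_k}$ sits on near $y_1$ and near $y_2$ — is the main obstacle; granting it, we reach the contradiction, so $\ell_0=0$ and the lemma is proved.
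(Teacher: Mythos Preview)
Your setup through the definition of $\ell_0$ and the appeal to Lemma~\ref{lemclau77} is fine, and the antipodal symmetry observation is a nice touch. But the proof is incomplete: you yourself flag the final ``topological step'' as ``the main obstacle'' and then write ``granting it''. That is precisely the content of the lemma, and the sketch you give for it is too vague to be convincing. The assertion that $J^{+}_{j_k}$ and $\gamma^{(1)}_{j_k}$ are ``nested near $y_1$ (one just inside and one just outside, or on, $\partial B(x_{j_k},r_{j_k})$)'' does not parse: both arcs lie essentially \emph{on} circumferences of nearly the same center and radius, so there is no obvious inside/outside relation between them. More seriously, even granting some crossing pattern, you give no mechanism for producing \emph{new} long arcs in $\Omega^{\pm}_{j_k}$ on a single circumference $\partial B(x_{j_k},s_k)$ whose limiting endpoints are strictly closer to $y_1$ than $\ell_0$. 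The arcs $\gamma^{(i)}_{j_k}$ lie in $\Gamma_{j_k}$, not in $\Omega^{\pm}_{j_k}$, and it is unclear how their presence forces the existence of longer open arcs in the complement.

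The paper's argument avoids this difficulty by changing the center rather than trying to improve the pair at $x$. It produces not one but \emph{three} limit arcs $I,I_\delta,I^\delta\subset\partial B(x,r),\partial B(x,r_\delta),\partial B(x,r^\delta)$ at three nearby radii (using closedness of admissible pairs to propagate the bound $\ell_0$ to radii $r_\delta<r<r^\delta$, then Lemma~\ref{lemclau77} at each). These three concentric arcs, all emanating from points of $L$ near $y$, form a configuration that any circumference centered near a \emph{second} point $x'\in L\cap\supp\mu_0$ (with $x'\neq x$) must hit in at least two places for a whole interval of radii $r'$. That rules out any admissible pair centered at $x'$ with such a radius $r'$, contradicting hypothesis~(\ref{admissiblesubseqence}) of Lemma~\ref{lemclau81} applied to the subsequence. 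The key idea you are missing is this use of a second base point on $L$; trying to close the argument at the original center $x$ looks substantially harder, if it can be done at all.
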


\begin{proof}
Let $x\in L\cap\supp\mu_0\cap B_0$ and let $r\in (0,2r(B_0))$.
Suppose that there does not exist an admissible pair of semicircumferences centered at $x$ with radius $r$ whose end-points
belong to $L$.  Let $y\in L\cap \partial B(x,r)$, then, by Lemma \ref{lemclau77},
$$\ell_0 = \inf_{(S_1,S_2)} \dist(y,(S_1,S_2)_{\text{ep}})>0
$$
where the infimum is taken over all admissible pairs centered at $x$ with radius $r$.   Consequently, Lemma \ref{lemclau77} ensures that there exists a subsequence of arcs $\gamma_{j_k}\subset \Gamma_{j_k}$ which converge in Hausdorff distance
to an arc $I\subset \partial B(x,r)$ such that $y$ is one of its end-points and $I$ has length at least
$\ell_0/5$.

By the closeness property of the admissibility property of pairs, for any small $\delta\in(0,r/2)$ there exists another radius $r_\delta
\in (r-\delta,r)$ close enough to $r$ such that, denoting by $y_\delta$ the point in $ L\cap \partial B(x,r_\delta)$ which is closest to $y$,
 any admissible pair $(S_1^\delta,S_2^\delta)$ of semicircumferences contained in $\partial B(x,r_\delta)$
satisfies
$\dist(y_\delta,(S_1^\delta,S_2^\delta)_{\ep})\geq \ell_0/2$.  Observe that $y_\delta\in G_0$ and then, by
applying Lemma
\ref{lemclau77} to the subsequence of domains $\Omega_{j_k}^+$ (observe that the sets $G^+$, $G^-$, $G_0$ associated with the subsequence are the same as the ones associated with the original sequence $\{\Omega_j\}_j$), we infer that there is a subsequence of arcs $\gamma_{\delta,j'_k}\subset \Gamma_{j'_k}$ which converge in Hausdorff distance
to an arc $I_\delta\subset \partial B(x,r_\delta)$ such that $y_\delta$ is one of its end-points and $I_\delta$ has length at least
$\ell_0/10$. By renaming the subsequence, we can assume that $\{j'_k\}_k$ coincides
with $\{j_k\}_k$.

By iterating the preceding argument, we still find another
 $r^\delta
\in (r,r+\delta)$ close enough to $r$ for which, after renaming the subsequence and
denoting by $y^\delta$ the point in $ L\cap \partial B(x,r^\delta)$ which is closest to $y$,
there is a family arcs $\gamma^\delta_{j_k}\subset \Gamma_{j_k}$ which converge in Hausdorff distance
to an arc $I^\delta\subset \partial B(x,r^\delta)$ such that $y^\delta$ is one of its end-points and has length at least
$\ell_0/10$.

Let $x'\in L\cap \supp\mu_0$ with $x'\neq x$. Suppose that $x'$ and $y$ are in the same half-line contained in $L$ with end-point equal to
$x$ (i.e., $x'$ and $y$ are at the same side of $x$ in $L$). Otherwise, in the arguments above we interchange $y$ with the other point from $\partial B(x,r)\cap L$.
It is easy to check that any circumference $\partial B(x'',r')$, with $\delta$ small enough and $x''$ close enough to $x'$, intersects at least two of the arcs $I,I_\delta,I^\delta$ for all $r'$ in
 some interval $H$ of width bounded from below depending on the relative position of $x,x',y,y_\delta,y^\delta$. In fact, the same phenomenon happens
replacing the arcs $I,I_\delta,I^\delta$ by the curves $\gamma_{j_k}, \gamma_{\delta,j_k},\gamma^\delta_{j_k}$, assuming $k$ big enough. From this fact, one deduces easily that there exists some $r'\in H$ such that there is no admissible pair of semicircumferences with center $x'$ and radius $r'$ (associated to the sequence of domains $\{\Omega_{j_k}\}_k$), which is in contradiction with the hypothesis (\ref{admissiblesubseqence}) in Lemma \ref{lemclau81}.
\end{proof}
\vv

\begin{lemma}\label{lemclau79}
Under the assumptions of Lemma \ref{lemclau81},
let $x\in L\cap\supp\mu_0\cap B_0$ and let $r\in (0,2r(B_0))$. Assume that $L$ coincides with the horizontal axis and
suppose that $\partial B(x,r)\cap G_0\cap\R^2_+\neq \varnothing$ (recall  that we assume $\R^2_+$ to be open) and let $y\in\partial B(x,r)\cap G_0\cap\R^2_+$. Then there exists a sequence of arcs $\gamma_{j_k}\subset \Gamma_{j_k}$ which converge in Hausdorff distance
to an arc $I\subset \partial B(x,r)$ such that $y$ is one of its end-points and has length at least
$\dist(y,L)/5$.
\end{lemma}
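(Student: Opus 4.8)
The plan is to obtain Lemma~\ref{lemclau79} as a quick consequence of Lemma~\ref{lemclau78} together with Lemma~\ref{lemlongarcstech}: the horizontal admissible pairs produced by Lemma~\ref{lemclau78} have their end-points on $L$, hence automatically far from a point $y$ that lies strictly above $L$, and this is exactly the input Lemma~\ref{lemlongarcstech} requires.

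First I would record the elementary observation underpinning everything: since $y\in\partial B(x,r)\cap G_0\cap\R^2_+$ and $\R^2_+$ is open, $\ell_0:=\dist(y,L)>0$; and, because $L$ is the horizontal axis, every point $w\in L$ satisfies $|y-w|\ge\dist(y,L)=\ell_0$, the vertical displacement alone already being $\ell_0$.

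Next, fix $k$ large enough that $r-\tfrac1k>0$ and $r+\tfrac1k<2r(B_0)$, and let $\{\Omega_{j_m}^+\}_m$ be an arbitrary subsequence of the domains. By the remark following Lemma~\ref{lemclau81}, this subsequence still satisfies the hypotheses of Lemma~\ref{lemclau81} (with the same sets $G^+,G^-,G_0$, the same line $L$, measure $\mu_0$ and ball $B_0$), so Lemma~\ref{lemclau78} applies to it and yields, for each of the radii $r\pm\tfrac1k$, an admissible pair (admissible for $\{\Omega_{j_m}^+\}_m$) of semicircumferences centred at $x$ with that radius and with both end-points on $L$. By the observation above,
$$\dist\big(y,(S_{1,r\pm1/k},S_{2,r\pm1/k})_{\ep}\big)\ge \ell_0 \qquad\text{for every such }k,$$
so in particular the two $\liminf$ hypotheses of Lemma~\ref{lemlongarcstech} are met with this value of $\ell_0$.

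Finally I would apply Lemma~\ref{lemlongarcstech} with $\ell_0=\dist(y,L)$; it provides a subsequence of arcs $\gamma_{j_k}\subset\Gamma_{j_k}$ converging in Hausdorff distance to an arc $I\subset\partial B(x,r)$ having $y$ as an end-point and with $\HH^1(I)\ge\ell_0/5=\dist(y,L)/5$, which is the conclusion of Lemma~\ref{lemclau79}. I do not expect a genuine obstacle here — the real work has been done in Lemmas~\ref{lemclau78} and~\ref{lemlongarcstech}. The only points needing any care are the verification that Lemma~\ref{lemclau78} may legitimately be invoked along an arbitrary subsequence of the domains (so that the ``given any subsequence'' clause of Lemma~\ref{lemlongarcstech} is satisfied), which is immediate from the stability of the hypotheses of Lemma~\ref{lemclau81} under passing to subsequences, and the harmless restriction to $k$ large enough that the perturbed radii $r\pm\tfrac1k$ remain in $(0,2r(B_0))$.
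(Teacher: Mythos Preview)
Your proposal is correct and follows essentially the same approach as the paper: invoke Lemma~\ref{lemclau78} along an arbitrary subsequence (using that the hypotheses of Lemma~\ref{lemclau81} pass to subsequences) to obtain admissible pairs at radii $r\pm1/k$ with end-points on $L$, observe these end-points are at distance at least $\dist(y,L)$ from $y$, and then apply Lemma~\ref{lemlongarcstech} with $\ell_0=\dist(y,L)$. Your care in verifying the ``given any subsequence'' clause and the range restriction on $r\pm1/k$ matches exactly what the paper does.
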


\begin{proof}
Fix a subsequence $\Omega_{j_k}$ of the domains $\Omega_j$.  This subsequence $\Omega_{j_k}$ again satisfies the assumptions of Lemma \ref{lemclau81} (with the same choices of sets $G^+$, $G^-$ and $G_0$).   By Lemma \ref{lemclau78}, for every $\delta\in (0,r/2)$, there are admissible pairs
$(S_{1,\delta},S_{2,\delta})$ for $\Omega_{j_k}$ centered at $x$ with respective radii equal to any number $s\in (r-\delta, r+\delta)$ such that their end-points belong all to $L$.  Consequently, for sufficiently large $k$ we may certainly find sequences of admissible pairs $(S_{1, r+1/k}, S_{2, r+1/k})$ and $(S_{1,r-1/k},S_{2, r-1/k})$ centred at $x$ with radii $r+1/k$ and $r-1/k$ respectively, and with end points on $H$ (and so at a distance $\dist(y,L)$ from $y$).  Thus, we may apply Lemma \ref{lemlongarcstech} with $\ell_0 = \dist(y, L)$, which completes the proof.
\end{proof}

We are now in a position to complete the proof of Lemma \ref{lemclau81}, which is an immediate consequence of the following statement.

\begin{lemma}\label{lemclau80}
Under the assumptions of Lemma \ref{lemclau81},
let $x\in L\cap\supp\mu_0\cap B_0$ and let $r\in (0,2r(B_0)$.
Then  $\partial B(x,r)\cap G_0\cap\R^2_+= \varnothing$ (assuming $L$ to be the horizontal axis).
\end{lemma}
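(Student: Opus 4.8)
The plan is to argue by contradiction. After a translation and rotation assume $x=0$ and that $L$ is the horizontal axis, and suppose there is a point $y\in\partial B(0,r)\cap G_0\cap\R^2_+$; set $h:=\dist(y,L)>0$. First I would apply Lemma \ref{lemclau79}: along a subsequence it produces arcs $\gamma_{j_k}\subset\Gamma_{j_k}$ converging in Hausdorff distance to an arc $I\subset\partial B(0,r)$ having $y$ as one endpoint and with $\HH^1(I)\geq h/5$. Since $y$ has height $h>0$, a short sub-arc of $I$ issuing from $y$ lies in $\R^2_+$; I would fix an \emph{interior} point $p_0$ of $I$ with $p_0\in\R^2_+$ (for instance at arclength $\approx h/10$ from $y$, which is permissible because $\HH^1(I)\geq h/5$), so in particular $p_0\notin L$.

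The aim is then to contradict the admissibility condition (\ref{admissiblesubseqence}) of Lemma \ref{lemclau81} at a \emph{second} centre. Since $\mu_0(B_0\cap L)>0$ and $\mu_0$ has $C_0$-linear growth (hence carries no atoms on $L$), the set $L\cap\supp\mu_0\cap B_0$ has positive $\HH^1$-measure, so we may pick $x'\in L\cap\supp\mu_0\cap B_0$ with $x'\neq 0$. Put $\rho:=|x'-p_0|$ and apply Lemma \ref{lemclau78} at $x'$ with radius $\rho$: there is an admissible pair at $x'$ of radius $\rho$ whose endpoints lie on $L$. As $x'\in L$, those endpoints are $x'\pm\rho$, so the two semicircumferences are exactly $S^{\uparrow}:=\partial B(x',\rho)\cap\overline{\R^2_+}$ and $S^{\downarrow}:=\partial B(x',\rho)\cap\overline{\R^2_-}$. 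Passing to a further subsequence (which preserves $\gamma_{j_k}\to I$), we obtain $x'_{j_k}\in\Gamma_{j_k}$ with $x'_{j_k}\to x'$, radii $\rho_{j_k}\to\rho$, and circular arcs $\widetilde I_{j_k}\subset\partial B(x'_{j_k},\rho_{j_k})\cap\Omega^{a}_{j_k}$ (for one fixed sign $a\in\{+,-\}$) with $\widetilde I_{j_k}\to S^{\uparrow}$ in Hausdorff distance.

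The heart of the matter is a crossing estimate. The circles $\partial B(0,r)$ and $\partial B(x',\rho)$ meet transversally at $p_0$: if not, $0$, $p_0$, $x'$ would be collinear, which is impossible since the line through $0$ and $p_0$ meets $L$ only at $0$ while $x'\in L\setminus\{0\}$. Hence $I$ has points strictly on both sides of $\partial B(x',\rho)$ arbitrarily close to $p_0$; since $\excess(I,\gamma_{j_k})\to0$ and $\partial B(x'_{j_k},\rho_{j_k})\to\partial B(x',\rho)$, for large $k$ the connected set $\gamma_{j_k}$ has points on both sides of $\partial B(x'_{j_k},\rho_{j_k})$ and therefore meets it at some point $q_{j_k}$, with $q_{j_k}\to p_0$. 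Now $q_{j_k}\in\gamma_{j_k}\subset\Gamma_{j_k}$, so $q_{j_k}\notin\Omega^{a}_{j_k}$, whereas $\widetilde I_{j_k}\subset\Omega^{a}_{j_k}$; and $p_0\in\R^2_+\setminus L$ lies in the relative interior of $S^{\uparrow}$, strictly away from $x'\pm\rho$, so for large $k$ the point $q_{j_k}$ lies in the relative interior of the upper semicircumference of $\partial B(x'_{j_k},\rho_{j_k})$, strictly between the (convergent) endpoints of $\widetilde I_{j_k}$. But $\widetilde I_{j_k}$ is a \emph{connected} circular arc of $\partial B(x'_{j_k},\rho_{j_k})$ whose endpoints tend to $x'\pm\rho$ and which, being eventually confined to a small neighbourhood of $S^{\uparrow}$, is forced to run along essentially all of the upper semicircumference, in particular through $q_{j_k}$ — contradicting $q_{j_k}\notin\widetilde I_{j_k}$. (One must also record the routine bookkeeping $\rho=|x'-p_0|<2r(B_0)$ needed for Lemma \ref{lemclau78} to be applicable at $x'$; this is where the size of $r$ relative to $r(B_0)$, or a suitable choice of $p_0$ and $x'$, enters.)

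The step I expect to be most delicate is this last one: one must justify carefully that a connected circular arc converging in Hausdorff distance to a full semicircumference $S^{\uparrow}$ cannot, for large $k$, avoid a prescribed interior point of it — equivalently, cannot ``jump over'' the point $q_{j_k}$ while still having both endpoints near $x'\pm\rho$ — and that the transversal intersection genuinely forces the crossing $q_{j_k}$ uniformly in $k$, even though $\gamma_{j_k}$ is a priori only a wild sub-arc of the Jordan curve $\Gamma_{j_k}$. These are elementary plane-topology facts, but setting them up cleanly, and verifying that all the Hausdorff convergences above can be arranged along one common subsequence, is the real content of the argument.
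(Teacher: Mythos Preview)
Your proposal is correct and follows essentially the same approach as the paper: obtain the arc $I$ and the approximating curves $\gamma_{j_k}$ from Lemma \ref{lemclau79}, pick a second center $x'\in L\cap\supp\mu_0$ with $x'\neq x$, and use the fact that a circle centered at $x'$ through an interior point of $I$ must cross $\gamma_{j_k}$ to contradict Lemma \ref{lemclau78}. The paper compresses your crossing/transversality discussion into the phrase ``by connectivity arguments, the existence of the curves $\gamma_{j_k}$ \ldots\ implies that \ldots\ there does not exist an admissible pair of semicircumferences centered at $x'$ with radius $r'$ whose end-points belong to $L$'', so your version is in fact more explicit than the original.
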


\begin{proof}
Suppose that  $y\in \partial B(x,r)\cap G_0\cap\R^2_+$. By  Lemma \ref{lemclau79}, there exists a sequence of arcs $\gamma_{j_k}\subset \Gamma_{j_k}$ which converge in Hausdorff distance
to an arc $I\subset \partial B(x,r)$ such that $y$ is one of its end-points and has length at least
$\dist(y,L)/5$.

Let $x'\in\supp\mu_0\cap L$,  with $x'\neq x$, and let $y'$ be the middle point of the arc $I$ (we may assume that $y'\not\in L$), and let $r'=|x'-y'|$, so that $\partial B(x',r')$ intersects $I$ in the middle point.
By connectivity arguments, the existence of the curves $\gamma_{j_k}$ given by Lemma \ref{lemclau79} implies that, for the
subsequence of domains $\Omega_{j_k}$,
there does not exist an admissible pair of semicircumferences centered at $x'$ with radius $r'$ whose
end-points belong to $L$. This fact contradicts Lemma \ref{lemclau78}.
\end{proof}



With Lemma \ref{lemclau81} proved, we are now in a position to complete the proof of Lemma \ref{lembetajordan}.

\vv

\begin{proof}[\bf Proof of Lemma \ref{lembetajordan}]

By renormalizing it suffices to prove the lemma for the ball $B_0:=B(0,1)$. We argue by contradiction: we suppose that there exists an $\ve>0$ such that for all $j \in \N$, there exists a Jordan domain $\Omega_j^+$ with
$\Gamma_j:=\partial \Omega_j^+$ containing $0$ supporting a measure $\mu_j$ with $C_0$-linear growth with $\mu_j(B_0)> c_0$, so that we have
\begin{align*}
    \int_{7B_0}\int_0^{7} \big(\ve_j(x,r)^2 + \alpha^+_{j}(x,r)^2\big) \, \dr \, d\mu_j(x) \leq \frac{1}{j}\,\mu_j(7_0),
\end{align*}
and
\begin{align*}
    \beta_{\infty, \Gamma_j}(B_0) > \ve.
\end{align*}
Here we denote by $\ve_j(x,r)$ and $\alpha^+_{j}$ the coefficients $\ve(x,r)$ and $\alpha^+$
associated with $\Omega_j^+$.

We first apply Lemma \ref{l:convjordan} to pass to a subsequence of the domains such that, with $\Omega_j^- = \R^2\backslash \overline{\Omega_j^+}$,
$$\Omega_j^{\pm}\to G^{\pm} \text{ and }\Gamma_j \to G_0
$$
locally as $j\to \infty$.  By passing to a further subsequence if necessary, we may assume that $\mu_j$ converge weakly to a measure $\mu_0$ with $C_0$-linear growth satisfying $\mu(\overline{B_0})\geq c_0$. Applying Lemma \ref{l:convjordan2}, we infer that the assumptions of Lemma \ref{l:convjordan4} are satisfied with $B_0$ replaced by the ball $2B_0$, so there is a line $L\subset G_0$ with $\mu_0(L\cap 2B_0)>0$.

Observe now that we can also apply Lemma \ref{l:convjordan2} to any subsequence of the domains.  In particular, from the conclusion (\ref{jordanadmissible}) of Lemma \ref{l:convjordan2} applied to a given subsequence, we infer that the assumption (\ref{admissiblesubseqence}) of Lemma \ref{lemclau81} also holds, again with the ball $2B_0$ playing the role of $B_0$ in Lemma \ref{lemclau81}.  Therefore, applying Lemma \ref{lemclau81} to the ball $2B_0$ that satisfies $\mu_0(L\cap 2B_0)>0$, we have that  $G_0\cap 2B_0\subset L$.  Consequently, $\Gamma_j\cap \overline{B_0}$ converges in Hausdorff distance to a subset of $L$, which contradicts $\beta_{\infty, \Gamma_j}(B_0) > \ve$ for sufficiently large $j$.
\end{proof}
\vv


\part*{Part II: From local flatness to rectifiability}

\section{The smooth square function on Lipschitz graphs} \label{s:fourier}

Recall that, given an integrable $C^\infty$ function $\psi:\R^2\to\R$,
an open set $\Omega^+\subset \R^2$ and $x\in\R^2$, $r>0$, we denote
$$c_\psi = \int_{y\in\R^2_+} \psi(y)\,dy,
\qquad \lca_\psi(x,r) = \bigg|c_\psi - \frac1{r^2}\int_{\Omega^+} \psi\bigg(\frac{y-x}{r}\bigg)\,dy\bigg|.
$$
We also set
$$\A_\psi(x)^2 = \int_0^\infty \lca_\psi(x,r)^2\,\frac{dr}r.$$
Remark that we allow $\psi$ to be non-radial.

We fix an even $C^\infty$ function $\vphi:\R\to\R$ such that $\chara_{[-1,1]}\leq \vphi\leq  \chara_{[-1.1,1.1]}$, and we denote
$$\vphi_r(x) = \frac1{r} \vphi\Big(\frac {x}r\Big),\quad\mbox{ for $x\in\R$, $r>0$.}$$

Our objective in this section is to prove the following.

\begin{lemma}\label{lemlips}
Consider a Lipschitz function $f:\R\to\R$ with compact support and let $\Gamma\subset\R^2$ be its Lipschitz graph. Let
$\Omega^+ =\{(x,y)\in\R^2:y>f(x)\}$ and $\Omega^- =\{(x,y)\in\R^2:y<f(x)\}$.
Let $\vphi$ be a function as above and let
$$\psi(x) = \vphi(|x|),\quad \mbox{ for $x\in\R^2$.}$$
Let $\A_\psi$ and $\lca_\psi$ be the associated coefficients defined above.
There exists some $\alpha_0>0$ such that if $\|f'\|_\infty\leq \alpha_0$, then
$$\int_\Gamma \A_\psi(x)^2\,d\HH^1(x)\approx \|f'\|_{L^2(\R)}^2.$$
\end{lemma}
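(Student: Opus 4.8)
\emph{Idea of the proof.} Parametrise $\Gamma$ by $t\mapsto(t,f(t))$; since $\|f'\|_\infty\le\alpha_0$ we have $d\HH^1=\sqrt{1+f'^2}\,dt\approx dt$, so it suffices to prove $\int_\R \A_\psi((t,f(t)))^2\,dt\approx\|f'\|_{L^2(\R)}^2$. First I would compute $\lca_\psi$ exactly. Changing variables $y=(t,f(t))+rz$ in the definition, the condition $y\in\Omega^+$ becomes $z_2>g_r(t,z_1)$ where $g_r(t,z):=\tfrac1r\big(f(t+rz)-f(t)\big)=\int_0^z f'(t+rw)\,dw$, so that
\[
\lca_\psi\big((t,f(t)),r\big)=\Big|\int_\R \Psi\big(z,g_r(t,z)\big)\,dz\Big|,\qquad \Psi(z,s):=\int_0^s\vphi\big(\sqrt{z^2+u^2}\big)\,du.
\]
I would then split $\Psi(z,s)=\vphi(|z|)\,s+R(z,s)$. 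Using that $\vphi$ is even (so $\psi$ is radial), $R(z,\cdot)$ is odd, supported in $|z|\le 1.1$, satisfies $R(z,0)=\partial_sR(z,0)=0$ and $|\partial_sR(z,s)|=|\vphi(\sqrt{z^2+s^2})-\vphi(|z|)|\lesssim s^2/|z|$, and --- crucially --- $\int_\R R(z,az)\,dz=0$ for every $a\in\R$. This last identity is just the fact that $\lca_\psi\equiv 0$ on a straight line (take $f(u)=au+b$, whence $g_r(t,z)=az$ and $\int z\vphi(|z|)\,dz=0$), which holds precisely because $\psi$ is radial.

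Write $\lca_\psi((t,f(t)),r)=|M_r(t)+\mathcal R_r(t)|$ with $M_r(t)=\int_\R g_r(t,z)\,\vphi(|z|)\,dz$ and $\mathcal R_r(t)=\int_\R R(z,g_r(t,z))\,dz$. For the main term I would apply Plancherel in $t$: with the normalisation $\hat f(\xi)=\int f(t)e^{-2\pi it\xi}\,dt$, the Fourier transform in $t$ of $t\mapsto g_r(t,z)$ is $\tfrac1r(e^{2\pi irz\xi}-1)\hat f(\xi)$, so $\widehat{M_r}(\xi)=\tfrac1r\hat f(\xi)\,m(r\xi)$ with $m(\eta):=\hat\vphi(\eta)-\hat\vphi(0)$. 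Integrating $|\widehat{M_r}|^2$ in $\xi$, then in $\tfrac{dr}r$, and substituting $s=|\xi|r$ yields
\[
\int_\R\int_0^\infty M_r(t)^2\,\frac{dr}r\,dt\;=\;c_m\int_\R|\xi|^2|\hat f(\xi)|^2\,d\xi\;=\;\frac{c_m}{4\pi^2}\,\|f'\|_{L^2}^2,\qquad c_m:=\int_0^\infty\frac{m(s)^2}{s^3}\,ds.
\]
Here $0<c_m<\infty$: the integral converges at $0$ because $m(s)=O(s^2)$ ($\hat\vphi$ is even and smooth) and at $\infty$ because $m$ is bounded with $m(s)\to-\hat\vphi(0)$; and $c_m>0$ because $m$ is real-analytic, $m(0)=0$, and $m\not\equiv 0$ (otherwise $\vphi$ would be a multiple of a Dirac mass).

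It remains to show that $\mathcal R$ is a small perturbation. Writing $h:=f'$ and using $|g_r(t,z)|\le\|h\|_\infty|z|\le\alpha_0|z|$, I would subtract the affine slope $\langle h\rangle_{r,t}:=\tfrac12\int_{-1}^1 h(t+rw)\,dw$: by the vanishing identity, $\mathcal R_r(t)=\int_\R\big[R(z,g_r(t,z))-R(z,z\langle h\rangle_{r,t})\big]\,dz$, and since both arguments have modulus $\le\alpha_0|z|$, the bound on $\partial_sR$ gives $|\partial_sR|\lesssim\alpha_0^2|z|$ along the joining segment, whence
\[
|\mathcal R_r(t)|\;\lesssim\;\alpha_0^2\int_{|z|\le 1.1}|H_{r,t}(z)|\,dz,\qquad H_{r,t}(z):=g_r(t,z)-z\langle h\rangle_{r,t}=\int_0^z\big(h(t+rw)-\langle h\rangle_{r,t}\big)\,dw.
\]
By Cauchy--Schwarz and Fubini it suffices to bound $\int_{|z|\le 1.1}\big[\int_\R\int_0^\infty H_{r,t}(z)^2\,\tfrac{dr}r\,dt\big]\,dz$, and for each fixed $z$ this inner quantity is again a Plancherel computation: the Fourier transform in $t$ of $t\mapsto H_{r,t}(z)$ equals $\hat h(\xi)\,K(z,r\xi)$ for the explicit multiplier $K(z,\eta)=\tfrac{e^{2\pi iz\eta}-1}{2\pi i\eta}-z\,\tfrac{\sin 2\pi\eta}{2\pi\eta}$, which is bounded, vanishes at $\eta=0$ (the mean-zero normalisation), is $O(|\eta|)$ near $0$ and $O(1/|\eta|)$ at $\infty$; hence $\int_0^\infty|K(z,s)|^2\tfrac{ds}s$ converges at both ends and is bounded for $|z|\le 1.1$, so $\int_\R\int_0^\infty H_{r,t}(z)^2\tfrac{dr}r\,dt\lesssim\|h\|_{L^2}^2$ and, summing in $z$, $\int_\R\int_0^\infty\mathcal R_r(t)^2\tfrac{dr}r\,dt\lesssim\alpha_0^4\|f'\|_{L^2}^2$. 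Finally $M$ and $\mathcal R$ are elements of the Hilbert space $L^2\big(\R\times(0,\infty),\,dt\,\tfrac{dr}r\big)$ with $\|\mathcal R\|\lesssim\alpha_0^2\|f'\|_{L^2}\ll\|M\|\approx\|f'\|_{L^2}$ once $\alpha_0$ is small; the triangle inequality then gives $\int_\R\A_\psi((t,f(t)))^2\,dt=\|M+\mathcal R\|^2\approx\|f'\|_{L^2}^2$, completing the proof.

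The step I expect to be the main obstacle is the remainder estimate. The naive pointwise bound $|\mathcal R_r(t)|\lesssim\int_{|z|\le1.1}|g_r(t,z)|^3\,dz$ is useless: as $r\to 0$ one has $g_r(t,z)\to z f'(t)$, so this bound does not decay and $\int_0^\infty\tfrac{dr}r$ diverges --- $\mathcal R$ is not even obviously square-summable. The resolution is the algebraic cancellation $\int_\R R(z,az)\,dz=0$ forced by the radial symmetry of $\psi$ (equivalently, $\lca_\psi$ vanishes identically on affine graphs): subtracting the best affine approximant converts $\mathcal R_r$ into a genuine Littlewood--Paley square function measuring the deviation of $f'$ from its local averages, whose $L^2$-boundedness is a clean Fourier-side computation and which automatically carries the extra $\alpha_0^2$ needed to absorb it into the main term.
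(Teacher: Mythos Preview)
Your proof is correct and follows essentially the same strategy as the paper's. Both arguments split $\lca_\psi$ into a main term equal to $\tfrac{\vphi_r*f(t)-c(\vphi)f(t)}{r}$ (your $M_r$), handle it by the same Plancherel computation, and then show the remainder has $L^2$-norm $\lesssim\alpha_0^2\|f'\|_{L^2}$ using the line cancellation together with a second Fourier computation.

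The only organisational difference is that the paper arrives at the main term by introducing the tensor-product kernel $\rho(x)=\vphi(x_1)\vphi(x_2)$ and noting that $\lca_\rho$ equals $M_r$ exactly, then bounds $\lca_\psi-\lca_\rho$ by integrating $|\rho_r-\psi_r|$ over the symmetric difference $\Omega^+\Delta H_{x,r}$ with the approximating half-plane (its slope is $c(\vphi)^{-1}(\vphi_r*f')$ rather than your $\langle h\rangle_{r,t}$, but this makes no difference). Your Taylor-expansion of $\Psi(z,\cdot)$ at $s=0$ is a more direct route to the same decomposition: your identity $\int R(z,az)\,dz=0$ is exactly the paper's $c_\psi=\psi_r*\chi_{H_{x,r}}(x)$, and your bound $|\partial_sR(z,s)|\lesssim s^2/|z|$ plays the same role as the paper's pointwise estimate $|\rho_r-\psi_r|\lesssim|y_2-x_2|^2/r^4$. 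The resulting auxiliary square functions (your $H_{r,t}$, the paper's affine-approximation error in its Lemma~\ref{lemfourier2}) differ only in the choice of averaged slope and are both dispatched by the same kind of multiplier computation.
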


We will prove this result by using the Fourier transform. This will play an
essential role in the proof of Main Lemma \ref{l:main2}.
We need first some auxiliary results.

\begin{lemma}\label{lemfourier}
Let $f:\R\to\R$ be a Lipschitz function with compact support. Then we have
$$\int_\R\int_0^\infty \left|\frac{f*\vphi_r(x) - c(\vphi)f(x)}r \right|^2\,\frac{dr}r\,dx = c\,\|f'\|_{L^2(\R)}^2,$$
where $c(\vphi) = \int_\R\vphi\,dx$ and $c>0$.
\end{lemma}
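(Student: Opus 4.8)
The plan is to establish this Littlewood--Paley type identity by passing to the Fourier transform in the variable $x$. First note that since $f$ is Lipschitz with compact support it lies in $L^1(\R)\cap L^2(\R)$, and $f'\in L^\infty(\R)$ has compact support, hence $f'\in L^2(\R)$; thus all the Fourier transforms below make sense and Plancherel's theorem applies. I would use the convention $\wh g(\xi)=\int_\R g(x)\,e^{-ix\xi}\,dx$, so that with $\vphi_r(x)=\tfrac1r\vphi(x/r)$ one has $\wh{\vphi_r}(\xi)=\wh\vphi(r\xi)$, and $c(\vphi)=\int_\R\vphi=\wh\vphi(0)$. Consequently
$$\wh{\,f*\vphi_r-c(\vphi)f\,}(\xi)=\wh f(\xi)\bigl(\wh\vphi(r\xi)-\wh\vphi(0)\bigr),$$
and Plancherel in $x$ followed by Tonelli's theorem (the integrand is nonnegative) rewrites the left-hand side of the lemma as
$$\frac1{2\pi}\int_\R|\wh f(\xi)|^2\Bigl(\int_0^\infty\frac{|\wh\vphi(r\xi)-\wh\vphi(0)|^2}{r^3}\,dr\Bigr)\,d\xi.$$

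Next I would evaluate the inner integral by the substitution $u=r|\xi|$. Using that $\vphi$ is even, hence $\wh\vphi$ is even and real-valued, this yields, for every $\xi\neq0$,
$$\int_0^\infty\frac{|\wh\vphi(r\xi)-\wh\vphi(0)|^2}{r^3}\,dr=C_\vphi\,|\xi|^2,\qquad C_\vphi:=\int_0^\infty\frac{|\wh\vphi(u)-\wh\vphi(0)|^2}{u^3}\,du.$$
The constant $C_\vphi$ is finite and strictly positive: near $u=0$ the evenness of $\wh\vphi$ forces $\wh\vphi{}'(0)=0$, so $\wh\vphi(u)-\wh\vphi(0)=O(u^2)$ and the integrand is $O(u)$; near $u=\infty$, $\wh\vphi$ is a Schwartz function (because $\vphi$ is $C^\infty$ with compact support), so the integrand is $O(u^{-3})$; and $C_\vphi=0$ would force $\wh\vphi$ to be constant, hence $\vphi$ a point mass, contradicting $\vphi\in C^\infty$ with $\vphi\ge\chara_{[-1,1]}$.

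Finally, inserting this back and applying Plancherel once more, this time in the form $\int_\R|\xi|^2|\wh f(\xi)|^2\,d\xi=2\pi\,\|f'\|_{L^2(\R)}^2$ (valid since $\wh{f'}(\xi)=i\xi\,\wh f(\xi)$), I obtain
$$\int_\R\int_0^\infty\Bigl|\frac{f*\vphi_r(x)-c(\vphi)f(x)}{r}\Bigr|^2\,\frac{dr}r\,dx=\frac{C_\vphi}{2\pi}\int_\R|\xi|^2|\wh f(\xi)|^2\,d\xi=C_\vphi\,\|f'\|_{L^2(\R)}^2,$$
which is the asserted identity with $c=C_\vphi>0$. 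I do not anticipate a genuine obstacle here: the only points needing care are the Fubini/Tonelli interchange, which is immediate from nonnegativity of the integrand, and the finiteness and positivity of $C_\vphi$, where the evenness hypothesis on $\vphi$ is used in an essential way.
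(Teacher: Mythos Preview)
Your proof is correct and follows essentially the same approach as the paper: apply Plancherel in $x$, use the change of variable $u=r|\xi|$ in the inner integral, and verify the resulting constant $C_\vphi$ is finite and positive using the evenness of $\vphi$ (which gives $\wh\vphi(u)-\wh\vphi(0)=O(u^2)$ near $0$). Your version is slightly more detailed in justifying the finiteness and positivity of $C_\vphi$, but the argument is the same.
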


\begin{proof}
By Plancherel, we have
\begin{align*}
\int_\R\int_0^\infty \left|\frac{f*\vphi_r(x) - c(\vphi)f(x)}r \right|^2\,\frac{dr}r\,dx
& =
\int_\R\int_0^\infty \left|\frac{\wh f(\xi) \wh \vphi(r\xi) - \wh f(\xi) \wh \vphi(0)}r \right|^2\,\frac{dr}r\,d\xi\\
& =
\int_\R  |f(\xi)|^2 \int_0^\infty \big| \wh \vphi(r\xi) - \wh \vphi(0)\big|^2\,\frac{dr}{r^3}\,d\xi.
\end{align*}
By the change of variable $r|\xi|=t$, we get
$$\int_0^\infty \big| \wh \vphi(r\xi) - \wh \vphi(0)\big|^2\,\frac{dr}{r^3}
= |\xi|^2\int_0^\infty \big| \wh \vphi(t) - \wh \vphi(0)\big|^2\,\frac{dt}{t^3} =: \tilde  c(\vphi) |\xi|^2,$$
where $0<\tilde c(\vphi)<\infty$, since $\wh \vphi(t) - \wh \vphi(0) = O(t^2)$ as $t\to0$ (because $\vphi$
is an even function in the Schwartz class).
Hence,
$$\int_\R\int_0^\infty \left|\frac{f*\vphi_r(x) - c(\vphi)f(x)}r \right|^2\,\frac{dr}r\,dx
= \tilde c(\vphi)\int_\R |\xi \wh f(\xi)|^2\,d\xi = c\,\|f'\|_{L^2(\R)}^2.$$
\end{proof}

\begin{lemma}\label{lemfourier2}
Let $f:\R\to\R$ be a Lipschitz function with compact support. Then we have
\begin{equation}\label{eqfour4}
\int_0^\infty\!\!\int_\R \int_{y\in\R:|y-x|\leq r} \left|\frac{ c(\vphi)^{-1}(\vphi_r*f')(x)(y-x)+f(x)-f(y)}r\right|^2\,\frac{dy}r\,dx\,\frac{dr}r = c\,\|f'\|_{L^2(\R)}^2.
\end{equation}
\end{lemma}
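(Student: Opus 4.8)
The strategy is to reduce \eqref{eqfour4} to Lemma~\ref{lemfourier} by carrying out the inner integration in $y$ and using Plancherel once more. Write the integrand more explicitly: with $g = c(\vphi)^{-1}(\vphi_r * f')$, the quantity inside $|\cdot|^2$ is
\[
\frac{g(x)(y-x) + f(x) - f(y)}{r}.
\]
First I would isolate the elementary observation that $f(y) - f(x) = \int_x^y f'(s)\,ds$, so the numerator becomes $\int_x^y \bigl(g(x) - f'(s)\bigr)\,ds$. The plan is then to split $g(x) - f'(s) = (g(x) - f'(x)) + (f'(x) - f'(s))$; but since we ultimately want an exact identity rather than an estimate, it is cleaner to work on the Fourier side from the outset.

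So the main step is: fix $r$ and apply Plancherel in the $x$ variable to the double integral $\int_\R \int_{|y-x|\le r} |\cdots|^2\,dy\,dx$. After the substitution $y = x + r u$ with $|u|\le 1$, the expression
\[
\frac{g(x)\cdot r u + f(x) - f(x+ru)}{r} = g(x) u + \frac{f(x) - f(x+ru)}{r}
\]
is, as a function of $x$, a combination of $f$ and its translates and of $\vphi_r * f'$. Its Fourier transform in $x$ is
\[
u\, c(\vphi)^{-1}\wh{\vphi}(r\xi)\, \wh{f'}(\xi) + \frac{1 - e^{i r u \xi}}{r}\,\wh f(\xi)
= i\xi\,\wh f(\xi)\left( \frac{u\,\wh\vphi(r\xi)}{c(\vphi)} + \frac{1 - e^{iru\xi}}{ir\xi}\right),
\]
using $\wh{f'}(\xi) = i\xi \wh f(\xi)$. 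Therefore, integrating $|\cdot|^2$ in $x$ (Plancherel), then in $u\in[-1,1]$, and finally in $r$ with $dr/r$, the left side of \eqref{eqfour4} equals
\[
\int_\R |\xi\,\wh f(\xi)|^2 \left( \int_0^\infty \int_{-1}^{1} \left| \frac{u\,\wh\vphi(r\xi)}{c(\vphi)} + \frac{1 - e^{iru\xi}}{ir\xi}\right|^2 du\,\frac{dr}{r}\right) d\xi.
\]
The inner double integral: substitute $t = r|\xi|$ (so $dr/r = dt/t$), and the integrand becomes a fixed function of $(t,u)$ independent of $\xi$ (note $\wh\vphi$ is real and even, and $|1 - e^{itu}|$ depends only on $tu$ up to sign). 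Hence the whole thing is a constant $\kappa := \int_0^\infty\int_{-1}^1 \bigl|\tfrac{u\,\wh\vphi(t)}{c(\vphi)} + \tfrac{1-e^{itu}}{it}\bigr|^2\,du\,\tfrac{dt}{t}$, giving $\kappa \int_\R |\xi\wh f(\xi)|^2\,d\xi = \kappa\,\|f'\|_{L^2}^2$, which is the claim provided $0 < \kappa < \infty$.

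**The main obstacle** is precisely the verification that $\kappa$ is finite and strictly positive — this is where the even-Schwartz structure of $\vphi$ and the normalization $c(\vphi) = \wh\vphi(0)$ enter. For finiteness at $t \to 0$: since $\vphi$ is even Schwartz, $\wh\vphi(t) = \wh\vphi(0) + O(t^2)$, so $\tfrac{u\,\wh\vphi(t)}{c(\vphi)} = u + O(t^2)$, while $\tfrac{1-e^{itu}}{it} = u + O(t)$ — wait, more carefully $\tfrac{1 - e^{itu}}{it} = u \cdot \tfrac{1 - e^{itu}}{itu} = u(1 - \tfrac{itu}{2} + O(t^2))= u - \tfrac{it u^2}{2} + O(t^2)$; the real parts cancel to order $O(t^2)$ and the imaginary part is $O(t)$, so the integrand is $O(t^2)$ near $t=0$, integrable against $dt/t$. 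For finiteness at $t\to\infty$: $\wh\vphi(t)$ decays rapidly and $|1 - e^{itu}|\le 2$, so the integrand is $O(1/t^2)$ after the $u$-integration — actually $\int_{-1}^1 |1-e^{itu}|^2\,du$ is bounded, so we get $\int^\infty \tfrac{dt}{t}$ which diverges; one must instead observe $\int_{-1}^1|1 - e^{itu}|^2\,du \to 2\int_{-1}^1 1\,du$? No — $\int_{-1}^1 |1-e^{itu}|^2 du = \int_{-1}^1(2 - 2\cos tu)\,du = 4 - \tfrac{4\sin t}{t}$, which is bounded but does NOT vanish. This suggests the naive split is lossy; the correct resolution is that one should \emph{not} separate the two terms — in the combined integrand $\tfrac{u\wh\vphi(t)}{c(\vphi)} + \tfrac{1-e^{itu}}{it}$ there is no cancellation issue at infinity and it is genuinely $O(1)$, so $\int^\infty \tfrac{dt}{t}$ diverges unless... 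Here I would instead integrate first in $u$: observe $\int_{-1}^{1}\bigl(g(x)u + \tfrac{f(x)-f(x+ru)}{r}\bigr)^2\,du$ and note that by Lemma~\ref{lemfourier} the ``$f$ only'' contribution $\int_0^\infty\int_\R\int_{-1}^1 \bigl|\tfrac{f(x)-f(x+ru)}{r}\bigr|^2 du\,dx\,\tfrac{dr}{r}$ need not be finite on its own; the finiteness of the full expression must come from the cross term providing the subtraction. The cleanest route, and the one I would actually write, is to recognize the bracket as $u\cdot\bigl[\tfrac{\wh\vphi(t)}{c(\vphi)} - \tfrac{e^{itu}-1}{itu}\bigr]$, and since $\tfrac{e^{itu}-1}{itu} \to 0$ as $|t|\to\infty$ only in an averaged sense — so one estimates $\int_{-1}^1 |t u\,\text{(correction)}|$ type bounds directly. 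Rather than force a closed form, I would argue: the left side of \eqref{eqfour4} is manifestly $\le C\|f'\|_{L^2}^2$ by Lemma~\ref{lemfourier} applied twice (triangle inequality: $|g(x)(y-x) + f(x) - f(y)| \le |y-x|\,|g(x) - f'(x)| + |f(x) - f(y) - f'(x)(y-x)|$, and both pieces are controlled — the first by Lemma~\ref{lemfourier} since $g - f' = c(\vphi)^{-1}(\vphi_r * f' - c(\vphi)f')$, the second by a standard square-function estimate for $f'$), giving $\kappa < \infty$; and $\kappa > 0$ follows because the Fourier identity shows the left side equals $\kappa\|f'\|_{L^2}^2$ exactly, and it is not identically zero (test against a single nonzero frequency). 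This two-sided argument, avoiding any need to compute $\kappa$, is what I expect to be the cleanest path, with the positivity/finiteness of $\kappa$ being the only genuinely delicate point.
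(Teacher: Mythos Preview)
Your Fourier-side reduction is correct and is exactly what the paper does: after the substitution and Plancherel in $x$, the left side of \eqref{eqfour4} factors as $\kappa\,\|f'\|_{L^2}^2$ with
\[
\kappa = \int_0^\infty\!\!\int_{-1}^1 \left|\frac{u\,\wh\vphi(t)}{c(\vphi)} + \frac{1-e^{itu}}{it}\right|^2 du\,\frac{dt}{t},
\]
and the task is to check $0<\kappa<\infty$. The paper uses the equivalent variables $(s,w)=(t,tu)$ and then estimates the resulting integral directly.

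Your worry about divergence at $t\to\infty$ is an arithmetic slip: you dropped the $1/(it)$ in the second term. Since $\bigl|\tfrac{1-e^{itu}}{it}\bigr|\le \tfrac{2}{t}$, the integrand is $\lesssim |u|^2|\wh\vphi(t)|^2 + t^{-2}$ for $t\ge1$, which is integrable against $du\,\tfrac{dt}{t}$ because $\wh\vphi$ is Schwartz. Combined with your own $O(t^2)$ analysis near $t=0$ (which is correct), this gives $\kappa<\infty$; positivity is immediate since the integrand is not identically zero. So your first approach already works and no fallback is needed.

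By contrast, your proposed fallback via the triangle inequality does \emph{not} work. After integrating out $y$, the first piece becomes
\[
\frac{2}{3}\int_0^\infty\!\!\int_\R \bigl|c(\vphi)^{-1}\vphi_r*f'(x) - f'(x)\bigr|^2\,dx\,\frac{dr}{r},
\]
and Plancherel turns this into $\tfrac{2}{3}\int_\R|\wh{f'}(\xi)|^2\,d\xi \cdot \int_0^\infty |c(\vphi)^{-1}\wh\vphi(t)-1|^2\,\tfrac{dt}{t}$. The $t$-integral diverges at infinity, since $\wh\vphi(t)\to 0$ forces the integrand to tend to $1$. Lemma~\ref{lemfourier} does not apply here: it controls $\int_0^\infty\|\vphi_r*f - c(\vphi)f\|_2^2\,\tfrac{dr}{r^3}$, with the extra $r^{-2}$ supplying the missing decay. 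So the two pieces in your split are individually infinite, and only the combined expression is finite.
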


\begin{proof}
By replacing $\vphi$ by $c(\vphi)^{-1}\vphi$ if necessary, we may assume that
$\int \vphi\,dx=1$. This is due to the fact that, as we shall see below, the assumption that $\chara_{[-1,1]}\leq \vphi\leq  \chara_{[-1.1,1.1]}$ is not necessary for the validity of this lemma.

Appealing to the change of variable $z=y-x$ and Fubini's theorem, the left hand side of \rf{eqfour4}
(with $c(\vphi)=1$) equals
\begin{align*}
\int_0^\infty\!\!&\int_{z\in\R:|z|\leq r}\int_{x\in\R}  \left|\frac{(\vphi_r*f')(x)z+f(x)-f(x+z)}r\right|^2\,dx\,\frac{dz}r\,\frac{dr}r \\
& \stackrel{\text{Plancharel}}{=}
\int_0^\infty \!\!\int_{z\in\R:|z|\leq r}\int_{\xi\in\R} \left|\frac{2\pi i \xi z\,\wh\vphi(r\xi)\,\wh f(\xi)+\wh f(\xi)-e^{2\pi i \xi z} \wh f(\xi)}r\right|^2\,\frac{dz}r\,d\xi\,\frac{dr}r.
 \end{align*}
Using Fubini's theorem to interchange the inner two integrals, and the changes of variable
$w=\xi z$, $s=|\xi|r$, we infer from the fact that $\vphi$ (and so $\wh{\vphi}$) is even that the last triple integral equals
\begin{multline*}
\int_{\xi\in\R} \int_0^\infty \!\!\int_{w\in\R:|w|\leq s} \left|2\pi i w\,\wh\vphi(s)\,\wh f(\xi)+\wh f(\xi)-e^{2\pi i w} \wh f(\xi)\right|^2\,|\xi|^2\,\frac{ds}{s^4}\,dw\,d\xi\\
= \int_{\xi\in\R} \big|\xi\,\wh f(\xi)|^2\,d\xi \,\int_0^\infty \!\!\int_{w\in\R:|w|\leq s}
\left|2\pi i w\,\wh\vphi(s)+ 1 -e^{2\pi i w} \right|^2\,dw\,\frac{ds}{s^4}.
\end{multline*}
 Hence, to prove the lemma it suffices to show that the last double integral
 $$ I:= \int_0^\infty \!\!\int_{w\in\R:|w|\leq s}
\left|2\pi i w\,\wh\vphi(s)+ 1 -e^{2\pi i w} \right|^2\,dw\,\frac{ds}{s^4} $$
 is absolutely convergent and positive.
 That this is positive is immediate.
 To show that this is  absolutely convergent,
 we split it  as follows:
 \begin{align*}
 I & =
\int_0^\infty  \!\!\int_{|w|\leq \min(s,1)}\cdots\; + \int_1^\infty  \!\!\int_{1\leq |w|\leq s}\cdots  =: I_1 + I_2.
\end{align*}

First we estimate $I_2$:
\begin{align*}
I_2 &\lesssim \int_1^\infty  \!\!\int_{1\leq |w|\leq s}\big(1+ \big|w\,\wh\vphi(s)\big|^2\big)\,dw\,\frac{ds}{s^4} \\ &\lesssim
 \int_1^\infty  \frac{ds}{s^3} +
  \int_1^\infty  \!\!\int_{1\leq |w|\leq s}\big|s\,\wh\vphi(s)\big|^2\,dw\,\frac{ds}{s^4}  \lesssim 1+  \int_1^\infty \big|\wh\vphi(s)\big|^2\,\frac{ds}{s} \lesssim 1
\end{align*}
Concerning $I_1$, we have
\begin{equation}\begin{split}\label{eqfou84}
I_1  \lesssim \int_0^\infty  &\!\!\int_{|w|\leq \min(s,1)}
\left|2\pi i w + 1 -e^{2\pi i w} \right|^2\,dw\,\frac{ds}{s^4}\\
& +
\int_0^\infty  \!\!\int_{|w|\leq \min(s,1)}
\left|2\pi i w\,(\wh\vphi(s) - 1) \right|^2\,dw\,\frac{ds}{s^4}.
\end{split}\end{equation}
The first term on the right hand side satisfies
\begin{align*}
\int_0^\infty  \!\!\int_{|w|\leq \min(s,1)}
\left|2\pi i w + 1 -e^{2\pi i w} \right|^2\,dw\,\frac{ds}{s^4} & \leq
\int_{|w|\leq 1} \int_{s\geq |w|}
\left|2\pi i w + 1 -e^{2\pi i w} \right|^2\,\frac{ds}{s^4}\,dw\\
& \lesssim
\int_{|w|\leq 1}
\left|2\pi i w + 1 -e^{2\pi i w} \right|^2\,\frac{dw}{|w|^3}\lesssim1,
\end{align*}
taking into account that $2\pi i w + 1 -e^{2\pi i w}=O(w^2)$ as $w\to0$.
Finally we turn our attention to the second term on the right hand side of \rf{eqfou84}:
\begin{align*}\int_0^\infty  \!\!\int_{|w|\leq \min(s,1)}\!
\left|2\pi i w\,(\wh\vphi(s) - 1) \right|^2\,dw\,\frac{ds}{s^4}
&\lesssim \int_{|w|\leq 1} |w|^2 \,dw\, \int_0^\infty\! \left|\wh\vphi(s) - 1 \right|^2\,\frac{ds}{s^4}
\\&\lesssim  \int_0^\infty \!\left|\wh\vphi(s) - 1 \right|^2\,\frac{ds}{s^4}.\end{align*}
Since $\vphi\in C^{\infty}$ is even, and $\wh\vphi(0)=1$, we have $\wh\vphi(s) - 1= O(s^2)$ as $s\to0$, and so the last integral is finite. So $I_2<\infty$ and the proof of the lemma
is concluded.
\end{proof}
\vv

\begin{lemma}\label{lem5.4}
Let $f:\R\to\R$ be a Lipschitz function with compact support with $\| f'\|_\infty\leq 1/10$.
For $x=(x_1,x_2)\in\R^2$, denote
$$\rho(x) = \vphi(x_1)\,\vphi(x_2).$$
Then we have
$$\lca_\rho(x,r) = \frac{\vphi_r * f(x_1) - c(\vphi)\,f(x_1)}r\quad \mbox{for all $x\in\R^2$ in the graf of $f$ and all $r>0$}
$$
and
$$\int_\R \A_{\rho}((x_1,f(x_1)))^2\,dx_1 =
\int_\R\int_0^\infty \left|\frac{f*\vphi_r(x_1) - c(\vphi)f(x_1)}r \right|^2\,\frac{dr}r\,dx_1,$$
where $c(\vphi) = \int_\R\vphi\,dx$.
\end{lemma}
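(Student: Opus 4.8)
The plan is to verify both identities by an explicit change of variables; the only place where the hypothesis $\|f'\|_\infty\le 1/10$ enters is a linearization of an inner one-dimensional integral. Fix a point $x=(x_1,f(x_1))$ on the graph of $f$ and a radius $r>0$. Substituting $y=x+rz$ in the integral defining $\lca_\rho$ and using that $x_2=f(x_1)$, the region $\Omega^+$ is transformed into $\{z:z_2>g_r(z_1)\}$ with
$$g_r(z_1):=\frac{f(x_1+rz_1)-f(x_1)}{r},$$
so that, since $\rho(z)=\vphi(z_1)\vphi(z_2)$,
\begin{equation}\label{eq:plan1}
\frac1{r^2}\int_{\Omega^+}\rho\Big(\frac{y-x}{r}\Big)\,dy=\int_{\{z_2>g_r(z_1)\}}\rho(z)\,dz=\int_\R\vphi(z_1)\Big(\int_{g_r(z_1)}^\infty\vphi(z_2)\,dz_2\Big)\,dz_1.
\end{equation}
Since $\vphi$ is even, $\int_0^\infty\vphi=\tfrac12 c(\vphi)$, hence $c_\rho=\int_{\R^2_+}\rho=\big(\int_\R\vphi\big)\big(\int_0^\infty\vphi\big)=\tfrac12 c(\vphi)^2=\int_\R\vphi(z_1)\big(\int_0^\infty\vphi(z_2)\,dz_2\big)\,dz_1$. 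Subtracting \rf{eq:plan1} from this gives
\begin{equation}\label{eq:plan2}
c_\rho-\frac1{r^2}\int_{\Omega^+}\rho\Big(\frac{y-x}{r}\Big)\,dy=\int_\R\vphi(z_1)\Big(\int_0^{g_r(z_1)}\vphi(z_2)\,dz_2\Big)\,dz_1,
\end{equation}
the inner integral being understood as a signed integral when $g_r(z_1)<0$.

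The crucial observation is that for $z_1\in\supp\vphi\subset[-1.1,1.1]$ the Lipschitz bound yields $|g_r(z_1)|\le\|f'\|_\infty|z_1|\le\tfrac{1.1}{10}<1$, so the interval with endpoints $0$ and $g_r(z_1)$ lies inside $[-1,1]$, where $\vphi\equiv1$; therefore $\int_0^{g_r(z_1)}\vphi(z_2)\,dz_2=g_r(z_1)$ for every $z_1$ in the support of the outer integrand. Plugging this into \rf{eq:plan2} and undoing the substitution $t=x_1+rz_1$ (using once more that $\vphi$ is even) gives
$$c_\rho-\frac1{r^2}\int_{\Omega^+}\rho\Big(\frac{y-x}{r}\Big)\,dy=\int_\R\vphi(z_1)\,g_r(z_1)\,dz_1=\frac1{r^2}\int_\R\vphi\Big(\frac{x_1-t}{r}\Big)\big(f(t)-f(x_1)\big)\,dt=\frac{\vphi_r*f(x_1)-c(\vphi)f(x_1)}{r},$$
which, after taking absolute values (recall $\lca_\rho$ carries a modulus), is the first assertion of the lemma.

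For the second identity I would simply square the first one, integrate against $\tfrac{dr}{r}$ over $(0,\infty)$ to obtain $\A_\rho((x_1,f(x_1)))^2=\int_0^\infty\big|\tfrac{f*\vphi_r(x_1)-c(\vphi)f(x_1)}{r}\big|^2\tfrac{dr}{r}$ (using $f*\vphi_r=\vphi_r*f$), and then integrate in $x_1$ over $\R$, the interchange of integrations being legitimate by Tonelli's theorem since the integrand is nonnegative. I do not expect a genuine obstacle here: once the substitution in \rf{eq:plan1} is set up, the entire content is the elementary remark that the smallness of the slope keeps the relevant one-dimensional intervals inside the flat region $\{\vphi=1\}$, collapsing the two-dimensional defect integral to the one-dimensional averaged difference $\vphi_r*f-c(\vphi)f$; the remaining manipulations are routine changes of variables.
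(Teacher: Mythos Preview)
Your proof is correct and essentially identical to the paper's: both reduce the two-dimensional defect $c_\rho-\frac1{r^2}\int_{\Omega^+}\rho\big(\tfrac{y-x}{r}\big)\,dy$ to a one-dimensional integral by observing that the Lipschitz bound $\|f'\|_\infty\le 1/10$ forces the inner integrand $\vphi$ (or $\vphi_r$) to equal its constant value on the relevant interval, collapsing the inner integral to $f(y_1)-f(x_1)$. The only difference is cosmetic---you rescale via $y=x+rz$ at the outset while the paper works in the original coordinates and writes $c_\rho$ as the integral over the half-plane $\{y_2>f(x_1)\}$ before subtracting; the second identity and its justification by Tonelli are handled the same way in both.
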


\begin{proof}
Observe that
\begin{align*}
c_\rho - \frac1{r^2}\int_{\Omega^+} \rho\bigg(\frac{y-x}{r}\bigg)\,dy & =
\frac1{r^2}\int_{y_1\in\R}\int_{y_2> f(x_1)} \vphi\bigg(\frac{y_1-x_1}{r}\bigg)\vphi\bigg(\frac{y_2-x_2}{r}\bigg)\,dy_2\,dy_1 \\
& \quad - \frac1{r^2}
\int_{y_1\in\R}\int_{y_2> f(y_1)} \vphi\bigg(\frac{y_1-x_1}{r}\bigg)\vphi\bigg(\frac{y_2-x_2}{r}\bigg)\,dy_2dy_1 \\
& = \int_{y_1\in\R} \vphi_r(y_1-x_1)
\int_{f(x_1)}^{f(y_1)} \vphi_r(y_2-x_2)\,dy_2 dy_1
\end{align*}
Observe also that, if $\vphi_{r}(y_1-x_1)\neq 0$, then because $\|f'\|_\infty\leq 1/10$,
$$\vphi_r(y_2-x_2)=\frac1{r}\quad \mbox{ for $y_2\in [f(x_1),f(y_1)]$.}$$
 As a consequence
$$c_\rho - \frac1{r^2}\int_{\Omega^+} \rho\bigg(\frac{y-x}{r}\bigg)\,dy
= \int_{y_1\in\R} \vphi_r(y_1-x_1)
\frac{f(y_1)- f(x_1)}r  dy_1 = \frac{\vphi_r * f(x_1) - c(\vphi)\,f(x_1)}r
.$$
Hence,
$$\A_\vphi(x)^2=
\int_0^\infty \lca_\vphi(x,r)^2 \,\frac{dr}r= \int_0^\infty \left|\frac{\vphi_r * f(x_1) - c(\vphi)\,f(x_1)}r\right|^2 \,\frac{dr}r.$$
Integrating with respect to $x_1$ in $\R$, the lemma follows.
\end{proof}

\begin{lemma}\label{lemdiff1}
Let $f:\R\to\R$ be a Lipschitz function with compact support with $\| f'\|_\infty\leq 1/10$ and let $\Gamma\subset \R^2$ be
its Lipschitz graph.
For $x=(x_1,x_2)\in\R^2$, denote
$$\rho(x) = \vphi(x_1)\,\vphi(x_2)\quad \text{and}\quad \psi(x)=\vphi(|x|)
.$$
Then we have
$$\int_\Gamma |\A_{\rho}(x) - \A_\psi(x)|^2\,d\HH^1(x) \lesssim \| f'\|_\infty^4\,\|f'\|_{L^2(\R)}^2.
$$
\end{lemma}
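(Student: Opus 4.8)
The plan is to compare the two square functions scale by scale, writing
$$
|\A_\rho(x)-\A_\psi(x)|^2 \le \left(\int_0^\infty |\lca_\rho(x,r)-\lca_\psi(x,r)|^2\,\frac{dr}r\right)
$$
via Minkowski's inequality applied to the defining $L^2(dr/r)$-norms, so it suffices to estimate the single-scale difference $|\lca_\rho(x,r)-\lca_\psi(x,r)|$ for $x=(x_1,f(x_1))\in\Gamma$. Both quantities are, up to the additive constant $c_\rho$ resp. $c_\psi$, the $r^{-2}$-rescaled integral of $\chara_{\Omega^+}$ against $\rho$ resp. $\psi$. Subtracting, the constants $c_\rho$ and $c_\psi$ should match the ``straight line'' contribution (i.e. $\Omega^+$ replaced by the half plane $\{y_2>x_2\}$), so I would first write, using that both $\rho$ and $\psi$ are even and integrate to the same constant on half-planes through the origin,
$$
\lca_\rho(x,r)-\lca_\psi(x,r) = \frac1{r^2}\int_{y_1\in\R}(\rho-\psi)\Big(\tfrac{y-x}{r}\Big)\big(\chara_{y_2>x_2} - \chara_{y_2>f(y_1)}\big)\,dy
$$
(with appropriate signs), which, since $\|f'\|_\infty\le 1/10$, reduces the $y_2$-integral to an interval between $f(x_1)$ and $f(y_1)$ on which the profile varies by a controlled amount.

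Next I would exploit the smallness of $|f(y_1)-f(x_1)|$. On the support of $\rho-\psi$ (contained in a ball of radius $\approx r$ around $x$), we have $|f(y_1)-f(x_1)|\le \|f'\|_\infty|y_1-x_1|\lesssim \|f'\|_\infty r$. The key cancellation is that $\rho$ and $\psi$ agree to high order near the axes: both equal $\vphi(|x_1|/r)\cdot(1/r)$ on the relevant strip when $|y_2-x_2|$ is small compared to $|y_1-x_1|$ is NOT quite right, so more carefully, I'd Taylor-expand the difference $\rho(\tfrac{y-x}{r})-\psi(\tfrac{y-x}{r})$ in the $y_2$-variable around $y_2=x_2$ over the short interval of length $\lesssim\|f'\|_\infty|y_1-x_1|$; the zeroth-order term is $\big(\rho-\psi\big)((y_1-x_1)/r,0)$, which vanishes because $\rho((t,0))=\vphi(t)\vphi(0)=\vphi(t)$ and $\psi((t,0))=\vphi(|t|)=\vphi(t)$ (here $\vphi$ is even), and the error is $O(\|f'\|_\infty |y_1-x_1|/r)\cdot\|\partial_{y_2}(\rho-\psi)\|_\infty$. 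Combining the length of the $y_2$-interval ($\lesssim \|f'\|_\infty|y_1-x_1|$) with this first-order vanishing gives a bound of order $\|f'\|_\infty^2$ times a bump function of $(y_1-x_1)/r$, so
$$
|\lca_\rho(x,r)-\lca_\psi(x,r)| \lesssim \frac{\|f'\|_\infty^2}{r^2}\int_{|y_1-x_1|\lesssim r}\!\!|y_1-x_1|^2\,\Phi\Big(\tfrac{y_1-x_1}{r}\Big)\,\frac{dy_1}{r}\cdot\frac{1}{r}\cdot(\cdots),
$$
where I must be careful with powers of $r$; essentially the single-scale bound should come out as
$$
|\lca_\rho(x,r)-\lca_\psi(x,r)| \lesssim \|f'\|_\infty^2\cdot\frac{|f(x_1+\cdot)-f(x_1)|\text{-type average on scale }r}{r},
$$
i.e. it is $\|f'\|_\infty^2$ times a single-scale coefficient of the type appearing in Lemma \ref{lemfourier} (a ``$\vphi_r*f-c(\vphi)f$ over $r$''-type object, possibly after symmetrizing).

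Then I would integrate in $r$ over $(0,\infty)$ and in $x_1$ over $\R$. By Lemma \ref{lemfourier} (or \ref{lem5.4}), the $L^2$-norm over $(x_1,r)\in\R\times(0,\infty)$ of the arising single-scale coefficient is $\lesssim \|f'\|_{L^2(\R)}^2$. Pulling out the factor $\|f'\|_\infty^2$ (which came from the two orders of smallness above) yields
$$
\int_\Gamma |\A_\rho(x)-\A_\psi(x)|^2\,d\HH^1(x)
\approx \int_\R |\A_\rho((x_1,f(x_1)))-\A_\psi((x_1,f(x_1)))|^2\,dx_1
\lesssim \|f'\|_\infty^4\,\|f'\|_{L^2(\R)}^2,
$$
using that $d\HH^1 = \sqrt{1+f'^2}\,dx_1 \approx dx_1$ on $\Gamma$. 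The main obstacle I anticipate is bookkeeping the powers of $r$ correctly when passing from the two-dimensional integral defining $\lca$ to a one-dimensional convolution-type quantity in $x_1$, and in particular verifying that the first-order Taylor term in the $y_2$-expansion really does reduce, after integration against the bump, to a $\vphi_r*f'$-type expression whose $L^2$-mass is controlled by $\|f'\|_{L^2}^2$ rather than something larger; handling this cleanly likely requires invoking Lemma \ref{lemfourier2} (which produces exactly the ``$c(\vphi)^{-1}(\vphi_r*f')(x)(y-x)+f(x)-f(y)$'' combination) and then absorbing one further power of $\|f'\|_\infty$ from the $y_2$-interval length. Once the single-scale estimate is set up as a constant $\|f'\|_\infty^2$ times one of the two already-computed square functions, the conclusion is immediate.
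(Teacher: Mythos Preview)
Your outline has the right skeleton---Minkowski to reduce to a single-scale bound, replace $c_\rho,c_\psi$ by half-plane integrals, then exploit that $(\rho-\psi)(t,0)=0$---and this matches the paper's proof. But the place where you yourself flag uncertainty is a real gap: after your first-order Taylor bound $|(\rho-\psi)((y-x)/r)|\lesssim |y_2-x_2|/r$ and the $y_2$-integration over an interval of length $|f(y_1)-f(x_1)|$, you throw away the remaining $|f(y_1)-f(x_1)|$ dependence and land on the constant bound $|\lca_\rho(x,r)-\lca_\psi(x,r)|\lesssim\|f'\|_\infty^2$, which diverges in $\int_0^\infty dr/r$. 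You cannot recover a ``$\vphi_r*f-c(\vphi)f$''-type coefficient after taking absolute values inside the $y_1$-integral, so your proposed appeal to Lemma~\ref{lemfourier} does not go through as written.

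The paper fixes this in two ways that differ from your sketch. First, it extracts a \emph{second-order} bound: on the relevant strip one has $\vphi_r(y_2-x_2)=1/r$, so $\rho_r-\psi_r=\tfrac1r\bigl(\vphi_r(y_1-x_1)-\vphi_r(|y-x|)\bigr)$, and since $\bigl||y_1-x_1|-|y-x|\bigr|\lesssim|y_2-x_2|^2/r$, the mean value theorem gives $|\rho_r-\psi_r|\lesssim|y_2-x_2|^2/r^4\lesssim\|f'\|_\infty^2/r^2$. Second---and this is the step that lets the argument close cleanly---it subtracts not the horizontal half-plane but the half-plane $H_{x,r}^+$ with slope $c(\vphi)^{-1}\vphi_r*f'(x_1)$, verifying via evenness of $\vphi$ that $c_\rho=\rho_r*\chara_{H_{x,r}^+}(x)$ as well. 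The area of $\Omega^+\Delta H_{x,r}^+$ in $B(x,3r)$ is then exactly $\int_{|y_1-x_1|\le3r}|c(\vphi)^{-1}(\vphi_r*f')(x_1)(y_1-x_1)+f(x_1)-f(y_1)|\,dy_1$, which after Cauchy--Schwarz and integration in $(x_1,r)$ is precisely the quantity controlled by Lemma~\ref{lemfourier2}.

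Your horizontal half-plane can actually be made to work too: keeping track of the factors, one obtains $|\lca_\rho-\lca_\psi|^2\lesssim\|f'\|_\infty^2\,r^{-3}\int_{|y_1-x_1|\lesssim r}|f(y_1)-f(x_1)|^2\,dy_1$, and integrating in $(x_1,r)$ with Fubini yields $\|f'\|_\infty^2\iint|f(y)-f(x)|^2|y-x|^{-3}\,dy\,dx\approx\|f'\|_\infty^2\|f'\|_2^2$, which is even stronger than the stated bound. But this requires the Besov--$\dot H^1$ identity rather than Lemma~\ref{lemfourier} or~\ref{lemfourier2}, so if you want to stay within the paper's toolkit, you should switch to the tilted half-plane and invoke Lemma~\ref{lemfourier2} exactly as the paper does.
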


\begin{proof}
For $r>0$, $x\in\R^2$, we denote
$$\rho_r(x) = \frac1{r^2} \rho\Big(\frac xr\Big),\qquad\psi_r(x) = \frac1{r^2} \psi\Big(\frac xr\Big),
\qquad \vphi_r(x_1) = \frac1{r} \vphi\Big(\frac {x_1}r\Big)
.$$
Then we have
\begin{equation}\label{eq931}
\big|\lca_\rho(x,r) - \lca_\psi(x,r)\big|\leq \big|(\rho_r * \chara_{\Omega^+} - c_\rho) - (\psi_r * \chara_{\Omega^+} - c_\psi)\big|.
\end{equation}

For $x\in\Gamma$, $r>0$, we denote by $L_{x,r}$ the line passing through $x$ with slope equal to $c(\vphi)^{-1}(\vphi_r*f')(x_1)$, and we let $H_{x,r}^+$, $H_{x,r}^-$ be two complementary half planes whose common boundary is $L_{x,r}$, so that $H_{x,r}^+$ is above $L_{x,r}$ and $H_{x,r}^-$ is below $L_{x,r}$.

Observe that, by the radial symmetry of $\psi$,
$$c_\psi = \int_{y\in\R^2_+} \psi(y)\,dy = \psi_r*\chara_{H_{x,r}} (x)\quad \mbox{ for all $x\in\R^2$ and $r>0$.}$$
We claim that the same identity holds replacing $\psi$ by $\rho$. To check this, suppose that $x=0$ for easiness of
notation and let $y_2= b\,y_1$ be equation of the line $L_{0,r}$. Then, by the evenness of $\vphi$ we have
\begin{align*}
\rho_r*\chara_{H_{0,r}} (0) & =\int \vphi_r(y_1)\int_{y_2>by_1}\vphi_r(y_2)\,dy_2\,dy_1\\
& = \frac12 \int \vphi_r(y_1)\int_{y_2>b y_1}\vphi_r(y_2)\,dy_2\,dy_1 + \frac12
\int \vphi_r(y_1)\int_{y_2>-b y_1}\vphi_r(y_2)\,dy_2\,dy_1\\
& = \frac12 \int \vphi_r(y_1)\int_{y_2>b y_1}\vphi_r(y_2)\,dy_2\,dy_1 + \frac12
\int \vphi_r(y_1)\int_{y_2<b y_1}\vphi_r(y_2)\,dy_2\,dy_1\\
& = \frac12 \int \rho_r(y)\,dy =\int_{\R^2_+} \rho_r(y)\,dy = c_\rho,
\end{align*}
which proves the claim.

From the above identities and \rf{eq931}, for $x\in\Gamma$, we obtain
\begin{align}\label{eqplu89}
\big|\lca_\rho(x,r) - \lca_\psi(x,r)\big| & \leq \big|\rho_r * (\chara_{\Omega^+} - \chara_{H_{x,r}^+})(x) - \psi_r * (\chara_{\Omega^+} - \chara_{H_{x,r}^+})(x)\big| \\
& = \big|(\rho_r- \psi_r) * (\chara_{\Omega^+} - \chara_{H_{x,r}^+})(x)\big| \nonumber\\
& \leq \int_{\Omega^+ \Delta H_{x,r}^+} |\rho_r(y-x)- \psi_r(y-x)|\,dy.\nonumber
\end{align}
 But now observe that, if $|x-y|<3r$, then using the fact that $\|f'\|_{\infty}<1/10$, we have
$\vphi_r(y_2-x_2) = \frac1r$ for all $x\in \Gamma$ and $y\in \Omega^+ \Delta H_{x,r}^+$. Thus,
by the definition of $\rho$ and $\psi$,
$$\rho_r(y-x)- \psi_r(y-x) = \vphi_r(y_1-x_1) \vphi_r(y_2-x_2) - \frac1r \vphi_r(|y-x|) =
\frac1r\,\big( \vphi_r(y_1-x_1) - \vphi_r(|y-x|)\big).$$
Still for $x\in \Gamma$ and $y\in \Omega^+ \Delta H_{x,r}^+$,
notice that if $|x-y|\leq r/2$, then
$\vphi_r(y_1-x_1) =\vphi_r(|y-x|) =\frac1r$
and thus $\rho_r(y-x)- \psi_r(y-x)=0$; while if $|x-y|\geq r/2$
$$\big|\rho_r(y-x)- \psi_r(y-x)\big|\leq \frac1r\|(\vphi_r)'\|_\infty \,\big|(y_1-x_1)- |y-x|\big|
\lesssim \frac{|y_2-x_2|^2}{r^4}
.$$
Since $\supp\rho_r \cup\supp\psi_r\subset B(0,3r)$, in any case we get
$$\big|\rho_r(y-x)- \psi_r(y-x)\big|
\lesssim \frac{(\|f'\|_\infty\,r)^2}{r^4} =  \frac{\|f'\|_\infty^2}{r^2}
\quad \mbox{for $x\in \Gamma$ and $y\in \Omega^+ \Delta H_{x,r}^+$}
.$$
Plugging this estimate into \rf{eqplu89}, we obtain
$$\big|\lca_\rho(x,r) - \lca_\psi(x,r)\big|\lesssim \frac{\|f'\|_\infty^2}{r^2}\,\HH^2\big(
(\Omega^+ \Delta H_{x,r}^+)\cap B(x,3r)\big).$$
Next, using the fact that the equation of  the line $L_{x,r}$ is $$y_2 = c(\vphi)^{-1}(\vphi_r*f')(x_1)(y_1-x_1) + f(x_1),$$
we get
\begin{align*}
\HH^2\big(
&(\Omega^+ \Delta H_{x,r}^+)\cap B(x,3r)\big)  \leq \int_{|y_1-x_1|\leq 3r} |c(\vphi)^{-1}(\vphi_r*f')(x_1)(y_1-x_1) + f(x_1) - f(y_1)|\,dy_1\\
& \lesssim r^{1/2}
\left(\int_{|y_1-x_1|\leq 3r} |c(\vphi)^{-1}(\vphi_r*f')(x_1)(y_1-x_1) + f(x_1) - f(y_1)|^2\,dy_1\right)^{1/2}.
\end{align*}
Hence,
\begin{align*}
\big|\lca_\rho(x,r) - &\lca_\psi(x,r)\big|\\
&\lesssim \frac{\|f'\|_\infty^2}{r^{3/2}}\,
\left(\int_{|y_1-x_1|\leq 3r} |c(\vphi)^{-1}(\vphi_r*f')(x_1)(y_1-x_1) + f(x_1) - f(y_1)|^2\,dy_1\right)^{1/2}
.
\end{align*}
Therefore,
\begin{align*}
|\A_{\rho}&(x) - \A_\psi(x)| \\
& = \left|\left(\int_0^\infty \lca_\rho(x,r)^2\,\frac{dr}r\right)^{1/2} -
\left(\int_0^\infty \lca_\psi(x,r)^2\,\frac{dr}r\right)^{1/2}
\right|\\
& \leq \left(\int_0^\infty |\lca_\rho(x,r) - \lca_\psi(x,r)|^2\,\frac{dr}r\right)^{1/2}\\
&\lesssim
\|f'\|_\infty^2
\left(\int_0^\infty \!\!
\int_{|y_1-x_1|\leq 3r} |c(\vphi)^{-1}(\vphi_r*f')(x_1)(y_1-x_1) + f(x_1) - f(y_1)|^2\,dy_1
\,\frac{dr}{r^3}\right)^{1/2}.
\end{align*}
Squaring and integrating on $x$ and applying Lemma \ref{lemfourier2}, we get
\begin{align*}
&\int_\Gamma |\A_{\rho}(x) - \A_\psi(x)|^2\,d\HH^1(x) \approx
\int |\A_{\rho}(x) - \A_\psi(x)|^2\,dx_1\\
&\; \lesssim
\|f'\|_\infty^4 \!\int_\R \int_0^\infty \!\!\!
\int_{|y_1-x_1|\leq 3r} \!\!|c(\vphi)^{-1}(\vphi_r*f')(x_1)(y_1-x_1) + f(x_1) - f(y_1)|^2\,dy_1
\,\frac{dr}{r^3}\,dx_1 \\
&\;\approx \|f'\|_\infty^4\,\|f'\|_{L^2(\R)}^2.
\end{align*}
\end{proof}

\vv
\begin{proof}[\bf Proof of Lemma \ref{lemlips}]
By Lemma \ref{lemfourier} and Lemma \ref{lem5.4}, we have
$$\int_\Gamma \A_{\rho}(x)^2\,d\HH^1(x) \approx
\|f'\|_{L^2(\R)}^2.$$
On the other hand, by Lemma \ref{lemdiff1},
$$\int_\Gamma |\A_{\rho}(x) - \A_\psi(x)|^2\,d\HH^1(x) \lesssim
\|f'\|_\infty^4\,\|f'\|_{L^2(\R)}^2.
$$
Hence,
$$\int_\Gamma \A_\psi(x)^2\,d\HH^1(x)\leq 2 \int_\Gamma \A_{\rho}(x)^2\,d\HH^1(x) + 2 \int_\Gamma |\A_{\rho}(x) - \A_\psi(x)|^2\,d\HH^1(x) \lesssim \|f'\|_{L^2(\R)}^2.
$$
In the converse direction, we have
\begin{align*}
\int_\Gamma \Apsi(x)^2\,d\HH^1(x) & \geq \frac12 \int_\Gamma \A_{\rho}(x)^2\,d\HH^1(x)
- \int_\Gamma \Apsi(x)^2\,d\HH^1(x) \\
&\geq c_1 \|f'\|_{L^2(\R)}^2 - C\,
\|f'\|_\infty^4\,\|f'\|_{L^2(\R)}^2.
\end{align*}
So if $\|f'\|_\infty^4\leq c_1/2C$, the lemma follows.
\end{proof}


\section{Construction of an approximate Lipschitz graph}

This and the remaining sections of the paper  are devoted to the proof of the Main Lemma \ref{l:main2}. To
this end, we need to construct a Lipschitz graph which covers a fairly big proportion of the measure $\mu$. We will achieve this through a construction stemming from  works of David and Semmes in \cite{DS1} and of L\'eger in \cite{Leger}.  Given the form of Main Lemma \ref{l:main2}, it is convenient for us to follow the presentation given in the monograph \cite[Chapter 7]{Tolsa-llibre}.\\

Fix $\ve>0$ and $\theta>0$.  We assume that the assumptions of Main Lemma \ref{l:main2} are satisfied with these choices of parameters $\ve$ and $\theta$.  We shall also introduce $\alpha>0$, $\alpha \ll 1$.  Here $\alpha$ will regulate the slope of a Lipschitz graph that will well approximate the support of $\mu$.  We will eventually determine $\theta$, then we will pick $\alpha$ depending on $\delta$, and finally $\ve$ can be chosen to depend on both $\theta$ and $\alpha$.\\

Set $E=\supp(\mu)$.  Put $B_0 = B(x_0,R)$ to be the ball given in Lemma \ref{l:main2}.  Then $E\subset \overline{B_0}$.   By replacing the ball $B_0$ by a ball with at most double radius, and replacing $c_0$ by $c_0/2$, if necessary, we may assume that a line $L_0$ that minimizes $\beta_{\infty,\Gamma}(B_0)$ passes through $x_0$.  Furthermore, we may assume $x_0=0$ and $L_0$ is the horizontal axis $\R\times\{0\}$.\\

For a ball $B$, $L_B$ denotes a best approximating line for $\beta_{\infty,\Gamma}(B)$.\\

Let $x\in E$ and $0<r\leq 50R$.  We call the ball $B=B(x,r)$ \emph{good}, and write $B\in\G$ if
\begin{enumerate}[label=\textup{(}\alph*\textup{)}]
\item  $\Theta_{\mu}(B)\geq \theta$, and
\item  $\angle(L_B, L_{B_0})\leq \alpha.$
\end{enumerate}
Therefore, by the assumptions of Lemma \ref{l:main2}\begin{equation}\label{goodballsflat}\beta_{\infty,\Gamma}(B)\leq \ve\text{ whenever }B\in \G.\end{equation}

We say that $B$ is \emph{very good}, and write $B\in \VG$, if $B(x,s)\in \G$ for every $r\leq s\leq 50R$.

Since $\theta\ll c_0$, we have that any ball $B$ centred on $E$ that contains $B_0$ with $r(B)\leq 50 R$ is very good.  In particular, although $B_0$ is not very good (we have arranged it to be centered at a point of $L_0$, which we cannot guarantee belongs to $E$), we still have that $\beta_{\infty, \Gamma}(20B_0)\leq \ve$, so in particular
$$\dist(x, L_0)\lesssim \ve R\text{ for all }x\in \Gamma\cap 20 B_0.
$$

For $x\in E$ we then set
$$h(x) = \inf\{r:0<r<50R, \,B(x,r)\in \VG\}.
$$
Observe that $h(x)\leq 2R$, as $B(x, 2R)\supset B_0$.

Notice that, if $x\in E$ and $r\in (h(x), 50R)$,  then, from (\ref{goodballsflat}),
$$\Theta_{\mu}(B(x,r))\geq \theta, \beta_{\infty,\Gamma}(B(x,r))\leq \ve, \text{ and }\angle(L_{B(x,r)},L_0)\leq \alpha.
$$
Put
$$Z = E\cap\{h=0\}.
$$

We now set
$$\LD = \{x\in E\backslash Z: \Theta_{\mu}(B(x,h(x)))\leq \theta\},
$$
and
$$\BA = E\backslash (\LD\cup Z),
$$
so that
$$E = Z\cup \LD\cup \BA.
$$

Since, for $x\in \BA$, $\Theta_{\mu}(B(x, h(x))\geq \theta$, we must have that $L_{B(x,h(x))}$ has a big angle with $L_0$, moreover

\begin{lemma}\cite[Lemma 7.13]{Tolsa-llibre}\label{l:biganglestop}     Provided that $\ve$ is sufficiently small in terms of $\delta$ and $\alpha$, if $x\in \BA$, then
$$\angle(L_{B(x, 2h(x))}, L_0)\geq\frac{\alpha}{2},
$$
for any approximating line $L_{B(x,2h(x))}$.
\end{lemma}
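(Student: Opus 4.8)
The plan is to argue by contradiction, exploiting that $h(x)$ is defined as an \emph{infimum} of admissible radii. First I would record two elementary facts coming from $x\in\BA=E\setminus(\LD\cup Z)$: since $x\notin Z$ we have $h(x)>0$, and since $x\notin\LD$ the \emph{open} ball $B(x,h(x))$ satisfies $\Theta_\mu(B(x,h(x)))>\theta$, which is condition (a) in the definition of $\G$. Moreover, straight from $h(x)=\inf\{r\in(0,50R):B(x,r)\in\VG\}$ there is a sequence $r_n\downarrow h(x)$ with $B(x,r_n)\in\VG$, and hence $B(x,s)\in\G$ for every $s\in(h(x),50R]$; in particular (recall $h(x)\le 2R$) the ball $B(x,2h(x))$ is good, so $\beta_{\infty,\Gamma}\big(B(x,2h(x))\big)\le\ve$.

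Now suppose, towards a contradiction, that some best approximating line $L:=L_{B(x,2h(x))}$ makes angle $<\alpha/2$ with $L_0$; I would then show that $B(x,s)\in\G$ for all $s$ in some interval $[r_0,h(x)]$ with $r_0<h(x)$, which gives $B(x,r_0)\in\VG$ and contradicts the minimality defining $h(x)$. Flatness at these scales is immediate: since $\Gamma\cap B(x,2h(x))$ lies in the $2\ve h(x)$-neighbourhood of $L$, also $\Gamma\cap B(x,s)$ lies in the $4\ve s$-neighbourhood of $L$ for every $s\in[h(x)/2,h(x)]$, so $\beta_{\infty,\Gamma}(B(x,s))\le 4\ve$ there. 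The lower density bound at those scales is where the openness of the balls is used: $\mu(B(x,s))\to\mu(B(x,h(x)))>\theta h(x)$ as $s\uparrow h(x)$, so there is $r_0\in(h(x)/2,h(x))$ with $\Theta_\mu(B(x,s))>\theta$ for all $s\in[r_0,h(x)]$. Finally, for such $s$: the inequality $\mu(B(x,s))>\theta s$ and the $1$-linear growth of $\mu$ force $\supp\mu\cap B(x,s)$ to contain two points at mutual distance $\gtrsim\theta s$; these two points lie within $4\ve s$ of $L$ and, since $\beta_{\infty,\Gamma}(B(x,s))\le 4\ve$, also within $4\ve s$ of $L_{B(x,s)}$, so an elementary planar estimate yields $\angle(L_{B(x,s)},L)\lesssim\ve/\theta$ and hence $\angle(L_{B(x,s)},L_0)<\alpha$ provided $\ve$ is small enough in terms of $\theta$ and $\alpha$. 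Thus $B(x,s)$ satisfies (a) and (b), i.e.\ $B(x,s)\in\G$, for every $s\in[r_0,h(x)]$; together with the first paragraph this gives $B(x,r_0)\in\VG$ with $r_0<h(x)$, the contradiction. Hence $\angle(L,L_0)\ge\alpha/2$ for every best approximating line $L$ of $\beta_{\infty,\Gamma}(B(x,2h(x)))$.

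The only step that is not routine bookkeeping around the definitions of $\G$, $\VG$, $\LD$ and $h$ is the ``pinning'' of the approximating line: flatness of $\Gamma$ in $B(x,s)$ together with the lower density bound forces \emph{any} $\beta_{\infty,\Gamma}(B(x,s))$-minimizing line to lie within $O(\ve/\theta)$ in direction of $L$, and it is this that transports the nearly-horizontal direction available at the larger scale $2h(x)$ down to scales just below $h(x)$. This quantitative point is exactly what dictates the required smallness of $\ve$ relative to the density threshold $\theta$ and the angle parameter $\alpha$; everything else is formal.
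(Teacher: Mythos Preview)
Your proof is correct and is essentially the standard argument for this lemma (the paper itself gives no proof, citing \cite[Lemma 7.13]{Tolsa-llibre}, and your reasoning matches the natural one there): exploit that $x\notin\LD$ gives $\Theta_\mu(B(x,h(x)))>\theta$, use left-continuity of $s\mapsto\mu(B(x,s))$ for open balls to propagate the density bound slightly below $h(x)$, and then pin the direction of $L_{B(x,s)}$ via two $\theta s$-separated points of $\supp\mu$ lying in the thin strips around both $L$ and $L_{B(x,s)}$. One cosmetic remark: the smallness of $\ve$ is in terms of $\theta$ and $\alpha$ (as you write), not ``$\delta$'' --- the appearance of $\delta$ in the lemma's hypothesis is a typo in the paper.
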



We now introduce a regularized version of the function $h$.  Denote by $\Pi$ the orthogonal projection onto $L_0$ and $\Pi^{\perp}$ the orthogonal projection onto the orthogonal complement of $L_0$.

For $x\in \R^2$, set
$$d(x) = \inf_{B(z,r)\in \VG}\bigl[|x-z|+r\bigl].
$$
(Recall here that in order for $B(z,r)\in \VG$ we must have that $z\in E$.)

Now define, for $p\in L_0$,
$$D(p) = \inf_{x\in \Pi^{-1}(p)}d(x). 
$$
As infimums over $1$-Lipschitz funcctions, we see that $d$ and then $D$ are both $1$-Lipschitz functions.

Observe that $d(x)\leq h(x)$ whenever $x\in E$, so the closed set
$$Z_0 = \{x\in \R^2: d(x)=0\}
$$
contains $Z$.

\begin{lemma}\label{PiperpLip}\cite[Lemma 7.19]{Tolsa-llibre}  For all $x,y\in \R^2$, we have
$$|\Pi^{\perp}(x)-\Pi^{\perp}(y)|\leq 6\alpha |\Pi(x)-\Pi(y)|+4d(x)+4d(y).
$$
\end{lemma}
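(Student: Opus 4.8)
The plan is to compare $x$ and $y$ with nearby very good balls and then cash in the flatness supplied by (\ref{goodballsflat}). First I would fix $\eta>0$; since $\VG\neq\varnothing$ (for instance any ball centred on $E$ that contains $B_0$ and has radius $\le 50R$ lies in $\VG$, so $d,D$ are finite), I can choose balls $B(z_1,r_1),B(z_2,r_2)\in\VG$ with $|x-z_1|+r_1\le d(x)+\eta$ and $|y-z_2|+r_2\le d(y)+\eta$. Recall $z_1,z_2\in E\subset\overline{B_0}=\overline{B(0,R)}$, so $|z_1-z_2|\le 2R$. Since $\Pi^{\perp}$ and $\Pi$ are $1$-Lipschitz,
$$|\Pi^{\perp}(x)-\Pi^{\perp}(y)|\le|\Pi^{\perp}(z_1)-\Pi^{\perp}(z_2)|+d(x)+d(y)+2\eta,\qquad |\Pi(z_1)-\Pi(z_2)|\le|\Pi(x)-\Pi(y)|+d(x)+d(y)+2\eta,$$
so it is enough to control $|\Pi^{\perp}(z_1)-\Pi^{\perp}(z_2)|$ by $|\Pi(z_1)-\Pi(z_2)|$ and $d(x)+d(y)$.

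Next I would choose the scale $r=\max\{r_1,r_2,2|z_1-z_2|\}$. Then $r_1,r_2\le 50R$ (good balls have radius $\le 50R$) and $2|z_1-z_2|\le 4R$, hence $r\le 50R$; moreover $r\ge r_1$, so $B(z_1,r_1)\in\VG$ forces $B(z_1,r)\in\G$, and by (\ref{goodballsflat}) together with the definition of $\G$ we get $\beta_{\infty,\Gamma}(B(z_1,r))\le\ve$ and $\angle(L_{B(z_1,r)},L_0)\le\alpha$. Since $z_1,z_2\in\Gamma$ and $|z_1-z_2|\le r/2$, both points lie in $\Gamma\cap B(z_1,r)$, hence within distance $\ve r$ of the line $L:=L_{B(z_1,r)}$. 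Letting $z_i'$ be the orthogonal projection of $z_i$ onto $L$ and using that along $L$ the vertical extent is at most $\tan(\angle(L,L_0))\le 2\alpha$ times the horizontal extent, a three-term triangle inequality ($z_1\to z_1'\to z_2'\to z_2$) yields
$$|\Pi^{\perp}(z_1)-\Pi^{\perp}(z_2)|\le 2\alpha\,|\Pi(z_1)-\Pi(z_2)|+C\ve\,r$$
for an absolute constant $C$.

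The hard part will be to remove the dependence on $r$, which is the only genuine obstacle, since $r$ contains $|z_1-z_2|$ and hence the very quantity $|\Pi^{\perp}(z_1)-\Pi^{\perp}(z_2)|$ being estimated. I would bound $r\le r_1+r_2+2|z_1-z_2|\le d(x)+d(y)+2\eta+2|\Pi(z_1)-\Pi(z_2)|+2|\Pi^{\perp}(z_1)-\Pi^{\perp}(z_2)|$ and insert this into the displayed inequality; because $\ve$ is small relative to $\alpha$ (which is consistent with the order in which $\theta$, then $\alpha$, then $\ve$ are chosen in this section), the term $2C\ve\,|\Pi^{\perp}(z_1)-\Pi^{\perp}(z_2)|$ can be absorbed into the left-hand side, leaving $|\Pi^{\perp}(z_1)-\Pi^{\perp}(z_2)|\le 6\alpha\,|\Pi(z_1)-\Pi(z_2)|+\alpha\,(d(x)+d(y))+C\eta$ (the numerical constants have ample slack). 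Substituting the second inequality of the first paragraph and then the first one, and finally letting $\eta\to0$, produces $|\Pi^{\perp}(x)-\Pi^{\perp}(y)|\le 6\alpha\,|\Pi(x)-\Pi(y)|+4d(x)+4d(y)$, which is the claimed estimate.
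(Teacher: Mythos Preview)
Your proof is correct and follows essentially the same approach as the proof in \cite[Lemma 7.19]{Tolsa-llibre}, which the paper merely cites without reproducing: pick near-optimal very good balls for $x$ and $y$, pass to a common good ball $B(z_1,r)$ at scale $r\approx \max\{r_1,r_2,|z_1-z_2|\}$, use $\beta_{\infty,\Gamma}(B(z_1,r))\le\ve$ and $\angle(L_{B(z_1,r)},L_0)\le\alpha$ to compare $\Pi^\perp(z_1)-\Pi^\perp(z_2)$ with $\Pi(z_1)-\Pi(z_2)$, and then absorb the $\ve r$ error by the smallness of $\ve$ relative to $\alpha$ (which is consistent with the paper's order of choosing parameters). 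The numerical constants have the slack you claim.
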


As a consequence of this lemma, we have that if $x,y\in Z_0$, then
$$|\Pi^{\perp}(x) - \Pi^{\perp}(y)|\leq 6\alpha |\Pi(x)-\Pi(y)|.
$$
In particular, the map $\Pi:Z_0\to L_0$ is injective and the function
$$A:\Pi(Z_0)\to \R, \; A(\Pi(x))=\Pi^{\perp}(x) \text{ for }x\in Z_0,
$$
is Lipschitz with norm at most $6\alpha$.  To extend the definition of $A$ to $L_0$, we appeal to a Whitney decomposition.

\subsection{Whitney decomposition}

Let $\DD_{L_0}$ be the collection of dyadic intervals in $L_0$.

For $I \in \DD_{L_0}$,
\begin{align*}
    D(I) := \inf_{p \in I} D(p). \;\;(\text{Here }D(p) = \inf_{x\in \Pi^{-1}(p)}d(x).)
\end{align*}
Set
\begin{align*}
    \Whit := \{ I \mbox{ maximal in } \DD_{L_0} \, :\, \ell(I)  < 20^{-1}\,  D(I)\}.
\end{align*}

 We index $\Whit$ as $\{R_i\}_{i \in I_\Whit}$.  The basic properties of the cubes in $\Whit$ are summarized in the following lemma. The proof of this result is standard, and can be found as Lemma 7.20 in \cite{Tolsa-llibre}.

\begin{lemma}\label{l:whitney-dec}
    The intervals $R_i$, $i\in I_\Whit$, have disjoint interiors in $L_0$ and satisfy the following properties:
    \begin{enumerate}[label=\textup{(}\alph*\textup{)}]
        \item If $x \in 15R_i$, then $5 \ell(R_i) \leq D(x) \leq 50 \ell(R_i)$.
        \item There exists an absolute constant $C>1$ such that if $15R_i \cap 15R_j \neq \varnothing$, then
        \begin{align}
            C^{-1} \ell(R_i) \leq \ell(R_j) \leq C \ell(R_i).
        \end{align}
        \item For each $i \in I_\Whit$, there are at most $N$ intervals $R_j$ such that $15 R_i \cap 15R_j \neq \varnothing$, where $N$ is some absolute constant.
        \item $L_0\setminus \Pi(Z_0) = \bigcup_{i\in I_\Whit} R_i =  \bigcup_{i\in I_\Whit} 15R_i$.
    \end{enumerate}
\end{lemma}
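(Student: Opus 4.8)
We are given the collection $\Whit$ of maximal dyadic intervals $I\subset L_0$ with $\ell(I) < 20^{-1} D(I)$, where $D(I) = \inf_{p\in I} D(p)$ and $D$ is $1$-Lipschitz on $L_0$. We need to show (a) the four stated properties: the "comparability on $15R_i$" estimate $5\ell(R_i)\le D(x)\le 50\ell(R_i)$ for $x\in 15R_i$; (b) bounded geometry — intervals with $15R_i\cap 15R_j\neq\varnothing$ have comparable sizes; (c) bounded overlap — at most $N$ of the dilates $15R_j$ meet any given $15R_i$; (d) the dilates $15R_i$ cover $L_0\setminus\Pi(Z_0)$.

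Let me think about how I would prove this.

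This is a completely standard Whitney-type lemma. The function $D$ here plays the role of "distance to the closed set $Z_0 = \{d=0\}$" (projected down), but it's not literally a distance function — it's just a nonnegative $1$-Lipschitz function, and $\Pi(Z_0)$ is essentially $\{D=0\}$ up to the subtlety that $D$ could be $0$ on a larger set. Actually wait — since $d$ is $1$-Lipschitz and $D(p) = \inf_{x\in\Pi^{-1}(p)} d(x)$, and $d(x)=0$ exactly on $Z_0$, we have $D(p)=0$ iff $\overline{\Pi^{-1}(p)}$ meets $Z_0$ iff $p\in \Pi(Z_0)$ (using that $Z_0$ is closed and... hmm, one needs $\Pi(Z_0)$ closed; but $Z_0$ lives in a strip near $L_0$ by Lemma \ref{PiperpLip}, so $\Pi$ restricted to $Z_0$ is proper, hence $\Pi(Z_0)$ is closed). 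So $D$ is a $1$-Lipschitz function vanishing exactly on the closed set $\Pi(Z_0)$, and the claim is the standard Whitney decomposition of the open set $L_0\setminus\Pi(Z_0)$ with respect to such a function. The proof is routine and the author explicitly says so ("The proof of this result is standard").

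=== BEGIN LATEX ===

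The plan is to treat this as the standard Whitney-type decomposition associated to the nonnegative $1$-Lipschitz function $D$ on $L_0$, which vanishes precisely on the closed set $\Pi(Z_0)$ (closedness of $\Pi(Z_0)$ following from Lemma \ref{PiperpLip}, which confines $Z_0$ to a strip around $L_0$ so that $\Pi|_{Z_0}$ is proper). The intervals $R_i\in\Whit$ have pairwise disjoint interiors because $\Whit$ consists of \emph{maximal} dyadic intervals satisfying the selection condition $\ell(I)<20^{-1}D(I)$: two dyadic intervals are either nested or have disjoint interiors, and maximality rules out nesting. Property (d) is then immediate: if $p\in L_0\setminus\Pi(Z_0)$ then $D(p)>0$, so every sufficiently small dyadic interval containing $p$ satisfies the selection condition, hence $p$ lies in some $R_i$; conversely if $p\in\Pi(Z_0)$ then $D(p)=0$ and no dyadic interval containing $p$ can satisfy $\ell(I)<20^{-1}D(I)$, since $D(I)\le D(p)=0$. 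The equality $\bigcup R_i=\bigcup 15R_i$ is clear once (a) is known, since (a) forces the centers of the $R_i$ to stay uniformly away from $\Pi(Z_0)$ relative to their sidelengths.

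For property (a), fix $R_i$ with parent $\widehat{R_i}$. By maximality, $\widehat{R_i}\notin\Whit$, so $\ell(\widehat{R_i})\ge 20^{-1}D(\widehat{R_i})$, i.e. $D(\widehat{R_i})\le 40\,\ell(R_i)$; hence there is a point $q\in\widehat{R_i}$ with $D(q)\le 41\,\ell(R_i)$ (say). On the other hand, $R_i\in\Whit$ gives $D(R_i)\ge 20\,\ell(R_i)$, so $D(p)\ge 20\,\ell(R_i)$ for every $p\in R_i$. Now for $x\in 15R_i$ one has $\dist(x,q)\lesssim \ell(R_i)$, and the $1$-Lipschitz property of $D$ transfers these bounds: $D(x)\le D(q)+|x-q|\lesssim \ell(R_i)$ and $D(x)\ge D(p_0)-|x-p_0|\gtrsim \ell(R_i)$ for a point $p_0\in R_i$ near $x$. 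Tracking the constants (using $\ell(\widehat{R_i})=2\ell(R_i)$, $\diam(15R_i)=15\,\ell(R_i)$, and $\dist(15R_i,\widehat{R_i})$ bounds) yields exactly $5\,\ell(R_i)\le D(x)\le 50\,\ell(R_i)$ on $15R_i$; one checks the endpoint constants hold with the selection threshold $20^{-1}$.

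Property (b) follows directly from (a): if $x\in 15R_i\cap 15R_j$ then $5\,\ell(R_i)\le D(x)\le 50\,\ell(R_j)$ and symmetrically, giving $\ell(R_i)\le 10\,\ell(R_j)$ and $\ell(R_j)\le 10\,\ell(R_i)$, so $C=10$ works. Property (c) is the usual bounded-overlap count: if $15R_j$ meets $15R_i$, then by (b) $\ell(R_j)\in[C^{-1}\ell(R_i),C\ell(R_i)]$, so there are only boundedly many dyadic \emph{sizes} available, and for each admissible size the interval $R_j$ is contained in a fixed bounded dilate of $R_i$ (of sidelength $\lesssim \ell(R_i)$); since the $R_j$ of a fixed size are disjoint, only $O(1)$ of them fit, and summing over the $O(1)$ admissible sizes gives the absolute bound $N$. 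I do not expect any real obstacle here — the only point requiring a little care is pinning down the numerical constants in (a) so that the dilation factor $15$ and the threshold $20^{-1}$ produce precisely the claimed bounds $5$ and $50$; this is a bookkeeping exercise with the Lipschitz estimate and the parent/child sidelength ratio, and is the reason the statement is quoted verbatim from \cite[Lemma 7.20]{Tolsa-llibre}.

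=== END LATEX ===
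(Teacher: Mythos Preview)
Your proof is correct and follows exactly the standard Whitney-decomposition argument; the paper itself does not give a proof but simply cites \cite[Lemma 7.20]{Tolsa-llibre} as standard, so there is nothing to compare against. One minor remark: for the closedness of $\Pi(Z_0)$ you do not need to invoke Lemma \ref{PiperpLip}---since $d(x)\ge \dist(x,E)$ and $E\subset\overline{B_0}$ is bounded, the set $Z_0=\{d=0\}$ is compact and hence $\Pi(Z_0)$ is closed automatically.
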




Now set
\begin{align*}
    I_0 := \{ i \in I_\Whit\, :\, R_i \cap B(0, 10R) \neq \varnothing\}.
\end{align*}


\begin{lemma}\label{lem73}\cite[Lemma 7.21]{Tolsa-llibre}
    The following two statements hold.
    \begin{itemize}
        \item If $i \in I_0$, then $\ell(R_i) \leq R$ and $3R_i \subset L_0 \cap 12 B_0 = (-12R,12R)$.
        \item If $i \notin I_0$, then
        \begin{align*}
            \ell(R_i) \approx \dist(0, R_i) \approx |p| \gtrsim R \mbox{ for all } p \in R_i.
        \end{align*}
    \end{itemize}
\end{lemma}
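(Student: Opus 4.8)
The plan is to reduce both assertions to two a priori estimates for the $1$-Lipschitz function $D$ on $L_0$, namely
$$(|p|-R)_+\ \le\ D(p)\ \le\ |p|+3R\qquad\text{for all }p\in L_0,$$
and then to feed these into the defining inequality $\ell(R_i)<\tfrac1{20}D(R_i)$ of the Whitney family together with property (a) of Lemma \ref{l:whitney-dec}. To set this up I would first note that $\VG\neq\varnothing$, so that $d$ and $D$ are infima over a non-empty family: since $E=\supp\mu\subset\overline{B_0}$ and $\mu(B_0)>0$ there is $w\in E$, and for every $s\in[2R,50R]$ the ball $B(w,s)$ is centred on $E$, has radius $\le 50R$, and contains $B_0$, hence is very good; thus $B(w,2R)\in\VG$. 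Choosing, for a given $p\in L_0$, the point $x\in\Pi^{-1}(p)$ with $\Pi^\perp(x)=\Pi^\perp(w)$ gives $D(p)\le d(x)\le |x-w|+2R=|p-\Pi(w)|+2R\le |p|+|w|+2R\le|p|+3R$, using $|w|\le R$. On the other hand every $B(z,r)\in\VG$ has $z\in E\subset\overline{B_0}$, so $|x-z|+r\ge|x|-|z|\ge|x|-R$ for $x\in\Pi^{-1}(p)$, whence $D(p)\ge(|p|-R)_+$.

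Suppose $i\in I_0$ and pick $p\in R_i\cap B(0,10R)$, so $|p|<10R$. Since $R_i\in\Whit$ we have $\ell(R_i)<\tfrac1{20}D(R_i)\le\tfrac1{20}D(p)\le\tfrac1{20}(|p|+3R)<\tfrac{13}{20}R<R$, which gives the first half of the claim. For the inclusion, every point of $3R_i$ lies within $2\ell(R_i)$ of $p$ (the midpoint of $R_i$ is within $\tfrac12\ell(R_i)$ of $p$, and $3R_i$ is contained in the ball of radius $\tfrac32\ell(R_i)$ about that midpoint), hence within $|p|+2\ell(R_i)<10R+\tfrac{13}{10}R<12R$ of the origin; therefore $3R_i\subset(-12R,12R)=L_0\cap 12B_0$.

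Suppose now $i\notin I_0$, i.e.\ $R_i\cap B(0,10R)=\varnothing$, so $|p|\ge 10R$ for every $p\in R_i$, in particular $|p|\gtrsim R$. Combining the upper bound on $D$ with $R_i\in\Whit$ yields $\ell(R_i)<\tfrac1{20}D(p)\le\tfrac1{20}(|p|+3R)\le\tfrac{13}{200}|p|$, while property (a) of Lemma \ref{l:whitney-dec} (applied to $p\in R_i\subset 15R_i$) gives $\ell(R_i)\ge\tfrac1{50}D(p)\ge\tfrac1{50}(|p|-R)\ge\tfrac{9}{500}|p|$; hence $\ell(R_i)\approx|p|$. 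Finally $\dist(0,R_i)\le|p|\le\dist(0,R_i)+\ell(R_i)\le\dist(0,R_i)+\tfrac{13}{200}|p|$, so $\dist(0,R_i)\ge\tfrac{187}{200}|p|$, and consequently $\dist(0,R_i)\approx|p|\approx\ell(R_i)$ for every $p\in R_i$, as claimed.

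The real content is the pair of bounds $(|p|-R)_+\le D(p)\le|p|+3R$; once these are in hand everything is elementary, and the only point needing a little care is the constant bookkeeping in the first item — to obtain $\ell(R_i)\le R$ rather than merely $\ell(R_i)\lesssim R$ one should use the factor $\tfrac1{20}$ from the definition of $\Whit$ directly, rather than the weaker inequality $5\ell(R_i)\le D(p)$ of property (a) of Lemma \ref{l:whitney-dec}.
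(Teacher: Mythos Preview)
Your proof is correct and follows the standard approach (the paper cites \cite[Lemma 7.21]{Tolsa-llibre} without reproducing a proof). The key reduction to the two-sided bound $(|p|-R)_+\le D(p)\le|p|+3R$, obtained from $E\subset\overline{B_0}$ and the existence of a very good ball $B(w,2R)$, is exactly what is needed, and your constant bookkeeping is accurate throughout.
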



\begin{lemma}\label{l:whitney-prop1}\cite[Lemma 7.22]{Tolsa-llibre}
    Let $i \in I_0$; there exists a ball $B_i \in \VG$ such that
    \begin{align}
        & \ell(R_i) \lesssim r(B_i) \lesssim \ell(R_i),\text{ and} \\
        & \dist(R_i, \Pi(B_i)) \lesssim \ell(R_i).
    \end{align}
\end{lemma}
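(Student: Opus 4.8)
The plan is to unwind the definitions of the functions $D$ and $d$ and of the Whitney intervals $R_i$, and then to exploit the monotonicity of the class $\VG$ in the radius; this is exactly \cite[Lemma 7.22]{Tolsa-llibre}, and the argument below is the one given there.

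First I would fix $i\in I_0$ and pick any point $p_i\in R_i$. Since $p_i\in R_i\subset 15R_i$, Lemma \ref{l:whitney-dec}(a) gives $5\ell(R_i)\le D(p_i)\le 50\ell(R_i)$, so $D(p_i)\approx \ell(R_i)$. Recalling that $D(p_i)=\inf_{x\in\Pi^{-1}(p_i)}d(x)$, and noting that this infimum is strictly positive (it is at least $5\ell(R_i)$) and finite (the class $\VG$ is nonempty, since for instance any ball of radius $50R$ centred at a point of $E$ contains $B_0$ and is therefore very good), I would choose $x_i\in\Pi^{-1}(p_i)$ with $d(x_i)\le 2D(p_i)$; together with $d(x_i)\ge D(p_i)$ this yields $d(x_i)\approx \ell(R_i)$. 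Unwinding $d(x_i)=\inf_{B(z,r)\in\VG}\bigl(|x_i-z|+r\bigr)$ in the same way, I would pick a ball $B(z_i,\rho_i)\in\VG$ with $|x_i-z_i|+\rho_i\le 2d(x_i)$; in particular $z_i\in E$, $\rho_i\lesssim \ell(R_i)$ and $|x_i-z_i|\lesssim \ell(R_i)$.

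At this point the upper bound $r(B_i)\lesssim\ell(R_i)$ and the location bound are already visible from this ball, but the lower bound $r(B_i)\gtrsim\ell(R_i)$ is still missing, since $\rho_i$ could be arbitrarily small. This is the main (though not deep) obstacle, and it is resolved by the observation that $\VG$ is monotone in the radius: directly from the definition of \emph{very good}, if $B(z,\rho)\in\VG$ then $B(z,s)\in\VG$ for every $s$ with $\rho\le s\le 50R$ (being good at every scale in $[\rho,50R]$ forces being good at every scale in the smaller range $[s,50R]$). I would therefore set $r_i:=\max\bigl(\rho_i,\,c\,\ell(R_i)\bigr)$ for a small absolute constant $c$ and take $B_i:=B(z_i,r_i)$. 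Since $\ell(R_i)\le R$ for $i\in I_0$ by Lemma \ref{lem73}, we get $r_i\le 50R$ whenever $c\le 50$, hence $B_i\in\VG$, and by construction $c\,\ell(R_i)\le r_i\lesssim \ell(R_i)$. Finally, for the location bound, $\Pi(B_i)$ is the interval of half-length $r_i$ centred at $\Pi(z_i)$, and since $|\Pi(z_i)-p_i|\le|z_i-x_i|\lesssim\ell(R_i)$ while $p_i\in R_i$, we obtain
$$\dist(R_i,\Pi(B_i))\le \dist(R_i,\Pi(z_i))\le |p_i-\Pi(z_i)|\lesssim \ell(R_i),$$
which would complete the proof.
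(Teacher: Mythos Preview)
Your argument is correct and is precisely the proof of \cite[Lemma~7.22]{Tolsa-llibre}, which the paper cites without reproducing a proof. The key points --- unwinding $D$ and $d$ to near-minimizers, then using that $B(z,\rho)\in\VG$ implies $B(z,s)\in\VG$ for all $\rho\le s\le 50R$ to inflate the radius to size $\approx\ell(R_i)$ --- are exactly the intended ones.
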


For $i \in I_0$, denote by $A_i$ the affine function $L_0 \to L_0^\perp$ whose graph is the line $L_{B_i}$. Insofar as $B_i\in \VG$, $\angle(L_{B_i}, L_{B_0})\leq \alpha$, so $A_i$ is Lipschitz with constant $\tan\alpha\lesssim \alpha$.

On the other hand, for $i \in I_\Whit\setminus I_0$, we put $A_i \equiv 0$.  We are now in a position to be able to define $A$ on $L_0$.
\vv




\subsection{Extending $A$ to $L_0$}  Consider a smooth partition of unity $\{\phi_i\}_{i\in I_{\Whit}}$ subordinate to $\{3R_i\}_{i\in I_{\Whit}}$, i.e. $\phi_i\in C^{\infty}_0(3R_i)$ with $\sum_i \phi_i\equiv 1$ on $L_0=\R$, which moreover satisfies that for every $i\in I_{\Whit}$,

\begin{equation}\nonumber
     \|\phi_i' \|_\infty \lesssim \ell(R_i)^{-1} \ \mbox{ and } \|\phi_i''\|_{\infty} \lesssim \ell(R_i)^{-2}.
\end{equation}
(See \cite[p. 250]{Tolsa-llibre} for an explicit construction.)

Now, if $p \in L_0\backslash \Pi(Z_0)$, we set
\begin{align*}
    A(p) := \sum_{i \in I_\Whit} \phi_i(p) A_i(p)= \sum_{i \in I_0} \phi_i(p) A_i(p).
\end{align*}

We require the following lemma, which combines Lemmas 7.24 and 7.27 from \cite{Tolsa-llibre}.
\begin{lemma}\label{l:A}\
   The function $A: L_0 \to L_0^\perp$ is supported in $[-12R, 12R]$ and is Lipschitz with slope $\lesssim \alpha$. Moreover, if $i\in I_{\Whit}$, then for any $x \in 15 R_i$,
   \begin{align*}
       |A''(x)| \lesssim_\theta \frac{\ve}{\ell(R_i)}.
   \end{align*}
\end{lemma}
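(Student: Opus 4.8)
The plan is to verify the three assertions in turn, relying on the Whitney structure of Lemma \ref{l:whitney-dec}, the geometric properties of the balls $B_i$ from Lemma \ref{l:whitney-prop1}, and the flatness estimate $\beta_{\infty,\Gamma}(B_i)\le\ve$ for $B_i\in\VG$. First, the support statement is immediate: by the first bullet of Lemma \ref{lem73}, if $i\in I_0$ then $3R_i\subset(-12R,12R)$, while for $i\notin I_0$ we have $A_i\equiv 0$, so every term $\phi_i A_i$ in the sum defining $A$ is supported in $[-12R,12R]$; also $A$ vanishes on $\Pi(Z_0)$ by construction (and the value there is consistent with the extension since $A$ agrees with the graphing function of $Z_0$ in the limit — this is the standard matching argument in \cite[Lemma 7.24]{Tolsa-llibre}).

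For the Lipschitz bound with slope $\lesssim\alpha$: on $\Pi(Z_0)$ this is exactly the conclusion drawn from Lemma \ref{PiperpLip} (slope $\le 6\alpha$). On each $3R_i$, write $A=\sum_j \phi_j A_j$ where the sum is over the boundedly many $j$ with $3R_i\cap 3R_j\ne\varnothing$ (Lemma \ref{l:whitney-dec}(c)), all of comparable size (Lemma \ref{l:whitney-dec}(b)). Differentiating, $A'=\sum_j(\phi_j' A_j+\phi_j A_j')$. Each $A_j'$ has $|A_j'|=\tan\angle(L_{B_j},L_0)\lesssim\alpha$ since $B_j\in\VG$. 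For the $\phi_j' A_j$ terms, use $\sum_j\phi_j'\equiv 0$ to rewrite $\sum_j\phi_j'A_j=\sum_j\phi_j'(A_j-A_{j_0})$ for a fixed reference index $j_0$; then $|A_j-A_{j_0}|\lesssim\alpha\,\ell(R_i)$ on $3R_i$ because both affine functions have slope $\lesssim\alpha$ and their graphs both pass within $O(\ell(R_i))$ of the same very good ball region — more precisely one uses that $L_{B_j}$ and $L_{B_{j_0}}$ are both $\ve$-approximating lines for nearby very good balls of comparable radius, so they are $O(\alpha\ell(R_i))$-close on $3R_i$. Combined with $\|\phi_j'\|_\infty\lesssim\ell(R_j)^{-1}\approx\ell(R_i)^{-1}$ and boundedly many terms, this gives $|A'|\lesssim\alpha$ on $L_0\setminus\Pi(Z_0)$; continuity across $\partial\Pi(Z_0)$ handles the rest.

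For the second-derivative estimate, fix $i\in I_\Whit$ and $x\in 15R_i$; we may assume $i\in I_0$ (else $A''$ is supported away from a neighborhood of $15R_i$, or more carefully one checks that on $15R_i$ only finitely many $A_j$ with $j\in I_0$ contribute and repeat the argument). Differentiating twice, $A''=\sum_j(\phi_j''A_j+2\phi_j'A_j'+\phi_j A_j'')=\sum_j(\phi_j''A_j+2\phi_j'A_j')$ since $A_j''\equiv 0$. Using $\sum_j\phi_j''\equiv 0$ and $\sum_j\phi_j'\equiv 0$, subtract a fixed affine function $A_{j_0}$ (say $j_0=i$) from each $A_j$:
\begin{equation}\label{eqAppextra}
A''(x)=\sum_j\phi_j''(x)(A_j-A_i)(x)+2\sum_j\phi_j'(x)(A_j'-A_i').
\end{equation}
The crux is the estimate $|(A_j-A_i)(x)|\lesssim_\theta\ve\,\ell(R_i)$ and $|A_j'-A_i'|\lesssim_\theta\ve$ for all $j$ with $15R_i\cap 15R_j\ne\varnothing$. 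This is the main obstacle and the heart of the lemma: $L_{B_i}$ and $L_{B_j}$ are both best-approximating lines, each within $\ve\,r(B)$ of $\Gamma$, for very good balls $B_i,B_j$ whose union contains a piece of $\Gamma$ of diameter $\gtrsim_\theta\ell(R_i)$ (here the lower density bound $\Theta_\mu(B_i)\ge\theta$ guarantees $\Gamma$ is genuinely spread out across $B_i$, not concentrated). Two lines that are both $\ve\ell(R_i)$-close to a common set of diameter $\gtrsim_\theta\ell(R_i)$ must satisfy $\angle(L_{B_i},L_{B_j})\lesssim_\theta\ve$ and must be $\lesssim_\theta\ve\ell(R_i)$-close throughout $15R_i$; this yields the two displayed bounds. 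Plugging into \eqref{eqAppextra}, together with $\|\phi_j''\|_\infty\lesssim\ell(R_j)^{-2}\approx\ell(R_i)^{-2}$, $\|\phi_j'\|_\infty\lesssim\ell(R_i)^{-1}$, and the bound $N$ on the number of relevant $j$ (Lemma \ref{l:whitney-dec}(c)), gives
\[
|A''(x)|\lesssim_\theta \ell(R_i)^{-2}\cdot\ve\,\ell(R_i)+\ell(R_i)^{-1}\cdot\ve=\frac{\ve}{\ell(R_i)},
\]
as claimed. The whole argument is the one carried out in \cite[Lemmas 7.24, 7.27]{Tolsa-llibre}, which we may invoke directly.
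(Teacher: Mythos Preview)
Your proposal is correct and follows exactly the approach the paper takes: the paper does not give its own proof but simply cites \cite[Lemmas 7.24, 7.27]{Tolsa-llibre}, and your sketch reproduces precisely that standard argument (partition-of-unity differentiation, the cancellation $\sum_j\phi_j'=\sum_j\phi_j''=0$, and the key line-comparison estimate $|A_j-A_i|\lesssim_\theta\ve\,\ell(R_i)$ coming from the fact that $L_{B_i},L_{B_j}$ are both $\ve$-close to a common spread-out piece of $\Gamma$). One small sharpening: in your Lipschitz paragraph you write that $L_{B_j}$ and $L_{B_{j_0}}$ are ``$O(\alpha\ell(R_i))$-close'' on $3R_i$; in fact the same line-comparison argument you give for $A''$ yields the stronger $O_\theta(\ve\,\ell(R_i))$, which is what one actually uses (and which is $\le\alpha\ell(R_i)$ once $\ve$ is small enough in terms of $\theta,\alpha$).
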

We will denote the graph of $A$ by $G_A$, that is
\begin{align}
    G_A := \{ (x, A(x)) \, |\, x \in L_0\}.
\end{align}

\subsection{The Lipschitz graph $G_A$ and $E = \supp\mu$ are close each other}\label{closeprops}

The next four results, concerning the relationship between $G_A$ and $E$, are central to our analysis.

\begin{lemma} \label{l:dist-B0}\cite[Lemma 7.28]{Tolsa-llibre}
    Every $x \in B(0, 10R)$ satisfies
    \begin{align*}
        \dist(x, G_A) \lesssim d(x).
    \end{align*}
\end{lemma}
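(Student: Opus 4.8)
The plan is to reduce everything to the estimate in Lemma \ref{PiperpLip} and the Whitney geometry of Lemma \ref{l:whitney-dec}, treating separately the two regimes: points that project into $\Pi(Z_0)$, and points that project into a Whitney interval. First I would fix $x\in B(0,10R)$ and set $p=\Pi(x)$.

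\medskip

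\textbf{Case 1: $p\in \Pi(Z_0)$.} Then there is a point $z_0\in Z_0$ with $\Pi(z_0)=p$, and by definition $(p,A(p))\in G_A$ lies on the graph. Since $A(p)=\Pi^\perp(z_0)$, we have $\dist(x,G_A)\le |x-(p,A(p))| = |\Pi^\perp(x)-\Pi^\perp(z_0)|$. Now Lemma \ref{PiperpLip}, applied with $y=z_0$ (so $d(z_0)=0$ and $\Pi(x)=\Pi(z_0)$), gives $|\Pi^\perp(x)-\Pi^\perp(z_0)|\le 4d(x)$. This already yields $\dist(x,G_A)\lesssim d(x)$ in this case.

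\medskip

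\textbf{Case 2: $p\in L_0\setminus\Pi(Z_0)$.} By Lemma \ref{l:whitney-dec}(d), $p\in R_i$ for some $i\in I_\Whit$; since $x\in B(0,10R)$ and $|\Pi^\perp(x)|$ is controlled (one checks $\dist(x,L_0)\lesssim d(x)\lesssim R$ using again Lemma \ref{PiperpLip} against any $z_0\in Z_0$, and $Z_0\neq\varnothing$ because $Z_0\supset Z$ — or one invokes that $x$ is within $O(R)$ of $E$), we have $R_i\cap B(0,10R)\neq\varnothing$, i.e. $i\in I_0$, so $\ell(R_i)\lesssim R$ by Lemma \ref{lem73}. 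The natural candidate point on $G_A$ is $(p,A(p))$, and I would estimate $|\Pi^\perp(x)-A(p)|$ by comparing $A(p)$ with the affine function $A_i$. Using Lemma \ref{l:whitney-prop1} there is $B_i=B(z_i,r_i)\in\VG$ with $\ell(R_i)\approx r_i$ and $\dist(R_i,\Pi(B_i))\lesssim \ell(R_i)$; since $B_i\in\VG$ one has $\beta_{\infty,\Gamma}(B_i)\le\ve$ and $z_i\in E$, so $z_i\in Z_0^c$-neighbourhood with $d(z_i)\le h(z_i)\le r_i\approx\ell(R_i)$. The key quantitative input is Lemma \ref{l:whitney-dec}(a): for any $q\in 15R_i$, $D(q)\approx\ell(R_i)$, hence $d(y)\gtrsim \ell(R_i)$ for all $y\in\Pi^{-1}(15R_i)$; combined with $d$ being $1$-Lipschitz and $d(x)\ge$ (something comparable), this shows $d(x)\gtrsim\ell(R_i)$ whenever $x$ itself projects into $R_i$ — wait, more carefully: $D(p)\ge 20\ell(R_i)$ by maximality in the definition of $\Whit$, and $D(p)\le d(x)$ since $\Pi(x)=p$, so indeed $d(x)\gtrsim\ell(R_i)$. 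Therefore it suffices to prove $|\Pi^\perp(x)-A(p)|\lesssim \ell(R_i)$.

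\medskip

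\textbf{The core estimate in Case 2.} To bound $|\Pi^\perp(x)-A(p)|\lesssim\ell(R_i)$, write $A(p)=\sum_{j}\phi_j(p)A_j(p)$, a sum over the boundedly many $j$ with $p\in 3R_j$; for each such $j$, $\ell(R_j)\approx\ell(R_i)$ by Lemma \ref{l:whitney-dec}(b), and $|A_j(p)-A_i(p)|\lesssim\ell(R_i)$ (the graphs $L_{B_j}$, $L_{B_i}$ both pass within $O(\ell(R_i))$ of a common point, being $\le\alpha$-sloped approximating lines to $\Gamma$ near overlapping balls of comparable radius, up to the $O(\ve)$ flatness error — this is exactly the type of estimate behind Lemma \ref{l:A}). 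Hence $|A(p)-A_i(p)|\lesssim\ell(R_i)$, and it remains to bound $|\Pi^\perp(x)-A_i(p)|$. For this, $\Pi^\perp(x)$ lies within $O(d(x))$ of $\Pi^\perp$ of some point of $E$ near $p$ via Lemma \ref{PiperpLip}; since $d(x)\gtrsim\ell(R_i)$ but also — and this is the subtle point — we need the reverse, that $x$ is not \emph{too far} above $L_0$. Here I would use that $x\in B(0,10R)$ together with Lemma \ref{PiperpLip} applied to $x$ and $z_i\in E$ (with $d(z_i)\lesssim\ell(R_i)$, $|\Pi(x)-\Pi(z_i)|\lesssim R$): this gives $|\Pi^\perp(x)|\lesssim \alpha R + d(x) + \ell(R_i)\lesssim R$, which is only an $O(R)$ bound, not $O(\ell(R_i))$. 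To get the sharp bound one instead applies Lemma \ref{PiperpLip} with $y$ a point of $Z_0$ whose projection is a nearby endpoint of a chain of Whitney intervals leading from $p$ out to $\Pi(Z_0)$ — i.e. one walks along $L_0$ from $p$ to the nearest point $p^*\in\partial\Pi(Z_0)$, with $|p-p^*|\lesssim\ell(R_i)$ (again by the Whitney property that $D(p)\approx\ell(R_i)$ and $D$ is $1$-Lipschitz, so $D$ vanishes within distance $\approx\ell(R_i)$ of $p$), pick $z^*\in Z_0$ over $p^*$, and apply Lemma \ref{PiperpLip} to $x$ and $z^*$: $|\Pi^\perp(x)-\Pi^\perp(z^*)|\lesssim\alpha\ell(R_i)+d(x)+0\lesssim d(x)$. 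Since $\Pi^\perp(z^*)=A(p^*)$ and $|A(p)-A(p^*)|\lesssim\alpha\ell(R_i)$ because $A$ is $\lesssim\alpha$-Lipschitz (Lemma \ref{l:A}), we conclude $|\Pi^\perp(x)-A(p)|\lesssim d(x)+\ell(R_i)\lesssim d(x)$, completing Case 2.

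\medskip

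\textbf{Main obstacle.} The only genuinely delicate point is the geometric bookkeeping in Case 2: identifying the right comparison point on $G_A$ and showing $|p-p^*|\lesssim\ell(R_i)$ where $p^*$ is the nearest point of $\partial\Pi(Z_0)$ — this rests on combining the maximality in the definition of $\Whit$ with the $1$-Lipschitz property of $D$ to see that $D$ drops from $\gtrsim\ell(R_i)$ at $p$ to $0$ within distance $\lesssim\ell(R_i)$. Everything else is a routine assembly of Lemma \ref{PiperpLip}, the Whitney properties (Lemma \ref{l:whitney-dec}), and the Lipschitz bound on $A$ (Lemma \ref{l:A}). I would also note the bound is vacuous / trivial when $d(x)\gtrsim R$, so one may assume $d(x)\ll R$ throughout, which places $x$ genuinely close to $E$ and legitimizes all the localizations above.
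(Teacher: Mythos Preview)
Your Case 1 is fine, and your overall architecture (project to $L_0$, split according to whether $p\in\Pi(Z_0)$ or not, use Lemma~\ref{PiperpLip} as the engine) matches the standard proof. The problem is entirely in the ``core estimate in Case 2''.

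There is a genuine gap in your final move. You claim that the nearest point $p^*\in\Pi(Z_0)$ satisfies $|p-p^*|\lesssim\ell(R_i)$, justified by ``$D(p)\approx\ell(R_i)$ and $D$ is $1$-Lipschitz, so $D$ vanishes within distance $\approx\ell(R_i)$ of $p$''. This is backwards: the $1$-Lipschitz bound tells you $D$ \emph{cannot} vanish closer than $D(p)$ to $p$, not that it \emph{must} vanish within $\approx D(p)$. Nothing in the construction prevents $D$ from staying at level $\approx\ell(R_i)$ on a long stretch of $L_0$ (a run of equal-length Whitney intervals), and at this point of the argument $Z_0$ could even be empty. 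So the point $p^*$ need not exist with the bound you claim, and the argument collapses.

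The fix is the approach you started and then abandoned: compare $x$ directly with $z_i$, the centre of the ball $B_i\in\VG$ from Lemma~\ref{l:whitney-prop1}. You discarded this because you wrote $|\Pi(x)-\Pi(z_i)|\lesssim R$, but in fact $|\Pi(x)-\Pi(z_i)|\lesssim\ell(R_i)$: indeed $p\in R_i$, $\dist(R_i,\Pi(B_i))\lesssim\ell(R_i)$, and $r(B_i)\approx\ell(R_i)$. With this correct bound, Lemma~\ref{PiperpLip} applied to $x$ and $z_i$ gives
\[
|\Pi^\perp(x)-\Pi^\perp(z_i)|\le 6\alpha|\Pi(x)-\Pi(z_i)|+4d(x)+4d(z_i)\lesssim \alpha\,\ell(R_i)+d(x)+\ell(R_i)\lesssim d(x),
\]
using $d(z_i)\le r(B_i)\approx\ell(R_i)\lesssim d(x)$. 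Then $|\Pi^\perp(z_i)-A_i(p)|\le|\Pi^\perp(z_i)-A_i(\Pi(z_i))|+|A_i(\Pi(z_i))-A_i(p)|\lesssim \ve\,r(B_i)+\alpha\,\ell(R_i)\lesssim\ell(R_i)$, since $z_i\in\Gamma\cap B_i$ lies within $\ve\,r(B_i)$ of $L_{B_i}$. Combined with your estimate $|A(p)-A_i(p)|\lesssim\ell(R_i)$, this yields $|\Pi^\perp(x)-A(p)|\lesssim d(x)$ and finishes Case~2 without ever invoking a nearby point of $Z_0$. This is essentially the argument in \cite[Lemma~7.28]{Tolsa-llibre}.
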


\begin{lemma}\label{l:distQ-L}\cite[Lemma 7.29]{Tolsa-llibre}
    For $B\in \VG$ and $x\in G_A\cap 3B$, it holds that
    \begin{align}\label{e:distQ-L}
        \dist(x,L_B) \lesssim_{\theta} \ve r(B).
    \end{align}
\end{lemma}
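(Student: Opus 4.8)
The plan is to reduce the statement to a pointwise comparison between the Lipschitz function $A$ defining $G_A$ and the affine functions $A_i$ whose graphs are the best-approximating lines $L_{B_i}$ for the Whitney balls, and then to use the flatness estimate \eqref{goodballsflat} together with the geometric control already recorded in Lemmas \ref{l:whitney-dec}, \ref{l:whitney-prop1} and \ref{l:A}. So, fix $B = B(z,s) \in \VG$ and a point $x = (p, A(p)) \in G_A \cap 3B$. The first step is to locate the relevant Whitney intervals: since $x \in 3B$ we have $p \in \Pi(3B) \subset L_0$, and because $B \in \VG$ we have $d(x') \lesssim s$ for points $x'$ near $B$ on $E$, so $D(p) \lesssim s$; on the other hand $D(p) \gtrsim s$ cannot fail too badly either because $z \in E$ forces $d$ to be comparable to $s$ on $\Pi^{-1}(p)$ up to the distance from $p$ to $\Pi(B)$, which is $\lesssim s$. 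Hence any Whitney interval $R_i \ni p$ (or with $p \in 3R_i$, in the support of $\phi_i$) has $\ell(R_i) \approx s$ by Lemma \ref{l:whitney-dec}(a), and only boundedly many such $i$ occur by Lemma \ref{l:whitney-dec}(c). Moreover such $R_i$ lies at distance $\lesssim s$ from $z$, so $3R_i \subset \Pi(CB)$ for an absolute $C$, and $R_i \in I_0$ provided $B$ is not too far from $B_0$ — if it is, one reduces to that case by the usual covering, or one notes directly that $A_i \equiv 0$ and $L_{B_i}$ can be compared using the global near-horizontality.

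The second step is the key estimate: for each $i$ with $p \in 3R_i$, one shows $\dist\big((p,A_i(p)), L_B\big) \lesssim_\theta \ve s$. This is where \eqref{goodballsflat} enters. Both $B$ and $B_i$ are very good balls with $r(B_i) \approx \ell(R_i) \approx s$ and $\dist(\Pi(B_i), R_i) \lesssim s$ (Lemma \ref{l:whitney-prop1}), so $B_i \subset C''B$ for an absolute constant $C''$. The line $L_{B_i}$ is a best-approximating line for $\beta_{\infty,\Gamma}(B_i)$ and $L_B$ for $\beta_{\infty,\Gamma}(B)$; since $B_i \in \VG$ there is a point of $\Gamma \cap B_i$ within $\ve r(B_i)$ of $L_{B_i}$, and that same point lies in $\Gamma \cap C''B$, hence within $\lesssim \ve s$ of $L_B$. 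Taking a couple of well-separated such points on $\Gamma \cap B_i$ (their separation is $\gtrsim_\theta s$ because $\Theta_\mu(B_i) \geq \theta$ forces $\Gamma$ to be spread out across $B_i$), a standard two-point argument gives $\angle(L_{B_i}, L_B) \lesssim_\theta \ve$ and then $\dist(y, L_B) \lesssim_\theta \ve s$ for every $y \in L_{B_i} \cap C''B$; in particular for $y = (p, A_i(p))$.

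The third step assembles these: using the partition of unity, $A(p) - A_i(p) = \sum_j \phi_j(p)\,(A_j(p) - A_i(p))$, and each nonzero term has $\ell(R_j) \approx \ell(R_i) \approx s$ with $3R_i \cap 3R_j \neq \varnothing$, so by the previous paragraph applied with the ball $B$ (or by directly comparing $L_{B_i}$ and $L_{B_j}$ through $B$) one gets $|A_j(p) - A_i(p)| \lesssim_\theta \ve s$; summing over the boundedly many $j$ yields $|A(p) - A_i(p)| \lesssim_\theta \ve s$. Combining with step two, $\dist\big((p,A(p)), L_B\big) \leq |A(p)-A_i(p)| + \dist\big((p,A_i(p)),L_B\big) \lesssim_\theta \ve s = \ve r(B)$, which is the claim.

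The main obstacle is the second step — quantifying the angle between the two best-approximating lines from the $\beta_\infty$ flatness of both balls. This needs the lower density bound $\Theta_\mu(B_i) \geq \theta$ (valid since $B_i \in \VG$) to guarantee that $\supp\mu \cap B_i$, and hence $\Gamma \cap B_i$, contains two points at mutual distance $\gtrsim_\theta r(B_i)$; without such separation two strips of width $\ve r(B)$ need not be nearly parallel. This is exactly the source of the $\theta$-dependence in the implicit constant, and it is the only genuinely non-bookkeeping point in the argument.
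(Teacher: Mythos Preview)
Your overall strategy is correct and matches the standard proof, but there is a genuine gap in Step 1 that propagates into Step 2.

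You claim that $D(p)\approx s$, hence $\ell(R_i)\approx s$ for the Whitney intervals meeting $p$. The upper bound $D(p)\lesssim s$ is fine (take the ball $B$ itself in the infimum defining $d$), but the lower bound is false in general: nothing prevents there being a much smaller very good ball $B(w,r)$ with $\Pi(w)$ near $p$ and $r\ll s$, which would force $D(p)\ll s$ and hence $\ell(R_i)\ll s$. Your justification (``$z\in E$ forces $d$ to be comparable to $s$ on $\Pi^{-1}(p)$'') does not hold; $z\in E$ with $B(z,s)\in\VG$ gives only $d(z)\le s$, never a lower bound.

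This breaks Step 2 as written. With $r(B_i)\ll s$, the two points you find in $\Gamma\cap B_i$ are separated only by $\gtrsim_\theta r(B_i)$, not $\gtrsim_\theta s$, so the two-point argument yields $\angle(L_{B_i},L_B)\lesssim_\theta \ve s/r(B_i)$, which may be large; consequently your assertion that $\dist(y,L_B)\lesssim_\theta \ve s$ for \emph{every} $y\in L_{B_i}\cap C''B$ is false. The fix is that you only need this bound at $y=(p,A_i(p))$, and this point lies in $L_{B_i}\cap C'B_i$ (a bounded enlargement of $B_i$, not of $B$), since $|p-\Pi(z_i)|\lesssim\ell(R_i)\approx r(B_i)$. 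Over that distance the weaker angle bound gives the correct estimate: $\dist\big((p,A_i(p)),L_B\big)\lesssim_\theta \ve s + r(B_i)\cdot(\ve s/r(B_i))\lesssim_\theta \ve s$. With this correction Steps 2 and 3 go through unchanged. You should also say a word about the (easier) case $p\in\Pi(Z_0)$, where $x\in Z_0\subset\Gamma$ directly and the flatness of a bounded enlargement of $B$ gives the bound.
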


\begin{lemma}\label{l:distG-A}\cite[Lemma 7.30]{Tolsa-llibre}
We have    \begin{align*}
        \dist(x, G_A) \lesssim_{\theta} \ve d(x)\text{ for every }x\in E.
    \end{align*}
\end{lemma}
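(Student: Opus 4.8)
The plan is to fix $x\in E$ and reduce immediately to the case $d(x)>0$, since if $d(x)=0$ then $x\in Z_0$, hence $x\in \Pi^{-1}(\Pi(Z_0))$ with $\Pi^\perp(x)=A(\Pi(x))$ by the very definition of $A$ on $\Pi(Z_0)$, so $x\in G_A$ and there is nothing to prove. So assume $d(x)>0$. The natural route is to find a ball $B\in\VG$ whose radius and distance to $x$ are both comparable to $d(x)$, and then play off two facts: first, that $x$ (being in $E\subset\Gamma$) lies close to the approximating line $L_B$ because $\beta_{\infty,\Gamma}(B)\le\ve$ for $B\in\VG$; and second, that the graph $G_A$ also lies close to $L_B$ inside a fixed dilate of $B$, which is precisely Lemma \ref{l:distQ-L}. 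Combining these two estimates at a common point near $x$ on $L_B$ (or directly, using that both $x$ and a nearby point of $G_A$ are within $O_\theta(\ve\, r(B))$ of $L_B$) yields $\dist(x,G_A)\lesssim_\theta \ve\, d(x)$.

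More carefully, the steps I would carry out are: (1) By the definition $d(x)=\inf_{B(z,r)\in\VG}(|x-z|+r)$, pick $B=B(z,r)\in\VG$ with $|x-z|+r\le 2d(x)$; then $x\in 2B$ and $r(B)\le 2d(x)$. (2) Since $x\in E\subset\Gamma$ and $x\in 2B\subset 3B$, and $\beta_{\infty,\Gamma}(B)\le\ve$ (this holds for $B\in\VG$ by \eqref{goodballsflat}, since $B\in\VG\Rightarrow B\in\G$), we get $\dist(x,L_B)\lesssim \ve\, r(B)\lesssim \ve\, d(x)$. (3) To produce a competitor point on $G_A$, note $x\in B(0,10R)$ when $d(x)$ is not too large — more precisely, one should split into the cases $x\in B(0,10R)$ and $x\notin B(0,10R)$; in the latter case $d(x)\gtrsim R$ by the geometry (compare Lemma \ref{lem73}: intervals far out have $\ell(R_i)\approx|p|\gtrsim R$, so $D\gtrsim R$ there), and since $\dist(x,G_A)\lesssim R$ trivially for $x$ within bounded distance of the support region (and $G_A$ is compactly supported), the bound is immediate up to adjusting constants, actually one should be slightly careful: $G_A$ is supported in $[-12R,12R]$ so for $x$ very far, $\dist(x,G_A)$ can be large, but then so is $d(x)$, and one checks $\dist(x,G_A)\lesssim |x|\lesssim d(x)$ directly. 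In the main case $x\in B(0,10R)$, Lemma \ref{l:dist-B0} gives a point $x'\in G_A$ with $|x-x'|\lesssim d(x)$; then $x'\in 3B$ provided the implicit constants are arranged (enlarging $B$ by a bounded factor, which costs only constants), so Lemma \ref{l:distQ-L} gives $\dist(x',L_B)\lesssim_\theta \ve\, r(B)\lesssim_\theta\ve\,d(x)$. (4) Now $x$ and $x'$ are both within $O_\theta(\ve\, d(x))$ of the line $L_B$, and $|x-x'|\lesssim d(x)$, so the orthogonal-to-$L_B$ component of $x-x'$ is $O_\theta(\ve\,d(x))$; sliding $x'$ along $G_A$ — or rather projecting along $L_B$ and using that $G_A$ is a Lipschitz graph that stays within $O_\theta(\ve\, r(B))$ of $L_B$ over the relevant interval — produces a point $x''\in G_A$ with $|x-x''|\lesssim_\theta\ve\,d(x)$. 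This last sliding step is the only place requiring a small geometric argument: one parametrizes $G_A\cap 3B$ as a graph over $L_B$ (legitimate since $\angle(L_{B_0},L_B)\le\alpha$ is tiny and $A$ has slope $\lesssim\alpha$), finds the point of $G_A$ whose $L_B$-projection equals that of $x$, and estimates its distance to $x$ by the triangle inequality using (2) and the $O_\theta(\ve r(B))$ flatness of $G_A$ relative to $L_B$.

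The main obstacle is really just bookkeeping in step (4): ensuring that the competitor point from Lemma \ref{l:dist-B0} actually lands in $3B$ (or a controlled dilate) so that Lemma \ref{l:distQ-L} applies, and then converting ``$x$ and a nearby graph point are both close to $L_B$'' into ``some graph point is close to $x$''. All of this is quantitatively harmless because $\alpha,\ve$ are small and $r(B)\approx d(x)$, but one must be careful that the constant ends up depending only on $\theta$ (through Lemmas \ref{l:distQ-L} and \ref{l:dist-B0}) and absolute constants, not on $\ve$ or $\alpha$ — which is fine, since the $\ve$-dependence is exactly the linear factor we are claiming. I expect no genuine difficulty beyond this; the lemma is essentially a synthesis of Lemmas \ref{l:dist-B0} and \ref{l:distQ-L} together with the flatness $\beta_{\infty,\Gamma}(B)\le\ve$ of very good balls.
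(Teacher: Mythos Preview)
Your proposal is correct and follows the standard argument (the paper does not prove this lemma, citing \cite[Lemma 7.30]{Tolsa-llibre} instead, and your outline matches that proof). Two minor clean-ups: the case $x\notin B(0,10R)$ is vacuous since $E=\supp\mu\subset\overline{B_0}\subset B(0,2R)$, so that digression can be dropped; and in step (2), $\beta_{\infty,\Gamma}(B)\le\ve$ only controls $\Gamma\cap B$, not $\Gamma\cap 2B$, so you should work from the start with a larger concentric ball $B'\supset\{x\}$ with $r(B')\approx d(x)$, which is still in $\VG$ since $B\in\VG$ forces all larger concentric balls up to radius $50R$ to lie in $\G$.
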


Lemmas \ref{l:distG-A} and \ref{l:distQ-L} combine to yield the following statement.

\begin{coro}\label{l:LBcloseGA} Suppose that $B\in \VG$.  As long as $\ve$ is small enough in terms of $\theta$,
\begin{align}
\dist(x, G_A)\lesssim_{\theta}\ve r(B) \text{ for all }x\in L_B\cap 2B.
\end{align}
\end{coro}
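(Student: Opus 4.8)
The plan is to combine the two preceding results, Lemma \ref{l:distG-A} and Lemma \ref{l:distQ-L}, by using the best-approximating line $L_B$ as an intermediary. Fix $B=B(z,s)\in \VG$ and a point $x\in L_B\cap 2B$. First I would produce a point of $E$ near $x$: since $B\in \VG$ we have $\Theta_\mu(B)\geq \theta$, hence $\mu(B)>0$ and $E\cap B\neq\varnothing$; moreover $\beta_{\infty,\Gamma}(B)\leq \ve$ by \eqref{goodballsflat}, so every point of $\Gamma\cap B$ — in particular every point of $E\cap B\subset \Gamma\cap B$ — lies within distance $\ve\, s$ of the line $L_B$. A standard covering/packing argument using the linear growth bound $\mu(B(y,t))\leq C_0 t$ together with $\Theta_\mu(B)\geq \theta$ shows that the orthogonal projection of $E\cap B$ onto $L_B$ is ``$C\ve$-dense'' at scale comparable to $s$ along that line; consequently there exists $y\in E\cap 2B$ with $|x-y|\lesssim_\theta \ve\, s$. (Here one uses that $x$ lies on $L_B$ inside $2B$, so one only needs a point of $E$ whose projection onto $L_B$ is close to the projection of $x$.)

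Next I would estimate $d(y)$. Since $y\in E\cap 2B$ and $B=B(z,s)\in\VG$ with $z\in E$, the very definition $d(y)=\inf_{B(w,t)\in\VG}[\,|y-w|+t\,]$ gives $d(y)\leq |y-z|+s\leq 3s\lesssim s$. Now apply Lemma \ref{l:distG-A} to the point $y\in E$: $\dist(y,G_A)\lesssim_\theta \ve\, d(y)\lesssim_\theta \ve\, s$. Combining with the triangle inequality,
\[
\dist(x,G_A)\leq |x-y|+\dist(y,G_A)\lesssim_\theta \ve\, s = \ve\, r(B),
\]
which is the assertion of the corollary. (One should note that when $E\cap 2B$ fails to have projections onto $L_B$ covering the projection of $x$ — which can only happen if $x$ is near the ``ends'' of $L_B\cap 2B$ — one can instead use that $\dist(x, L_B\cap \tfrac32 B)$ controls things and work with $\tfrac32 B$ in place of $2B$; the flexibility in the implied constants absorbs this.)

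\textbf{Alternative, cleaner route.} Rather than first finding a point of $E$, one can argue directly: by Lemma \ref{l:dist-B0} (or by the Whitney construction) $G_A$ meets $2B$, so pick $w\in G_A\cap 2B$; then Lemma \ref{l:distQ-L} applied with this $B\in\VG$ gives $\dist(w,L_B)\lesssim_\theta \ve\, r(B)$. The set $G_A\cap 2B$ is, by Lemma \ref{l:A}, a nearly-flat Lipschitz graph piece over a segment of length $\approx s$, and $L_B$ is a line passing within $\ve s$ of it at every point of $2B$; an elementary planar-geometry argument (two nearly-parallel ``curves'' that stay within $\ve s$ of each other over a window of width $\approx s$) then shows that every point $x$ of $L_B\cap 2B$ lies within $\lesssim_\theta \ve\, s$ of $G_A$. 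This avoids any measure-theoretic density argument.

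\textbf{Main obstacle.} The genuinely delicate point in either route is controlling the behaviour near the boundary of $2B$: Lemma \ref{l:distQ-L} only bounds $\dist(x,G_A)$ for $x\in G_A\cap 3B$ (distances \emph{from} $G_A$ \emph{to} $L_B$), not the reverse containment, and a priori $L_B\cap 2B$ could ``escape'' $G_A$ near the ends of the window. The fix is to exploit that $G_A$ is globally a Lipschitz graph with slope $\lesssim\alpha$ over all of $L_0$ and that $L_B$ has angle $\lesssim\alpha$ with $L_0$ (as $B\in\VG$), so both $L_B$ and $G_A$ are graphs over $L_0$ with comparable, small slopes; over the projection of $2B$ onto $L_0$ — an interval of length $\approx s$ — the vertical gap between them is $\lesssim_\theta\ve s$ at some point (by Lemma \ref{l:distQ-L}) and its derivative is $\lesssim\alpha$, but to propagate the smallness to the ends one really needs the second-order bound $|A''|\lesssim_\theta \ve/\ell(R_i)$ from Lemma \ref{l:A}, which forces $G_A$ to stay close to the line $L_B$ across the whole window once it is close at one point and has the right slope there. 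Assembling this comparison carefully is the crux; everything else is routine.
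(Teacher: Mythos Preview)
Your first route has a genuine gap: the claim that the projection of $E\cap B$ onto $L_B$ is dense at scale $\ve s$ is not justified and is in general false. The only information available is $\mu(B)\geq\theta s$ together with $C_0$-linear growth; this forces $E\cap B$ to have diameter $\gtrsim\theta s/C_0$, but says nothing whatsoever about $\ve s$-density. It is perfectly possible for $E\cap B$ to be concentrated in a single short arc, in which case a generic $x\in L_B\cap 2B$ is at distance $\approx s$ from every point of $E$, and the bound $|x-y|\lesssim_\theta \ve s$ fails. Your parenthetical fix (``work with $\tfrac32 B$ in place of $2B$'') does not address this; the problem is not the ends of the window but the absence of any mechanism producing $\ve$-scale density of $E$.

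Your alternative route is correct and is essentially the paper's argument, but you are overcomplicating the ``main obstacle'': no second-derivative bound is needed. The paper simply observes that, since $z\in E$, Lemma~\ref{l:distG-A} gives $\dist(z,G_A)\lesssim_\theta\ve r(B)$, and since $z\in\Gamma$ also $\dist(z,L_B)\leq\ve r(B)$; hence both $G_A$ and $L_B$ pass close to $z$. Now $G_A$ is a Lipschitz graph over \emph{all} of $L_0$ with slope $\lesssim\alpha$, so it necessarily traverses $3B$ from side to side---there is no ``escape at the ends'', because the graph has no ends. Lemma~\ref{l:distQ-L} then forces $G_A\cap 3B\subset U_{C(\theta)\ve r(B)}(L_B)$. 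Since $L_B$ also has angle $\leq\alpha$ with $L_0$, the projection onto $L_0$ of $G_A\cap 3B$ covers the projection of $L_B\cap 2B$, and the vertical separation between the two (nearly horizontal) graphs is $\lesssim_\theta\ve r(B)$ at each such point, which is the desired conclusion. The propagation you worry about is automatic from Lemma~\ref{l:distQ-L} holding on all of $3B$; you do not need to integrate $A''$.
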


\begin{proof}  If $B=B(z, r)\in \VG$, then $z\in E$ so by Lemma \ref{l:distG-A}, $\dist(z, G_A)\lesssim _{\theta}\ve r$, so $G_A$ (as well as $L_B$) passes close to $z$. On the other hand, Lemma \ref{l:distQ-L} ensures that if $B\in \VG$, then $G_A\cap 3B\subset U_{C(\theta)\ve r(B)}(L_B)$.  Since both $G_A$ and $L_B$ are connected, we readily deduce the conclusion.
\end{proof}

\begin{lemma}\label{l:GdistL0}\cite[Lemma 7.32]{Tolsa-llibre}
We have
$$\dist(x,L_0)\lesssim_{\theta}\,\ve\,R$$
for all $x\in G_A$.
\end{lemma}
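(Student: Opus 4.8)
\emph{Approach.} The plan is to reduce the statement, in essentially one step, to Lemma~\ref{l:distQ-L} applied to a single large very good ball, and then to relate the approximating line of that ball to $L_0$ using the flatness of $\Gamma$ inside $B_0$.

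First I would pick a point $x_*\in\supp\mu\cap\overline{B_0}$ (this set is non-empty, since $\mu(B_0)\geq c_0R>0$) and set $B_*:=B(x_*,50R)$. Because $x_*\in E=\supp\mu$, $B_*\supset B_0$, and $r(B_*)=50R$, the remark in the text (any ball centred on $E$ that contains $B_0$ and has radius at most $50R$ is very good) gives $B_*\in\VG$. Since $A$ is Lipschitz with slope $\lesssim\alpha<1$ and is supported in $[-12R,12R]$ (Lemma~\ref{l:A}), one has $|A|\lesssim\alpha R$ on its support, so $G_A\cap\{|x_1|\leq12R\}$ is contained in a ball $B(0,CR)$ with $C$ absolute, hence in $3B_*=B(x_*,150R)$; on the other hand $G_A\cap\{|x_1|>12R\}\subset L_0$, where the claimed bound is trivial. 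Thus it suffices to estimate $\dist(x,L_0)$ for $x\in G_A\cap 3B_*$, and for such $x$ Lemma~\ref{l:distQ-L} immediately yields $\dist(x,L_{B_*})\lesssim_\theta\ve\,r(B_*)\lesssim_\theta\ve R$.

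It then remains to show that $L_{B_*}$ stays $\lesssim_{c_0}\ve R$-close to $L_0$ throughout a ball of radius $O(R)$ about the origin. For this I would use the ``fat'' set $S:=\supp\mu\cap B_0\subset\Gamma\cap B_0$. Since $\mu$ has $1$-linear growth and $\mu(B_0)\geq c_0 R$, the set $S$ has diameter $\gtrsim_{c_0}R$. Since $L_0$ is a $\beta_{\infty,\Gamma}(B_0)$-minimizing line through the centre of $B_0$ and $\beta_{\infty,\Gamma}(B_0)\leq\ve$ (because $B_0$ is centred at $\Gamma$ and $\mu(B_0)\geq\theta\,r(B_0)$), we get $S\subset U_{\ve R}(L_0)$; and since $S\subset\Gamma\cap B_*$ with $B_*\in\G$, we get $S\subset U_{50\ve R}(L_{B_*})$. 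Two straight lines that both meet a set of diameter $\gtrsim_{c_0}R$ within distance $O(\ve R)$ make an angle $\lesssim_{c_0}\ve$ and, over any region of diameter $O(R)$, remain within $O_{c_0}(\ve R)$ of one another; in particular $\dist(y,L_0)\lesssim_{c_0}\ve R$ for every $y\in L_{B_*}\cap B(0,2CR)$. Combining this with the previous paragraph: given $x\in G_A\cap 3B_*$, let $x'\in L_{B_*}$ be a nearest point to $x$, so $x'\in B(0,2CR)$ once $\ve$ is small; then $\dist(x,L_0)\leq|x-x'|+\dist(x',L_0)\lesssim_{\theta,c_0}\ve R$, which is the asserted inequality.

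The only genuinely delicate points here are bookkeeping: verifying that $B_*$ is very good (this is exactly the remark recalled above) and the elementary ``two nearly incident lines'' estimate used in the last paragraph. Neither is a real obstacle; the one thing to watch is that the absolute constants are chosen so that $G_A\cap\{|x_1|\leq12R\}$, together with the relevant nearest-point perturbations, remains inside $3B_*$ — which holds since the slope $\lesssim\alpha$ and the amplitude $\lesssim\alpha R$ of $A$ are both controlled.
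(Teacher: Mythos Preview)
Your proof is correct. The paper does not give its own proof of this lemma, citing instead \cite[Lemma~7.32]{Tolsa-llibre}, so there is no in-paper argument to compare against; your route via a single application of Lemma~\ref{l:distQ-L} to one large very good ball $B_*=B(x_*,50R)$, followed by the elementary comparison of $L_{B_*}$ with $L_0$ through the fat set $S=\supp\mu\cap B_0$, is a clean and standard way to obtain the estimate.

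One small point of bookkeeping: you justify $S\subset U_{\ve R}(L_0)$ by asserting that $B_0$ is centred at $\Gamma$ and invoking the hypothesis of Main Lemma~\ref{l:main2}. But after the normalisation performed at the start of the construction (``By replacing the ball $B_0$ by a ball with at most double radius\ldots''), the centre $x_0$ has been moved onto $L_0$, and the paper explicitly remarks that one cannot guarantee $x_0\in E$; in particular $B_0$ need not be centred at $\Gamma$ any more. This does not affect your argument, since the paper has already recorded, just after the definition of very good balls, that $\dist(x,L_0)\lesssim\ve R$ for all $x\in\Gamma\cap 20B_0$; citing that fact in place of your parenthetical yields $S\subset U_{C\ve R}(L_0)$ immediately and the rest of your proof goes through unchanged.
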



\section{Small measure of $\LD$ and $\BA$}

\subsection{$\LD$ has small measure}


The following lemma shows that $\LD$ has small measure.  The reason for this is that $\LD$ can be covered by balls of small density that are closely aligned to the Lipschitz graph $G_A$.

\begin{lemma} \label{l:LD-small}\cite[Lemma 7.33]{Tolsa-llibre}
    If $\theta$ is sufficiently small, and $\ve>0$ is sufficiently small in terms of $\theta$, then
    \begin{align*}
    \mu(\LD) \leq \frac{1}{1000} \mu(B_0).
    \end{align*}
\end{lemma}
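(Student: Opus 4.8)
\textbf{Proof proposal for Lemma \ref{l:LD-small}.}

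The plan is to cover $\LD$ by a Vitali subfamily of the balls $\{B(x,h(x))\}_{x\in\LD}$ and exploit two facts: first, that for $x\in\LD$ the density $\Theta_\mu(B(x,h(x)))$ is at most $\theta$ (by definition of $\LD$), and second, that these balls are very well aligned with the Lipschitz graph $G_A$ (since $B(x,s)\in\VG$ for $s$ slightly bigger than $h(x)$, so $\beta_{\infty,\Gamma}(B(x,s))\le\ve$ and the approximating line makes angle $\le\alpha$ with $L_0$, while also $\dist(x,G_A)\lesssim_\theta \ve\, d(x)\le\ve\, h(x)$ by Lemma \ref{l:distG-A}). First I would apply the $5r$-covering lemma to extract a countable disjoint subcollection $\{B_k=B(x_k, h(x_k))\}_k$, $x_k\in\LD$, such that $\LD\subset\bigcup_k 5B_k$. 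Then
$$\mu(\LD)\le\sum_k\mu(5B_k)\le 5C_0\sum_k r(B_k),$$
using $C_0$-linear growth (here $C_0=1$, but any absolute constant is fine), so it suffices to control $\sum_k h(x_k)=\sum_k r(B_k)$.

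The key step is to bound $\sum_k r(B_k)$ by a constant multiple of $\HH^1(G_A\cap CB_0)$, which in turn is $\lesssim R \approx \mu(B_0)$ (the last comparison holds because $\mu(B_0)\ge c_0 r(B_0)$ and $G_A$ is a Lipschitz graph of slope $\lesssim\alpha$ supported in $[-12R,12R]$, so $\HH^1(G_A\cap CB_0)\lesssim R$). To get this, I would show that each ball $B_k$ has a ``shadow'' on $G_A$ of length comparable to $r(B_k)$, and that these shadows have bounded overlap. Concretely: since $x_k\in\LD\subset E$ and $B(x_k,h(x_k))$ is a limit of very good balls (or more simply, since $d(x_k)\le h(x_k)$ and Lemma \ref{l:distG-A} gives $\dist(x_k,G_A)\lesssim_\theta\ve h(x_k)$), there is a point $y_k\in G_A$ with $|x_k-y_k|\lesssim_\theta\ve h(x_k)$. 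Moreover $G_A\cap 3B_k$ lies in a $C(\theta)\ve r(B_k)$-neighborhood of $L_{B_k}$ by Lemma \ref{l:distQ-L}, and $G_A$ is connected, so $G_A$ traverses $B_k$ and $\HH^1(G_A\cap 2B_k)\gtrsim r(B_k)$ (provided $\ve$ is small in terms of $\theta$). Now using that the balls $B_k$ are disjoint and have finite overlap of their doubles --- which follows from the fact that $h$ (being comparable to $d$, a $1$-Lipschitz function vanishing on $Z_0$) behaves like a regularized distance, so the $B_k$ are essentially a Whitney-type family --- we conclude
$$\sum_k r(B_k)\lesssim\sum_k\HH^1(G_A\cap 2B_k)\lesssim_\theta\HH^1\Big(G_A\cap C B_0\Big)\lesssim R\approx\mu(B_0).$$
Here the bounded overlap of $\{2B_k\}$ is where I expect to use that each $B_k$ is roughly the ``Whitney ball'' attached to $x_k$: if $2B_k\cap 2B_j\neq\varnothing$ with $h(x_k)\le h(x_j)$, then by the Lipschitz property of $d$ (hence of $h$ up to constants on $E$) one gets $h(x_k)\approx h(x_j)$, and then disjointness of the $B_k$ forces only boundedly many such $j$ for each $k$.

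Combining the two displays,
$$\mu(\LD)\le 5C_0\sum_k r(B_k)\le C(\theta)\,\ve\,\mu(B_0)$$
wait --- more carefully, the bound $\sum_k r(B_k)\lesssim_\theta \mu(B_0)$ only gives $\mu(\LD)\lesssim_\theta\mu(B_0)$, which is not small. The gain must come from the density: since $x_k\in\LD$ we have $\mu(5B_k)\lesssim\mu(B_k)\le\theta\, h(x_k)\cdot$(abs.\ const.)? No --- $\mu(B(x_k,h(x_k)))\le\theta h(x_k)$ by definition of $\LD$, but $\mu(5B_k)$ could be up to $5C_0 h(x_k)$ by linear growth, losing the $\theta$. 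So instead I would \emph{not} pass to $5B_k$; rather, cover $\LD$ more efficiently. The correct route (as in \cite[Lemma 7.33]{Tolsa-llibre}) is: use that $\LD\subset\bigcup_{x\in\LD}B(x,h(x))$ and apply Besicovitch's covering theorem to get a subfamily with bounded overlap \emph{still covering} $\LD$; then
$$\mu(\LD)\le\sum_k\mu(B(x_k,h(x_k)))\le\theta\sum_k h(x_k)\le\theta\sum_k r(B_k),$$
and now bound $\sum_k r(B_k)\lesssim_\theta R\approx\mu(B_0)$ exactly as above via the shadows on $G_A$ (Besicovitch already gives the bounded overlap needed for the shadow-packing argument). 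This yields $\mu(\LD)\le C(\theta)\theta\,\mu(B_0)$, which is $\le\frac{1}{1000}\mu(B_0)$ once $\theta$ is chosen small enough (and $\ve$ small in terms of $\theta$, as needed for the geometric estimates above).

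\textbf{Main obstacle.} The delicate point is the packing estimate $\sum_k r(B_k)\lesssim_\theta\mu(B_0)$: one must argue that the balls $B(x_k,h(x_k))$ project onto $G_A$ with controlled overlap and near-unit efficiency, which requires combining the Whitney-type regularity of $h$ (inherited from the $1$-Lipschitz function $d$ and Lemma \ref{PiperpLip}) with the flatness estimates Lemmas \ref{l:distG-A}, \ref{l:distQ-L} and the finiteness $\HH^1(G_A\cap CB_0)\lesssim R$. Everything else is routine covering-lemma bookkeeping.
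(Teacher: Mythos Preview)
Your corrected approach (after switching from Vitali to Besicovitch) is essentially the argument of \cite[Lemma 7.33]{Tolsa-llibre}, which the paper cites without reproducing: cover $\LD$ by the low-density balls $B(x,h(x))$, use Besicovitch to get bounded overlap, and pack the radii against $\HH^1(G_A\cap CB_0)\lesssim R$ via the fact that each ball meets $G_A$ in length $\gtrsim h(x)$ (from Lemma \ref{l:distG-A} with $\ve$ small in terms of $\theta$). One small clarification: in your final inequality you write $\mu(\LD)\le C(\theta)\,\theta\,\mu(B_0)$, but in fact once $\ve$ is small enough that $\dist(x_k,G_A)<\tfrac12 h(x_k)$, the packing constant is absolute (depending only on the Besicovitch constant and $c_0$), so you get $\mu(\LD)\le C\,\theta\,\mu(B_0)$ with $C$ independent of $\theta$ --- which is what makes the conclusion go through.
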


This lemma determines our choice of $\theta$.

\subsection{$\BA$ has small measure}
Our main objective in this section is to prove the following.

\begin{lemma}\label{l:BA-small}
If $\alpha$ is chosen sufficiently small, and $\ve$ is chosen sufficiently small, with respect to  $\alpha$ and $\theta$, then
\begin{align}
    \mu(\BA) \lesssim \ve^{1/2} \mu(B_0).
\end{align}
\end{lemma}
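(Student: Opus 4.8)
The strategy is to show that the set $\BA$ (the ``big angle'' stopping points) can be captured by a Carleson-type packing argument, driven by the smooth square function hypothesis of Main Lemma \ref{l:main2}. The key point, recorded in Lemma \ref{l:biganglestop}, is that at every $x\in \BA$ the best approximating line $L_{B(x,2h(x))}$ makes an angle $\geq \alpha/2$ with $L_0$, whereas by definition of $h(x)$ the ball $B(x,50R)$ (and all balls $B(x,s)$ for $s$ slightly larger than $h(x)$) are very good, so $\angle(L_{B(x,s)},L_0)\leq \alpha$ for $s\gtrsim h(x)$. Thus near each $\BA$ point there is a definite \emph{rotation} of the approximating lines occurring at scale $\approx h(x)$, and this rotation must be ``paid for'' by the square function $\int \apsi(x,r)^2\,\frac{dr}{r}$, which controls the Lipschitz constant (hence the turning) of the approximating graph via the Fourier estimate Lemma \ref{lemlips}.

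\textbf{Step 1: Localize and compare to a Lipschitz graph.} Fix $x\in\BA$ and let $s_x\approx h(x)$. Using Lemmas \ref{l:distG-A}, \ref{l:distQ-L}, and Corollary \ref{l:LBcloseGA}, the graph $G_A$ lies within $C(\theta)\ve\, h(x)$ of $L_{B(x,2h(x))}$ on the relevant scales, and within $C(\theta)\ve\, R$ of $L_0$ globally. Since the angle between $L_0$ and $L_{B(x,2h(x))}$ is $\geq \alpha/2$, the Lipschitz graph $G_A$ must turn by an amount $\gtrsim \alpha$ between a scale $\approx h(x)$ around $x$ and the top scale $R$. The plan is to quantify this: on the interval $J_x=\Pi(B(x,2h(x)))\subset L_0$ one has, comparing the slope of $A$ near $x$ (which is $\approx$ slope of $L_{B(x,2h(x))}$, up to $C(\theta)\ve$) with the slope at the coarsest scale (which is $\leq \alpha$), that the oscillation of $A'$ over the relevant range of scales is $\gtrsim \alpha$. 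Lemma \ref{l:A} already gives $|A''(y)|\lesssim_\theta \ve/\ell(R_i)$ on Whitney intervals, which is the right ``smallness'' input; the turning must therefore accumulate over many scales/intervals.

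\textbf{Step 2: Relate the turning of $A$ to the square function of $\Gamma$.} This is where Section \ref{s:fourier} enters. For the compactly supported Lipschitz graph $G_A$ with slope $\lesssim\alpha\leq\alpha_0$, Lemma \ref{lemlips} gives $\int_{G_A}\A_\psi(x)^2\,d\HH^1(x)\approx \|A'\|_{L^2}^2$. Since $G_A$ approximates $\Gamma$ up to errors $\lesssim_\theta \ve$ times scale on all good balls, one expects the coefficients $\apsi(x,r)$ for $\Gamma$ to be comparable, up to the small error terms coming from $\dist(x,G_A)$, to those of $G_A$ — this comparison has to be made carefully (bounding $|\lca_\psi^\Gamma(x,r)-\lca_\psi^{G_A}(x,r)|$ by the symmetric-difference volume estimate exactly as in the proof of Lemma \ref{lemdiff1}, now with the error governed by $\dist(x,G_A)\lesssim_\theta \ve r$). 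The upshot is a bound of the form
\begin{equation}\nonumber
\|A'\|_{L^2(\R)}^2 \lesssim_\theta \int_{\supp\mu}\int_0^{R/\ve}\apsi(x,r)^2\,\frac{dr}{r}\,d\mu(x) \;+\; (\text{small error}) \;\lesssim_\theta \ve\,\mu(B_0),
\end{equation}
using the hypothesis of Main Lemma \ref{l:main2}, together with the linear growth and the fact that $\mu$ is supported near $G_A$.

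\textbf{Step 3: Packing the $\BA$ balls.} Each $x\in\BA$ forces, via Step 1, a lower bound $\int_{J_x}|A'(y)-m_{J_x^{\text{top}}}A'|^2\,dy\gtrsim \alpha^2\, h(x)$, or more robustly, that a definite portion of $\|A'\|_{L^2}^2$ near the projection of $x$ at scale $h(x)$ is $\gtrsim \alpha^2 h(x)$. Since the balls $\{B(x,h(x))\}_{x\in\BA}$ have controlled overlap after passing to a Vitali/Besicovitch subcover (using linear growth $\mu(B(x,h(x)))\leq C_0 h(x)$ and, for the lower bound, $\Theta_\mu(B(x,h(x)))\geq\theta$ which holds on $\BA$ by definition), we sum: $\alpha^2 \sum_i h(x_i) \lesssim_\theta \|A'\|_{L^2}^2 \lesssim_\theta \ve\,\mu(B_0)$, and $\mu(\BA)\lesssim_\theta \sum_i \mu(B(x_i,h(x_i))) \lesssim_\theta \sum_i h(x_i)$, hence $\mu(\BA)\lesssim_{\theta}\alpha^{-2}\ve\,\mu(B_0)$. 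Choosing $\alpha$ fixed and then $\ve$ small (in particular $\ve\leq \alpha^4$, say) yields $\mu(\BA)\lesssim \ve^{1/2}\mu(B_0)$ as stated.

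\textbf{Main obstacle.} The delicate point is Step 2: converting the ``turning'' of the approximating lines at $\BA$ points into a genuine lower bound for the integrated square function $\int\apsi(x,r)^2\frac{dr}{r}\,d\mu(x)$ along $\Gamma$ (as opposed to along $G_A$). One must control the discrepancy between $\lca_\psi$ computed against $\Omega^+$ versus against the half-graph region of $G_A$, uniformly over scales up to $R/\ve$, and ensure the accumulated errors ($\dist(x,G_A)\lesssim_\theta\ve r$ type bounds, plus the contribution of the ``bad'' Whitney intervals where $A$ is merely controlled in $C^2$) are genuinely lower order compared to $\alpha^2$. A secondary technical issue is that the lower bound in Step 1 must survive the passage from a pointwise slope comparison to an $L^2$ statement on $A'$ with the correct normalization in the scale $h(x)$; this is handled by integrating Lemma \ref{l:A}'s second-derivative bound against the scales between $h(x)$ and $R$ and noting that to accumulate total turning $\gtrsim\alpha$ one needs $\sum_{\text{scales}}\ve \gtrsim \alpha$, which forces the square-function mass at $x$ to be $\gtrsim \alpha^2$ once one squares and uses Cauchy–Schwarz over the logarithmically many scales — this logarithmic loss is harmless since it is absorbed by the final choice $\ve\ll\alpha^4$.
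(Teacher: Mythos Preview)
Your overall architecture matches the paper's: your Steps~1+3 amount to Lemma~\ref{l:BA-a} ($\mu(\BA)\lesssim\alpha^{-2}\|A'\|_2^2$, cited from \cite[Lemma~7.35]{Tolsa-llibre}), and your Step~2 is the route through Lemma~\ref{lemlips} followed by the comparison of $\A_{G_A,\psi}$ with $\A_{\Gamma,\psi}$ that the paper carries out in Lemmas~\ref{l:BA-est2}, \ref{l:lemfac64}, and \ref{lemllumy}.

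There is, however, a genuine gap in your Step~2. You propose to bound $|\lca_{G_A,\psi}(x,r)-\lca_{\Gamma,\psi}(x,r)|$ pointwise via a symmetric-difference estimate and then pass from $\int_{G_A}\cdots\,d\HH^1$ to $\int\cdots\,d\mu$ by appealing to ``linear growth and $\mu$ supported near $G_A$''. That passage is not automatic: $\mu$ can have arbitrarily low density along large portions of $G_A$ (this is exactly what happens over $\LD$), so there is no general inequality $\int_{G_A}F\,d\HH^1\lesssim\int F\,d\mu$. The paper handles this by (i) restricting to scales $r\geq\ell(x)$ (the Whitney scale), where each $I\in\ppStop$ carries a nearby ball $B_I\in\VG$ with $\mu(B_I)\gtrsim_\theta\HH^1(I)$; (ii) invoking a transfer-of-mass device (Lemma~\ref{lemrepart}: functions $g_I\geq0$ supported on $B_I$ with $\int g_I\,d\mu=\HH^1(I)$ and $\sum_I g_I\lesssim_\theta 1$); and (iii) using the $L^2(\mu)$-boundedness of the centered maximal operator $\wt M_\mu$ to convert $\inf_{x'\in B_I}\lca_{\Gamma,\psi}(x',r)$ into an honest $L^2(\mu)$ integral. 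Your sketch supplies none of these ingredients.

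A smaller correction: the alternative mechanism in your ``Main obstacle'' paragraph --- forcing the \emph{pointwise} mass $\int_0^R\apsi(x,r)^2\,\tfrac{dr}{r}\gtrsim\alpha^2$ at each $x\in\BA$ by ``accumulating turning over $\log$-many scales'' --- does not work as stated. The coefficient $\apsi(x,r)$ is insensitive to the \emph{angle} of the approximating line through $x$; it only detects deviation of $\Omega^+\cap B(x,r)$ from a half-disc. So a rotating family of flat approximations $L_{B(x,s)}$ produces no large $\apsi(x,\cdot)$ at the center. One genuinely must pass through $\|A'\|_2^2$ and Lemma~\ref{lemlips}, as in your main Steps.
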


Recall our assumption that the line $L_0$ coincides with the horizontal axis of $\R^2$, and so $L_0^\bot$ is the vertical axis.
We denote by $\lca_{\Gamma,\psi}$ and $\A_{\Gamma,\psi}$ the respective square functions $\apsi$ and $\Apsi$ associated with
the open set $\Omega^+_\Gamma\equiv\Omega^+$, whose boundary is $\Gamma$. The analogous square functions associated
with the domain
$$\Omega_{G_A}^+=\{x\in\R^2:\Pi^\bot(x)>A(\Pi(x))\}$$
are denoted by $\lca_{G_A,\psi}$ and $\A_{{G_A},\psi}$.

\begin{lemma}\label{l:Gtopology}For every $B\in \G$, one of the components of $B\backslash U_{2\ve r(B)}(L_B)$ belongs to $\Omega^+$, while the other belongs to $\Omega^-$.
\end{lemma}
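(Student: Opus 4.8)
The goal is to show that for a good ball $B\in \G$, removing the $2\ve r(B)$-neighbourhood of the best line $L_B$ splits $B$ into two pieces, one inside $\Omega^+$ and the other inside $\Omega^-$. Since $B\in \G$ we know from \eqref{goodballsflat} that $\beta_{\infty,\Gamma}(B)\leq \ve$, i.e. $\Gamma\cap B$ is contained in the strip $U_{\ve r(B)}(L_B)$. Consequently $B\setminus U_{2\ve r(B)}(L_B)$ is disjoint from $\Gamma$, so each of its two connected components (they are genuinely two components when $\ve$ is small, since the strip of width $4\ve r(B)$ does not disconnect $B$ differently) is entirely contained in $\Omega^+$ or entirely in $\Omega^-$. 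The content of the lemma is therefore that the two components do not land on the \emph{same} side. First I would fix notation: let $B^+$ and $B^-$ denote the two components of $B\setminus U_{2\ve r(B)}(L_B)$, on either side of $L_B$, each of which is connected and contained in either $\Omega^+$ or $\Omega^-$.

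\textbf{Key steps.} The main mechanism is to produce a circumference centred at a point of $\Gamma\cap B$ on which both an arc meeting $B^+$ and an arc meeting $B^-$ must be present, and then use the smallness of $\ve$ together with the fact that $\Gamma$ is confined to the thin strip to force these arcs to lie on opposite sides. More concretely: suppose for contradiction that both $B^+$ and $B^-$ are contained in $\Omega^+$ (the case of $\Omega^-$ is symmetric, and one also needs to rule out one component being empty, but a component cannot be empty once $\ve$ is small since $B$ has a definite radius and the strip is thin). Pick a point $z\in \Gamma\cap B$, which exists because $B$ is centred on $\Gamma$ (here I use that $B=B(x,r)$ with $x\in E\subset\Gamma$, so in fact $x\in\Gamma\cap B$). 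Now consider circumferences $\partial B(z,s)$ for $s$ ranging over a sub-interval of $(0, c\,r(B))$ with $c$ a small absolute constant, chosen so that $\partial B(z,s)$ genuinely crosses both halves $B^+$ and $B^-$: this is possible because $z$ lies within $\ve r(B)$ of $L_B$ and both $B^+$ and $B^-$ are substantial regions on either side of $L_B$ at distance at least $2\ve r(B)$. For such $s$, the circumference $\partial B(z,s)$ meets $B^+\subset\Omega^+$ and $B^-\subset\Omega^+$, and the two arcs of $\partial B(z,s)$ lying in $U_{2\ve r(B)}(L_B)$ (the portion near the strip) each have length $\lesssim \ve s$. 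Hence the portion of $\partial B(z,s)$ outside the strip consists of two arcs each of length $(\pi - O(\ve))s$, and each of these is contained in one of $B^+,B^-$, hence in $\Omega^+$ by our contradiction hypothesis. But then $\HH^1(\partial B(z,s)\cap\Omega^+)\geq (2\pi - O(\ve))s$, so $|\pi s - \HH^1(\partial B(z,s)\cap\Omega^+)|\geq (\pi - O(\ve))s$, giving $\ve(z,s)\gtrsim 1$ for all such $s$. Integrating, $\int_0^{c\,r(B)}\ve(z,s)^2\,\tfrac{ds}{s}\gtrsim 1$.

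\textbf{Closing the argument.} To derive a contradiction from $\int_0^{c\,r(B)}\ve(z,s)^2\,\tfrac{ds}{s}\gtrsim 1$, I would invoke the hypothesis of Main Lemma \ref{l:main2}. Actually the cleanest route is slightly different: this lemma is used in the proof of Lemma \ref{l:BA-small}, and there the relevant hypothesis is that, for balls $B$ of sufficient density (which good balls are, since condition (a) of goodness gives $\Theta_\mu(B)\geq\theta$), the Carleson-type square function is controlled. But it is simpler still to note that a single point $z\in\Gamma\cap B$ won't directly have small square function. The correct mechanism: observe that for \emph{every} $z'\in\Gamma$ close to $z$ (within the strip, at distance $\ll \ve r(B)$ of $z$) the same computation gives $\ve(z',s)\gtrsim 1$ for $s$ in an interval of definite logarithmic length, and there are points $z'$ of $\supp\mu$ arbitrarily close to $z$ by density (condition (a): $\Theta_\mu(B)\geq\theta>0$ forces $\mu$, hence $\supp\mu$, to be present near the centre). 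Then integrating $\ve(z',r)^2\,\tfrac{dr}{r}\,d\mu(z')$ over a small ball around the centre contradicts the third hypothesis of Main Lemma \ref{l:main2} (the $\apsi$ bound) via Lemma \ref{lem1}, which controls $\int \apsi(x,r)^2\tfrac{dr}{r}$ below by $\int\ve(x,r)^2\tfrac{dr}r$... wait, it is the other direction. Here is the genuinely clean fix: one does not need the square-function hypothesis at all. Since $B\in\G$, we have $\beta_{\infty,\Gamma}(B)\leq\ve$, and the \emph{local topology} of $\Gamma$ near a strip is rigid: $\Gamma\cap B$ separates $B$ (because $\Gamma=\partial\Omega^+=\partial\Omega^-$ and $B$ meets both $\Omega^+$ and $\Omega^-$ — this last fact follows since $B$ is centred on $\Gamma=\partial\Omega^+$, so every neighbourhood of the centre meets $\Omega^+$, and similarly $\Omega^-$). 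A set separating the disc $B$ cannot be confined to a thin central strip and have \emph{both} complementary components on the same side; more precisely, if $B^+$ and $B^-$ were both in $\Omega^+$, then $\Omega^-\cap B\subset U_{2\ve r(B)}(L_B)$, a thin strip, yet $\Gamma=\partial\Omega^-$ must meet this strip's complement near the centre's side, contradiction. I would spell this out as: $\Omega^-\cap B$ is open, nonempty, and contained in $U_{2\ve r(B)}(L_B)\cap B$; its boundary within $B$ is in $\Gamma$; but $\Gamma\cap B\subset U_{\ve r(B)}(L_B)$, so $\overline{\Omega^-\cap B}\cap B\subset U_{2\ve r(B)}(L_B)$, and then $\Omega^-\cap B$ would be relatively clopen in $B\setminus U_{2\ve r(B)}(L_B)$'s complement picture... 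The hard part, and the one place care is needed, is precisely this separation/connectedness bookkeeping — turning ``$\Gamma$ is thin and $\Omega^\pm$ both meet $B$'' into ``the two fat components lie on opposite sides'' — which I would handle by a direct argument: take a path in $B^+$ from near $L_B$ outward and a path in $B^-$; if both are in $\Omega^+$, then $\Omega^+\cap B$ is connected around through the region avoiding the strip, forcing $\Omega^-\cap B$ into the strip, and then a point of $\Gamma=\partial\Omega^-$ must sit in the interior of $B$ outside the strip, contradicting $\beta_{\infty,\Gamma}(B)\le\ve$. This planar-topology step is the main obstacle; everything else is the elementary strip geometry above.
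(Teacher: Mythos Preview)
Your proposal has a genuine gap. Both approaches you attempt ultimately fail in the generality of Main Lemma \ref{l:main2}, whose only structural assumption on $\Omega^+$ is that it is an open set with $\Gamma=\partial\Omega^+$. In particular there is \emph{no} assumption that $\Gamma=\partial\Omega^-$, so your claim that ``every neighbourhood of the centre meets $\Omega^+$, and similarly $\Omega^-$'' is not justified: consider $\Omega^+=\R^2\setminus L$ for a line $L$, so that $\Gamma=L$ and $\Omega^-=\varnothing$. Here both components of $B\setminus U_{2\ve r(B)}(L_B)$ lie in $\Omega^+$, and no purely topological argument can rule this out. Your first attempt via $\ve(z,s)$ also cannot close, as you yourself noticed: in this part of the paper the only available hypothesis is the pointwise bound $\int_0^{r(B_0)/\ve}\apsi(x,r)^2\,\tfrac{dr}{r}\le\ve$ for $x\in\supp\mu$, and Lemma \ref{lem1} goes the wrong direction to convert this into control on $\ve(x,r)$.

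The paper's proof uses precisely the $\apsi$ hypothesis, and it is a one-line computation you were circling around. If both components $B^+,B^-$ lay in $\Omega^+$, then for $r\in(r(B)/2,r(B))$ the set $B(z_B,1.1r)\cap\Omega^+$ would miss only a strip of area $\lesssim\ve r^2$, so $\tfrac{1}{r^2}\int_{\Omega^+}\psi\bigl(\tfrac{z_B-y}{r}\bigr)\,dy$ would be within $O(\ve)$ of $\int_{\R^2}\psi=2c_\psi$, giving $\apsi(z_B,r)\ge c_\psi-C\ve\gtrsim 1$. Likewise if both lay in $\Omega^-$ one gets $\apsi(z_B,r)\ge c_\psi-C\ve$. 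Either way $\int_{r(B)/2}^{r(B)}\apsi(z_B,r)^2\,\tfrac{dr}{r}\gtrsim 1$, contradicting the hypothesis $\int_0^{r(B_0)/\ve}\apsi(z_B,r)^2\,\tfrac{dr}{r}\le\ve$ (note $z_B\in E=\supp\mu$ and $r(B)\le 50R\le r(B_0)/\ve$). This is the missing idea: the $\apsi$ coefficient directly measures how close the mass of $\Omega^+$ near $z_B$ is to a half-space, and is bounded away from zero unless the two components land on opposite sides.
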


\begin{proof} For $B\in \G$ we have that $\beta_{\infty,\Gamma}(B)\leq\ve$. In particular, this implies that
if $L_B$ minimizes $\beta_{\infty,\Gamma}(B)$, then
$$B\setminus U_{2\ve r(B)}(L)\subset \Omega^+\cup \Omega^-.$$
By connectivity, it is clear that each component of $B\setminus U_{2\ve r(B)}(L)$ is contained either in $\Omega^+$ or in $\Omega^-$. Also, since $z_B\in E$, we have that $\int_{r(B)/2}^{r(B)} \lca_{\psi}(z_B,r)^2\,\frac{dr}r\leq \ve$, which easily implies that one of those components must be contained in $\Omega^+$ and the other in $\Omega^-$.\end{proof}

 Applying this lemma to a ball $B'\in \G$ containing $15B_0$, interchanging the upper half plane by the lower half plane if necessary,
 we find a constant $C>0$ such that
\begin{equation}\label{eqconnec1}
15 B_0\cap \R^2_+\setminus U_{C\ve r(B_0)}(L_0)\subset \Omega^+\quad\text{ and } \quad 15 B_0\cap \R^2_-\setminus U_{C\ve r(B_0)}(L_0)\subset \Omega^-.
\end{equation}

To prove Lemma \ref{l:BA-small} we will show that if $\BA$ has noticeable measure, then $\|A'\|_2^2$ is (relatively) large, and that this in turn contradicts the smallness assumption of the smoothed square function in Main Lemma \ref{l:main2}.
 The proof is split into several lemmas.
The first one is Lemma 7.35 of \cite{Tolsa-llibre}, which is a consequence of Lemmas \ref{l:biganglestop} and \ref{l:distG-A}.

\begin{lemma} \label{l:BA-a}\cite[Lemma 7.35]{Tolsa-llibre}
    Provided that $\ve$ is small enough in terms of $\alpha$,
    \begin{align}
        \mu(\BA) \lesssim \alpha^{-2} \|A'\|_2^2.
    \end{align}
\end{lemma}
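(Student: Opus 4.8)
The plan is to cover $\BA$ by a controlled family of balls realizing the ``first very good scale'' $h$, to show that on the $L_0$-projection of each such ball the function $A$ is forced to oscillate because its graph shadows a line of definite slope, and then to sum these contributions against a bounded-overlap count.

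First I would fix $x\in\BA$ and set $B_x=B(x,2h(x))$, which belongs to $\VG$. By Lemma~\ref{l:biganglestop} we have $\angle(L_{B_x},L_0)\geq\alpha/2$, while $\angle(L_{B_x},L_0)\leq\alpha$ by property (b) in the definition of $\G$; moreover $\dist(x,L_{B_x})\leq\beta_{\infty,\Gamma}(B_x)\,r(B_x)\leq 2\ve\,h(x)$ because $x\in\Gamma\cap B_x$. Hence the chord $L_{B_x}\cap B(x,3h(x))$ has length $\gtrsim h(x)$, and its orthogonal projection $J_x\subset L_0$ is an interval with $|J_x|\gtrsim h(x)$ over which the affine function with graph $L_{B_x}$ oscillates by at least $c\,\alpha\,h(x)$. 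On the other hand, by Lemma~\ref{l:distG-A} the graph $G_A$ passes within $C(\theta)\ve\,h(x)$ of $x$, and since $A$ has slope $\lesssim\alpha$ (Lemma~\ref{l:A}), the portion of $G_A$ lying over $J_x$ stays inside $3B_x$; Lemma~\ref{l:distQ-L} then forces this portion to remain within $C(\theta)\ve\,h(x)$ of $L_{B_x}$. Combining the last two facts, the total variation of $A$ over $J_x$ is at least $c\,\alpha\,h(x)-C(\theta)\ve\,h(x)\geq\tfrac12 c\,\alpha\,h(x)$, provided $\ve$ is small enough in terms of $\alpha$ (recall $\theta$ has already been fixed). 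By Cauchy--Schwarz,
$$\int_{J_x}|A'|^2\,\geq\,\frac{1}{|J_x|}\Bigl(\int_{J_x}|A'|\Bigr)^2\,\gtrsim\,\alpha^2\,h(x).$$

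Next I would carry out a covering argument. Since $h\leq 2R$, the family $\{B_x\}_{x\in\BA}$ has uniformly bounded radii, so the Besicovitch covering theorem yields a countable subfamily $\{B_{x_i}\}_i$ with $\BA\subset\bigcup_i B_{x_i}$ and $\sum_i\chara_{B_{x_i}}\leq N_0$ for an absolute constant $N_0$. Using that $\mu$ has $1$-linear growth,
$$\mu(\BA)\,\leq\,\sum_i\mu(B_{x_i})\,\leq\,\sum_i 2h(x_i)\,\lesssim\,\alpha^{-2}\sum_i\int_{J_{x_i}}|A'|^2\,=\,\alpha^{-2}\int_{L_0}|A'|^2\Bigl(\sum_i\chara_{J_{x_i}}\Bigr).$$
The proof is then completed by the claim that the intervals $J_{x_i}\subset L_0$ have bounded overlap. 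This is where the geometry of the construction enters: each $x_i$ lies within $C(\theta)\ve\,h(x_i)\ll h(x_i)$ of the $\lesssim\alpha$-Lipschitz graph $G_A$, and $J_{x_i}$ is a bounded dilate of $\Pi(B_{x_i})$. A point $t$ lying in $J_{x_i}$ confines $x_i$ to a thin tube around $(t,A(t))$ at scale $h(x_i)$; combining this with the Besicovitch bound on the overlap of the $B_{x_i}$ themselves and with the fact that a ball $B_{x_j}$ of much larger radius whose projection also covers $t$ would have to swallow all the smaller ones (contradicting disjointness within a Besicovitch subfamily), one sees that $\sum_i\chara_{J_{x_i}}(t)$ is bounded by an absolute constant, so $\sum_i\int_{J_{x_i}}|A'|^2\lesssim\|A'\|_{L^2(\R)}^2$.

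I expect the bounded-overlap step for the projected intervals $J_{x_i}$ to be the main technical obstacle, since it is the point at which one must exploit that the stopping scale $h$ cannot fluctuate wildly among $\BA$-balls whose projections cluster, precisely because all such balls hug the same nearly flat Lipschitz graph. An alternative that makes this cleaner would be to bypass the one-parameter Besicovitch selection and instead attach to each $\BA$-point the Whitney interval of $\Whit$ containing its projection, using Lemma~\ref{l:whitney-dec}(c) for the overlap control; this requires first checking that $h(x)$ is comparable to the side length of that Whitney interval, which is exactly what the closeness estimates of Section~\ref{closeprops} provide. Everything else---the per-ball lower bound on $\int|A'|^2$ and the passage from $\sum_i h(x_i)$ to $\mu(\BA)$ via linear growth---is routine once Lemmas~\ref{l:biganglestop}, \ref{l:distG-A}, \ref{l:distQ-L} and~\ref{l:A} are available.
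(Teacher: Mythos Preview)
Your overall strategy is exactly the one the paper has in mind (it just cites \cite[Lemma~7.35]{Tolsa-llibre}): for each $x\in\BA$ use Lemma~\ref{l:biganglestop} to get a best-approximating line of slope $\approx\alpha$ at scale $2h(x)$, transfer this oscillation to $A$ via Lemmas~\ref{l:distG-A} and~\ref{l:distQ-L}, apply Cauchy--Schwarz, and sum over a covering. Your per-ball lower bound $\int_{J_x}|A'|^2\gtrsim\alpha^2 h(x)$ is correct.

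The gap is precisely where you flag it, and your sketched resolution does not work. If $t\in J_{x_i}\cap J_{x_j}$ with $h(x_j)\gg h(x_i)$, the closeness to $G_A$ only gives $|x_i-x_j|\lesssim h(x_j)$, which places $x_i$ in a fixed \emph{dilate} of $B_{x_j}$, not in $B_{x_j}$ itself; no swallowing occurs, and Besicovitch bounded overlap of balls does not pass to dilates. In fact even for a \emph{pairwise disjoint} family the intervals $J_{x_i}$ (half-length $\approx 3h(x_i)$, while $r(B_{x_i})=2h(x_i)$) can have unbounded overlap: take centers on $G_A$ with $\Pi(x_k)=3\cdot 10^{-k}$ and $h(x_k)=10^{-k}$; the balls $B(x_k,2\cdot 10^{-k})$ are disjoint, yet every $J_{x_k}=[0,6\cdot 10^{-k}]$ contains $0$.

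The fix is immediate once you shorten the interval. Use the $5r$-Vitali lemma to extract \emph{disjoint} balls $B_{x_i}=B(x_i,2h(x_i))$ with $\BA\subset\bigcup_i 5B_{x_i}$, and replace $J_{x_i}$ by $\wt J_{x_i}:=\Pi(\tfrac12 B_{x_i})=[\Pi(x_i)-h(x_i),\Pi(x_i)+h(x_i)]$. Since each $x_i$ lies within $C(\theta)\ve\,h(x_i)$ of the $C\alpha$-Lipschitz graph $G_A$, disjointness of the $B_{x_i}$ forces
\[
|\Pi(x_i)-\Pi(x_j)|\geq \sqrt{1-C(\alpha+\ve)^2}\,\big(2h(x_i)+2h(x_j)\big)>h(x_i)+h(x_j),
\]
so the $\wt J_{x_i}$ are \emph{pairwise disjoint}. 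Your oscillation computation still gives $\int_{\wt J_{x_i}}|A'|^2\gtrsim\alpha^2 h(x_i)$ (the line $L_{B_{x_i}}$ varies by $\gtrsim\alpha h(x_i)$ over $\wt J_{x_i}$, and the portion of $G_A$ above $\wt J_{x_i}$ sits inside $B_{x_i}\subset 3B_{x_i}$, so Lemma~\ref{l:distQ-L} applies). Then
\[
\mu(\BA)\leq\sum_i\mu(5B_{x_i})\leq 10\sum_i h(x_i)\lesssim\alpha^{-2}\sum_i\int_{\wt J_{x_i}}|A'|^2\leq\alpha^{-2}\|A'\|_2^2.
\]
Your Whitney alternative would need $h(x)\lesssim\ell(R_{i(x)})$, i.e.\ $h(x)\lesssim D(\Pi(x))$; only the reverse inequality $D(\Pi(x))\leq d(x)\leq h(x)$ is available from Section~\ref{closeprops}, so that route requires additional work you have not supplied.
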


Assume that $\alpha$ is small enough to apply Lemma \ref{lemlips}.  Then
\begin{equation}\label{eqga534}
\int_{G_A} \A_{G_A,\psi}(x)^2\,d\HH^1(x)\approx \|A'\|_{2}^2,
\end{equation}
where $\A_{G_A,\psi}$ stands for the square function $\A_\psi$ associated with the graph $G_A$.
From this and Lemma \ref{l:BA-a} we infer that
\begin{equation}\label{eqba10}
\mu(\BA) \lesssim  \alpha^{-2} \int_{G_A} \A_{G_A,\psi}(x)^2\,d\HH^1(x).
\end{equation}
Our next objective is to compare
$\int_{G_A} \A_{G_A,\psi}(x)^2\,d\HH^1(x)$ with $\int \Agapsi(x)^2\,d\mu(x)$.

We denote
$$\ell(x):=\frac1{50}D(x)=\frac1{50}D(\Pi(x)).$$
Lemma \ref{l:whitney-dec} ensures that if $x\in 15I$, $I\in\Whit$, then
$$\frac1{10}\ell(I)\leq\ell(x)\leq \ell(I).$$
We set
\begin{align*}
    \wt \A_{G_A, \psi}(x)^2 := \int_{\ell(x)}^{R} \lca_{G_A, \psi}(x,r)^2\, \dr.
\end{align*}

It will be convenient to denote $L^2(G_A)=L^2(\HH^1|_{G_A})$.

\begin{lemma}\label{l:BA-est2}
    We have
    \begin{align*}
        \|\A_{G_A, \psi}- \wt \A_{G_A, \psi}\|_{L^2(G_A)}^2 \lesssim C(\theta)\ve^2 R + \alpha^4 \|A'\|_{2}^2.
    \end{align*}
\end{lemma}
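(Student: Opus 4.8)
The idea is to split the square function $\A_{G_A,\psi}$ at the scale $\ell(x)$ and bound the contribution of the small scales $r\in(0,\ell(x))$. Write
$$\A_{G_A,\psi}(x)^2 - \wt\A_{G_A,\psi}(x)^2 = \int_0^{\ell(x)} \lca_{G_A,\psi}(x,r)^2\,\frac{dr}{r},$$
since both coincide on $(\ell(x),R)$ (recall $\wt\A_{G_A,\psi}(x)^2=\int_{\ell(x)}^R \lca_{G_A,\psi}(x,r)^2\,\frac{dr}{r}$, and the missing upper tail $r>R$ contributes nothing because $A$ is supported in $[-12R,12R]$ and $\psi$ has compact support, so for $x\in G_A$ and $r>R$ the quantity $\lca_{G_A,\psi}(x,r)$ vanishes or is $O(\ve R/r)$ — one should check this carefully but it is harmless). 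Using $|\A-\wt\A|^2\le |\A^2-\wt\A^2|$ (valid since both are nonnegative and $\wt\A\le\A$), it suffices to estimate
$$\int_{G_A}\int_0^{\ell(x)}\lca_{G_A,\psi}(x,r)^2\,\frac{dr}{r}\,d\HH^1(x).$$

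The key point is that for $x\in G_A$ and $r<\ell(x)$, the graph $G_A$ near $x$ at scale $r$ is very flat: by Lemma \ref{l:A}, $A$ is Lipschitz with slope $\lesssim\alpha$ and $|A''|\lesssim_\theta \ve/\ell(R_i)$ on $15R_i$, and since $r<\ell(x)\approx\ell(R_i)$, on the ball $B(x,r)$ the function $A$ deviates from its tangent line at $\Pi(x)$ by at most $\lesssim_\theta \ve r$ (Taylor with the second-derivative bound), while the tangent line itself has slope $\lesssim\alpha$. Applying Lemma \ref{lem5.4} / the computation behind Lemma \ref{lemdiff1} locally — more precisely, comparing $\lca_{G_A,\psi}(x,r)$ with the corresponding coefficient for the tangent half-plane (which is exactly $0$ by radial symmetry of $\psi$, as in the claim proved inside Lemma \ref{lemdiff1}) — one gets a pointwise bound of the shape
$$\lca_{G_A,\psi}(x,r)\lesssim \frac{1}{r^2}\,\HH^2\bigl((\Omega^+_{G_A}\,\Delta\,H_{x,r}^+)\cap B(x,3r)\bigr)\cdot r^{-0}\lesssim_\theta \ve + \frac{1}{r}\Bigl(\int_{|t-\Pi(x)|\le 3r}|A(t)-T_{x}(t)|^2\,dt\Bigr)^{1/2}r^{-1/2},$$
where $T_x$ is the tangent line to $A$ at $\Pi(x)$. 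The first (crude) term $\lesssim_\theta\ve$ already integrates: $\int_{G_A}\int_0^{\ell(x)} \ve^2\,\frac{dr}{r}$ diverges logarithmically, so one must instead keep the finer form. Using $A(t)-T_x(t)=O(\|A''\|_{\infty,B}\,r^2)$ with $\|A''\|_{\infty,B}\lesssim_\theta \ve/\ell(x)$, the second term is $\lesssim_\theta \ve r/\ell(x)$, whence
$$\int_0^{\ell(x)}\lca_{G_A,\psi}(x,r)^2\,\frac{dr}{r}\lesssim_\theta \ve^2\int_0^{\ell(x)}\frac{r^2}{\ell(x)^2}\,\frac{dr}{r}\lesssim_\theta \ve^2,$$
which is finite and scale-free. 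Integrating in $x$ over $G_A\cap 12B_0$ (outside this set $A\equiv 0$ and the integrand vanishes) and using $\HH^1(G_A\cap 12B_0)\lesssim R$ gives the term $C(\theta)\ve^2 R$.

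The $\alpha^4\|A'\|_2^2$ term arises from the part of the comparison that uses the genuine slope of the tangent lines $T_x$ (which is only $\lesssim\alpha$, not $\lesssim_\theta\ve$): replacing $\psi$ by the tensor-product bump $\rho$ and tracking the slope as in Lemma \ref{lemdiff1} produces, after squaring and integrating, exactly a contribution $\lesssim \|A'\|_\infty^4\|A'\|_2^2\lesssim \alpha^4\|A'\|_2^2$ — this is the local analogue of Lemma \ref{lemdiff1} applied on each Whitney interval and summed. The main obstacle is bookkeeping the two different flatness scales cleanly: near a given $x\in 15R_i$ one has \emph{two} sources of ``wiggle'' of $G_A$ at scale $r<\ell(R_i)$ — the overall tilt of the tangent line (size $\alpha$) and the curvature deviation (size $\ve r/\ell(R_i)$) — and one must feed the first into the $\rho$-vs-$\psi$ comparison (Lemma \ref{lemdiff1}-type estimate, costing $\alpha^4\|A'\|_2^2$) and the second into the $\rho$-coefficient itself (a one-dimensional Fourier/Plancherel estimate à la Lemma \ref{lemfourier}, restricted to scales below $\ell(R_i)$, costing $\ve^2\ell(R_i)$ per interval, which sums to $\ve^2 R$). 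Organizing this as: (i) pointwise reduction to the tangent half-plane, (ii) split into curvature part and tilt part, (iii) integrate the curvature part using $|A''|$, (iv) integrate the tilt part via the Fourier lemmas with the extra $\|f'\|_\infty^4$ gain — should yield the stated bound.
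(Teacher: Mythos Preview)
Your plan for the small-scale integral $I_1=\int_{G_A}\int_0^{\ell(x)}\lca_{G_A,\psi}(x,r)^2\,\frac{dr}{r}\,d\HH^1(x)$ is correct and in fact cleaner than the paper's. The paper detours through the tensor-product bump $\rho$: it bounds $I_1$ by $I_{1,1}+I_{1,2}$, where $I_{1,1}$ is the global $\psi$-versus-$\rho$ comparison (costing $\|A'\|_\infty^4\|A'\|_2^2\lesssim\alpha^4\|A'\|_2^2$ via Lemma~\ref{lemdiff1}) and $I_{1,2}$ uses the explicit one-dimensional formula of Lemma~\ref{lem5.4} followed by a Taylor expansion with the bound $|A''|\lesssim_\theta\ve/\ell(R_i)$ on $15R_i$. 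Your direct comparison of $\lca_{G_A,\psi}(x,r)$ to the tangent half-plane exploits the radial symmetry of $\psi$ (so the half-plane integral equals $c_\psi$ regardless of slope) and yields $\lca_{G_A,\psi}(x,r)\lesssim_\theta \ve r/\ell(x)$ immediately, hence $I_1\lesssim_\theta\ve^2 R$ after noting that the integrand vanishes for $\Pi(x)$ outside $CB_0$ (because $\Pi(B(x,1.1r))\subset 4R_i$ misses $\supp A$ there). In your scheme the $\alpha^4\|A'\|_2^2$ term simply does not arise; it is an artifact of the paper's $\rho$-detour, and your paragraph trying to explain it by reverting to that detour is confused and should be deleted.

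There is, however, a genuine gap in your treatment of the large-scale tail $I_2=\int_{G_A}\int_R^\infty\lca_{G_A,\psi}(x,r)^2\,\frac{dr}{r}\,d\HH^1(x)$. The pointwise bound $\lca_{G_A,\psi}(x,r)\lesssim_\theta\ve R/r$ (from $\|A\|_\infty\lesssim_\theta\ve R$, Lemma~\ref{l:GdistL0}) is correct, but integrating it over the infinite-length graph $G_A$ gives $+\infty$; saying it ``vanishes or is $O(\ve R/r)$'' and ``is harmless'' is not a proof. What you must actually use is that $\lca_{G_A,\psi}(x,r)=0$ whenever $B(x,1.1r)$ misses $\supp A\subset[-12R,12R]\times\R$, so for $|\Pi(x)|>14R$ the inner integral only runs over $r\gtrsim|\Pi(x)|-12R$; combining this with the $O(\ve R/r)$ bound gives a contribution $\lesssim_\theta\ve^2R^2/(|\Pi(x)|-12R)^2$ which integrates to $\lesssim_\theta\ve^2R$. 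The paper does this explicitly via the estimate $\beta_{\infty,G_A}(B(x,2r))\lesssim\ve R/\max(r,\dist(x,B_0))$, and it is a necessary part of the argument that you cannot leave as a parenthetical.
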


\begin{proof}

To prove the lemma we need to bound the integrals
\begin{align}\label{defi1i2}
  I_1:= \int_{G_A} \int_0^{\ell(x)} \lca_{G_A, \psi}(x,r)^2 \, \dr \, d \hi(x), \quad I_2:=
   \int_{G_A} \int_{R}^{\infty} \lca_{G_A, \psi}(x,r)^2 \, \dr \, d \hi(x).
\end{align}
To do so, we consider the square function $\lca_{G_A, \rho}$, introduced in Section \ref{s:fourier}. We write
\begin{align*}
    I_1 &\lesssim  \int_{G_A} \int_0^{ \ell(x)} \av{\lca_{G_A, \psi}(x,r) - \lca_{G_A, \rho}(x,r) }^2 \, \dr\, d \hi(x) \\
    &\quad
    + \int_{G_A}  \int_0^{ \ell(x)} \lca_{G_A, \rho} (x,r)^2 \, \dr \, d \hi(x) =: I_{1,1} + I_{1,2}.
    \end{align*}
    The first term $I_{1,1}$ can be estimated as in the proof of Lemma \ref{lemdiff1}, to obtain
    \begin{align*}
    I_{1,1} \lesssim \|A'\|^4_{\infty}\|A'\|_{L^2(\R)}^2\lesssim \alpha^4 \|A'\|_{2}^2.
\end{align*}
Let us look at the term $I_{1,2}$. First, recall from Lemma \ref{lem5.4} that
\begin{align} \label{e:BA-d}
   I_{1,2}
    & \approx \int_{\Pi(G_A)} \int_0^{ D(p)/50} \lca_{G_A, \rho}((p, A(p)), r)^2\, \dr\, dp \nonumber\\
    & = \int_{\Pi(G_A)}  \int_0^{ D(p)/50} \av{ \int_{q \in \R} \vp_r(q-p) \ps{\frac{A(q) -A(p)}{r}} \, dq}^2 \, \dr \, dp.
\end{align}
where $\vphi_r(\cdot) = \frac1{r} \vphi\big(\frac\cdot r\big)$.
We write the last integral as
\begin{equation}\label{eqri34}
\sum_{i\in I_\Whit}
\int_{R_i}  \int_0^{ D(p)/50} \av{ \int_{q \in \R} \vp_r(q-p) \ps{\frac{A(q) -A(p)}{r}} \, dq}^2 \, \dr \, dp.
\end{equation}
Observe that for $p\in R_i\in\Whit$, $0<r\leq D(p)/50\leq \ell(R_i)$, and $q\in \supp\vphi_r(\cdot-p)
\subset
\bar B(p,1.1r)$, we have $q\in 4R_i$. Since $\supp A\subset12B_0$ (Lemma \ref{l:A}), we can restrict the sum in \rf{eqri34} to the intervals
$R_i$ such that $4R_i\cap 12B_0\neq \varnothing$. Appealing to Lemma \ref{lem73}, we infer that these cubes are contained in $CB_0$, for some absolute constant $C>1$.

To estimate each of the summands in \rf{eqri34}, let $p\in R_i$ and $q\in \supp\vphi_r(\cdot-p)$.
Taylor's theorem gives, with $\xi_{q,p}$ on the line segment between $q$ and $p$,
\begin{align*}
    A(q) = A(p) + A'(p)(q-p) + \frac{A''(\xi_{q, p})}{2} (q-p)^2.
\end{align*}
Thus we can write the interior most integral in the right hand side of \eqref{e:BA-d} as
$$
     \frac1r\int \vp_r(q-p) A'(p)(p-q) dq + \frac{1}{2r} \int \vp_r(q-p) A''(\xi_{q,p
    }) |p-q|^2 \, d q.
$$
By symmetry we immediately see that the first integral vanishes. Concerning the second integral,
 for $p\in R_i\in\Whit$, $0<r\leq D(p)/50\leq \ell(R_i)$, and $q\in
B(p,1.1r)$ we have
$\xi_{q,p}\in 15R_i$, and then
from Lemma \ref{l:A} we see that
\begin{align*}
    \av{ \frac{1}{2r} \int \vp_r(p-q) A''(\xi_{p,q}) (q-p)^2 \, dq  } \lesssim \frac{1}{r} \,\sup_{\xi \in 15R_i}|A''(\xi)|\,r^2
    \lesssim_\theta
    \frac{r \, \ve}{\ell(R_i)}.
\end{align*}
Using again that $D(p)/50\leq \ell(R_i)$ we deduce that
\begin{align*}
I_{1,2} & \lesssim_\theta
\sum_{R_i\subset C B_0}
\int_{R_i}  \int_0^{ D(p)/50} \av{
\frac{r \, \ve}{\ell(R_i)}
}^2 \, \dr \, dp
\lesssim
\sum_{R_i\subset C B_0} \ve^2\,\ell(R_i)\lesssim \ve^2\,R.
\end{align*}

\vv
Next we have to estimate the integral $I_2$ in \rf{defi1i2}. Given $x\in G_A$ and $r\geq R$,
let $L_{x,r}$ be a line passing through $x$ and parallel to the line minimizing $\beta_{\infty,G_A}(B(x,1.1r))$, and let $H_{x,r}$ be the half plane whose boundary is $L_{x,r}$ lying above $L_{x,r}$.
From the definition of $\lca_{G_A,\psi}(x,r)$, it follows that
\begin{equation}\label{eqas432}
\big|\lca_{G_A,\psi}(x,r)\big| \lesssim \frac1{r^2}|(\Omega_{G_A}^+\Delta H_{x,r})\cap B(x,1.1r)|\lesssim
\beta_{\infty,G_A}(B(x,2r)).
\end{equation}
Taking into account that $\supp A\subset 12B_0$ and that
$\dist(x,L_0)\lesssim_{\theta} \ve R$ for every $x\in G_A$, by Lemma \ref{l:GdistL0}, it
follows easily that
$$\beta_{\infty,G_A}(B(x,2r))
\lesssim \frac{\ve\,R}{\max\big(r,\dist(x,B_0)\big)}\quad\mbox{ for all $x\in G_A$ and $r\geq R$.}
$$
So we deduce that
\begin{align*}
I_2 &\lesssim_{\theta} \int_{G_A} \int_{R}^{\infty} \left(\frac{\ve\,R}{\max\big(r,\dist(x,B_0)\big)}\right)^2 \, \dr \, d \hi(x)\\
& \lesssim_{\theta}
\int_{G_A\cap 2B_0} \int_{R}^{\infty} \big(\ve\,R\big)^2 \, \frac{dr}{r^3} \, d \hi
+\int_{G_A\setminus 2B_0} \int_{R}^{\infty} \frac{\big(\ve\,R\big)^2}{\dist(x,B_0)^{3/2}\,r^{1/2}} \, \dr \, d \hi(x)\\
&\lesssim_{\theta}
\ve^2\,R^2 \int_{R}^{\infty}  \,\frac{dr}{r^3} +
\ve^2\,R^2 \int_{G_A\setminus 2B_0} \frac1{\dist(x,B_0)^{3/2}}  \, d \hi(x)
\int_{R}^{\infty} \frac{dr}{r^{3/2} }\\
& \lesssim_{\theta} \ve^2\,R,
\end{align*}
Gathering the estimates obtained for $I_{1,1}$, $I_{1,2}$, and $I_2$, the lemma follows.
\end{proof}

\vv

Observe that, from \rf{eqga534} and the previous lemma, we have that
\begin{equation}\begin{split}\nonumber\|A'\|_{2}^2&\lesssim \|\wt \A_{G_A, \psi}\|_{L^2(G_A)}^2+   \|\A_{G_A, \psi}- \wt \A_{G_A, \psi}\|_{L^2(G_A)}^2\\&\lesssim \|\wt \A_{G_A, \psi}\|_{L^2(G_A)}^2 +
 C(\theta)\ve^2 R + \alpha^4 \|A'\|_{2}^2.\end{split}\end{equation}
Hence, for $\alpha$ small enough, this gives
$$\|A'\|_{2}^2\lesssim \|\wt \A_{G_A, \psi}\|_{L^2(G_A)}^2 +
 C(\theta)\ve^2 R,$$
and combining this inequality with \rf{eqba10} we obtain
\begin{align} \label{e:BA-100}
    \mu(\BA) \lesssim    \alpha^{-2} \|\wt \A_{G_A, \psi} \|_{L^2(G_A)}^2 + C(\theta)\ve^2 \alpha^{-2}\,R.
\end{align}

To estimate $\|\wt \A_{G_A, \psi} \|_{L^2(G_A)}^2$, we split
\begin{align} \label{e:alpha-G-Gamma}
\|\wt \A_{G_A, \psi} \|_{L^2(G_A)}^2 &= \int_{G_A}\int_{\ell(x)}^{R} \lca_{G_A, \psi}(x,r)^2 \, \dr  \, d \hi(x) \nonumber\\
 &=
  \int_{G_A}\int_{\ell(x)}^{\min(\ve^{-1}\ell(x),R)} \cdots
  +  \int_{G_A}\int_{\min(\ve^{-1}\ell(x),R)}^{R} \cdots.
 \end{align}
Next we estimate each of these integrals separately.

\begin{lemma}\label{l:lemfac64}
We have
\begin{align} \label{e:alpha-G-Gamma2}
\int_{G_A}\int_{\ell(x)}^{\min(\ve^{-1}\ell(x),R)} \lca_{G_A, \psi}(x,r)^2 \, \dr \, d \hi(x) \lesssim_\theta \ve^2 |\log\ve| R.
\end{align}
\end{lemma}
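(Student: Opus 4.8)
The estimate \eqref{e:alpha-G-Gamma2} measures the smooth square function $\lca_{G_A,\psi}$ on $G_A$ at scales $r$ in the range $\ell(x)\leq r\leq \ve^{-1}\ell(x)$, i.e. only a bounded (logarithmic) number of octaves above the "Whitney scale" $\ell(x)$. The plan is to show that, for such $r$ and such $x\in G_A$, one has the pointwise bound $\lca_{G_A,\psi}(x,r)\lesssim_\theta \ve$, and then integrate. Indeed, if this holds then
\begin{align*}
\int_{G_A}\int_{\ell(x)}^{\min(\ve^{-1}\ell(x),R)} \lca_{G_A,\psi}(x,r)^2\,\frac{dr}{r}\,d\HH^1(x)
&\lesssim_\theta \ve^2 \int_{G_A}\log\frac{\min(\ve^{-1}\ell(x),R)}{\ell(x)}\,d\HH^1(x)\\
&\lesssim \ve^2|\log\ve|\,\HH^1(G_A\cap CB_0)\lesssim \ve^2|\log\ve|\,R,
\end{align*}
using that $A$ is supported in $[-12R,12R]$ with slope $\lesssim\alpha\lesssim 1$ (Lemma \ref{l:A}), so $\HH^1(G_A)\lesssim R$.

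For the pointwise bound, fix $x\in G_A$ and $r$ with $\ell(x)\leq r\leq \ve^{-1}\ell(x)$. Recall $\ell(x)=\tfrac1{50}D(\Pi(x))$, and let $I=R_i\in\Whit$ be the Whitney interval with $\Pi(x)\in R_i$; then $\tfrac1{10}\ell(R_i)\leq \ell(x)\leq\ell(R_i)$ by Lemma \ref{l:whitney-dec}, and there is a ball $B_i\in\VG$ with $\ell(R_i)\lesssim r(B_i)\lesssim\ell(R_i)$ and $\dist(R_i,\Pi(B_i))\lesssim\ell(R_i)$ by Lemma \ref{l:whitney-prop1}. The key geometric input is that $\Omega_{G_A}^+$ is extremely flat near $x$ at scale $r$: by definition $\lca_{G_A,\psi}(x,r)=|c_\psi-\tfrac1{r^2}\int_{\Omega_{G_A}^+}\psi(\tfrac{y-x}{r})dy|$, and choosing the half-plane $H_{x,r}$ bounded by the line through $x$ with slope $A'(\Pi(x))$ (or more precisely a suitably averaged slope), the radial symmetry of $\psi$ gives $c_\psi=\psi_r*\chara_{H_{x,r}}(x)$, so that
\[
\lca_{G_A,\psi}(x,r)\leq \int_{(\Omega_{G_A}^+\Delta H_{x,r})\cap B(x,1.1r)}\psi_r(y-x)\,dy\lesssim \frac1{r^2}\,\HH^2\bigl((\Omega_{G_A}^+\Delta H_{x,r})\cap B(x,1.1r)\bigr).
\]
Now $\Omega_{G_A}^+\Delta H_{x,r}$ near $x$ is the region between $G_A$ and its tangent-type line, whose width is controlled by the second-order behaviour of $A$: Taylor expanding as in Lemma \ref{l:BA-est2}, for $q$ with $|q-\Pi(x)|\lesssim r\lesssim\ve^{-1}\ell(R_i)$ one has $|A(q)-A(\Pi(x))-A'(\Pi(x))(q-\Pi(x))|\lesssim \sup_{15R_i}|A''|\cdot r^2 \lesssim_\theta \tfrac{\ve}{\ell(R_i)}\,r^2$ — using Lemma \ref{l:A} — hence the symmetric difference has area $\lesssim_\theta r\cdot\tfrac{\ve}{\ell(R_i)}r^2 = \tfrac{\ve r^3}{\ell(R_i)}\lesssim \ve r^2$ (since $r\lesssim\ell(R_i)$... careful: here $r$ can be as large as $\ve^{-1}\ell(R_i)$, so $\tfrac{\ve r^3}{\ell(R_i)}\lesssim r^2$ only needs $r\lesssim \ve^{-1}\ell(R_i)$, which is exactly our range). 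This gives $\lca_{G_A,\psi}(x,r)\lesssim_\theta \ve$ uniformly in the stated range, and the lemma follows. One subtlety: when $r$ is large compared to $\ell(R_i)$ the set $B(x,1.1r)$ may exit the support $[-12R,12R]$ of $A$, but there $A\equiv 0$ and the flatness is only improved, so the same bound holds (and this is also where one should keep in mind $\dist(x,L_0)\lesssim_\theta\ve R$ from Lemma \ref{l:GdistL0}).

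The main obstacle I anticipate is making the "width of the symmetric difference" estimate fully rigorous across the whole dyadic range of scales simultaneously — in particular choosing the comparison half-plane $H_{x,r}$ consistently (a single slope, e.g. $A'(\Pi(x))$, versus an $r$-dependent averaged slope $\varphi_r*A'(\Pi(x))$ as used in Section \ref{s:fourier}), and controlling the error from $\Pi(x)$ not lying exactly on $G_A$ at scale $r$ when $r$ straddles several Whitney intervals, so that several bounds of Lemma \ref{l:A} (via property (b),(c) of Lemma \ref{l:whitney-dec}) must be chained. Everything else is a routine integration in $r$ and the trivial length bound $\HH^1(G_A)\lesssim R$.
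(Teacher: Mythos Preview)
Your overall strategy --- prove a pointwise bound $\lca_{G_A,\psi}(x,r)\lesssim_\theta \ve$ for $\ell(x)\leq r\leq \ve^{-1}\ell(x)$, then integrate to pick up the factor $|\log\ve|$ in $r$ and the factor $R$ in $x$ --- is exactly the paper's strategy. The gap is in your derivation of the pointwise bound.

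Your Taylor-expansion argument uses $|A''|\lesssim_\theta \ve/\ell(R_i)$ on $15R_i$. But for $r$ near the top of the range, $r\approx\ve^{-1}\ell(R_i)$, the ball $B(x,1.1r)$ projects to an interval of length $\sim\ve^{-1}\ell(R_i)$, far exceeding $15R_i$; the Taylor remainder involves $A''$ at points that need not lie in $15R_i$, and chaining the Whitney bounds across many intervals does not obviously keep the constant uniform. More seriously, even granting the $A''$ bound globally, your own parenthetical ``careful'' reveals the problem: from area $\lesssim_\theta \ve r^3/\ell(R_i)$ and $r\leq\ve^{-1}\ell(R_i)$ you only get area $\lesssim r^2$, hence $\lca_{G_A,\psi}(x,r)\lesssim 1$ --- which is trivial. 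To get $\lca\lesssim\ve$ you would need area $\lesssim\ve r^2$, i.e.\ $r\lesssim\ell(R_i)$, which fails across most of the range. If you integrate $\bigl(\ve r/\ell(R_i)\bigr)^2\,dr/r$ from $\ell(x)$ to $\ve^{-1}\ell(x)$ you get a quantity of order $1$, not $\ve^2|\log\ve|$, and the lemma would collapse.

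The paper avoids this by not using second-order information at all. Instead it invokes Lemma~\ref{l:distQ-L}: for every $r>\ell(x)$ there is a nearby ball $B\in\VG$ of radius comparable to $r$, and $G_A\cap 3B$ lies within $C(\theta)\ve\, r(B)$ of $L_B$. This gives $\beta_{\infty,G_A}(B(x,r))\lesssim_\theta\ve$ directly, for \emph{all} $r>\ell(x)$, and then the same half-plane comparison as in~\rf{eqas432} yields $\lca_{G_A,\psi}(x,r)\lesssim_\theta\ve$ uniformly, with no deterioration at large $r$. That is the missing ingredient: replace your Taylor/$A''$ step by the $\beta$-number control coming from Lemma~\ref{l:distQ-L}.
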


\begin{proof}
From Lemma \ref{l:distQ-L}, it easily follows that $\beta_{\infty,G_A}(B(x,r))\lesssim_\theta \ve$ for all
$x\in G_A$ and $r>\ell(x)$. Then, arguing as in  \rf{eqas432}, we deduce that
$$\lca_{G_A, \psi}(x,r)\lesssim_\theta \ve.$$
Thus, for every $x\in G_A$,
\begin{equation}\label{pontwiseellx}\int_{\ell(x)}^{\ve^{-1}\ell(x)} \lca_{G_A, \psi}(x,r)^2 \, \dr \lesssim_\theta \ve^2 \int_{\ell(x)}^{\ve^{-1}\ell(x)}  \dr \approx \ve^2 |\log\ve|.\end{equation}
On the other hand, if $|x|>CR$ for $C>1$ big enough, then $\ell(x)>R$ and thus
$$\int_{\ell(x)}^{\min(\ve^{-1}\ell(x),R)} \lca_{G_A, \psi}(x,r)^2 \, \dr =0.$$
Consequently, integrating the pointwise estimate (\ref{pontwiseellx}) over $x\in G_A$ with $|x|\leq CR$ yields the lemma.
\end{proof}

To estimate the second integral on the right hand side of \rf{e:alpha-G-Gamma} we need to introduce some additional notation.
We denote by $\Pi_{G_A}$ the projection $\R^2 \to G_A$ orthogonal to $L_0$.
We let $\DD_{G_A}$ be the family of ``dyadic cubes" on $G_A$ of the form
$$\DD_{G_A}= \big\{\Pi_{G_A}(I):I\in\DD_{L_0}\big\}.$$
We define the length of $I\in \DD_{G_A}$ (and write $\ell(I)$) to be equal to $\ell(R)$, where $R\in \DD_{L_0}$ satisfies $I=\Pi_{G_A}(R)$.  Then $\ell(I)$ is comparable to $\mathcal{H}^1(I)$.

Then we set
$$\pStop = \big\{\Pi_{G_A}(I):I\in\Whit\big\}.$$
%

Denote by $\ppStop = \{\Pi_{G_A}(R_i): i\in I_0\}$, so $I\in \ppStop$ if $I = \Pi_{G_A}(R_i)$ for some $R_i$ that intersects $B(0, 10R)$.

\begin{lemma}\label{l:15B0} If $I\in \ppStop$, $x\in I$, and $0<r\leq R$, then $B(x, 2r)\subset 15B_0$.
\end{lemma}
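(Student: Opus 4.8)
The claim is a purely geometric/metric statement about the Whitney cubes indexing $\ppStop$, so the plan is to unwind the definitions and use the basic Whitney properties already established. Recall $I \in \ppStop$ means $I = \Pi_{G_A}(R_i)$ for some $i \in I_0$, i.e. $R_i \cap B(0,10R) \neq \varnothing$. By Lemma \ref{lem73}, for $i \in I_0$ we have $\ell(R_i) \leq R$ and $3R_i \subset (-12R, 12R) = L_0 \cap 12B_0$. The first step is to control the vertical displacement: by Lemma \ref{l:A}, $A$ is supported in $[-12R,12R]$ with slope $\lesssim \alpha$, and since $R_i \cap B(0,10R) \neq \varnothing$ while $\ell(R_i) \leq R$, every point of $R_i$ lies within distance $\lesssim R$ of the origin along $L_0$; hence $|A(p)| \lesssim \alpha R$ for $p \in 3R_i$. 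Consequently every point $x = (p, A(p)) \in I = \Pi_{G_A}(R_i)$ satisfies $|\Pi(x)| < 12R$ and $|\Pi^\perp(x)| \lesssim \alpha R$, so $x \in B(0, 13R)$, say, once $\alpha$ is small enough.

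Next I would bound the radius $r$ against $\ell(R_i)$. The hypothesis is $0 < r \leq R$; I need $2r$ plus the diameter of $I$ to stay inside $15B_0$. Since $\diam(I) \lesssim \ell(R_i) \leq R$ and $r \leq R$, for any $x \in I$ the ball $B(x, 2r)$ is contained in $B(0, |x| + 2r) \subset B(0, 13R + 2R) = B(0,15R) = 15B_0$. Here I am using that the implied constants in ``$|\Pi^\perp(x)| \lesssim \alpha R$'' are absolute and that $\alpha \ll 1$, so the vertical contribution is negligible and the horizontal bound $|\Pi(x)| < 12R$ from Lemma \ref{lem73} dominates. Thus $|x| \leq 12R + C\alpha R \leq 13R$ for $\alpha$ small, and then $B(x,2r) \subset B(0, 15R)$. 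If one wants to be careful about the exact radius $15R$, one can instead start from $3R_i \subset 12B_0$ which already gives $|\Pi(x)| < 12R$, combine with $|\Pi^\perp(x)| \le \alpha \cdot 12 R$, get $|x| < 12R\sqrt{1+\alpha^2} \le 13R$, and finish as above.

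There is essentially no obstacle here: the statement is a bookkeeping consequence of Lemmas \ref{lem73} and \ref{l:A} together with the standing assumption $\alpha \ll 1$. The only point requiring a small amount of care is tracking the absolute constants so that the final ball is exactly $15B_0$ and not merely $CB_0$; this is handled by the slack between $12R$ (from $3R_i \subset 12B_0$), the extra $2r \le 2R$, and the room left up to $15R$, using $\alpha$ small to absorb the vertical term. I would therefore write the proof as: fix $I = \Pi_{G_A}(R_i) \in \ppStop$ with $i \in I_0$, $x \in I$, $0 < r \le R$; by Lemma \ref{lem73}, $\Pi(x) \in 3R_i \subset (-12R,12R)$ and $\ell(R_i) \le R$; by Lemma \ref{l:A}, $|\Pi^\perp(x)| = |A(\Pi(x))| \lesssim \alpha R \le R$ for $\alpha$ small; hence $|x| \le 13R$ and $B(x,2r) \subset B(x, 2R) \subset B(0, 15R) = 15B_0$.
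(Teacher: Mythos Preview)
Your proof is correct and follows essentially the same approach as the paper: bound the horizontal coordinate via Lemma~\ref{lem73} (giving $|\Pi(x)|<12R$), bound the vertical coordinate by a quantity much smaller than $R$, and then add $2r\le 2R$ to land inside $15B_0$. The only cosmetic difference is that for the vertical bound the paper invokes Lemma~\ref{l:GdistL0} (which gives $\dist(x,L_0)\lesssim_\theta \ve R$), while you use Lemma~\ref{l:A} directly to get $|A(\Pi(x))|\lesssim \alpha R$; both give a vertical displacement $\ll R$ and lead to the same conclusion.
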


\begin{proof}  By Lemma \ref{l:GdistL0}, $\dist(x, L_0)\ll R$ for $x\in G_A$, so the claimed statement follows from Lemma \ref{lem73}, which states that $I=\Pi_{G_A}(R)$ for an interval $R$ satisfying $3R\subset [-12R, 12R]$.
\end{proof}


We now claim that there is an absolute constant $C_1$ such that for each $I \in \pStop$ that intersects $15B_0$, there exists a ball $B_I\in \VG$ (centered at $z_I\in E$) such that
\begin{align}\label{cond22*}
    & C_1^{-1} r(B_I) \leq \ell(I)  \leq C_1 r(B_I),\\
 \label{cond23*}   & 
  I\subset C_1B_I,\text{ and }\Pi_{G_A}(z_{I})\in C_1B_I.
\end{align}
Indeed, if $I\in \ppStop$, then $I=\Pi(R_i)$ for $i\in I_0$, so by Lemma \ref{l:whitney-prop1}, there exists a ball $B_I\in \VG$ such that  $r(B_I) \approx \ell(I)$ and $\dist(I, \Pi_{G_A}(B_I)) =\dist(\Pi_{G_A}(R_i), \Pi_{G_A}(\Pi(B_I)))\lesssim \ell(I)$.  But then, by Lemma
\ref{l:distG-A}, $$\dist(B_I, G_A)\leq \dist(z_I,G_A)\lesssim_{\theta} \ve d(z_{B_I})\lesssim_{\theta}\ve\,r(B_I),$$
where $z_{I}$ is the center of $B_I$.

Provided that $\ve$ is small enough in terms of $\theta$, we therefore have
$$\dist(I, B_I) \lesssim \dist(I, \Pi_{G_A}(B_I)) + \dist(B_I,\Pi_{G_A}(B_I))+\diam(\Pi_{G_A}(B_I))\lesssim r(B_I)\approx \ell(I).$$
We now can readily deduce that (\ref{cond23*}) holds for $C_1$ large enough (recall that $G_A$ is a Lipschitz graph with Lipschitz constant $\lesssim \alpha\ll 1$).

On the other hand, if $I\in \pStop\backslash \ppStop$ and $I\subset 15B_0$, then $\ell(I)\approx R$, and we can set for $B_I$ a ball centered on $E$ of radius $2R$, say.



We need the following auxiliary result, which appears as Lemma 7.41 in \cite{Tolsa-llibre} in slightly different notation.

\begin{lemma} \label{lemrepart}\cite[Lemma 7.41]{Tolsa-llibre}
For each $I\in \ppStop $ there exists some function $g_I\in L^\infty(\mu)$, $g_I\geq0$ supported
on $B_I$ such that
\begin{equation} \label{co1}
\int g_I\,d\mu = \HH^1(I),
\end{equation}
and
\begin{equation} \label{co2}
\sum_{I\in \ppStop} g_I \lesssim c(\theta).
\end{equation}
\end{lemma}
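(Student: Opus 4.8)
The plan is to construct the functions $g_I$ directly using the density of $\mu$ on the balls $B_I$, and then establish the overlap bound \rf{co2} from the bounded overlap of the Whitney cubes together with the volume estimate on $B_I$. First I would recall that for each $I\in\ppStop$ the ball $B_I$ is very good, so $\mu(B_I)\geq\theta\,r(B_I)$, and moreover $r(B_I)\approx\ell(I)\approx\HH^1(I)$; thus there is an absolute constant so that $\HH^1(I)\leq C\theta^{-1}\mu(B_I)$. This makes the normalization \rf{co1} achievable: I would set $g_I = \lambda_I\,\chara_{B_I}$ (possibly after first replacing $\chara_{B_I}$ by a slightly truncated version to avoid boundary issues, though this is not needed since $\mu$ is just a measure), where the constant $\lambda_I := \HH^1(I)/\mu(B_I)$ satisfies $0\leq\lambda_I\lesssim\theta^{-1}$ by the preceding inequality. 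This immediately gives $\int g_I\,d\mu=\HH^1(I)$.

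For \rf{co2}, the key point is to bound, for a fixed $x\in\R^2$, the sum $\sum_{I\in\ppStop}\lambda_I\chara_{B_I}(x)\lesssim\theta^{-1}\#\{I\in\ppStop: x\in B_I\}$, so it suffices to show that each point lies in at most an absolute constant number of the balls $B_I$, $I\in\ppStop$. This follows from the standard bounded-overlap structure of the Whitney decomposition: by Lemma \ref{l:whitney-prop1} and \rf{cond22*}, each $B_I$ satisfies $r(B_I)\approx\ell(I)=\ell(R_i)$ and sits within a bounded multiple of $\ell(R_i)$ of $R_i$, i.e. $B_I\subset CB_{R_i}$ where $B_{R_i}$ is the ball centered on $R_i$ of radius $C\ell(R_i)$. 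If $x\in B_I\cap B_{I'}$ for $I=\Pi_{G_A}(R_i)$, $I'=\Pi_{G_A}(R_j)$, then $CB_{R_i}\cap CB_{R_j}\neq\varnothing$, and since (by Lemma \ref{l:whitney-dec}(a)) the side-lengths $\ell(R_i)$, $\ell(R_j)$ are both comparable to $D(x)/50$ up to absolute constants, we get $\ell(R_i)\approx\ell(R_j)$; then a volume-counting argument (all these cubes $R_i$ have disjoint interiors in $L_0$, comparable length, and lie within a bounded multiple of $\ell(R_i)$ of $\Pi(x)$) shows that there are at most $N'$ such cubes, $N'$ an absolute constant. Hence $\sum_{I\in\ppStop}g_I(x)\lesssim\theta^{-1}N'=:c(\theta)$, which is \rf{co2}.

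The main obstacle, and the only place where some care is required, is making precise the passage from ``$x\in B_I$'' to ``the associated Whitney cube $R_i$ is forced to have length $\approx D(x)$ and to be located near $\Pi(x)$'', since $B_I$ is merely a ball satisfying \rf{cond22*}–\rf{cond23*} rather than one canonically attached to $R_i$. I would handle this by noting that $x\in B_I\subset C_1B_I$ and $I\subset C_1B_I$ force $\dist(x,I)\lesssim r(B_I)\approx\ell(I)$; combined with Lemma \ref{l:dist-B0} or directly with the $1$-Lipschitz property of $D$ and Lemma \ref{l:whitney-dec}(a) applied to a point of $15R_i$ near $x$, this pins down $D$ at $\Pi(x)$ up to a factor depending only on $C_1$, hence pins down $\ell(R_i)$. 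Everything else is a routine packing count. (Alternatively, one can simply invoke that this is \cite[Lemma 7.41]{Tolsa-llibre} and that the construction there carries over verbatim with $\theta$ playing the role of the lower density bound; but the self-contained argument above is short enough to include.)
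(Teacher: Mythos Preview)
The paper does not prove this lemma; it simply cites Lemma 7.41 of \cite{Tolsa-llibre}. Your proposal is therefore being compared to the argument in that reference.

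Your construction $g_I=(\HH^1(I)/\mu(B_I))\,\chara_{B_I}$ is natural, and the normalization \rf{co1} together with $\lambda_I\lesssim\theta^{-1}$ are correct. The gap is in the bounded-overlap claim for $\{B_I\}_{I\in\ppStop}$. Your justification is that $x\in B_I$ forces $\ell(R_i)\approx D(\Pi(x))$ via the $1$-Lipschitz property of $D$ and Lemma \ref{l:whitney-dec}(a); but Lemma \ref{l:whitney-dec}(a) only applies once you know $\Pi(x)\in 15R_i$, i.e.\ $\dist(\Pi(x),R_i)\leq 7\ell(R_i)$. From $x\in B_I$ and $I\subset C_1B_I$ you only get $\dist(\Pi(x),R_i)\leq C_1(C_1+1)\ell(R_i)$, and the constant $C_1$ in \rf{cond22*}–\rf{cond23*} is inherited from Lemma \ref{l:whitney-prop1}, where it is of order $40$–$50$ (it arises from the bound $D(\hat R_i)\leq 40\ell(R_i)$). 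With $C_1(C_1+1)\gg 7$, the Lipschitz estimate $D(\Pi(x))\geq D(q)-|\Pi(x)-q|$ for $q\in R_i$ is useless: the subtracted term overwhelms the Whitney lower bound $D(q)\geq 20\ell(R_i)$, and you obtain no upper bound on $\ell(R_i)$ in terms of $D(\Pi(x))$. One can in fact arrange configurations in which a single point lies in balls $B_I$ across arbitrarily many dyadic scales, so the overlap need not be bounded.

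The argument in \cite{Tolsa-llibre} does not use pointwise bounded overlap. Instead one proves the packing estimate
\[
\sum_{I\in S}\HH^1(I)\;\lesssim_\theta\;\mu\Big(\bigcup_{I\in S}B_I\Big)\qquad\text{for every finite }S\subset\ppStop,
\]
by extracting a Vitali-type disjoint subfamily $\{B_{I_j}\}$ from $\{B_I\}_{I\in S}$, noting that each $B_I$ with $I\in S$ lies in some $3B_{I_j}$, and using disjointness of the Whitney intervals to bound $\sum_{I:\,B_I\subset 3B_{I_j}}\HH^1(I)\lesssim r(B_{I_j})\lesssim_\theta\mu(B_{I_j})$. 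This packing inequality, combined with a standard redistribution (Hall/max-flow type) argument, yields the functions $g_I$ satisfying \rf{co1} and \rf{co2}. So the overall idea—spread the mass $\HH^1(I)$ over $B_I$—is the same, but the mechanism behind \rf{co2} is a Carleson-type packing bound rather than a pointwise overlap count.
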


We will also need the next geometric lemma.

\begin{lemma}\label{lemgeom2} If $\ve$ is small enough, then there exist constants $C_2$ and $C(\theta)>0$ such that
$$(\Omega^+_{G_A}\Delta
\Omega^+_{\Gamma})\cap 15 B_0 \subset \bigcup_{J\in\pStop:J\cap 15 B_0\neq \varnothing}
 B(x_J,C_2\ell(J)) \cap U_{C(\theta)\ve\ell(J)}(G_A).$$
Moreover, for each $J\in\pStop$ such that $J\cap 15 B_0\neq \varnothing$,
$$\big|(\Omega^+_{G_A}\Delta
\Omega^+_{\Gamma})\cap B(x_J,C_2\ell(J))\cap 15B_0\big|\lesssim_{\theta} \ve\,\ell(J)^2.$$
\end{lemma}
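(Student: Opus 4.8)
The plan is to understand the symmetric difference $\Omega^+_{G_A}\Delta\Omega^+_\Gamma$ locally, at the scale dictated by the Whitney structure. First I would observe that a point $x\in 15B_0$ lying in this symmetric difference must be close to \emph{both} $\Gamma$ and $G_A$: indeed, by \rf{eqconnec1}, away from a $C\ve r(B_0)$-neighbourhood of $L_0$ the sets $\Omega^+_\Gamma\cap 15B_0$ and $\Omega^+_{G_A}\cap 15B_0$ agree (both equal to $\R^2_+$ side, resp.\ $\R^2_-$ side), so any $x$ in the symmetric difference satisfies $\dist(x,L_0)\lesssim\ve r(B_0)$; but more to the point, $x$ cannot be at definite distance from $G_A$ because on each side of $G_A$, at a point of $G_A\cap 15B_0$, Lemma \ref{l:distQ-L} and Corollary \ref{l:LBcloseGA} force $\Gamma$ to be trapped in a thin tube around $L_B$ hence around $G_A$. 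Let me be precise: pick $x\in(\Omega^+_{G_A}\Delta\Omega^+_\Gamma)\cap 15B_0$, let $p=\Pi(x)$, and let $J=\Pi_{G_A}(R)$ where $R\in\Whit$ is the Whitney interval containing $p$ (this exists since, by Lemma \ref{l:whitney-dec}(d), either $p\in\Pi(Z_0)$, in which case a short separate argument shows $x$ cannot be in the symmetric difference because near $Z_0$ both $\Gamma$ and $G_A$ hug the graph $A$ via Lemmas \ref{l:distG-A} and \ref{l:dist-B0}, or $p$ lies in some $R_i$). Then $\ell(R)\approx\ell(x)=\tfrac1{50}D(p)$, and by \rf{cond22*}--\rf{cond23*} there is a very good ball $B_J\in\VG$ with $r(B_J)\approx\ell(J)$ and $J\subset C_1B_J$.

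The key step is then the dichotomy argument: since $B_J\in\VG$ we have $\beta_{\infty,\Gamma}(B_J)\le\ve$, so $\Gamma\cap 3B_J\subset U_{3\ve r(B_J)}(L_{B_J})$, and by Lemma \ref{l:distQ-L} (applied to $B_J\in\VG$) and Corollary \ref{l:LBcloseGA}, $L_{B_J}$ and $G_A$ stay within $C(\theta)\ve r(B_J)$ of each other throughout $2B_J\supset 3B_J\cap 15B_0$ — here one uses connectivity of both $G_A$ and $L_{B_J}$ exactly as in the proof of Corollary \ref{l:LBcloseGA}. Consequently $\Gamma\cap 3B_J\subset U_{C(\theta)\ve r(B_J)}(G_A)$. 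Now if $x\in 3B_J$ were at distance $>C(\theta)\ve r(B_J)$ from $G_A$, it would lie in one of the two connected components of $3B_J\setminus U_{C(\theta)\ve r(B_J)}(G_A)$, and since $\Gamma$ does not meet this set, Lemma \ref{l:Gtopology} (applied to $B_J$, which is good) forces that component to lie entirely in $\Omega^+_\Gamma$ or entirely in $\Omega^-_\Gamma$; the same component lies on one definite side of $G_A$, hence entirely in $\Omega^+_{G_A}$ or $\Omega^-_{G_A}$ — and a check of orientations (using that $z_{B_J}\in E$ is close to $G_A$, so $\Gamma$ and $G_A$ induce the same $\pm$ labelling, cf.\ the argument for \rf{eqconnec1}) shows the two labels agree, so $x\notin\Omega^+_{G_A}\Delta\Omega^+_\Gamma$, a contradiction. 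Therefore $\dist(x,G_A)\le C(\theta)\ve r(B_J)\lesssim_\theta\ve\ell(J)$, and $x\in C_1B_J\subset B(x_J,C_2\ell(J))$ for a suitable $C_2$ (using $r(B_J)\approx\ell(J)$ and $\diam J\lesssim\ell(J)$). This proves the first inclusion.

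For the measure bound, fix $J\in\pStop$ with $J\cap 15B_0\neq\varnothing$. By what we just proved,
$$(\Omega^+_{G_A}\Delta\Omega^+_\Gamma)\cap B(x_J,C_2\ell(J))\cap 15B_0\subset B(x_J,C_2\ell(J))\cap U_{C(\theta)\ve\ell(J)}(G_A),$$
and since $G_A$ is a Lipschitz graph with small constant, the set $B(x_J,C_2\ell(J))\cap U_{C(\theta)\ell(J)\ve}(G_A)$ is contained in a tube of length $\lesssim\ell(J)$ and width $\lesssim_\theta\ve\ell(J)$ over $L_0$, whose planar measure is $\lesssim_\theta\ve\ell(J)^2$. (More carefully: $U_{C(\theta)\ve\ell(J)}(G_A)\cap\Pi^{-1}(p')$ is an interval of length $\lesssim_\theta\ve\ell(J)$ for each $p'\in L_0$ — because $G_A$ is a graph — and the relevant $p'$ range over an interval of length $\lesssim\ell(J)$; integrate in $p'$.) This gives the second assertion.

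I expect the main obstacle to be the orientation bookkeeping in the dichotomy step — namely verifying that the $\pm$ side of $G_A$ on which a component of $3B_J\setminus U_{C(\theta)\ve r(B_J)}(G_A)$ sits matches the $\Omega^\pm_\Gamma$ label that Lemma \ref{l:Gtopology} assigns to it. This requires propagating the orientation normalization fixed in \rf{eqconnec1} down through the Whitney scales; the mechanism is that $z_{B_J}\in E$ lies within $C(\theta)\ve r(B_J)$ of $G_A$ and that $\int_{r(B_J)/2}^{r(B_J)}\lca_\psi(z_{B_J},r)^2\tfrac{dr}r\le\ve$ (used in Lemma \ref{l:Gtopology}) pins which side of $G_A$ is $\Omega^+_\Gamma$, consistently with the global choice. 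Everything else is routine given Lemmas \ref{l:distQ-L}, \ref{l:distG-A}, \ref{l:Gtopology}, \ref{l:GdistL0} and the Whitney properties \rf{cond22*}--\rf{cond23*}.
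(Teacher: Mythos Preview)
Your dichotomy argument has a gap: you apply it inside $3B_J$ to conclude $\dist(x,G_A)\le C(\theta)\ve r(B_J)$, but you have not established that $x\in 3B_J$. All you know a priori is that $\Pi_{G_A}(x)\in J\subset C_1B_J$ (horizontal localisation) together with $\dist(x,L_0)\lesssim_\theta\ve R$ from \rf{eqconnec1} and Lemma~\ref{l:GdistL0}; the vertical distance $|x-\Pi_{G_A}(x)|$ could therefore be of order $\ve R$, which may be much larger than $r(B_J)\approx\ell(J)$. So the premise ``$x\in 3B_J$'' is unjustified and the contradiction does not close.

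The paper avoids this by first containing $\Gamma\cap 15B_0$ (rather than the symmetric difference) in the union of tubes: for $x\in\Gamma$ with $\Pi_{G_A}(x)\in J$, both $x$ and the centre $z_J$ of $B_J$ lie on $\Gamma$; if $x\notin 10C_1B_J$ then the segment from $z_J$ to $x$ is nearly vertical, and choosing $s$ with $x\in B(z_J,s)\setminus\tfrac12 B(z_J,s)$ together with $\beta_{\infty,\Gamma}(B(z_J,s))\le\ve$ forces $\angle(L_{B(z_J,s)},L_0)\gtrsim 1\gg\alpha$, contradicting $B(z_J,s)\in\G$. Once $\Gamma\cap 15B_0$ is trapped, connectivity and \rf{eqconnec1} give the containment for $\Omega^+_{G_A}\Delta\Omega^+_\Gamma$. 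Your route can be repaired by iterating the dichotomy through the balls $B(z_J,s)$ as $s$ decreases from $\approx R$ down to $\approx r(B_J)$, shrinking the bound on $\dist(x,G_A)$ geometrically at each step (this is essentially the ``continuity argument'' the paper itself invokes when proving the measure bound), but a single application at scale $3B_J$ is not enough. Given a correct first part, your derivation of the measure bound from it via Whitney neighbour comparability is a legitimate shortcut over the paper's separate treatment.
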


\begin{proof}
Let $x\in\Gamma \cap 15 B_0$ and let $J\in\pStop$ be such that $\Pi_{G_A}(x)\in J$. Let $B_J=B(z_J, r(B_J))\in\VG$ satisfy the properties \rf{cond22*} and \rf{cond23*}.  Then both $\Pi_{G_A}(x)$ and $\Pi_{G_A}(z_J)$ belong to $C_1B_J$, where $C_1$ is the constant appearing in \rf{cond22*} and \rf{cond23*}.  Recall here that $\Pi_{G_A}(x)$ is the projection of $x$ onto $G_A$ \emph{orthogonal to} $L_0$.

 We claim that $x\in 10C_1B_J$.  If $x\notin 10C_1B_j$, then the majority of the length of $x-z_J$ is in the component orthogonal to $L_0$.  Therefore, since $x$ and $z_J$ belong to $\Gamma$, from the fact that $\beta_{\infty,\Gamma}(B')\leq \ve$ whenever $B'=B(z_J,s)$ and $r_J\leq s\leq 50R$, we deduce that such a ball $B'$ with $x\in B'\backslash \frac{1}{2}B'$ satisfies
 $\angle(L_{B'},L_0)\gtrsim 1\gg\alpha$. But this cannot happen, and yields the claim.

 But since $10C_1B_J\in \VG$,
$$\dist(x, L_{10C_1B_J})\lesssim \ve r_J\lesssim \ve\ell(J).$$


We therefore infer that
$$\Gamma\cap 15 B_0 \subset \bigcup_{J\in\pStop:J\cap 15 B_0\neq \varnothing}
 B(x_J,10C_1\ell(J)) \cap U_{C\ve\ell(J)}(L_{10C_1B_J}).$$
On the other hand, from Corollary \ref{l:LBcloseGA}, there is a constant $C(\theta)>0$ such that for any $B\in \VG$,
$$L_B\cap B\subset U_{C(\theta)\ve r(B)}(G_A).
$$
Consequently,
$$\Gamma\cap 15 B_0 \subset \bigcup_{J\in\pStop:J\cap 15 B_0\neq \varnothing}
 B(x_J,10C_1\ell(J)) \cap U_{C(\theta)\ve\ell(J)}(G_A).$$
By connectivity arguments, using \rf{eqconnec1}, we deduce that
\begin{equation}\label{eqconn49}
\Omega^\pm_{\Gamma}\cap 15B_0 \subset \Omega_{G_A}^\pm
\cup \bigcup_{J\in\pStop:J\cap 15B_0\neq \varnothing}
 B(x_J,10C_1\ell(J)) \cap U_{C(\theta)\ve\ell(J)}(G_A),
\end{equation}
which implies the first part of the lemma with $C_2=10C_1$.\\

For the second claim of the lemma, set $\wt{B} = B(z_J, C_2\ell(J))$.  If $C_2\ell(J)>R$, then we bound
$$\big|(\Omega^+_{G_A}\Delta
\Omega^+_{\Gamma})\cap \wt B\cap 15B_0\big|\leq \big|(\Omega^+_{G_A}\Delta
\Omega^+_{\Gamma})\cap 15B_0\big|\stackrel{\rf{eqconnec1}}{\lesssim_{\theta}}\ve R^2.
$$
On the other hand, if $C_2\ell(J)<R$, then $\wt{B}\in \VG$, so Lemma \ref{l:distQ-L} and Corollary \ref{l:LBcloseGA} yield that for any $B'\supset \wt{B}$ with $r(B')\leq R$
$$G_A\cap \wt{B}\subset U_{C(\theta)\ve r(B')}(L_{B'}), \text{ and } L_{B'}\cap B'\subset U_{C(\theta)\ve r(B')}(G_A).
$$
Since $\beta_{\infty,\Gamma}(B')\leq \ve$, we infer that both $G_A\cap B'$ and $\Gamma\cap B'$ are both contained in a strip of width $C(\theta)\ve r(B')$ around $L_{B'}$.  Since $\int_0^{R}\lca_{\psi}(z_J,r)^2\frac{dr}{r}<\ve$, one of the components of $B'\backslash U_{C(\theta)\ve r(B')}(L_{B'})$ must belong to $\Omega_{\Gamma}^+$, with the other belonging to $\Omega_{\Gamma}^-$ (Lemma \ref{l:Gtopology}).  But now we infer from \rf{eqconnec1} and a continuity argument that the component of $B'\backslash U_{C(\theta)\ve r(B')}(L_{B'})$ that lies above $L_{B'}$ belongs to $\Omega_{\Gamma}^+$, while the component that lies below $L_{B'}$ belongs to $\Omega_{\Gamma}^-$.  Therefore $\Omega_{\Gamma}^+\Delta\Omega_{G_A}^+\cap B'\subset  U_{C(\theta)\ve r(B')}(L_{B'})$.  In the case $B'=\wt{B}$, we have that $r(\wt{B})\lesssim_{\theta}\ell(J)$, so $|\Omega_{\Gamma}^+\Delta\Omega_{G_A}^+\cap \wt B|\lesssim_{\theta}\ve\ell(J)^2$.
\end{proof}

Now we are ready to deal with the first integral on the right hand of \rf{e:alpha-G-Gamma}:

\begin{lemma}\label{lemllumy}
We have
\begin{align}\label{eqx2}
\int_{G_A}\int_{\min(\ve^{-1}\ell(x),R)}^{R} \lca_{G_A, \psi}(x,r)^2 \, \dr  &\, d \hi(x)
\nonumber
\\ & \lesssim_{\theta} \ve^2R +
\iint_0^{R} \lca_{\Gamma, \psi}(x,r)^2 \, \dr  \, d \mu(x).
\end{align}
\end{lemma}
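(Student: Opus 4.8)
## Proof proposal for Lemma~\ref{lemllumy}

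\textbf{Strategy.} The goal is to bound the contribution to $\|\wt\A_{G_A,\psi}\|_{L^2(G_A)}^2$ coming from scales $r\in[\min(\ve^{-1}\ell(x),R),R]$ by the analogous square function for $\Gamma$, plus an acceptable error $\ve^2 R$. The mechanism is that at such a scale $r$, both $\Omega^+_{G_A}$ and $\Omega^+_\Gamma$ agree with each other except inside a controlled union of balls $B(x_J,C_2\ell(J))$, $J\in\pStop$, of size $\ell(J)\lesssim \ve r$ (this is exactly Lemma~\ref{lemgeom2}, which supplies both the location and the measure of the symmetric difference). So one first replaces $\lca_{G_A,\psi}$ by $\lca_{\Gamma,\psi}$ at the cost of an error governed by $|(\Omega^+_{G_A}\Delta\Omega^+_\Gamma)\cap B(x,2r)|/r^2$, and then estimates that error term directly using Lemma~\ref{lemgeom2} together with the packing/partition-of-unity bound from Lemma~\ref{lemrepart}, which transfers a sum over $\pStop$ into an integral against $\mu$.

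\textbf{Main steps.} First, fix $x\in G_A$ and $r\geq \min(\ve^{-1}\ell(x),R)$; in particular $\ell(x)\leq \ve r$, so the relevant Whitney cubes $J$ near $B(x,2r)$ have $\ell(J)\lesssim \ve r$. Using the triangle inequality in the form $\lca_{G_A,\psi}(x,r)^2\lesssim \lca_{\Gamma,\psi}(x,r)^2 + |\lca_{G_A,\psi}(x,r)-\lca_{\Gamma,\psi}(x,r)|^2$, and bounding the difference by $\frac1{r^2}\int_{(\Omega^+_{G_A}\Delta\Omega^+_\Gamma)\cap B(x,1.1r)}|\psi_r(y-x)|\,dy\lesssim \frac1{r^2}|(\Omega^+_{G_A}\Delta\Omega^+_\Gamma)\cap B(x,2r)|$, it remains to handle
$$
\int_{G_A}\int_{\min(\ve^{-1}\ell(x),R)}^{R}\Bigl(\frac1{r^2}\bigl|(\Omega^+_{G_A}\Delta\Omega^+_\Gamma)\cap B(x,2r)\bigr|\Bigr)^2\frac{dr}{r}\,d\hi(x).
$$
By Lemma~\ref{lem73} and Lemma~\ref{l:GdistL0}, all such $x$ and $r$ can be taken with $B(x,2r)\subset 15B_0$ (arguing as in Lemma~\ref{l:15B0}), so Lemma~\ref{lemgeom2} applies: $(\Omega^+_{G_A}\Delta\Omega^+_\Gamma)\cap B(x,2r)\cap 15B_0\subset \bigcup_{J\in\pStop:\,J\cap B(x,2r)\neq\varnothing}B(x_J,C_2\ell(J))$ with each summand of measure $\lesssim_\theta \ve\ell(J)^2$, and only cubes with $\ell(J)\lesssim r$ meet $B(x,2r)$ (by Lemma~\ref{l:whitney-dec}(a) and since $\ell(x)\lesssim\ell(J)$ would force $r\geq \ve^{-1}\ell(x)\gtrsim\ve^{-1}\ell(J)$... more precisely one sums over $J$ with $x_J$ within $O(r)$ of $x$ and $\ell(J)\lesssim r$). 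Hence
$$
\frac1{r^2}\bigl|(\Omega^+_{G_A}\Delta\Omega^+_\Gamma)\cap B(x,2r)\bigr|\lesssim_\theta \frac{\ve}{r^2}\sum_{\substack{J\in\pStop,\ \ell(J)\lesssim r\\ \dist(x,x_J)\lesssim r}}\ell(J)^2.
$$
Since the $J$'s with $\ell(J)\approx 2^{-k}r$ that are within $O(r)$ of $x$ number $O(2^k)$ by the bounded-overlap property Lemma~\ref{l:whitney-dec}(b)--(c), the sum is $\lesssim r^2$, giving the crude pointwise bound $\lesssim_\theta\ve$; but that only yields $\ve^2 R$ after integrating, which is already one of the two allowed terms — so the delicate part is to see that once we are \emph{below} scale $\ve^{-1}\ell(x)$ is excluded, the remaining ``large'' symmetric difference is genuinely controlled by the $\Gamma$-square function rather than merely by $\ve^2 R$.

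\textbf{The real argument / main obstacle.} I expect the crux to be the following: one does \emph{not} want to throw away $\lca_{\Gamma,\psi}$; rather, the estimate of $|(\Omega^+_{G_A}\Delta\Omega^+_\Gamma)\cap B(x,2r)|$ should itself be fed back into a sum over Whitney cubes $J$, and for each such $J$ one uses Lemma~\ref{lemgeom2}'s quantitative bound $|(\Omega^+_{G_A}\Delta\Omega^+_\Gamma)\cap B(x_J,C_2\ell(J))\cap 15B_0|\lesssim_\theta\ve\ell(J)^2$ only on those $J$ where $\ell(J)$ is \emph{not} tiny compared to $r$; for the genuinely small $J$ one instead observes that $\Gamma$ passing through $B(x_J,C_2\ell(J))$ while $G_A$ is flat there forces $\lca_{\Gamma,\psi}(z_J,s)$ to be nonnegligible for a range of $s\approx \ell(J)$, producing a lower bound on $\int_0^R\lca_{\Gamma,\psi}(z_J,s)^2\frac{ds}{s}$ localized near $z_J$. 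One then sums these localized contributions over $J\in\pStop$ and uses Lemma~\ref{lemrepart}: the functions $g_J\geq 0$, $\supp g_J\subset B_J$, $\int g_J\,d\mu=\HH^1(J)\approx\ell(J)$, $\sum_J g_J\lesssim_\theta 1$, convert $\sum_J \ell(J)\cdot(\text{local square function at }z_J)$ into $\iint_0^R\lca_{\Gamma,\psi}(x,r)^2\,\frac{dr}{r}\,d\mu(x)$ up to constants depending on $\theta$. The main obstacle is therefore the bookkeeping that reconciles the $r$-integration on $G_A$ with the $J$-sum: one must interchange $\int_{G_A}\int\,d\hi\,\frac{dr}{r}$ with $\sum_{J}$, keeping careful track that each pair $(x,r)$ on $G_A$ sees only boundedly many $J$ of each dyadic size and that $\ell(J)\gtrsim \ve r$ is \emph{not} available (that is precisely why the lower cutoff is $\ve^{-1}\ell(x)$ and not $\ell(x)$), so the ``flat piece'' scales must be absorbed into the $\ve^2 R$ term while only the ``non-flat'' scales $\ell(J)\gtrsim \ve r$... actually $\ell(J)$ comparable to the scale at which $\Gamma$ genuinely deviates, are charged to $\lca_{\Gamma,\psi}$. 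Carrying this out cleanly, i.e.\ producing the dichotomy ``either the symmetric difference in $B(x_J,C_2\ell(J))$ is already $\ve^2$-small relative to the budget, or it is detected by $\lca_{\Gamma,\psi}$ near $z_J$ at scale $\approx\ell(J)$'' and then Fubini-ing against $\sum g_J$, is the substantive content of the proof.
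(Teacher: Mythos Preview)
Your proposal has a genuine gap in how the measure transfer from $\hi|_{G_A}$ to $\mu$ is carried out, and the mechanism you suggest for it is not correct.

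You write $\lca_{G_A,\psi}(x,r)^2\lesssim \lca_{\Gamma,\psi}(x,r)^2+|\lca_{G_A,\psi}(x,r)-\lca_{\Gamma,\psi}(x,r)|^2$ with the \emph{same} base point $x\in G_A$. But the right-hand side of \rf{eqx2} is an integral against $d\mu$, and $x\in G_A$ need not lie in $\supp\mu$ (indeed, $\supp\mu\subset\Gamma$). You never say how $\int_{G_A}\lca_{\Gamma,\psi}(x,r)^2\,d\hi(x)$ becomes $\int\lca_{\Gamma,\psi}(x,r)^2\,d\mu(x)$. Your proposed remedy---that for ``genuinely small'' $J$ the symmetric difference near $B(x_J,C_2\ell(J))$ forces $\lca_{\Gamma,\psi}(z_J,s)$ to be bounded below for $s\approx\ell(J)$---is not true: $\Gamma$ can be perfectly flat at scale $\ell(J)$ (so $\lca_{\Gamma,\psi}(z_J,s)$ is tiny) while $G_A$, being only an approximant, still differs from $\Gamma$ there. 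There is no lower bound of this kind, so the ``dichotomy'' you describe does not exist.

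What actually works is to \emph{move the base point first}. For $x\in I\in\ppStop$ and any $x'\in B_I\subset\supp\mu$, split
\[
|\lca_{G_A,\psi}(x,r)-\lca_{\Gamma,\psi}(x',r)|\leq |\lca_{G_A,\psi}(x,r)-\lca_{G_A,\psi}(x',r)|+|\lca_{G_A,\psi}(x',r)-\lca_{\Gamma,\psi}(x',r)|.
\]
The first piece is $\lesssim\ell(I)/r$ (smoothness of $\psi$, $|x-x'|\lesssim\ell(I)$); the second is bounded by $r^{-2}|(\Omega^+_{G_A}\Delta\Omega^+_\Gamma)\cap B(x,2r)|$ and then by Lemma~\ref{lemgeom2} by $\ve r^{-2}\sum_{J\subset B(x_I,Cr)}\ell(J)^2$. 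Both of these error terms, after integration, give $\lesssim_\theta\ve^2 R$ (the second via a Fubini/$D(I,J)$ computation---the crude pointwise bound $\ve$ alone would leave a logarithm). The main term is then $\inf_{x'\in B_I}\lca_{\Gamma,\psi}(x',r)^2$, and \emph{this} is what Lemma~\ref{lemrepart} is for: using $g_I$ and the $L^2(\mu)$-bounded maximal operator $\wt M_\mu$, one converts $\sum_{I\in\ppStop}\HH^1(I)\,\inf_{B_I}\lca_{\Gamma,\psi}(\cdot,r)^2$ into $\int\lca_{\Gamma,\psi}(x,r)^2\,d\mu(x)$. The packing lemma transfers the \emph{main} term, not the error.
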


Throughout the proof of Lemma \ref{lemllumy}, we will let the implicit constant in the symbol $\lesssim$ depend on $\theta$ without further mention.

\begin{proof}
By Lemma \ref{lem73}, $\ve^{-1}\ell(x)\geq R$ for $x\in I\in\pStop\setminus \ppStop$, so we may write
\begin{align}\label{eqajg32}
\int_{G_A}\int_{\min(\ve^{-1}\ell(x),R)}^{R} \lca_{G_A, \psi}&(x,r)^2 \, \dr  \, d \hi(x)\nonumber
\\
&= \sum_{I\in\ppStop} \int_{I}\int_{\min(\ve^{-1}\ell(x),R)}^{R} \lca_{G_A, \psi}(x,r)^2 \, \dr  \, d \hi(x).
\end{align}

Given $x\in I\in\ppStop$, we consider an arbitrary point $x'\in B_{I}$.
Then we write
\begin{equation}\label{eqx1}
  \av{\lca_{G_A, \psi} (x,r) - \lca_{\Gamma, \psi}(x',r)} \leq
     \av{ \lca_{G_A, \psi} (x,r) - \lca_{G_A, \psi}(x',r)}+
      \av{ \lca_{G_A, \psi}(x',r) - \lca_{\Gamma, \psi}(x',r)}.
\end{equation}
 Regarding the first term on the right hand side, using the fact that $r\geq \ve^{-1}\ell(x)\approx \ve^{-1}\ell(I)\gg\ell(I)$ and
taking into account that $|x-x'|\leq \dist(I,B_{I})\lesssim\ell(I)$ by \rf{cond23*}, we get
 \begin{align*}
 \av{ \lca_{G_A, \psi} (x,r) - \lca_{G_A, \psi}(x',r)}   &\leq r^{-2}\int_{\Omega^+_{G_A}}
 \bigg| \psi\ps{\frac{x-y}{r}} - \psi\ps{\frac{x'-y}{r}}\bigg|\,dy\\
 & \lesssim \|\nabla\psi\|_\infty \int_{B(x,2r)}\frac{|x-x'|}{r^3}\,dy \lesssim \frac{\ell(I)}r,
\end{align*}

Next we deal with the last term in \rf{eqx1}:
\begin{align*}
\av{ \lca_{G_A, \psi}(x',r) - \lca_{\Gamma, \psi}(x',r)}
& = r^{-2}\av{ \int_{\Omega^+_{G_A}} \psi\ps{\frac{x'-y}{r}} \, dy -\int_{\Omega^+_{\Gamma}} \psi\ps{\frac{x'-y}{r}} \, dy}\\
& \lesssim r^{-2}\,  \big|(\Omega^+_{G_A}\Delta
\Omega^+_{\Gamma})\cap B(x,2r)\big|.
\end{align*}
Notice next that, insofar as $I\in \Whit_{G_A}$, there is an absolute constant $C>0$ such that if $x_I$ is the center of $I$, and if $J\in \Whit_{G_A}$ satisfies $J\cap B(x,2r)\neq \varnothing$, then $J\subset B(x_I, Cr)$. Also by Lemma \ref{l:15B0}, $B(x,2r)\subset 15B_0$.  But then, Lemma \ref{lemgeom2} ensures that,
\begin{align*}
\big|(\Omega^+_{G_A}\Delta
\Omega^+_{\Gamma})\cap B(x,2r)\big|& \leq \sum_{J\in\pStop:J\cap B(x,2r)\neq \varnothing}
\big|(\Omega^+_{G_A}\Delta
\Omega^+_{\Gamma})\cap B(x_J,C_2\ell(J))\big| \\
&\lesssim \sum_{J\in \pStop: J\cap B(x,2r)\neq \varnothing}\ve\ell(J)^2\lesssim \sum_{J\in\pStop:J\subset  B(x_I,Cr)} \ve \,\ell(J)^2.
\end{align*}
From the last estimates we derive
$$\av{ \lca_{G_A, \psi} (x,r) - \lca_{\Gamma, \psi}(x',r)}^2
\lesssim \frac{\ell(I)^2}{r^2} + \ve^2 \bigg(\sum_{J\in\pStop:J\subset  B(x_I,Cr)} \frac{\ell(J)^2}{r^2}
\bigg)^2.$$
But since
$$\sum_{J\in\pStop:J\subset  B(x_I,Cr)} \ell(J)^2\lesssim r\sum_{J\in\pStop:J\subset  B(x_I,Cr)} \ell(J)\lesssim_\theta r^2,$$
we deduce that
$$\av{ \lca_{G_A, \psi} (x,r) - \lca_{\Gamma, \psi}(x',r)}^2
\lesssim_\theta \frac{\ell(I)^2}{r^2} + \ve^2 \sum_{J\in\pStop:J\subset  B(x_I,Cr)} \frac{\ell(J)^2}{r^2}.$$
Since this holds for all $x'\in B_I$,
$$ \lca_{G_A, \psi} (x,r)^2 \lesssim_\theta  \inf_{x'\in  B_I}
\lca_{\Gamma, \psi}(x',r)^2
+\frac{\ell(I)^2}{r^2} + \ve^2 \sum_{J\in\pStop:J\subset  B(x_I,Cr)} \frac{\ell(J)^2}{r^2}$$
for all $x\in I\in\ppStop$.

Plugging this inequality into the right hand side of \rf{eqajg32}, we estimate the integral on the left side of \rf{eqx2} as follows:
\begin{align*}
\int_{G_A}\int_{\min(\ve^{-1}\ell(x),R)}^{R}& \lca_{G_A, \psi}(x,r)^2 \, \dr \,d \hi(x)
\\
& \lesssim_\theta \sum_{I\in \ppStop}\int_I\int_{\min(c\ve^{-1}\ell(I),R)}^{R} \inf_{x'\in  B_I}
\lca_{\Gamma, \psi}(x',r)^2\, \dr \,d \hi(x)\\
&\quad+ \sum_{I\in \ppStop}\int_I\int_{\min(c\ve^{-1}\ell(I),R)}^{R} \frac{\ell(I)^2}{r^2} \, \dr \,d \hi(x)\\
&\quad+
\sum_{I\in \ppStop}\int_I\int_{\min(c\ve^{-1}\ell(I),R)}^{R} \ve^2 \sum_{\substack{J\in\pStop:\\J\subset  B(x_I,Cr)}} \frac{\ell(J)^2}{r^2} \, \dr \,d \hi(x)\\
&=: T_1+ T_2+ T_3.
\end{align*}
First we bound $T_2$ in a straightforward manner by evaluating the double integral:
$$T_2\lesssim
\sum_{I\in \ppStop}\ell(I) \int_{c\ve^{-1}\ell(I)}^\infty \frac{\ell(I)^2}{r^3} \, dr \lesssim
\ve^2 \sum_{I\in \ppStop}\ell(I) \lesssim \ve^2\,R.
$$
We now turn out attention to $T_3$. Given $I,J\in \pStop$ denote
$$D(I,J)= \ell(I)+\ell(J)+\dist(I,J).$$
Notice that if $J\subset  B(x_I,Cr)$, $r>c\ve^{-1}\ell(I)$, then $r\gtrsim D(I,J)$.  Using Fubini's theorem, we therefore infer that
\begin{align*}
T_3&\lesssim \sum_{I\in \ppStop}\ell(I) \int_{\min(c\ve^{-1}\ell(I),R)}^{R} \ve^2 \sum_{\substack{J\in\pStop:\\J\subset  B(x_I,Cr)}} \frac{\ell(J)^2}{r^3} \, dr \\
& \lesssim \ve^2 \sum_{J\in \pStop:J\subset CB_0}\ell(J)^2\sum_{I\in \pStop}\ell(I)
\int_{r>c \,D(I,J)}\frac{dr}{r^3}\\
& \lesssim \ve^2 \sum_{J\in \pStop:J\subset CB_0}\ell(J)^2\sum_{I\in \pStop}\frac{\ell(I)}{D(I,J)^2}.
\end{align*}
Now notice that if $D(I,J)=t$, then $I\subset B(x_J, Ct)$.  Consequently, as $D(I,J)\geq \ell(J)$, we control the inner sum on the right hand side as follows:\begin{align*}
\sum_{I\in \pStop}\frac{\ell(I)}{D(I,J)^2} &=
\sum_{k\geq 0} \sum_{\substack{I\in \pStop:\\2^k\ell(J)\leq D(I,J)\leq 2^{k+1}\ell(J)}} \frac{\ell(I)}{D(I,J)^2}\\
&\lesssim \sum_{k\geq 0} \frac1{2^{2k}\ell(J)^2}
\sum_{\substack{I\in \pStop:\\ I\subset B(x_J,C2^k\ell(J))}} \ell(I) \lesssim
\sum_{k\geq 0} \frac1{2^{k}\ell(J)} \approx \frac1{\ell(J)}.
\end{align*}
Combining these two chains of inequalities, we arrive at $$T_3\lesssim \ve^2 \sum_{J\in \pStop:J\subset CB_0}\ell(J)\lesssim \ve^2 R.$$

Finally we will estimate the term $T_1$. To this end, we consider the functions $g_I$ constructed in
Lemma \ref{lemrepart}. It is clear that
$$T_1 =\sum_{I\in \ppStop}\iint_{\min(c\ve^{-1}\ell(I),R)}^{R} \inf_{x'\in  B_I}
\lca_{\Gamma, \psi}(x',r)^2\, \dr \,g_I(x)\,d \mu(x).$$
Observe now that, for each $x\in B_I$,
$$\inf_{x'\in  B_I}
\lca_{\Gamma, \psi}(x',r) \leq \avint_{B_I} \lca_{\Gamma, \psi}(x',r)\,d\mu(x')
\leq \frac{\mu(3B_I)}{\mu(B_I)}\,
\wt M_\mu \lca_{\Gamma, \psi}(\cdot,r)(x),
$$
where $\wt M_\mu$ is the maximal operator defined by $$\wt M_\mu f(x) = \sup_{B\ni x} \frac1{\mu(3B)}\int_{B}|f|\,d\mu.$$
Since $\frac{\mu(3B_I)}{\mu(B_I)}\lesssim \theta^{-1}\lesssim 1$,  using Fubini and Lemma \ref{lemrepart},
we can write
\begin{align*}
T_1 & \lesssim \sum_{I\in \ppStop}\iint_{\min(c\ve^{-1}\ell(I),R)}^{R}
\wt M_\mu \lca_{\Gamma, \psi}(\cdot,r)(x)^2
 \, \dr \,g_I(x)\,d \mu(x)\\
& \lesssim
 \int_0^{R}\!\!\int
\wt M_\mu \lca_{\Gamma, \psi}(\cdot,r)(x)^2 \,\sum_{I\in \ppStop} g_I(x)\,d\mu(x)\, \dr\\
&\lesssim  \int_0^{R}\!\!\int
\wt M_\mu \lca_{\Gamma, \psi}(\cdot,r)(x)^2 \,d\mu(x)\, \dr
\end{align*}
Using that $\wt M_\mu$ is bounded in $L^2(\mu)$ (see Theorem 9.32 in \cite{Tolsa-llibre}, for example),  we derive
$$T_1\lesssim \int_0^{R}\!\!\int
 \lca_{\Gamma, \psi}(x,r)^2 \,d\mu(x)\, \dr.$$
Gathering the estimates obtained for the terms $T_1$, $T_2$, and $T_3$, the lemma follows.
\end{proof}

\begin{proof}[Proof of Lemma \ref{l:BA-small}]
By \rf{e:BA-100} and Lemmas \ref{l:lemfac64}, \ref{lemllumy}, we get
\begin{align*}
  & \mu(\BA) \lesssim_{\theta}  \alpha^{-2}\ve^2 \,R +  \alpha^{-2} \|\wt \A_{G_A, \psi} \|_{L^2(G_A)}^2  \\
   &\lesssim_{\theta}
\alpha^{-2}\ve^2 \,R + \alpha^{-2}\left(
\ve^2 |\log\ve| R+
\ve R +
\iint_0^{R} \lca_{\Gamma, \psi}(x,r)^2 \, \dr  \, d \mu(x)\right)\\
& \lesssim_{\theta}
\alpha^{-2}\ve^2 |\log\ve|\,R + C(\theta,\alpha)
\iint_0^{R} \lca_{\Gamma, \psi}(x,r)^2 \, \dr  \, d \mu(x)\leq \ve^{1/2}\mu(B_0),
\end{align*}
for $\ve= \ve(\alpha,\theta)$ small enough. This yields the desired conclusion.
\end{proof}

\subsection{Proof of the Main Lemma \ref{l:main2}}

By Lemmas \ref{l:LD-small} and \ref{l:BA-small}, if $\theta$ is chosen small enough and then $\ve$ also
small enough (depending on $\alpha$ and $\theta$), then
$$\mu(\BA \cup \LD)\leq \frac12\,\mu(B_0).$$
But then
$$\mu(Z)\geq \frac{1}{2}\mu(B_0),
$$
and $Z\subset Z_0\subset G_A$.  This completes the proof.


\end{document}